\documentclass{amsart}
\usepackage[utf8]{inputenc}
\usepackage{amssymb,amsmath,mathtools,latexsym}
\usepackage{amsthm}
\usepackage{setspace}
\usepackage{graphicx}
\usepackage{mathrsfs}
\usepackage{mdwlist}
\usepackage{color}
\usepackage{bbm}
\usepackage{url}
\usepackage[colorlinks=true]{hyperref}
\usepackage{breakurl}
\usepackage[all,cmtip]{xy}
\usepackage[shortlabels]{enumitem}
\usepackage{tikz-cd}

\newtheorem{theorem}{Theorem}
\newtheorem{lemma}[theorem]{Lemma}
\newtheorem{definition}[theorem]{Definition}
\newtheorem{proposition}[theorem]{Proposition}

\newtheorem{conj}[theorem]{Conjecture}

\newtheorem{cor}[theorem]{Corollary}
\theoremstyle{remark}\newtheorem{remark}[theorem]{Remark}

\newtheorem*{claim*}{Claim}

\numberwithin{theorem}{section}
\numberwithin{equation}{section}
\newcommand{\G}{\mathbf{G}} % alg. group G
\renewcommand{\H}{\mathbf{H}} % alg. group G
\newcommand{\Gm}{\mathbb{G}_m}
\newcommand{\GL}{\operatorname{GL}}
\newcommand{\PGL}{\operatorname{PGL}}
\newcommand{\SL}{\operatorname{SL}}
\newcommand{\SO}{\operatorname{SO}}
\newcommand{\Spin}{\operatorname{Spin}}
\newcommand{\torus}{\mathbf{T}} % algebraic torus
\newcommand{\Mat}{\operatorname{Mat}}

\newcommand{\Aut}{\operatorname{Aut}}

\newcommand{\Nr}{\mathrm{Nr}} % norm
\newcommand{\A}{\mathbb{A}}
\newcommand{\Res}{\operatorname{Res}} % restriction of scalars
\newcommand{\disc}{\operatorname{disc}}
\renewcommand{\mod}{\, \mathrm{mod}\,} % mod without weird spacing
\newcommand{\de}{\,\mathrm{d}} % integral
\newcommand{\vol}{\mathrm{vol}} % volume
\newcommand{\height}{\mathrm{ht}} % height
\newcommand{\norm}[1]{\|#1\|} % height
\newcommand{\Ad}{\mathrm{Ad}}

\DeclareMathOperator{\Tr}{Tr}
\DeclareMathOperator{\gen}{gen}
\DeclareMathOperator{\spn}{spn}

\DeclareMathOperator{\diag}{diag}
\DeclareMathOperator{\pr}{pr}
\DeclareMathOperator{\Av}{Av}

\newcommand{\Bow}{\mathrm{Bow}}

\newcommand\rquot[2]{
  \mathchoice
  {% \displaystyle
    \text{\raise0.5ex\hbox{$#1$}\big/\lower0.5ex\hbox{$#2$}}%
  }
  {% \textstyle
    #1\,/\,#2
  }
  {% \scriptstyle
    #1\,/\,#2
  }
  {% \scriptscriptstyle
    #1\,/\,#2
  }
}

\newcommand\lquot[2]{
  \mathchoice
  {% \displaystyle
    \text{\lower0.5ex\hbox{$#1$}\big\backslash\raise0.5ex\hbox{$#2$}}%
  }
  {% \textstyle
    #1\,\backslash\,#2
  }
  {% \scriptstyle
    #1\,\backslash\,#2
  }
  {% \scriptscriptstyle
    #1\,\backslash\,#2
  }
}

\newcommand\lrquot[3]{
  \mathchoice
  {% \displaystyle
    \text{\lower0.5ex\hbox{$#1$}\big\backslash\raise0.5ex\hbox{$#2$}\big/
      \lower0.5ex\hbox{$#3$}}%
  }
  {% \textstyle
    #1\,\backslash\,#2\,/\,#3
  }
  {% \scriptstyle
    #1\,\backslash\,#2\,/\,#3
  }
  {% \scriptscriptstyle
    #1\,\backslash\,#2\,/\,#3
  }
}

\newcommand{\Dcal}{\mathcal{D}}

\newcommand{\Ocal}{\mathcal{O}}

\newcommand{\Scal}{\mathcal{S}}

\newcommand{\bA}{\mathbb{A}}

\newcommand{\C}{\mathbb{C}}

\newcommand{\N}{\mathbb{N}}

\newcommand{\Q}{\mathbb{Q}}

\newcommand{\R}{\mathbb{R}}

\newcommand{\Z}{\mathbb{Z}}

\newcommand{\Bbf}{\mathbf{B}}

\newcommand{\Hbf}{\mathbf{H}}

\newcommand{\Lbf}{\mathbf{L}}
\newcommand{\Mbf}{\mathbf{M}}

\newcommand{\Tbf}{\mathbf{T}}

\newcommand{\ord}{\mathrm{ord}}

\newcommand{\id}{\mathrm{id}}

\DeclarePairedDelimiter\abs{\lvert}{\rvert}%

\makeatletter
\let\oldabs\abs
\def\abs{\@ifstar{\oldabs}{\oldabs*}}

\title[Representations of binary forms by quaternary forms]{Representations of binary quadratic forms by quaternary quadratic forms}
\author{Wooyeon Kim}
\address[W.K.]{Korea institute for Advanced Study, 85 Hoegi-ro, Dongdaemun-gu, Seoul, Republic of Korea}
\email{wooyeonkim@kias.re.kr}

\author{Andreas Wieser}
\address[A.W.]{Institute for Advanced Study, 1 Einstein Drive, Princeton, NJ 08540, USA}
\email{awieser@ias.edu}

\author{Pengyu Yang}
\address[P.Y.]{Morningside Center of Mathematics, Academy of Mathematics and Systems Science, Chinese Academy of Sciences, Beijing, China}
\email{yangpengyu@amss.ac.cn}

\date{\today}
\thanks{
P.Y. is supported by National Key R\&D Program of China 2022YFA1007500.
W.K.~is supported by a Korea Institute for Advanced Study individual grant no.~HP101301.
A.W.~acknowledges the support of the Swiss National Science Foundation, grant no.~217944.
This material is based upon work supported by a grant from the Institute for Advanced Study School of Mathematics.
}

\begin{document}

\begin{abstract}
We prove a local-global principle for primitive representations of binary quadratic forms by quaternary quadratic forms.
Our method is a variant of Linnik's ergodic method showing density for certain homogenous toral sets.
The central ingredient is a measure classification result of Einsiedler and Lindenstrauss for actions of rank two diagonalizable groups on quotients of products of $\SL_2$.
This rigidity result together with an application of the Siegel mass formula reduce the density problem to a counting problem on a certain affine variety. 
We solve that counting problem using the determinant method of Bombieri-Pila and Heath-Brown.
\end{abstract}
\maketitle
\section{Introduction}

Consider two integral quadratic forms $q:\Z^m \to \Z$ and $Q: \Z^n \to \Z$ for $m < n$.
We write $\disc(Q)$ for the discriminant of $Q$ i.e.~the determinant of any matrix representation of $Q$; $\disc(q)$ is defined analogously.
A \emph{representation} of $q$ by $Q$ is a linear map $\iota: \Z^m \to \Z^n$ with $Q\circ \iota = q$.
Furthermore, the representation $\iota$ is \emph{primitive} if $\iota(\Z^m) = (\iota(\Z^m)\otimes\Q) \cap \Z^n$.
Lastly, we will say that $q$ is (primitively) representable by $Q$ if a (primitive) representation of $q$ by $Q$ exists.

The question how to decide whether a (primitive) representation of $q$ by $Q$ exists is a very classical one.
Certainly, there is a necessary local criterion: there should exist (primitive) representations $\iota_p: \Z_p^m \to \Z_p^n$ of $q$ by $Q$ for every prime $p$. 
It is also necessary that the minimum
\begin{align*}
\min(q) := \min_{x \in \Z^m\setminus\{0\}} q(x)
\end{align*}
of $q$ be sufficiently large in terms of $Q$.
These two necessary conditions together turn out to be sufficient for primitive representations when the codimension $n-m$ is at least three. 
This was established only very recently in work of Einsiedler, Lindenstrauss, Mohammadi and A.W. \cite{effectivesemisimple} and has a very long history; we refer to \cite{effectivesemisimple} and the announcement of the present article \cite{KWY-2in4announcement}.
We note that these advances for $n-m\geq 3$ go hand-in-hand with progress in unipotent dynamics (namely effective equidistribution results for semisimple adelic periods).

The question remains what can be proven when $n-m=2$ where these results in effective unipotent dynamics are inapplicable.
The above two necessary criteria (on local representations and on the minimum) turn out to be insufficient in codimension $n-m=2$ due to so-called spinor obstructions.
Thus, the following is typically conjectured in codimension two.

\begin{conj}\label{conj:localglobal n-m=2}
Suppose $n-m=2$.
There exists $C(Q)>0$ with the following property.
If $q$ is primitively represented by (an element of) the spin genus of $Q$ and
\begin{align*}
\min(q) \geq C(Q),
\end{align*}
then $q$ is primitively represented by $Q$.
\end{conj}

We note that primitive representability by the spin genus is equivalent to local primitive representability whenever the discriminant $\disc(q)$ is outside of finitely many square classes of integers.
Moreover, if the discriminant $\disc(Q)$ is not divisible by a `high' prime power (see \cite[Ch.~11, Thm.~1.3]{cassels}), the spin genus of $Q$ is equal to the genus of $Q$ and primitive representability by the spin genus is again equivalent to local primitive representability (by the Hasse-Minkowski theorem).

When $m=1$, i.e.~for primitive representations of integers, the above conjecture has been established by Duke and Schulze-Pillot \cite{DukeSchulzePillot} based on estimates of Duke \cite{duke88} and Iwaniec \cite{Iwaniec-halfintegral} for Fourier coefficients of forms of half-integral weight.
Cogdell, Piatetski-Shapiro, and Sarnak \cite{Cogdell-squares} resolved the analogous problem over totally real number fields.
We remark that Linnik's ergodic method \cite{linnik} (see also \cite{ELMV-Ens,W-Linnik}) can be applied to prove a version of Conjecture~\ref{conj:localglobal n-m=2} for $m=1$ assuming a splitting condition; his arguments are central to the approach of the current article.
Lastly, we point out that, in contrast to \cite{effectivesemisimple} for $n-m \geq 3$, the constant $C(Q)$ established in \cite{DukeSchulzePillot} is ineffective due to possible Landau-Siegel zeros.

When $m>1$, Conjecture~\ref{conj:localglobal n-m=2} has seen very little progress.
In the following, we will focus on $m=2$.
Using analytic methods, Schulze-Pillot \cite{SchulzePillot-2in4} showed that a positive proportion of binary quadratic forms of discriminant $\disc(q)$ are primitively represented by $Q$.
In the present article, we prove Conjecture~\ref{conj:localglobal n-m=2} under two Linnik-type splitting conditions:

\begin{theorem}\label{thm:main}
Let $p_1,p_2$ be two distinct odd primes and let $Q$ be a positive definite quaternary integral quadratic form.
Then there exists $C=C(p_1,p_2,Q)>1$ with the following property.

Let $q$ be a primitive binary integral quadratic form such that $-\disc(q)\disc(Q)$ is a non-zero square modulo $p_1$ and $p_2$.
If $q$ is primitively represented by the spin genus of $Q$ and $\min(q) \geq C$ then $q$ is primitively represented by $Q$.
\end{theorem}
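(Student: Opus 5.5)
The plan is to recast primitive representations as integer points on a homogeneous variety and then run a Linnik-type equidistribution argument. Fix $q$ of discriminant $D$. A primitive representation $\iota$ of $q$ by a form in the genus determines a rank-two primitive sublattice $L=\iota(\Z^2)$ together with its orthogonal complement $L^\perp$. The orthogonal complement is again a binary lattice, and its discriminant is essentially $\disc(q)\disc(Q)$ up to bounded factors.

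The group $\SO_Q$ is, after passing to the spin cover, isogenous to a quotient of $\SL_2\times\SL_2$ (forms of the quaternion norm), with the two factors attached to $L$ and $L^\perp$. The stabiliser of the pair $(L,L^\perp)$ is a torus $\mathbf{T}_q$ of rank two, namely $\SO_L\times\SO_{L^\perp}$. Over $\Q_{p_i}$ this torus is split exactly when the discriminants of $L$ and $L^\perp$ are squares mod $p_i$; this is where the hypothesis that $-\disc(q)\disc(Q)$ is a square mod $p_1,p_2$ enters. With that splitting in hand, the set of all primitive representations of $q$ by the spin genus of $Q$ becomes a finite union of adelic toral orbits $[\mathbf{T}_q(\A)g]$ inside $\mathbf{G}(\Q)\backslash\mathbf{G}(\A)$, where $\mathbf{G}=\Spin_Q$. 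Representability by $Q$ itself means that the orbit meets the open set of the $\mathbf{G}(\R\times\prod_p\Z_p)$-orbit corresponding to the class of $Q$ in its spin genus. So it suffices to show that these orbits, projected to the $S$-arithmetic quotient with $S=\{\infty,p_1,p_2\}$, become dense (indeed equidistributed onto the relevant component) as $\min(q)\to\infty$.

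The equidistribution proceeds in four steps, taking normalised Haar measures $\mu_q$ on the orbits and a weak-$*$ limit $\mu$.

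\emph{Step 1: invariance.} The limit $\mu$ is invariant under the split diagonal group $A\cong\Q_{p_1}^\times\times\Q_{p_2}^\times$ acting on the two factors. Using both primes lets us act independently in each $\SL_2$-factor, giving a rank-two action.

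\emph{Step 2: entropy.} Using a Linnik-type basic lemma, we show that the ergodic components of $\mu$ have positive entropy in each factor. The basic lemma bounds the volume of tubes $\mu_q(xB\cap xaB)$ by counting pairs of representations close to each other. Via the Siegel mass formula, this counting reduces to counting integer points on an affine variety, whose coordinates encode two representations of $q$ sharing a prescribed relation with bounded height. We need a bound of the form $\ll D^{1-\delta}$, power-saving against the total mass $\approx D^{1}$ coming from the class number asymptotics of $\mathbf{T}_q$ (Siegel's lower bound; this makes $C$ ineffective).

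\emph{Step 3: measure classification.} We apply the Einsiedler–Lindenstrauss classification for rank-two diagonal actions on quotients of products of $\SL_2$. Given positive entropy, it says each ergodic component is either the Haar measure or invariant under a subgroup that arises from a diagonal embedding of one $\SL_2$ into the product. In other words, joinings must be ruled out.

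\emph{Step 4: excluding joinings.} A joining component would force $L$ and $L^\perp$ to have commensurable structure, meaning their discriminants or quaternion orders are related. The large-$\min(q)$ condition together with the entropy bound rules this out, since the orbits are not concentrated near the countably many intermediate orbits. Hence $\mu$ is Haar and density follows. The entropy estimate is also what controls the escape of mass: there is none here because $Q$ is positive definite, so the quotient is compact.

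The main obstacle is the counting in Step 2. We need a nontrivial power-saving bound for integral points of height up to $D^{O(1)}$ on a specific affine (likely quadric-cut) variety, uniform in $q$. The first thing to try is the Bombieri–Pila / Heath-Brown determinant method, applied after slicing the variety into curves or surfaces of controlled degree. Linear subvarieties, where the method gives no saving, would have to be handled separately by a direct arithmetic argument.
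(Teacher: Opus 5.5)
Your outline has the right architecture: a Linnik basic lemma at the two split primes, Einsiedler--Lindenstrauss rigidity, reduction via the Siegel mass formula to counting integer points on an affine variety, and the determinant method. However, Steps~2--4 have a genuine gap.

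\textbf{Positive entropy is not enough.} The Einsiedler--Lindenstrauss classification allows positive-entropy ergodic components that are homogeneous measures on orbits of proper semisimple $\Q$-subgroups $\Lbf<\G$. Examples are forms of $\SL_2$ such as the stabilizer $\Hbf$ of a rational vector $v$, or graphs of isomorphisms between the factors when $\disc(Q)$ is a rational square. Every such component has \emph{exactly half} the maximal entropy $h_{\mu_X}(a)=4\log p$.

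Your Step~4 dismisses these because the orbits are ``not concentrated near the countably many intermediate orbits'', but nothing in your argument proves this. Excluding them entirely would amount to full equidistribution, a mixing-conjecture-type statement that is not available here.

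The missing idea is to push the basic lemma past the half-entropy threshold. One needs $\mu\times\mu(\{(x,y):y\in x\Bow(n)\})\ll p^{-(4+\delta)n}$ at a scale $D^{1/10}\le p^n\le D^{1/8}$. This gives $h_\mu(a)\ge(1+\tfrac14\delta_1)\tfrac12 h_{\mu_X}(a)$. Averaging entropy over the ergodic decomposition then forces the Haar component to carry mass $\ge\delta_1/8$. It is this positive Haar component, not $\mu=\mu_X$, that yields representability.

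This also changes your counting target. A power saving against the total mass is not the point: the trivial count already gives correlation $\ll p^{-(4-\varepsilon)n}$, i.e.\ essentially half entropy for free. The determinant method is needed precisely to gain the extra factor $p^{-\delta n}$.

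\textbf{The small-minimum range is missing.} No such bound can hold when $\min(q)$ is small compared to $D$. If $v\in L$ realizes $\min(q)$, then $\torus_\iota\subset\Hbf$ and the whole period lies in an $\Hbf$-period of volume $Q(v)^{O(1)}$, so the orbit really does concentrate on an intermediate orbit. The range $\min(q)\le D^{\delta_0}$ therefore needs a separate argument. The paper uses equidistribution in stages: effective equidistribution of the $\Hbf$-periods as $Q(v)\to\infty$, plus a uniform Duke theorem for the torus inside $\Hbf$. Conversely, the point count in the complementary range uses $\min(q)\ge c_0D^{\delta_0}$ in an essential way.

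\textbf{Your setup also needs correcting.}
\begin{itemize}
\item \emph{Wrong torus.} Primitive representations of $q$ itself are parametrized by orbits of the \emph{rank-one} pointwise stabilizer $\torus_\iota$, with $\overline{\torus}_\iota\simeq\SO_{L^\perp}$, not by orbits of $\SO_L\times\SO_{L^\perp}$. The $\SO_L(\A_f)$-part moves the lattice $L$ inside $\iota(\Q^2)$. So your orbits parametrize representations of every form in the $\SO_L(\A_f)$-orbit of $q$, and their density would not show that $q$ itself is represented. Correspondingly the period has volume $D^{1/2+o(1)}$, not $D$.
\item \emph{Splitting.} The hypothesis on $-\disc(q)\disc(Q)$ makes exactly $\SO_{L^\perp}$ split at $p_i$. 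Nothing is assumed about $-\disc(q)$, so $\SO_L$ need not split.
\item \emph{Factors.} The two geometric $\SL_2$-factors of $\G$ are not attached to $L$ and $L^\perp$. The split element acts on both factors at once: as $(\diag(p,p^{-1}),\diag(p,p^{-1}))$ in $\SL_2\times\SL_2$, or as $\diag(p,p^{-1})$ in $\Res_{\Q_p(\sqrt{\alpha})/\Q_p}\SL_2$ when $\disc(Q)$ is not a square in $\Q_p$. The rank two of $A$ comes from the two places $p_1,p_2$. When $\disc(Q)$ is not a rational square, $\G$ is $\Q$-almost simple and there are no rational factors at all.
\item \emph{Common invariance.} The toral sets vary with $q$, so common invariance has to be arranged. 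This is done by translating by $k_{\iota,p_j}\in\SO_Q(\Z_{p_j})$ obtained from an integral Witt theorem, and passing to a subsequence.
\end{itemize}
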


The assumption that $q$ be primitive should not be seen as central to our approach and can be relaxed.
On the other hand, the two auxiliary primes $p_1,p_2$ are an artifact of our method; they make Conjecture~\ref{conj:localglobal n-m=2} accessible to deep measure rigidity results for diagonalizable actions on homogeneous spaces established by Einsiedler and Lindenstrauss \cite{EL-joiningsPIHES,EL-nonmaximal}.

In contrast to other applications of these rigidity results, see e.g.~\cite{IlyaAnnals,AES3D,AEW-2in4}, the underlying result for adelic toral periods is a \emph{density} result rather than an equidistribution result.
In fact, we show lower bounds rather than asymptotics for representation numbers.
Let $r(q,Q)$ be the number of primitive representations of $q$ by $Q$.
Define
\begin{align*}
\omega_{\spn(Q)} &= \sum_{Q' \in \spn(Q)} |\SO_{Q'}(\Z)|^{-1}
\end{align*}
and
\begin{align*}
r(q,\spn(Q))     = \omega_{\spn(Q)}^{-1} \sum_{Q' \in \spn(Q)} r(q,Q') |\SO_{Q'}(\Z)|^{-1}.
\end{align*}
The following is a finer version of Theorem~\ref{thm:main}.

\begin{theorem}\label{thm:main2}
There exists $\delta>0$ with the following property.
Let $p_1,p_2$ be two distinct primes and let $q,Q$ be as in Theorem~\ref{thm:main}.
Then
\begin{align*}
r(q,Q) \geq r(q,\mathrm{spin}(Q)) \big( \delta + \varepsilon(q))
\end{align*}
where $\varepsilon(q)$ is a function depending on $Q,p_1,p_2$ and on $q$ with $\varepsilon(q) \to 0$ when $\min(q)$ goes to infinity.
\end{theorem}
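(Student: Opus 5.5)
We follow the strategy outlined in the abstract: translate the representation problem into a statement about adelic toral orbits, show that every weak-$*$ limit of the relevant measures has an absolutely continuous component of fixed size, and then read off $r(q,Q)$ via the Siegel mass formula.

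\emph{Setup.} Let $V=\Q^4$ with the form $Q$. A primitive representation $\iota$ of $q$ has image a plane $L=\iota(\Q^2)$ whose orthogonal complement $L^\perp$ is a positive definite binary space of discriminant roughly $\disc(q)\disc(Q)$. The group $\SO_Q$ is isogenous to $(\SL_1(D)\times\SL_1(D))/\mu_2$, twisted by $\Q(\sqrt{\disc Q})$, for a definite quaternion algebra $D$. The stabilizer of $L$ is $\SO_L\times\SO_{L^\perp}$, and its relevant part is a rank-two torus $\torus_q$. This torus is built from the imaginary quadratic orders of discriminants related to $\disc(q)$ and $-\disc(q)\disc(Q)$. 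The splitting hypothesis at $p_1,p_2$ makes $\torus_q$ split over $\Q_{p_1}$ and $\Q_{p_2}$.

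Fix one primitive representation $\iota_0$ in the spin genus. Primitive representations of $q$ by forms $Q'\in\spn(Q)$, weighted by $|\SO_{Q'}(\Z)|^{-1}$, then correspond to points of an adelic toral orbit
\[
[\torus_q(\A)g_q]\subset \lquot{\SO_Q(\Q)}{\SO_Q(\A)}/K_f .
\]
In this orbit, the points lying in the component of $Q$ give $r(q,Q)$, and the total mass gives $r(q,\spn(Q))$ up to normalization. So it suffices to prove the following density claim: for $\min(q)$ large, the normalized Haar measure $\mu_q$ on this orbit gives mass at least $\delta'$ to the component of $Q$, uniformly in $q$.

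\emph{Limit measures.} Let $\mu$ be any weak-$*$ limit of the $\mu_q$ as $\min(q)\to\infty$. By Linnik-type arguments (subconvex or Duke-type bounds are not needed; the volume growth of the orbits suffices), $\mu$ has positive entropy. Moreover $\mu$ is invariant under a rank-two split diagonal group $A\subset \SO_Q(\Q_{p_1})\times\SO_Q(\Q_{p_2})$. This $A$ acts on a quotient of $(\SL_2\times\SL_2)$-type over $\Q_{p_1}\times\Q_{p_2}$, with one factor per simple factor of $\SO_Q$.

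We then apply the Einsiedler--Lindenstrauss measure classification for joinings and non-maximal-entropy situations \cite{EL-joiningsPIHES,EL-nonmaximal}. It shows that each ergodic component of $\mu$ is invariant under one of the following:
\begin{enumerate}
\item the full $\SL_2\times\SL_2$, hence Haar on the spin component;
\item a single $\SL_2$ factor;
\item a diagonally embedded twisted $\SL_2$, i.e.\ a joining along an isomorphism.
\end{enumerate}
The key point is that the $\min(q)\to\infty$ hypothesis rules out escape to intermediate periodic orbits that avoid the $Q$-component. Invariance under a single factor, or under a twisted diagonal, still forces positive mass on every spinor class, because strong approximation applies to the corresponding subgroup. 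Strong approximation, however, only gives connected orbits inside a class. So we must show that each of the three types of homogeneous measure gives positive mass to the component of $Q$, and that the mass of type-(1) components is bounded below.

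\emph{Main obstacle.} The hardest step is controlling the mass of the non-Haar components of types (2) and (3). Their presence corresponds to representations concentrating on special subvarieties: for instance, $L$ lying on a fixed quaternionic sub-structure, or $L^\perp$ being forced into a bounded family. Here the Siegel mass formula and the determinant method enter. By the mass formula, $r(q,\spn(Q))\asymp |\disc q|^{1/2\pm\epsilon}\cdot(\text{local densities})$. Mass carried by a type-(3) component would force about $|\disc q|^{1/2-\epsilon}$ representations to lie on a fixed affine quadric-type variety, namely the locus of $\iota$ with a prescribed relation between $L$ and $L^\perp$. On this variety, counting integral points of height $\ll|\disc q|^{1/4}$ via Bombieri--Pila/Heath-Brown gives $O(|\disc q|^{1/2-\eta})$ for some $\eta>0$. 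This contradicts positive limiting mass, so every limit $\mu$ has at least a fixed proportion $\delta$ of Haar (type-(1)) component.

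Projecting to the component of $Q$ then yields $r(q,Q)\ge r(q,\spn(Q))(\delta+\varepsilon(q))$, and Theorem~\ref{thm:main} follows since $r(q,\spn(Q))>0$. Here $\delta$ comes only from the rigidity theorem, which makes it plausibly independent of $p_1,p_2$ and $Q$.
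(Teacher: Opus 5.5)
\textbf{Gap 1: positive entropy does not give a Haar component.} Knowing that a limit $\mu$ has positive entropy and applying Einsiedler--Lindenstrauss only tells you that positive-entropy ergodic components are algebraic. Apart from $\mu_{X^S}$, these can be homogeneous measures on periodic orbits of $\Q$-forms $\Lbf$ of $\SL_2$, e.g.\ spin groups of $v^\perp$ for a rational vector $v$.

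\begin{itemize}
\item Such a component need not charge the class of $Q$ at all, since it only sees lattices sharing a fixed rational configuration (e.g.\ containing $v$). Strong approximation for $\Lbf$ does not change this, and nothing is uniform in $q$.
\item The paper's mechanism is different. Every proper component has entropy \emph{exactly} $\frac12 h_{\mu_{X^S}}(a)$, because $\Ad(b_j)$ has eigenvalues $1,p_j^{\pm2}$, each once, on the Lie algebra of a conjugate of $\Lbf$. So if $h_{\mu}(a)\geq(1+\frac14\delta_1)\frac12 h_{\mu_X}(a)$, averaging entropy over the ergodic decomposition forces Haar mass $\geq\frac18\delta_1$. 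No control of individual non-Haar components is needed.
\item Everything therefore hinges on entropy \emph{strictly above one half}. This does not follow from volume growth, which alone gives no entropy bound. A self-correlation bound is needed, and the crude one (Lemma~\ref{lem:trivial bound}) gives exponent $4-\varepsilon$, i.e.\ just below one half.
\end{itemize}

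Your use of the determinant method is aimed at the wrong target. Nothing turns a limiting $\Lbf$-orbit component into ``$|\disc q|^{1/2-\epsilon}$ representations on a fixed variety''. Even excluding type-(3) components would say nothing about type-(2) or zero-entropy components.

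In the paper the counting feeds the Linnik-type basic lemma (Theorem~\ref{thm:selfcorrelation}): $\mu'\times\mu'(\{(x,y):y\in x\Bow(n)\})\ll p^{-(4+\delta)n}$ at a scale $p^n\approx D^{1/8}$. The steps are:
\begin{itemize}
\item nearby pairs of points become pairs of primitive representations agreeing mod $p^{2n}$ up to a rotation fixing $\iota_1(\Q^2)^\perp$;
\item their Gram data $(x_0,\dots,x_4)$ lie on the quartic hypersurface \eqref{eq:degree4 equation}, with congruences mod $p^{4n}$;
\item fibres are bounded via the Siegel mass formula (Appendix~\ref{sec:Siegel mass});
\item a lattice reduction plus Bombieri--Pila in non-square boxes beats the trivial count by a power of $D$.
\end{itemize}
In particular $\delta=\frac18\delta_1$ comes from this counting margin, not from rigidity alone.

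\textbf{Gap 2: the small-minimum regime.} The basic lemma needs $\min(q)\geq c_0\disc(q)^{\delta_0}$; see the term $C/p^{4n}$ in Lemma~\ref{lem:lowrangeA}. For $\min(q)\leq c_2\disc(q)^{\delta_0}$ the paper argues differently. The torus $\torus_\iota$ lies in the stabiliser $\Hbf$ of a vector $v$ with $Q(v)=\min(q)$, and one proves effective equidistribution in stages:
\begin{itemize}
\item EMMV for $[\Hbf(\A)]$ inside $[\G(\A)]$, whose complexity grows with $Q(v)$;
\item a uniform Duke theorem for the torus inside $\Hbf$ (Appendix~\ref{sec:Duke}).
\end{itemize}
Your remark that $\min(q)\to\infty$ ``rules out escape to intermediate periodic orbits'' is exactly what this argument supplies.

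\textbf{Smaller points.}
\begin{itemize}
\item The relevant torus is the rank-one pointwise stabiliser $\torus_\iota$, the spin group of $\iota(\Q^2)^\perp$. A rank-two stabiliser of the plane would move $\iota(\Z^2)$ within the genus of $q$.
\item A single toral orbit does not parametrise all primitive representations in the spin genus, since $\Phi_{\mathfrak{L},\iota}$ is injective but not surjective in general. The Haar-component bound must therefore hold for every $\mathfrak{L}\in\spn(Q)$ and every sequence of representations, as in Proposition~\ref{prop:localglobalgiven?}\ref{item:localglobalgiven-Haarcomp}.
\item Common $A$-invariance requires shifting by the elements $k_{\iota,p_j}$ from Lemma~\ref{lem:orbits over Zp}.
\item In the reducible case you first need equidistribution of both projections (Proposition~\ref{prop:joining}) before the joinings theorem applies.
\end{itemize}
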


If $q$ is primitively represented by the spin genus of $Q$, then one can show \cite[Lemma 2.1]{SchulzePillot-survey04} using the Siegel mass formula \cite{Siegel-I} and lower bounds for local representation densities that
\begin{align*}
r(q,\mathrm{spin}(Q)) \gg_{\varepsilon} \disc(q)^{\frac{1}{2}-\varepsilon}
\end{align*}
where the implicit constant is not effective (due to Landau-Siegel zeros).
Together with Theorem~\ref{thm:main2}, this implies that, whenever $\min(q)$ is sufficiently large, we have
\begin{align*}
r(q,Q) \gg_{\varepsilon} \disc(q)^{\frac{1}{2}-\varepsilon}.
\end{align*}

\begin{remark}[Mixing conjecture]
The local-global principle in Conjecture~\ref{conj:localglobal n-m=2} for $m=2$ and $n=4$ is closely related to the mixing conjecture of Michel and Venkatesh \cite{MVIHES}.
Tremendous progress has been achieved towards the mixing conjecture in recent years --- see the work of Khayutin \cite{IlyaAnnals}, Blomer and Brumley \cite{BlomerBrumley-mixing}, Blomer, Brumley, Khayutin \cite{BlomerBrumleyKhayutin}, as well as the very recent work of Blomer, Brumley and Radziwi\l\l\, \cite{BlomerBrumleyRadziwill}.
The methods in these works do not seem to apply to our setting, particularly when $\disc(Q)$ is not a square.
The present article's approach has implications towards the mixing conjecture, which will be made explicit in an upcoming work.
\end{remark}

As mentioned, the approach pursued in this article is strongly inspired by Linnik's ergodic method \cite{linnik,ELMV-Ens,W-Linnik}) and has few analytic ingredients.
In particular, there is justified hope that this soft method might extend to related problems in higher-rank (density of toric periods).

\subsection*{Acknowledgments}
The authors would like to thank Manfred Einsiedler, Elon Lindenstrauss, Philippe Michel, Peter Sarnak, and Akshay Venkatesh for their interest in this project and for their encouragement.
We are also grateful to Menny Aka, Farrell Brumley, Zhizhong Huang, Ilya Khayutin, Rainer Schulze-Pillot, Per Salberger, Ye Tian, and Katherine Woo for many helpful discussions on a range of related topics.
Finally, we thank the Forschungsinstitut f\"ur Mathematik at ETH Z\"urich, the Institute for Advanced Study, the Korea Institute for Advanced Study, the Morningside Center of Mathematics at CAS, and the Simons–Laufer Institute for their hospitality and for providing an excellent research environment.

\subsection{Structure of the article}\label{sec:structure}
For an outline of the proof, we refer to the research announcement \cite{KWY-2in4announcement} of the present article.
Our proof below follows this outline, and we recommend consulting it.
\begin{itemize}
\item In \S\ref{sec:translation}, we translate our main theorems into a problem for certain adelic toral periods. 
More specifically, these periods are given by rank one tori $\torus < \Spin_Q$ (pointwise stabilizers groups of representations).
This discussion is classical and, for instance, already contained in \cite[\S2]{effectivesemisimple}.
\item In \S\ref{sec:subpolynomial}, we establish our main theorems when $\min(q) \leq \disc(q)^{\delta_1}$ for an explicit $\delta_1>0$. In this low range, our argument is based on equidistribution in `stages' where one uses that the torus $\mathbf{T}$ in question is contained in an intermediate group of much smaller complexity. 
\end{itemize}
In all other sections, we assume that $\min(q) > \disc(q)^{\delta_1}$.
\begin{itemize}
\item In \S\ref{sec:measurerigidity}, we apply the aforementioned measure classification results for diagonalizable actions of positive entropy due to Einsiedler and Lindenstrauss \cite{EL-joiningsPIHES,EL-nonmaximal}; this reduces the problem to showing that certain weak${}^\ast$-limits of homogeneous toral measures have entropy bigger than a half of the maximal entropy.
\item In \S\ref{sec:reduce to counting}, we reduce the above entropy claim to a counting problem for integer points on a rational hypersurface of $\mathbf{A}^5$ of degree four (where the points also satisfy archimedean and non-archimedean restrictions).
\item In \S\ref{sec:solve counting}, we solve the counting problem using the determinant method of Bombieri-Pila \cite{BombieriPila} and Heath-Brown \cite{HeathBrown-annals} (see also Appendix~\ref{sec:det method}). 
\end{itemize}

The appendix is structured as follows. In Appendix~\ref{sec:det method}, we establish a variant of the determinant method for counting in planar (non-square) rectangles.
In Appendix~\ref{sec:Siegel mass}, we apply the Siegel mass formula to estimate the number of representations of a ternary or quaternary quadratic form by a quaternary quadratic form.
In Appendix~\ref{sec:Duke}, we present an effective equidistribution result for toral periods in periods of anisotropic inner forms of $\PGL_2$.
We note that all results established in the appendix are well-known to experts, we present them merely for lack of explicit reference.

\subsection{Some notations}
Given two quantities $A,B>0$ we write $A \ll B$ to mean that there exists an implicit constant $c>0$ with $A \leq cB$. 
If the constant depends on an object $W$, we write $A \ll_W B$ unless the dependency is clear from context.
Moreover, $A \asymp B$ if $A \ll B$ and $B \ll A$.
If there exists $c>0$ with $A \leq B^c$ we write $A \leq B^\star$.

Given a natural number $N$ and a prime power $p^\ell$, we write $p^\ell \Vert N$ to mean $p^\ell \mid N$ and $p^{\ell+1}\nmid N$.

\section{Translation into density of toral packets}\label{sec:translation}

In this section, we shall interpret our main theorems --- Theorems~\ref{thm:main}, \ref{thm:main2} --- as a strengthening of density result for certain homogeneous toral sets.

\subsection{General notation}\label{sec:notation qf}
Let $Q$ be a positive definite quaternary integral quadratic form in four variables.
Let $\langle\cdot,\cdot\rangle_Q$ be the symmetric bilinear form such that $Q(w)=\langle w,w\rangle_Q$ for every $w\in \Q^4$.
We denote by $\G = \Spin_Q$ the spin group of the quadratic form $Q$.
By definition, $\G$ is the universal cover of $\overline{\G}=\SO_Q$ and we denote by $\rho: \G \to \overline{\G}$ the covering map.
We also write $\rho(g)x = g.x$ for the standard representation of $\G$.
Denote
\begin{align}\label{eq:localglobal-ambientspaces}
X = \overline{\G}(\Q)\rho(\G(\A)) \subset \overline{X} = [\overline{\G}(\bA)] = \lquot{\overline{\G}(\Q)}{\overline{\G}(\A)}
\end{align}
and the $\rho(\G(\A))$ resp.~$\overline{\G}(\A)$-invariant probability measure on $X$ resp.~$\overline{X}$ by $\mu_X$ resp.~$\mu_{\overline{X}}$.

Note that $\G$ is an $\R$-anisotropic $\Q$-form of $\SL_2\times \SL_2$.
One can show that $\G$ is an outer form if and only if the discriminant $\disc(Q)$ is not a rational square.
Recall that $\disc(Q)$ denotes the determinant of a matrix representative of $Q$.

For an integral binary quadratic form $q(x,y) = Ax^2+Bxy+Cy^2$ we similarly denote by $\disc(q) = AC -\frac{1}{4}B^2$ its discriminant.
Note that $\disc(q)$ differs by a factor of $-4$ from the alternative convention $B^2-4AC$ more common in algebraic number theory.
Representations of $q$ by $Q$ (or other elements of the spinor genus of $Q$) will be denoted by $\iota$.
Recall that $\iota$ is primitive if $\iota(\Z^2) = (\iota(\Z^2) \otimes \Q)\cap \Z^4$.
Note that if $-4\disc(q)$ is fundamental (i.e.~it is equal to the discriminant of the quadratic field $\Q(\sqrt{-\disc(q)})$), then any representation is primitive.
One may check that $-4\disc(q)$ is fundamental if and only if either $\disc(q) \not\in\Z$, $-4\disc(q)$ is square-free and congruent to $1\mod 4$ or $\disc(q) \in\Z$, $\disc(q)$ is square-free, and congruent to $1$ or $2\mod 4$.
We write $\min(q) = \min_{x \in \Z^2\setminus\{0\}} q(x)$ as before.

\subsection{Genus, spin genus, and primitive representations}

The following discussion is similar to \cite[\S2]{effectivesemisimple}.
We recall the action of the group $\GL_4(\A_f)$ acts on the set of (full rank) lattices $\Lambda$ in $\Q^4$ via
\begin{align*}
g_\star\Lambda = \bigcap_p \big(g_p.(\Lambda \otimes \Z_p) \cap \Q^4\big).
\end{align*}
Notice that the restriction of this action to $\GL_4(\Q)$ induces the standard action of $\GL_4(\Q)$ on lattices in $\Q^4$.

\subsubsection{The genus and spin genus of $Q$}
The genus resp.~spin genus of $Q$ are defined to be
\begin{align*}
\gen(Q) = \overline{\G}(\A_f)_\ast \Z^4\supset 
\spn(Q) = \big((\overline{\G}(\Q)\rho(\G(\A_f))\big)_\star \Z^4.
\end{align*}
Notice that the quadratic form $Q$ restricts to an integral form on each lattice in the genus; the so-obtained set of quadratic forms is, for simplicity, also called the genus and consists precisely of quadratic forms locally equivalent to $Q$.
The same convention applies to the spin genus, though the corresponding quadratic forms are harder to characterize with referring to lattices --- see e.g.~\cite[Ch.~11]{cassels}.
The genus decomposes into finitely many $\SO_Q(\Q)$-orbits which we call equivalence classes.
Define
\begin{equation}\label{eq:maximalcompact}
\begin{split}
K_f &= \big\{g \in \overline{\G}(\Q)\rho(\G(\A_f)): g_\star \Z^4 =\Z^4\big\},\\
\overline{K}_f &= \big\{g \in \overline{\G}(\A_f): g_\star \Z^4 =\Z^4\big\}.
\end{split}
\end{equation}
In particular, $\gen(Q) \simeq \rquot{\overline{X}}{(\overline{K}_f}\overline{\G}(\R))$.

\subsubsection{Primitive representations by the genus and spin genus}
In the following it will be convenient to view a
primitive representations of $q$ by an element of the genus of $Q$ as pairs $(\mathfrak{L},\iota)$ where $\mathfrak{L} \in \gen(Q)$ and $\iota:\Q^2 \to \Q^4$ is an isometry (i.e.~$Q\circ \iota =q$) with $\mathfrak{L} \cap \iota(\Q^2) = \iota(\Z^2)$.
We view primitive representation by the spin genus analogously.
Note that, in this viewpoint, $\SO_Q(\Q)$ acts on the set of primitive representations by the genus via $\gamma.(\mathfrak{L},\iota) = (\gamma\mathfrak{L},\gamma\iota)$; an orbit will be called an equivalence class of representations.
We correspondingly introduce the following notation:
\begin{itemize}
\item $\tilde{\mathcal{R}}(q,\gen(Q))$ (resp.~$\tilde{\mathcal{R}}(q,\spn(Q))$) is the set of equivalence classes of primitive representations of $q$ by the genus (resp.~spin genus) of $Q$.
\item $\tilde{\mathcal{R}}(q,Q)$ is the set of $\SO_Q(\Z)$-orbits of primitive representations of $q$ by $Q$.
\end{itemize}
If $Q=Q_1,\ldots,Q_r$ are representatives of the quadratic forms in the spin genus of $Q$, we clearly have
\begin{align*}
\tilde{\mathcal{R}}(q,\spn(Q)) \simeq \bigsqcup_i \tilde{\mathcal{R}}(q,Q_i);
\end{align*}
a similar statement is true for the genus.
We have a forgetful map onto the set of equivalence classes in the spin genus
\begin{align*}
\tilde{\mathcal{R}}(q,\spn(Q)) \to \SO_Q(\Q) \backslash \spn(Q);
\end{align*}
with Theorem~\ref{thm:main} in mind, one of our goals is to show surjectivity of this forgetful map.

\subsection{Periods of stabilizer groups}\label{sec:period stabilizer}

For any isometry $\iota: (\Q^2,q) \to (\Q^4,Q)$ set
\begin{equation}\label{eq:stabtorus}
\begin{split}
\torus_\iota &= \{g \in \G: g.\iota(x) = \iota(x) \text{ for all }x \in \Z^2\},\\
\overline{\torus}_{\iota} &= \{g \in \overline{\G}: g\iota(x) = \iota(x) \text{ for all }x \in \Z^2\}.
\end{split}
\end{equation}
These are rank one tori in $\G$ resp.~$\overline{\G}$ and $\overline{\torus}_{\iota}$ it is isomorphic to the group $\SO_{Q|_{\iota(\Z^2)^\perp}}$.
We choose $g_{\iota,\infty}\in \G(\R)$ such that $g_{\iota,\infty}^{-1}\torus_\iota(\R)g_{\iota,\infty}$ does not depend on $\iota$ (for instance, take $g_{\iota,\infty}$ to rotate the subspace $\iota(\Z^2) \otimes \R \subset \R^n$ to a fix reference subspace).

For any primitive representation $(\mathfrak{L},\iota)$ where $\mathfrak{L}=g_\star \Z^4 \in \gen(Q)$ we define the homogeneous toral subsets of $\overline{X}$ 
\begin{align}\label{eq:homtor-generalized}
Y_{\mathfrak{L},\iota} = \overline{\G}(\Q)\rho(\torus_{\iota}(\A)g_{\iota,\infty})g,\quad
\overline{Y}_{\mathfrak{L},\iota} = \overline{\G}(\Q)\overline{\torus}_{\iota}(\A)\rho(g_{\iota,\infty})g.
\end{align}
One may check that $Y_{\mathfrak{L},\iota},\overline{Y}_{\mathfrak{L},\iota}$ depend only on the class $\SO_Q(\Q).(\mathfrak{L},\iota) \in \tilde{\mathcal{R}}(q,\gen(Q))$ of the primitive representation $(\mathfrak{L},\iota)$.
If $\mathfrak{L}=\Z^4$ we simplify \eqref{eq:homtor-generalized} to
\begin{align}\label{eq:deftoralsets}
Y_{\iota} = \overline{\G}(\Q) \rho(\torus_\iota(\A)g_{\iota,\infty}),\quad
\overline{Y}_\iota = \overline{\G}(\Q) \overline{\torus}_\iota(\A)\rho(g_{\iota,\infty}).
\end{align}
We write $\mu_{\mathfrak{L},\iota}= \mu_{Y_{\mathfrak{L},\iota}}$ for the $g^{-1}\rho(g_{\iota,\infty}^{-1}\torus_\iota(\A)g_{\iota,\infty})g$-invariant probability measure on $Y_\iota$ and define $\overline{\mu}_{\mathfrak{L},\iota}= \mu_{\overline{Y}_{\mathfrak{L},\iota}}$ similarly.
We again simplify this to $\mu_{\iota}$ resp.~$\overline{\mu}_{\iota}$ if $\mathfrak{L}=\Z^4$.

\subsubsection{Generating primitive representations}
Suppose now that $(\mathfrak{L},\iota)$ is a primitive representation by the spin genus of $Q$.
We have a map
\begin{equation}\label{eq:from torus to rep}
\begin{split}
\Phi_{\mathfrak{L},\iota}:    \overline{\torus}_{\iota}(\Q) \backslash (\overline{\torus}_{\iota}(\Q)\rho(\torus_{\iota}(\A_f))_\star \mathfrak{L}) 
&\to \tilde{\mathcal{R}}(q,\spn(Q)),\\
\overline{\torus}_{\iota}(\Q)t_\star \mathfrak{L}
&\mapsto \SO_Q(\Q).(t_\star \mathfrak{L},\iota).
\end{split}
\end{equation}
and similarly
\begin{equation}\label{eq:from torus to rep-genus}
\begin{split}
\overline{\Phi}_{\mathfrak{L},\iota}:    \overline{\torus}_{\iota}(\Q) \backslash \overline{\torus}_{\iota}(\A_f)_\star \mathfrak{L} 
&\to \tilde{\mathcal{R}}(q,\gen(Q)),\\
\overline{\torus}_{\iota}(\Q)t_\star \mathfrak{L}
&\mapsto \SO_Q(\Q).(t_\star \mathfrak{L},\iota).
\end{split}
\end{equation}
As in \cite[Lemma 2.7]{effectivesemisimple}, one may verify that the maps in \eqref{eq:from torus to rep} and \eqref{eq:from torus to rep-genus} are injective; surjectivity fails in general (see e.g.~\cite[Rem.~2.8]{effectivesemisimple}).

We record the following simple observation (not used for Theorem~\ref{thm:main}):

\begin{lemma}\label{lem:partition image}
Suppose that $(\mathfrak{L}_1,\iota_1)$ and $(\mathfrak{L}_2,\iota_2)$ are two primitive representations. Then the images of the maps $\Phi_{\mathfrak{L}_1,\iota_1}$ and $\Phi_{\mathfrak{L}_2,\iota_2}$ are either equal or disjoint.
The analogous statement applies to the maps in \eqref{eq:from torus to rep-genus}.
\end{lemma}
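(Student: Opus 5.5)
The plan is to show that if the two images share a point, then each is contained in the other, by re-basing the map at that common point. Suppose some class lies in both images. Then there are $t_1 \in \rho(\torus_{\iota_1}(\A_f))$ and $t_2 \in \rho(\torus_{\iota_2}(\A_f))$, together with $\gamma \in \SO_Q(\Q)$, such that
\begin{align*}
(t_{2\star}\mathfrak{L}_2,\iota_2) = \gamma.(t_{1\star}\mathfrak{L}_1,\iota_1) = (\gamma t_{1\star}\mathfrak{L}_1,\gamma\iota_1).
\end{align*}
The first step is to record two elementary transformation rules for the maps $\Phi$.

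\emph{Equivariance under $\SO_Q(\Q)$.} Since $\gamma\iota_1=\iota_2$, conjugation by $\gamma$ carries $\torus_{\iota_1}$ onto $\torus_{\iota_2}$ and $\overline{\torus}_{\iota_1}$ onto $\overline{\torus}_{\iota_2}$; at the level of the spin group one uses a lift of $\gamma$ to $\G(\bar\Q)$, whose conjugation action is defined over $\Q$. Consequently $t \mapsto \gamma t\gamma^{-1}$ gives a bijection of domains, with $t_\star\mathfrak{L} \mapsto \gamma t\gamma^{-1}{}_\star(\gamma\mathfrak{L})$. This bijection satisfies $\Phi_{\gamma\mathfrak{L},\gamma\iota}(\ldots) = \SO_Q(\Q).(\gamma t_\star \mathfrak{L},\gamma\iota) = \Phi_{\mathfrak{L},\iota}(\ldots)$. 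Hence $\operatorname{im}\Phi_{\mathfrak{L},\iota} = \operatorname{im}\Phi_{\gamma\mathfrak{L},\gamma\iota}$.

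\emph{Translation within the torus.} For $s \in \rho(\torus_\iota(\A_f))$, the domain of $\Phi_{s_\star\mathfrak{L},\iota}$ is $\overline{\torus}_\iota(\Q)\backslash(\overline{\torus}_\iota(\Q)\rho(\torus_\iota(\A_f)))_\star s_\star \mathfrak{L}$. Because $\torus_\iota$ is a torus and hence commutative, $\rho(\torus_\iota(\A_f))s = \rho(\torus_\iota(\A_f))$. So this set equals the domain of $\Phi_{\mathfrak{L},\iota}$, and both maps are given by the same formula. Thus $\operatorname{im}\Phi_{s_\star\mathfrak{L},\iota}=\operatorname{im}\Phi_{\mathfrak{L},\iota}$.

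Combining the two rules gives the chain
\begin{align*}
\operatorname{im}\Phi_{\mathfrak{L}_1,\iota_1}=\operatorname{im}\Phi_{t_{1\star}\mathfrak{L}_1,\iota_1}=\operatorname{im}\Phi_{\gamma t_{1\star}\mathfrak{L}_1,\gamma\iota_1}=\operatorname{im}\Phi_{t_{2\star}\mathfrak{L}_2,\iota_2}=\operatorname{im}\Phi_{\mathfrak{L}_2,\iota_2}.
\end{align*}
So the images are equal whenever they intersect. The genus case for $\overline{\Phi}$ is identical, with $\overline{\torus}_\iota(\A_f)$ in place of $\rho(\torus_\iota(\A_f))$.

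The only point needing care is well-definedness in these steps. One must check that the action $g_\star$ is compatible with products, i.e.\ $(gh)_\star = g_\star h_\star$, including when $\gamma\in\SO_Q(\Q)$ is viewed diagonally in $\overline{\G}(\A_f)$. One must also check that $\overline{\torus}_\iota(\Q)$ is normalized compatibly under conjugation by $\gamma$. I expect both to be routine bookkeeping, following \cite[Lemma 2.7]{effectivesemisimple}.
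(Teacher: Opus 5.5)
Your proof is correct and is essentially the paper's own argument: the paper also picks a common point and uses that the image of $\Phi_{\mathfrak{L},\iota}$ depends only on the class $\SO_Q(\Q).(\mathfrak{L},\iota)$ (your equivariance rule); after normalizing $\gamma=\id$ it notes that the two base points share the same $\iota$ and hence the same torus (your translation rule). One cosmetic remark: the translation step only needs that $s$ lies in the group $\rho(\torus_\iota(\A_f))$, not commutativity; commutativity is instead what makes $\overline{\torus}_\iota(\Q)\rho(\torus_\iota(\A_f))$ a group in the first place.
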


\begin{proof}
Suppose that $\SO_Q(\Q).(\mathfrak{L},\iota)$ is in the intersection of the images of the maps $\Phi_{\mathfrak{L}_1,\iota_1}$ and $\Phi_{\mathfrak{L}_2,\iota_2}$. By symmetry, it is enough to show that the images of $\Phi_{\mathfrak{L}_1,\iota_1}$ and $\Phi_{\mathfrak{L},\iota}$ agree.
Here, note that the image of $\Phi_{\mathfrak{L},\iota}$ is independent of the choice of representative within $\SO_Q(\Q).(\mathfrak{L},\iota)$.
Since $\SO_Q(\Q).(\mathfrak{L},\iota)$ is in the image of $\Phi_{\mathfrak{L}_1,\iota_1}$, there exists $\gamma \in \SO_Q(\Q)$ and $t \in \rho(\torus_{\iota}(\A_f))$ such that $\gamma.(\mathfrak{L},\iota) = (t_\star \mathfrak{L}_1,\iota_1)$. We may assume, by replacing $(\mathfrak{L},\iota)$ within its class $\SO_Q(\Q).(\mathfrak{L},\iota)$, that $\gamma = \id$.
Correspondingly, $\iota=\iota_1$ and the tori for $\iota$ and $\iota_1$ agree which proves the lemma.
\end{proof}

\subsection{From distributional properties of toral sets to local-global principles}

Distributional properties of the homogeneous toral sets defined in \eqref{eq:homtor-generalized} and \eqref{eq:deftoralsets} imply corresponding statements for representations of quadratic forms.
Here, we aim to phrase a technical proposition capturing these implications.

Given a subset $X'\subset X$ we say that a sequence of subsets $Y_i \subset X'$ is \emph{asymptotically dense} in $X'$ if for any open subset $\mathcal{O}\subset X'$ we have $Y_i \cap \mathcal{O}\neq \emptyset$ for all $i$ sufficiently large.
Also, recall that the character spectrum in $L^2(X)$ consists of functions invariant under $\rho(\G(\A))$ i.e.~arising through the factor map $\overline{X} \to \overline{X}/\rho(\G(\A))$. 
Here, note that $\overline{X}/\rho(\G(\A))$ is a compact abelian group.

\begin{proposition}\label{prop:localglobalgiven?}
Let $q_i$ be a sequence of integral binary quadratic forms with $\min(q_i) \to \infty$ for $i \to \infty$.
Then the following statements hold:
\begin{enumerate}[label=\textnormal{(\Alph*)}]
\item\label{item:localglobalgiven-density} \emph{(Asymptotic density)} Suppose that there exists a sequence of primitive representations $\iota_i$ of $q_i$ by $Q$ such that the homogeneous toral sets $Y_{\iota_i}$ are asymptotically dense in $X$.
Then all $q_i$ for $i$ sufficiently large are primitively represented by \emph{all} elements of the spin genus of $Q$.
\item\label{item:localglobalgiven-Haarcomp} \emph{(Positive Haar component)} Suppose that there exists $\delta>0$ so that for any $\mathfrak{L}\in \spn(Q)$ and for any sequence of primitive representations $(\mathfrak{L},\iota_i)$ of $q_i$ the following holds:
\begin{quote}
Any weak${}^\ast$-limit $\mu$ of the measures $\mu_{\mathfrak{L},\iota_i}$ satisfies $\mu \geq \delta \mu_X$.
\end{quote}
Then as $i\to \infty$
\begin{align*}
r(q_i,Q) \geq r(q_i,\mathrm{spin}(Q)) \big( \delta + o(1)).
\end{align*}
\item\label{item:localglobalgiven-equi1} \emph{(Equidistribution)} 
Suppose that for any $\mathfrak{L}\in \spn(Q)$ and for any sequence of primitive representations $(\mathfrak{L},\iota_i)$ of $q_i$ the measures $\mu_{\mathfrak{L},\iota_i}$ converge to $\mu_X$.
Then as $i\to \infty$
\begin{align*}
r(q_i,Q) = r(q_i,\mathrm{spin}(Q)) \big(1 + o(1)).
\end{align*}
\end{enumerate}
Furthermore,
\begin{enumerate}[label=$(\overline{\textnormal{\Alph*}})$]
\setcounter{enumi}{2}
\item\label{item:localglobalgiven-equi2}
    Suppose that for any $\mathfrak{L}\in \spn(Q)$ and for any sequence of primitive representations $(\mathfrak{L},\iota_i)$ of $q_i$ the following holds:
    \begin{quote}
    For any $f \in C^\infty(\overline{X})$ orthogonal to the character spectrum we have $\overline{\mu}_{\mathfrak{L},\iota_i}(f) \to \overline{\mu}_X(f)$.
    \end{quote}
Then as $i\to \infty$
\begin{align*}
r(q_i,Q) = r(q_i,\mathrm{spin}(Q)) \big(1 + o(1)).
\end{align*}
\end{enumerate}
\end{proposition}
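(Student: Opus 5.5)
The plan is to identify each representation count with the mass that a toral measure puts on a set in $X$, using the maps $\Phi_{\mathfrak{L},\iota}$ of \eqref{eq:from torus to rep}. Write
\[
\pi:X\to \overline{\G}(\Q)\backslash \overline{\G}(\Q)\rho(\G(\A))/\overline{K}_f\overline{\G}(\R)\simeq \SO_Q(\Q)\backslash\spn(Q).
\]
This is the projection onto classes in the spin genus, and its fibres are open and closed. For a class $[Q']$ let $U_{Q'}=\pi^{-1}([Q'])$. Then
\[
\mu_X(U_{Q'})=|\SO_{Q'}(\Z)|^{-1}\omega_{\spn(Q)}^{-1}
\]
by the standard mass computation.

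\textbf{Part (A).} Apply asymptotic density to the nonempty open set $U_{Q'}$. For $i$ large, $Y_{\iota_i}$ meets $U_{Q'}$, so there is $t\in\torus_{\iota_i}(\A)$ with $\overline{\G}(\Q)\rho(t g_{\iota_i,\infty})\in U_{Q'}$. Unwinding this, $\rho(t_f)_\star\Z^4$ lies in the class of $Q'$, and $(\rho(t_f)_\star\Z^4,\iota_i)$ is a primitive representation of $q_i$. Primitivity is preserved because $t_f$ fixes $\iota_i(\Q^2)$ pointwise, so $t_\star\Z^4\cap\iota_i(\Q^2)=\iota_i(\Z^2)$. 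Transporting by $\gamma\in\SO_Q(\Q)$ with $\gamma\,\rho(t_f)_\star\Z^4=\mathfrak{L}_{Q'}$ then gives a primitive representation of $q_i$ by $Q'$.

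\textbf{Part (B).} The key identity is that $\mu_{\mathfrak{L},\iota}(U_{Q'})$ equals the proportion of the packet $\overline{\torus}_{\iota}(\Q)\backslash\overline{\torus}_{\iota}(\Q)\rho(\torus_{\iota}(\A_f))_\star\mathfrak{L}$ whose lattices lie in the class of $Q'$. Each point $x$ of the packet corresponds under $\Phi_{\mathfrak{L},\iota}$ to a class in $\tilde{\mathcal{R}}(q,Q')$. Pushing the Haar measure of the torus forward, each point receives weight proportional to $|\mathrm{Stab}|^{-1}$, where the stabilizer is $\overline{\torus}_\iota(\Q)\cap\mathrm{Stab}(x)$, a finite group. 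Comparing with the weights $|\SO_{Q'}(\Z)|^{-1}$ relates this to a count of representations rather than of classes of representations. The precise statement to check is
\[
r(q,Q')\,|\SO_{Q'}(\Z)|^{-1}=\sum_{\text{packets }P}\ \sum_{x\in P,\ \pi(x)=[Q']} |\mathrm{Stab}_{\overline{\torus}_\iota(\Q)}(x)|^{-1}.
\]
Summing this over $Q'$ shows that $r(q,\spn(Q))\,\omega_{\spn(Q)}$ equals the total weighted packet size.

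By Lemma~\ref{lem:partition image}, $\tilde{\mathcal{R}}(q_i,\spn(Q))$ is partitioned into images of packets. Hence
\[
\frac{r(q_i,Q)}{r(q_i,\spn(Q))}
\]
is a convex combination, weighted by packet mass, of $\mu_{\mathfrak{L},\iota}(U_Q)/\mu_X(U_Q)$ over packets. If the claimed inequality failed along a subsequence, then some packet $(\mathfrak{L}_i,\iota_i)$ would have $\mu_{\mathfrak{L}_i,\iota_i}(U_Q)<(\delta-\epsilon)\mu_X(U_Q)$. Since $\spn(Q)$ is finite, we may pass to a subsequence with $\mathfrak{L}_i=\mathfrak{L}$ fixed. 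The set $U_Q$ is clopen, so its indicator is continuous and weak$^\ast$ limits can be evaluated on it. Taking a limit $\mu\ge\delta\mu_X$ then contradicts the assumed bound.

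\textbf{Parts (C) and (C̄).} Part (C) follows by running the argument of (B) with equality.

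For (C̄), decompose $1_{U_Q}$ on $\overline{X}$ as its projection onto the character spectrum plus a remainder orthogonal to the characters. The character part is determined by the genus data, i.e. by the $\rho(\G(\A))$-orbit. On that component, each packet measure $\overline{\mu}_{\mathfrak{L},\iota}$ restricted to the spin genus gives the correct mass on every $\rho(\G(\A))$-coset. This is because $\overline{\torus}_\iota(\A)$ versus $\rho(\torus_\iota(\A))$ accounts for the spinor norm: the sets $\overline{Y}$ project onto full cosets of the compact abelian group $\overline{X}/\rho(\G(\A))$ hit by $\overline{\torus}_\iota(\A)$, with Haar measure on them. The hypothesis handles the orthogonal part. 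The count of representations by the spin genus then uses only those cosets corresponding to $\spn(Q)$.

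\textbf{Main obstacle.} I expect the main obstacle to be the exact bookkeeping in the counting identity: the stabilizer weights, the $\SO_Q(\Z)$-orbit versus counting convention, and, in (C̄), verifying that the character spectrum contributes precisely the spin-genus proportion. I would handle this following \cite[\S2]{effectivesemisimple}, where the same bookkeeping is carried out.
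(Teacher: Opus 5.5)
Your part (A) is the paper's argument. Your (B) and (C) give the standard weighting argument that the paper does not write out (it defers to \cite[\S2]{effectivesemisimple}), and they are correct. Within one packet the weights $|\overline{\torus}_\iota(\Q)\cap\mathrm{Stab}(t_\star\mathfrak{L})|^{-1}$ are in fact constant, since $\overline{\torus}_\iota$ is abelian. The push-forward of $\mu_{\mathfrak{L},\iota}$ to the packet is uniform, being induced by a surjective homomorphism of finite abelian groups. So $\mu_{\mathfrak{L},\iota}(U_{Q'})$ is the plain proportion of the packet over $[Q']$, and your convex-combination identity holds with packet weights $\sum_{x\in P}|\mathrm{Stab}(x)|^{-1}$.

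One small correction: $\spn(Q)$ itself is infinite; only its set of $\SO_Q(\Q)$-classes is finite. Since $Y_{\mathfrak{L},\iota}$ depends only on the class of $(\mathfrak{L},\iota)$, you may move each $\mathfrak{L}_i$ to one of finitely many representatives before extracting a subsequence.

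The step that fails as written is in $(\overline{\mathrm{C}})$. You decompose $1_{U_Q}$, with $U_Q\subset X$, in $L^2(\overline{X})$, and you speak of the mass of individual $\rho(\G(\A))$-cosets. But $\overline{X}/\rho(\G(\A))$ is an \emph{infinite} compact group: the spinor norm contributes a factor $\Z_p^\times/(\Z_p^\times)^2$ at almost every $p$. Hence $X$ and every $\rho(\G(\A))$-coset are $\mu_{\overline{X}}$-null. Likewise $\overline{Y}_{\mathfrak{L},\iota}\cap X$ is $\overline{\mu}_{\mathfrak{L},\iota}$-null, because the image of $\overline{\torus}_\iota(\A)$ in that quotient is also infinite (its spinor norms are norms from a quadratic field, which fill $\Q_p^\times/(\Q_p^\times)^2$ at split primes). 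So $1_{U_Q}$ is zero in $L^2(\overline{X})$ and is not in $C^\infty(\overline{X})$. The hypothesis of $(\overline{\mathrm{C}})$ therefore says nothing about it, and restricting $\overline{\mu}_{\mathfrak{L},\iota}$ to $X$ gives the zero measure.

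The missing device, which is the one the paper uses, is to saturate by $\overline{K}_f$:
\begin{itemize}
\item Work with $\overline{K}_f$-invariant functions supported on the open set $X\overline{K}_f$, such as $1_{U_Q\overline{K}_f}$. The character-spectrum component of such a function is a multiple of $1_{X\overline{K}_f}$, i.e.\ a delta function at the identity of the \emph{finite} group $\mathcal{F}=\overline{X}/\rho(\G(\A))\overline{K}_f$. For $1_{U_Q\overline{K}_f}$ it equals $\mu_X(U_Q)1_{X\overline{K}_f}$.
\item Replace $\overline{Y}_{\mathfrak{L},\iota}$ by $Y'=\overline{Y}_{\mathfrak{L},\iota}\cap X\overline{K}_f$, the orbit of the kernel $T'$ of $\overline{\torus}_\iota(\A)\to\mathcal{F}$. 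Its $\overline{\mu}_{\mathfrak{L},\iota}$-mass is at least $1/\#\mathcal{F}$, so the $o(1)$ coming from the orthogonal part survives renormalisation.
\item Run your weighting over the partition of $\tilde{\mathcal{R}}(q,\gen(Q))$ into images of the genus-level maps $\overline{\Phi}_{\mathfrak{L},\iota}$, each restricted to its fibre over the spin genus, not over the $\Phi$-packets of (B).
\end{itemize}
With these changes your sketch of $(\overline{\mathrm{C}})$ becomes the paper's proof.
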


The above proposition is certainly standard (see e.g.~\cite[\S2]{effectivesemisimple}) and we do not fully prove it. (Also, note that \ref{item:localglobalgiven-equi1} is not be used in this article.)

\begin{remark}
Since $\mu_{\mathfrak{L},\iota_i}$ is merely a translate of $\mu_{\iota_i}$ (where $\iota_i$ is notably not a primitive representation of $q$ by $Q$), the assumptions in \ref{item:localglobalgiven-Haarcomp} and \ref{item:localglobalgiven-equi1} may as well be imposed on the measures $\mu_{\iota_i}$.
The analogous statement applies in \ref{item:localglobalgiven-equi1}.
\end{remark}

\begin{proof}[Proof of Proposition~\ref{prop:localglobalgiven?} \ref{item:localglobalgiven-density}]
Let $\iota$ be a primitive representation of a binary quadratic form $q$ by $Q$.
In view of the assumptions in \ref{item:localglobalgiven-density}, we may additionally assume that $Y_{\iota}$ intersects all $\rho(\G(\R))\overline{K}_f$-orbits in $X$ where $\overline{K}_f$ is as in \eqref{eq:maximalcompact}.

To see that $\tilde{\mathcal{R}}(q,\spn(Q)) \to \SO_Q(\Q) \backslash \spn(Q)$ is surjective, we may as well show that its precomposition with $\Phi_{\mathfrak{L},\iota}$ (cf.~\eqref{eq:from torus to rep}) is surjective or, equivalently, that the natural map
\begin{align*}
Y_{\iota} \to X/(\rho(\G(\R))\overline{K}_f) = \SO_Q(\Q) \backslash \spn(Q)
\end{align*}
is surjective. The latter is true by assumption.
\end{proof}

\begin{proof}[Proof of Proposition~\ref{prop:localglobalgiven?} \ref{item:localglobalgiven-equi2}]
Let $\mathfrak{L} = g_{\star}\Z^4 \in \spn(Q)$ and suppose that $(\mathfrak{L},\iota_i)$ is a sequence of primitive representations of $q$ by $\mathfrak{L}$.
By assumption, the probability measures $\overline{\mu}_{\iota_i}$ on the homogeneous toral sets $\overline{Y}_{\mathfrak{L},\iota_i}$ satisfy $\overline{\mu}_{\mathfrak{L},\iota_i}(f) \to \overline{\mu}_X(f)$ for any $f \in C^\infty(\overline{X})$ orthogonal to the character spectrum.

Consider the composed homomorphism
\begin{align*}
\overline{\torus}_{\iota_i}(\A) \hookrightarrow \overline{\G}(\A) \to \overline{\G}(\Q)\backslash \overline{\G}(\A)/\rho(\G(\A))\overline{K}_f=\overline{X}/\rho(\G(\A))\overline{K}_f = \mathcal{F}.
\end{align*}
Since $\mathcal{F}$ is a finite abelian group, its kernel is a finite index subgroup of $\torus_{\iota_i}(\A)$ of index at most $\#\mathcal{F}$.
We denote the kernel by $T_{\iota_i}'$ and let 
\begin{align*}
Y_{\iota_i}' = \overline{\G}(\Q)T_{\iota_i}' \rho(g_{\iota,\infty})g = \overline{Y}_{\mathfrak{L},\iota_i} \cap X\overline{K}_f \subset X\overline{K}_f
\end{align*}
By construction, if $\psi$ is a $\overline{K}_f$-invariant function supported on $X\overline{K}_f \subset \overline{X}$ and $x \in \overline{Y}_{\mathfrak{L},\iota_i}$ satisfies $\psi(x) \neq 0$ then $x \in Y_{\iota_i}'$.

\begin{claim*}
For any $\overline{K}_f$-invariant function $\psi$ supported on $X\overline{K}_f$ we have
\begin{align*}
\int_{Y_{\iota_i}'} \psi \to \int_{X\overline{K}_f} \psi,
\end{align*}
where the integrals are taken against the respective invariant probability measures.
\end{claim*}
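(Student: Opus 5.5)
The plan is to reduce the claim to the hypothesis of \ref{item:localglobalgiven-equi2} by a character decomposition on the finite abelian group $\mathcal{F}$. Write $\pi:\overline{X}\to \overline{X}/\rho(\G(\A))\overline{K}_f = \mathcal{F}$ for the natural projection; note that $X\overline{K}_f = \pi^{-1}(e)$. Given a $\overline{K}_f$-invariant $\psi$ supported on $X\overline{K}_f$, we expand it as
\begin{align*}
\psi = \Big(\psi - \tfrac{1}{\#\mathcal{F}}\sum_{\chi\in\widehat{\mathcal{F}}} \langle \psi,\chi\circ\pi\rangle\, \chi\circ\pi\Big) + \tfrac{1}{\#\mathcal{F}}\sum_{\chi} \langle\psi,\chi\circ\pi\rangle\, \chi\circ\pi .
\end{align*}
More directly, we split $\psi = \psi_0 + c\,\mathbbm{1}_{X\overline{K}_f}$ with $c = \int_{X\overline{K}_f}\psi$, computed against the invariant probability measure on $X\overline{K}_f$. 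Then $\psi_0$ is supported on $X\overline{K}_f$, is $\overline{K}_f$-invariant and has mean zero there. Every character of $\overline{X}/\rho(\G(\A))$ is constant on the fibers of $\pi$, so $\psi_0$ is orthogonal to the character spectrum. (By density we may assume $\psi$ smooth; $\mathbbm{1}_{X\overline{K}_f}$ is smooth since $X\overline{K}_f$ is open and closed.)

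Next we translate integrals over $Y'_{\iota_i}$ into integrals over $\overline{Y}_{\mathfrak{L},\iota_i}$. Because $T'_{\iota_i}$ is the kernel of $\overline{\torus}_{\iota_i}(\A)\to\mathcal{F}$, the set $\overline{Y}_{\mathfrak{L},\iota_i}$ decomposes into the translates of $Y'_{\iota_i}$ by coset representatives of $T'_{\iota_i}$. These translates map to distinct points of $\mathcal{F}$, and only $Y'_{\iota_i}$ itself lies in $X\overline{K}_f$. Hence $\mu_{Y'_{\iota_i}} = [\overline{\torus}_{\iota_i}(\A):T'_{\iota_i}]\cdot \overline{\mu}_{\mathfrak{L},\iota_i}|_{X\overline{K}_f}$, and
\begin{align*}
\int_{Y'_{\iota_i}}\psi = m_i\,\overline{\mu}_{\mathfrak{L},\iota_i}(\psi_0) + c,\qquad m_i = [\overline{\torus}_{\iota_i}(\A):T'_{\iota_i}]\le\#\mathcal{F}.
\end{align*}
The hypothesis of \ref{item:localglobalgiven-equi2} gives $\overline{\mu}_{\mathfrak{L},\iota_i}(\psi_0)\to\overline{\mu}_X(\psi_0)=0$. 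Since $m_i$ is bounded, the claim follows.

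The main subtlety is the bookkeeping in the translation step. One has to check that $Y'_{\iota_i}$ is exactly the intersection with $X\overline{K}_f$, which follows since the $\overline{K}_f$-orbits respect the fibers of $\pi$. One must also check that the invariant probability measure on $Y'_{\iota_i}$ is the normalized restriction; this is clear because $T'_{\iota_i}$ has finite index and the Haar measure on $\overline{\torus}_{\iota_i}(\A)$ restricts to Haar measure on $T'_{\iota_i}$. A further point is that the hypothesis is stated only for smooth $f$, so for a merely continuous $\psi$ we would approximate it uniformly by smooth $\overline{K}_f$-invariant functions supported on $X\overline{K}_f$ and use the boundedness of $m_i$.
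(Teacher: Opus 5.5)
Your proposal is correct and is essentially the paper's argument. You split $\psi$ into its projection $c\,\mathbbm{1}_{X\overline{K}_f}$ onto the character spectrum (the paper's ``multiple of the delta-function on $\mathcal{F}$ at the identity'') plus a remainder orthogonal to it, and your factor $m_i=[\overline{\torus}_{\iota_i}(\A):T'_{\iota_i}]$ makes explicit the measure bookkeeping the paper leaves implicit.

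One small correction: characters of $\overline{X}/\rho(\G(\A))$ need not be constant on the fibers of $\pi$, since those nontrivial on the image of $\overline{K}_f$ are not. However, $\psi_0$ is orthogonal to such characters anyway because it is $\overline{K}_f$-invariant, so your conclusion that $\psi_0$ is orthogonal to the character spectrum still holds.
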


\begin{proof}[Proof of Claim]
If $\psi$ is in addition orthogonal to the character spectrum, then $\int_{Y_{\iota_i}'} \psi \to 0$ as $\int_{\overline{Y}_{\mathfrak{L},\iota_i}}\psi \to 0$.
On the other hand, if $\psi$ is in the character spectrum then $\psi$ is a multiple of the delta-function on $\mathcal{F}$ supported at the identity. In this case, $\int_{Y_{\iota_i}'} \psi = \int_{X\overline{K}_f} \psi$ and the claim follows.
\end{proof}

The claim applied to characteristic functions on $\overline{K}_f$-orbits on $X\overline{K}_f$ implies Proposition~\ref{prop:localglobalgiven?}\ref{item:localglobalgiven-equi2} by invoking the same finishing argument as in \cite[\S2]{effectivesemisimple}.
Indeed, one partitions $\tilde{\mathcal{R}}(q,\gen(Q))$ according to Lemma~\ref{lem:partition image}, restricts each set to fiber over the spin genus of $Q$ (whence the definition of the above sets $Y_{\iota_i}'$), and computes weights to reduce to equidistribution of these restricted sets (which is the content of the above claim).
\end{proof}

\section{The subpolynomial case}\label{sec:subpolynomial}

In this section, we show that if $\min(q)$ is small, our main theorems reduce to a simpler equidistribution argument in stages, establishing stronger results directly (see Theorem~\ref{thm:small min} below).
We follow the notation in \S\ref{sec:translation}.

\subsection{Orthogonal complements}
Let $v\in\Z^4$ be a primitive vector and let $W$ be its orthogonal complement with respect to $Q$.
We write $W(\Z) = W \cap \Z^4$ and note that $\Z^4 = W(\Z) + \Z w_0$ for some $w_0\in\Z^4$.
Then by orthogonal projection of $w_0$ to the line spanned by $v$ we have $\disc(Q) = \disc(Q|_{W(\Z)}) \frac{\langle v,w_0\rangle_Q^2}{Q(v)}$.
Notice that $2\langle v,w_0\rangle_Q$ generates the ideal $\{2\langle v,w\rangle_Q:w\in\Z^4\}$ in $\Z$, and this ideal divides $\disc(2Q)$ as $v$ is primitive.
Hence $\langle v,w_0\rangle_Q\asymp_Q 1$, and it follows that 
\begin{equation}\label{eq:complement of vector}
    \disc(Q|_{W(\Z)})\asymp_Q Q(v).
\end{equation}
More generally, we have the following well-known lemma, a special case of e.g.~\cite[Prop.~5.1]{AMW-higherdim}.

\begin{lemma} \label{lem:orthocomplement disc}
    Let $W$ be a subspace of $\Q^4$, and let $W^\perp$ denote the orthogonal complement. Then we have
    \begin{equation*}
        \frac{\disc(Q|_{W^\perp(\Z)})}{\disc(Q)} \leq \disc(Q|_{W(\Z)}) \leq \disc(Q)\disc(Q|_{W^\perp(\Z)}).
    \end{equation*}
\end{lemma}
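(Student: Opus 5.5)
We will prove the two inequalities using the standard relation between a primitive sublattice of an integral lattice and its orthogonal complement in the dual lattice. Write $L=\Z^4$ with the bilinear form $\langle\cdot,\cdot\rangle_Q$, and $L^\#=\{x\in\Q^4:\langle x,L\rangle_Q\subset\Z\}$ for its dual. Then $[L^\#:L]=\disc(Q)$ up to the normalization fixed in \S\ref{sec:notation qf}, and $L\subset L^\#$ because $Q$ is integral. Put $A=W(\Z)=W\cap L$ and $B=W^\perp(\Z)=W^\perp\cap L$, both primitive sublattices of $L$.

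The key identity, to be proved first, is
\begin{align*}
\disc(Q|_{A})=\disc(Q)\,\disc(Q|_{\pi_{W^\perp}(L^\#)}),
\end{align*}
where $\pi_{W^\perp}$ is the orthogonal projection onto $W^\perp\otimes\R$. In words: the covolume of a primitive sublattice equals $\mathrm{covol}(L)$ times the covolume of the projection of the dual lattice onto the complement.

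The proof of the identity runs as follows. Choose a basis of $L$ extending a basis of $A$, which exists by primitivity, and take the dual basis of $L^\#$. The last $4-\dim W$ dual vectors then span $L^\#\cap A^\perp$, which is a lattice in $W^\perp$. Taking Gram determinants and using the block-inverse (Schur complement) formula gives $\disc(Q|_A)\cdot\disc(Q)^{-1}=\disc(Q|_{L^\#\cap W^\perp})$. Equivalently, one can use the exact sequence $0\to L^\#\cap W^\perp\to L^\#\to A^\#\to0$, whose surjectivity follows from primitivity of $A$. So the precise form we use is
\begin{align*}
\disc(Q|_A)=\disc(Q)\,\disc(Q|_{L^\#\cap W^\perp}).
\end{align*}

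Both inequalities then follow by comparing $L^\#\cap W^\perp$ with $B$.

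\emph{Lower bound.} Since $L\subset L^\#$, we have $B\subset L^\#\cap W^\perp$, so $\disc(Q|_{L^\#\cap W^\perp})\le \disc(Q|_B)$. Hence $\disc(Q|_A)\le \disc(Q)\disc(Q|_B)$, which is the right-hand inequality of the lemma.

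\emph{Upper bound.} Multiplication by $\disc(Q)$ sends $L^\#$ into $L$ (more precisely, the exponent of $L^\#/L$ divides $\disc(Q)$, up to the harmless factor $2$ coming from the convention on $\disc$). Therefore $\disc(Q)\cdot(L^\#\cap W^\perp)\subset B$, and the index of $B$ in $L^\#\cap W^\perp$ is at most $[L^\#:L]$. This is the sharper bound, obtained from the injection $(L^\#\cap W^\perp)/B\hookrightarrow L^\#/L$, which holds because $B=L\cap W^\perp$. It gives $\disc(Q|_B)\le [L^\#:L]^2\,\disc(Q|_{L^\#\cap W^\perp})$. Using the $A\leftrightarrow B$ symmetric version of the identity, namely $\disc(Q|_B)=\disc(Q)\disc(Q|_{L^\#\cap W})$, together with $A\subset L^\#\cap W$, the cleanest route is:
\begin{align*}
\disc(Q|_B)=\disc(Q)\,\disc(Q|_{L^\#\cap W})\le \disc(Q)\,\disc(Q|_A).
\end{align*}
This is exactly the left-hand inequality of the lemma.

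So each inequality is the identity applied with $W$ and $W^\perp$ in one of the two roles, combined with the inclusion $L\subset L^\#$.

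The main obstacle is bookkeeping rather than substance. First, the identity requires verifying that $L^\#\cap W^\perp$ is precisely the lattice dual to $L/A$ under the pairing, which is where primitivity of $A$ enters. Second, the paper's convention $\disc(q)=AC-\tfrac14B^2$ means Gram matrices are taken for $\langle\cdot,\cdot\rangle_Q$ and may be half-integral. In that case $L\subset L^\#$ can fail and the constants change. If this happens, we will run the same argument with $2Q$ and absorb the resulting powers of $2$; this only affects implied constants and is harmless for the later use of the lemma, e.g.\ in \eqref{eq:complement of vector}.
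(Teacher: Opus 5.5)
Your argument is correct (given the normalization caveat you raise; see below), but it takes a different route from the paper.

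\textbf{The two routes.} The paper never passes to the dual lattice of $\Z^4$. With $M=W(\Z)$ it considers $f\colon\Z^4\to M^\vee$, $f(v)=2\langle v,\cdot\rangle_Q|_M$, whose kernel is $M^\perp=W^\perp(\Z)$. Hence $\Z^4/(M\oplus M^\perp)\cong f(\Z^4)/f(M)\hookrightarrow M^\vee/f(M)$, which bounds the index $[\Z^4:M\oplus M^\perp]$ by the discriminant of $M$. Inserting this into $\disc(Q|_M)\disc(Q|_{M^\perp})=[\Z^4:M\oplus M^\perp]^2\disc(Q)$ gives one inequality, and the other follows by symmetry. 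You instead prove the exact identity $\disc(Q|_A)=\disc(Q)\disc(Q|_{L^\#\cap W^\perp})$ (Jacobi's complementary-minor formula for the inverse Gram matrix) and then only need $B\subset L^\#\cap W^\perp$. Your route identifies the slack exactly: $\disc(Q)\disc(Q|_B)/\disc(Q|_A)=[L^\#\cap W^\perp:B]^2$. The paper's route is slightly more elementary, using one homomorphism and an index count with no matrix inversion. Your first attempt in the upper-bound paragraph, via $(L^\#\cap W^\perp)/B\hookrightarrow L^\#/L$, is the closest analogue of the paper's map, with the discriminant group of $L$ in place of that of $M$.

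\textbf{Two remarks.} First, your initial displayed identity, with $\pi_{W^\perp}(L^\#)$, is false. The lattice $\pi_{W^\perp}(L^\#)$ is the dual of $B=L\cap W^\perp$ inside $W^\perp$, so that display would assert $\disc(Q|_A)\disc(Q|_B)=\disc(Q)$. The version you actually use, with $L^\#\cap W^\perp$, is the correct one; equivalently $\disc(Q|_A)=\disc(Q)/\disc(Q|_{\pi_{W^\perp}(L)})$, projecting $L$ rather than $L^\#$.

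Second, your caveat about half-integral Gram matrices is not cosmetic. Take $Q=x_1^2+x_1x_2+x_2^2+x_3^2+x_4^2$, so $\disc(Q)=\tfrac34$, and $W=\Q e_1$. Then $W^\perp(\Z)=\Z(e_1-2e_2)+\Z e_3+\Z e_4$, so $\disc(Q|_{W(\Z)})=1$ and $\disc(Q|_{W^\perp(\Z)})=3$. The left inequality would require $4\le 1$, so it fails as literally stated. The paper's proof has the matching slip: $|M^\vee/f(M)|=2^{\dim W}\disc(Q|_M)$, not $\disc(Q|_M)$. So the lemma holds exactly for $2Q$ (your argument shows this), and for $Q$ itself up to a bounded power of $2$. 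That is all that is used in \eqref{eq:complement of vector} and afterwards.
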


\begin{proof}
For simplicity, we set $M = W(\Z)$ and $M^\perp = W^\perp(\Z)$.
    Consider the linear map $f:\Z^4\to M^\vee$ defined by $f(v)(w)=2\langle v,w\rangle_Q$ for every $v\in \Z^4$ and $w\in M$, where $M^\vee:=\mathrm{Hom}(M,\Z)$ denotes the dual lattice of $M$. 
    Then we have $\ker f=M^\perp$, $\Z^4/M^\perp\cong f(\Z^4)$, and $f(M)\cong M$. 
    Thus, $f(\Z^4)/f(M)\cong \Z^4/(M\oplus M^\perp)$. Since $f(\Z^4)/f(M)$ is a subgroup of $M^\vee/M$, we have $\abs{\Z^4/(M\oplus M^\perp)}=\abs{f(\Z^4)/f(M)}\leq\abs{M^\vee/M}=\disc(Q|_{M})$. Hence 
    \begin{align*}
    \disc(Q|_M)\disc(Q|_{M^\perp})=\abs{\Z^4/(M\oplus M^\perp)}^2\disc(Q)\leq\disc(Q|_M)^2\disc(Q)
    \end{align*}
    and thus we have $\disc(Q|_{M^\perp})\leq\disc(Q|_M)\disc(Q)$. The same argument shows that $\disc(Q|_{M})\leq\disc(Q|_{M^\perp})\disc(Q)$ and we conclude.
\end{proof}

\subsection{Equidistribution in stages}
Let $q$ be a binary quadratic form and suppose that $\iota$ is a primitive representation of $q$ by $Q$.
Let $W = \iota(\Q^2)$ and let $v \in \iota(\Z^2) = W(\Z)$ be a primitive vector (where we may assume that $Q(v)$ is minimal).
By Lemma~\ref{lem:orthocomplement disc} we have $\disc(Q|_{W^\perp(\Z)}) \asymp_Q \disc(q)$.

Define $\overline{\torus}=\overline{\torus}_{\iota}$ as in \eqref{eq:stabtorus}.
Let $\Hbf = \{g\in \G: g.v = v\}$, $\overline{\Hbf} = \{g\in \overline{\G}: g.v = v\}$, and note that $\overline{\torus}< \overline{\Hbf}$.
Let $L^2_{00}([\overline{\G}(\bA)])$ denote the orthogonal complement of the subspace of $L^2([\overline{\G}(\bA)])$ consisting of $\rho(\G(\bA))$-invariant square integrable functions (the character spectrum).
We use the Sobolev norms $(\mathcal{S}_d(\cdot))_{d\geq 1}$ defined in \cite[(A.3)]{EMMV}.

The following theorem combines effective equidistribution results for semisimple adelic periods established by Einsiedler, Margulis, Mohammadi and Venkatesh in \cite{EMMV} with the effective equidistribution result for toral periods discussed in Appendix~\ref{sec:Duke}.

\begin{theorem}\label{thm:two step equi}
    There exist $d,\kappa>0$ such that for $f\in C^{\infty}([\overline{\G}(\bA)])\cap L^2_{00}([\overline{\G}(\bA)])$
    \begin{equation*}
        \bigg|\int_{[\overline{\Tbf}(\bA)]} f\bigg|
        \ll_Q Q(v)^{\frac{1}{\kappa}}\disc(q)^{-\kappa}\Scal_{10}(f)+Q(v)^{-\kappa}\Scal_d(f).
    \end{equation*}
\end{theorem}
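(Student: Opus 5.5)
Our plan is to run the equidistribution in two stages through the intermediate group $\overline{\Hbf}$, the stabilizer of $v$; it satisfies $\overline{\Tbf}<\overline{\Hbf}<\overline{\G}$ and is a form of $\SO_3$, anisotropic over $\R$. Write $\mu_{\overline{\Hbf}}$ for the invariant probability measure on the period $[\overline{\Hbf}(\bA)]$ (suitably translated by the archimedean element), and fix $f\in C^\infty([\overline{\G}(\bA)])\cap L^2_{00}$. Restrict $f$ to the $\overline{\Hbf}$-period, set $F = f|_{[\overline{\Hbf}(\bA)]}$, and decompose
\begin{align*}
\int_{[\overline{\Tbf}(\bA)]} f = \int_{[\overline{\Tbf}(\bA)]} \big(F - \Pi F\big) + \int_{[\overline{\Tbf}(\bA)]} \Pi F,
\end{align*}
where $\Pi F$ is the projection of $F$ onto the character spectrum of $[\overline{\Hbf}(\bA)]$, i.e.\ onto functions invariant under the image of $\Spin$ of the stabilizer. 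Each of the two terms is then bounded by one stage.

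\emph{First term (toral stage).} $F-\Pi F$ is orthogonal to the character spectrum of the $\overline{\Hbf}$-period. The torus $\overline{\Tbf}$ is a maximal torus of $\overline{\Hbf}$, i.e.\ of an anisotropic inner form of $\PGL_2$ attached to the ternary lattice $v^\perp(\Z)$ of discriminant $\asymp_Q Q(v)$, by \eqref{eq:complement of vector}. Its discriminant is comparable to $\disc(Q|_{W^\perp(\Z)})\asymp_Q\disc(q)$ by Lemma~\ref{lem:orthocomplement disc}. We apply the Duke-type effective equidistribution of Appendix~\ref{sec:Duke}; it gives a bound of the form $\mathrm{vol}(\overline{\Hbf})^{\star}\disc(q)^{-\kappa}\mathcal{S}(F-\Pi F)$ with polynomial dependence on the level of the quaternion order. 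That level is controlled by the discriminant of $v^\perp(\Z)$, hence by $Q(v)$. It remains to bound $\mathcal{S}(F-\Pi F)$ by $\Scal_{10}(f)$: restriction to a subgroup orbit does not increase Sobolev norms (up to constants), and the projection $\Pi$ is a bounded average over a finite abelian quotient. The loss in this comparison is again polynomial in $Q(v)$, which produces the factor $Q(v)^{1/\kappa}$.

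\emph{Second term (semisimple stage).} The function $\Pi F$ is determined by the averages of $f$ over cosets of the semisimple period $\rho(\Hbf(\bA))$. We first observe that, since $f\in L^2_{00}$, the corresponding averages $\int f\,\de\mu_{\overline{\Hbf}}$ and their translates by characters should tend to $0$. To see this, apply the effective equidistribution of semisimple adelic periods of EMMV \cite{EMMV} to the $\Hbf$-orbit; the volume of this orbit is $\asymp Q(v)^{\star}$, again by \eqref{eq:complement of vector}. This yields $|\int f \de\mu_{\overline{\Hbf}}|\ll Q(v)^{-\kappa}\Scal_d(f)$, and the same bound holds for each character twist, since these are equidistribution statements on finitely many translated orbits. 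As $\overline{\Tbf}$ is a probability period inside, $|\int_{[\overline{\Tbf}]}\Pi F|\le\|\Pi F\|_\infty$, which is bounded by the above.

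\emph{Main obstacle.} The delicate step is the first stage: we need the toral equidistribution on $[\overline{\Hbf}(\bA)]$ with explicit polynomial dependence on the level $Q(v)$. This requires Sobolev norm comparison between the restriction and the ambient space, and uniformity over the varying inner form $\overline{\Hbf}$ (subconvexity bounds uniform in level for Waldspurger periods). Here we would rely on Appendix~\ref{sec:Duke} and track the level dependence carefully.
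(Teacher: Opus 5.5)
Your proposal is correct and is essentially the paper's proof. The paper likewise projects onto $\rho(\Hbf(\bA))$-invariant functions on $[\overline{\Hbf}(\bA)]$ (its $\pi^+_{\Hbf}$) and bounds the orthogonal part with Theorem~\ref{thm:uniform Duke}, losing $Q(v)^\star$ through $\disc(B)$, the level, and $\Scal^H_{10}(f)\ll Q(v)^\star\Scal_{10}(f)$; it then bounds the invariant part pointwise via EMMV with $\vol\gg Q(v)^\star$.

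One small correction: the quotient of $[\overline{\Hbf}(\bA)]$ by $\rho(\Hbf(\bA))$ is a compact but generally infinite abelian group, so ``character twists on finitely many orbits'' is not quite right. Use instead, as the paper does, the formula $\Pi F(x)=\int_{[\rho(\Hbf(\bA))]}f(x\tilde h)\,\de\tilde h$ together with the fact that the EMMV bound is uniform in the translate. For the first term, it suffices to note that $\Pi$ commutes with $\overline{\Hbf}(\bA)$ (because $\rho(\Hbf(\bA))$ is normal), so $\Scal^H(F-\Pi F)\le\Scal^H(F)$.
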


\begin{proof}
    We show that the periods $[\overline{\Tbf}(\A)] \subset [\overline{\Hbf}(\A)]$ and $[\overline{\Hbf}(\A)] \subset [\overline{\G}(\A)]$ are effectively equidistributed beginning with the latter.
    
    By \cite[Thm.~1.5]{EMMV}, there exist an absolute constant $d>0$ such that for every $h\in\overline{\Hbf}(\bA)$ we have
    \begin{equation}\label{eq:applying EMMV}
        \bigg|\int_{[\rho(\Hbf(\bA))h]}f\bigg| \ll \vol([\rho(\Hbf(\bA))h])^{-\star}\Scal_d(f).
    \end{equation}
    where $[\rho(\Hbf(\bA))h] = \overline{\G}(\Q)\rho(\Hbf(\bA))h \subset [\overline{\G}(\bA)]$.
    Moreover, \cite[App.~A]{effectivesemisimple} defines a notion of complexity $\mathrm{cpl}(\cdot)$ for semisimple adelic periods and shows that
    \begin{equation*}
        \vol([\rho(\Hbf(\bA))h])\gg\mathrm{cpl}([\rho(\Hbf(\bA))h])^{\star}= \mathrm{cpl}([\rho(\Hbf(\bA))])^{\star}.
    \end{equation*}
    In view of \cite[\S2.1]{effectivesemisimple}, we have $Q(v)^\star \ll_Q \mathrm{cpl}([\rho(\Hbf(\bA))])\ll_Q Q(v)^\star$.
    Combining this with \eqref{eq:applying EMMV} we get
    \begin{equation}\label{eq:bounding Htilde period by disc}
        \bigg|\int_{[\rho(\Hbf(\bA))h]}f\bigg| \ll Q(v)^{-\star}\Scal_d(f).
    \end{equation}

    We now show that $[\overline{\Tbf}(\A)] \subset [\overline{\Hbf}(\A)]$ is effectively equidistributed in an appropriate sense. 
    Here, we wish to invoke Appendix~\ref{sec:Duke}.
    To do so, we need to compare the different level structures used here and in Appendix~\ref{sec:Duke}.
    For $\overline{K}_f$ as in \eqref{eq:maximalcompact}, the level structure used here is defined with respect to the compact open subgroup $\overline{K}_f \cap \overline{\Hbf}(\A_f)$, which in view of the above complexity and volume estimates has index $\ll Q(v)^\star$ in a maximal compact open subgroup containing it.
    Note also that $\overline{\Hbf}\simeq \mathbf{PB}^\times$ for $B$ a definite quaternion algebra over $\Q$ whose discriminant is $\ll Q(v)^\star$ (cf.~\eqref{eq:complement of vector}).
    Let $\pi^+_\Hbf$ denote the orthogonal projection from $L^2(\overline{\Hbf}(\Q)\backslash\overline{\Hbf}(\bA))$ to the subspace of $\rho(\Hbf(\bA))$-invariant functions so that $f-\pi_\Hbf^+(f)\in L^2_{00}(\overline{\Hbf}(\Q)\backslash\overline{\Hbf}(\bA))$. 
    By the definition of $\pi_\Hbf^+$ we have
    \begin{equation} \label{eq:writing piH+ as Htilde period}
        \pi_\Hbf^+(f)(x) = \int_{[\rho(\Hbf(\bA))]}f(x\widetilde{h})\de \widetilde{h},\quad \forall x\in[\overline{\Hbf}(\bA)].
    \end{equation}
    
    By Theorem~\ref{thm:uniform Duke} we have
    \begin{equation}\label{eq:inner equidistribution}
        \bigg|\int_{[\overline{\Tbf}(\bA)]}\left( f-\pi_\Hbf^+(f) \right)\bigg| 
        \ll Q(v)^{\star}\disc(q)^{-\delta+\varepsilon}\Scal^H_{14}(f).
    \end{equation}
    Here, the Sobolev norm $\Scal^H_{10}(\cdot)$ is defined in \eqref{eq:sob H} and satisfies by the above index estimates $\Scal_{10}^H(f)\ll Q(v)^\star\Scal_{10}(f)$ for all $f$ smooth.
    Combining \eqref{eq:bounding Htilde period by disc}\eqref{eq:writing piH+ as Htilde period}, we have for every $x\in[\overline{\Hbf}(\bA)]$
    \begin{equation} \label{eq:outer equidistribution}
        \abs{\pi_\Hbf^+(f)(x)} \ll Q(v)^{-\star}\Scal_d(f).
    \end{equation}
    By \eqref{eq:inner equidistribution}\eqref{eq:outer equidistribution} we have
    \begin{align*}
        \bigg|\int_{[\overline{\Tbf}(\bA)]}f\bigg|  
        &\ll Q(v)^{\star}\disc(q)^{-\delta+\varepsilon}\Scal^H_{10}(f)
        +Q(v)^{-\star}\Scal_d(f)\\
        &\ll Q(v)^{\star}\disc(q)^{-\delta+\varepsilon}\Scal_{10}(f)+Q(v)^{-\star}\Scal_d(f).
    \end{align*}
    This proves the theorem.
\end{proof}

\subsection{Representing forms with small minimum}

Using Theorem~\ref{thm:two step equi}, we obtain the following.

\begin{theorem}\label{thm:small min}
There exist absolute constants $A>1$ and $\delta_0>0$ as well as constants $c_1,c_2$ depending on $Q$ with the following property.
Suppose that
\begin{align*}
c_1 \leq \min(q) \leq c_2\disc(q)^{\delta_0}
\end{align*}
and that $q$ is primitively represented by the spin genus of $Q$. Then
\begin{align*}
\Big| \frac{r(q,Q)}{r(q,\spn(Q))} - 1 \Big| \ll_Q \min(q)^{-1/A}.
\end{align*}
\end{theorem}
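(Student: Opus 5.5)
The plan is to feed Theorem~\ref{thm:two step equi} into the mechanism of Proposition~\ref{prop:localglobalgiven?}\ref{item:localglobalgiven-equi2}, now carried out quantitatively. Fix $\mathfrak{L}\in\spn(Q)$ and a primitive representation $(\mathfrak{L},\iota)$ of $q$. Choose $v\in\iota(\Z^2)$ primitive with $Q(v)=\min(q)$. Since $\mathfrak{L}$ lies in the genus, its form is one of finitely many classes, so all constants stay uniform in $Q$. Theorem~\ref{thm:two step equi} applies after translating by $g$, where $\mathfrak{L}=g_\star\Z^4$; this costs a bounded factor in the Sobolev norms. For $f$ smooth and orthogonal to the character spectrum it gives
\[
\bigl|\overline{\mu}_{\mathfrak{L},\iota}(f)\bigr|\ll_Q \bigl(\min(q)^{1/\kappa}\disc(q)^{-\kappa}+\min(q)^{-\kappa}\bigr)\Scal_d(f).
\]
Now impose $\min(q)\le c_2\disc(q)^{\delta_0}$ with $\delta_0<\kappa^2/2$. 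The first term is then at most $\disc(q)^{-\kappa/2}\le \min(q)^{-\kappa/(2\delta_0)}$, so the total error is $\ll \min(q)^{-\kappa'}$ for some absolute $\kappa'$.

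Next we apply this to $\overline{K}_f$-invariant test functions. We take $f=\psi$, the indicator of a single $\rho(\G(\R))\overline{K}_f$-orbit in $X\overline{K}_f$, with its projection to the character spectrum subtracted. This function is smooth, being locally constant at the finite places and invariant at $\infty$. Its Sobolev norm $\Scal_d$ is bounded in terms of $Q$ alone, because $\overline{K}_f$ is fixed. As in the Claim in the proof of \ref{item:localglobalgiven-equi2}, the character part is handled exactly by restricting to $Y'_{\iota}$. We conclude that the proportion of $Y'_\iota$ lying in each class of $\spn(Q)$ equals the Haar proportion, namely the mass $|\SO_{Q'}(\Z)|^{-1}/\omega_{\spn(Q)}$, up to an error $O_Q(\min(q)^{-\kappa'})$.

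Finally, we translate this into representation numbers using the finishing argument. Partition $\tilde{\mathcal{R}}(q,\spn(Q))$ into the images of the injective maps $\Phi_{\mathfrak{L},\iota}$, which is possible by Lemma~\ref{lem:partition image}. Each image, counted with the weights $|\SO_{Q'}(\Z)|^{-1}$ coming from stabilizers, is the pushforward of the toral measure on $Y'_{\iota}$ to $\SO_Q(\Q)\backslash\spn(Q)$. Summing over the partition expresses $r(q,Q)|\SO_Q(\Z)|^{-1}$ as $\omega_{\spn(Q)}^{-1}|\SO_Q(\Z)|^{-1}$ times the weighted total $\sum_{Q'} r(q,Q')|\SO_{Q'}(\Z)|^{-1}$, up to a relative error $O(\min(q)^{-\kappa'})$. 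This yields the claim with $A=1/\kappa'$. The lower bound $c_1\le\min(q)$ is there to ensure that $\min(q)^{-\kappa'}$ is small, and that the hypothesis ``represented by the spin genus'' makes $r(q,\spn(Q))>0$.

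The main obstacle is bookkeeping rather than ideas. Two points need care. First, the relation between the weighted counts in $\tilde{\mathcal R}$ and the toral measures, including the finite index of $T'_\iota$ and the stabilizers $\overline{\torus}_\iota(\Q)\cap$ level, has to be exact rather than merely up to constants. Second, the translation by $g$ and the normalization of $v$ must not introduce any dependence on $\disc(q)$ in the Sobolev norms. I expect the first point to require redoing the argument of \cite[\S2]{effectivesemisimple} with explicit weights.
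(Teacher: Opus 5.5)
Your proposal is correct and is essentially the route the paper indicates. You apply Theorem~\ref{thm:two step equi} with $v$ a minimal vector, so that $Q(v)=\min(q)$, feed it into the mechanism of Proposition~\ref{prop:localglobalgiven?}\ref{item:localglobalgiven-equi2}, and make it effective via the finishing argument of \cite[\S2]{effectivesemisimple}; your $\delta_0<\kappa^2/2$ in place of the paper's $\kappa^2-\varepsilon$ is immaterial.

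One bookkeeping fix is needed. The pieces whose class distribution is governed by $Y'_\iota$ are the spin-genus fibres of the images of $\overline{\Phi}_{\mathfrak{L},\iota}$, each of which is a union of images of $\Phi$ by Lemma~\ref{lem:partition image}. They are not the individual images of $\Phi_{\mathfrak{L},\iota}$: those correspond to the possibly smaller sets $Y_{\mathfrak{L},\iota}\subseteq Y'_\iota$, to which your equidistribution argument does not directly apply.
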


Here, $\delta_0$ can be taken to be $\kappa^2-\varepsilon$ for any $\varepsilon>0$ where $\kappa$ is as in Theorem~\ref{thm:two step equi}.
The qualitative version of Theorem~\ref{thm:small min} is readily obtainable through Proposition~\ref{prop:localglobalgiven?}\ref{item:localglobalgiven-equi2} and Theorem~\ref{thm:two step equi}.
We do not require the above effective version for Theorems~\ref{thm:main} and \ref{thm:main2}, but note that it follows from the arguments in \cite[\S2]{effectivesemisimple}.
In the remainder of the article, we will treat the case where $\min(q) \geq C_2\disc(q)^{\delta_0}$.

\section{Measure rigidity and reduction to entropy $>\frac{1}{2}$}\label{sec:measurerigidity}

In this section, we turn to rephrasing our main theorems in terms of entropy.
Let $p_1,p_2$ be distinct odd primes. 
In Proposition~\ref{prop:localglobalgiven?}, we already expressed our main theorems in terms of certain properties of homogeneous toral sets.
Let $(q_i)_i$ be a sequence of primitive binary quadratic forms with $-\disc(q_i)\disc(Q)$ a non-zero square modulo $p_1$ and $p_2$.
Let $(\iota_i)_i$ be a sequence of primitive representations of $q_i$ by $Q$ (without loss of generality; one may replace below $Q$ by an element of the spin genus of $Q$).
In view of Theorem~\ref{thm:small min}, we will henceforth additionally assume 
\begin{align*}
\min(q_i) \geq c_0|\disc(q_i)|^{\delta_0}.
\end{align*}
for some $c_0\in (0,\frac{1}{4})$ and $\delta_0\in (0,\frac{1}{2})$.

For each $i$, let $\mu_i=\mu_{\iota_i}$ denote the uniform probability measure on $Y_i = Y_{\iota_i}$ (see \eqref{eq:deftoralsets}).
We also recall that $X=\overline{\G}(\Q)\rho(\G(\A)) \subset \overline{X}$ and that $\mu_X$ denotes the uniform probability measure on $X$ (see \eqref{eq:localglobal-ambientspaces}). 
In view of Proposition~\ref{prop:localglobalgiven?}\ref{item:localglobalgiven-Haarcomp}, Theorems~\ref{thm:main} and \ref{thm:main2} are implied by the following:

\begin{theorem}\label{thm:Haarcomponent}
There exists an absolute constant $\delta>0$ so that any weak${}^\ast$-limit $\mu$ of the measures $\mu_i$ satisfies $\mu \geq \delta \mu_X$.
\end{theorem}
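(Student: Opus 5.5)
We follow the outline of \S\ref{sec:structure}: first invoke measure rigidity, then reduce to an entropy bound, and finally reduce that bound to a counting problem.

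Let $\mu$ be a weak${}^\ast$-limit of $\mu_i$. The splitting assumption at $p_1,p_2$ means $\torus_{\iota_i}$ splits over $\Q_{p_1}$ and $\Q_{p_2}$. After conjugating by elements of a fixed compact set, each $\mu_i$ is invariant under a fixed diagonal subgroup $A=A_{p_1}\times A_{p_2}$ of $\G(\Q_{p_1})\times\G(\Q_{p_2})$. Since $\G(\Q_{p_j})\simeq\SL_2\times\SL_2$ (or $\SL_2(\Q_{p_j^2})$ in the outer case), $A$ acts diagonally in both $\SL_2$-factors, so $\mu$ is $A$-invariant. By nondivergence (here $X$ is compact since $Q$ is definite) $\mu$ is a probability measure. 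I would decompose $\mu$ into $A$-ergodic components. The Einsiedler--Lindenstrauss joinings/nonmaximal-entropy classification \cite{EL-joiningsPIHES,EL-nonmaximal} then says the following. Any ergodic component whose entropy under a suitable element $a\in A$ exceeds half of the maximal entropy $h_{\max}$ (the entropy of $\mu_X$) in each $\SL_2$-factor must be invariant under a finite index subgroup of $\rho(\G(\A))$-type unipotent groups. Hence it is an algebraic measure: either $\mu_X$ or a measure on a translated closed orbit of a proper subgroup. Typical examples of the latter are the diagonal-type embeddings $\SL_2\hookrightarrow\SL_2\times\SL_2$ coming from quaternary forms with a rational structure.

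The remaining alternatives must be ruled out or shown to have small mass. Intermediate algebraic measures come from $\Q$-subgroups $\H$ containing a conjugate of $\torus_{\iota_i}$, i.e.\ stabilizers of vectors $v$ or forms. For the component supported on such orbits, I would bound the $\mu_i$-mass near periods of bounded complexity. The hypothesis $\min(q_i)\ge c_0\disc(q_i)^{\delta_0}$ should force any such $v$ to have $Q(v)$ large, as in \S\ref{sec:subpolynomial}, so these contributions vanish in the limit. Given this, it suffices to prove an averaged entropy bound: $h_\mu(a)>\tfrac12 h_{\max}+\eta$ in each factor, for some absolute $\eta>0$. Then by affinity of entropy the set of ergodic components of entropy $>\tfrac12h_{\max}$ has $\mu$-measure at least some $\delta(\eta)>0$, and these components are all $\mu_X$. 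This yields $\mu\ge\delta\mu_X$.

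The entropy bound is the main obstacle. I would follow Linnik's/ELMV's approach: bound $\mu_i(B_\epsilon\text{-Bowen balls})$ of depth $\tau\approx c\log\disc(q_i)$ by $\disc(q_i)^{-1/2-\eta'}$-type quantities. Returns of $Y_i$ to a Bowen ball correspond to rational elements $\gamma\in\overline{\G}(\Q)$ that are close to $\torus_{\iota_i}$ at $\infty$ and $p_1,p_2$ and have controlled denominators. Equivalently, they correspond to pairs of representations of $q_i$, or to integer points on a quartic hypersurface in $\mathbf{A}^5$. This is related to representing a ternary/quaternary form by $Q$, which is where Appendix~\ref{sec:Siegel mass} enters. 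The diagonal contribution gives entropy exactly $\tfrac12$, so beating $\tfrac12$ requires a power saving in counting the off-diagonal integer points in the lopsided box. For this I plan to use the determinant method of Appendix~\ref{sec:det method}, cutting the points by a few auxiliary hypersurfaces. The large minimum assumption ensures that the box is not degenerate. Obtaining a saving uniform enough to give an absolute $\eta$, and hence absolute $\delta$, is where I expect the real difficulty.
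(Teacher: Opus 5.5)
Your outline has the paper's architecture: a common rank-two group $A$ obtained via the local conjugations $k_{\iota,p}$, Einsiedler--Lindenstrauss rigidity, entropy strictly above $\tfrac12 h_{\mu_X}$, and a Linnik-type basic lemma proved by counting pairs of representations on the quartic hypersurface with the Siegel mass formula and the determinant method. Two parts, however, do not hold up as written.

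\textbf{The rigidity step.} You propose to show that proper algebraic components carry no mass because the large minimum forces $Q(v)$ to be large. That argument fails. The minimum only constrains rational $v\in\iota_i(\Q^2)$, i.e.\ intermediate groups $\Hbf_v$ that \emph{contain} $\torus_{\iota_i}$. An intermediate ergodic component of the limit is instead the orbit measure on $\G(\Z^S)\Lbf(\Q_S)g$ for some fixed $g\in\G(\Q_S)$ with $A\subset g^{-1}\Lbf(\Q_S)g$. The $\mu_i$ can a priori accumulate on such an orbit (say $\Lbf=\Hbf_v$ with $Q(v)$ bounded) even though no $\torus_{\iota_i}$ lies in $\Lbf$. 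Excluding this is a non-concentration statement of equidistribution strength, which neither your proposal nor the paper supplies; that is why the theorem only asserts a Haar component.

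The step is also unnecessary. By dimension count a proper $\Lbf$ is a $\Q$-form of $\SL_2$. Since $b_j\in g^{-1}\Lbf(\Q_{p_j})g$, the Lie algebra of $g^{-1}\Lbf g$ over $\Q_{p_j}$ is a sum of $\Ad(b_j)$-eigenspaces with eigenvalues $1,p_j^{2},p_j^{-2}$, each of multiplicity one rather than two. So every intermediate component has entropy \emph{exactly} $\tfrac12 h_{\mu_{X^S}}(a)$. If $m_1,m_2$ are the masses of Haar and intermediate components, Corollary~\ref{cor:entropy>1/2} gives $m_1+\tfrac12 m_2\ge\tfrac12(1+\tfrac14\delta_1)$, hence $m_1\ge\tfrac18\delta_1$. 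Intermediate and zero-entropy components can simply be left in place.

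There is also the case where $\disc(Q)$ is a square. Then $\G\simeq\SL_1(\Bbf)^2$ is not $\Q$-almost simple, so Theorem~\ref{thm:nonmaximaltorusrigidity} does not apply. The paper uses the joinings theorem instead, which needs Haar marginals on both factors. It gets these from Lemma~\ref{lem:applying AEW} and Duke/Linnik in each factor (Proposition~\ref{prop:joining}); your proposal has nothing playing this role.

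\textbf{The entropy bound.} This is the actual content of the theorem, and the proposal gives no argument for it; the target you state is also impossible. Write $D=\disc(q_i)$. The $p$-adic compact part of the conjugated torus commutes with $a$. Hence $x\Bow(n)$ contains the orbit of $x$ under a subgroup of index $O_p(1)$ in the compact part of the torus. By Lemma~\ref{lem:volume} this gives $\mu_i(x\Bow(n))\gg_p\vol(Y_{\iota_i})^{-1}=D^{-1/2+o(1)}$, so no bound of the form $D^{-1/2-\eta'}$ can hold. What is needed (Theorem~\ref{thm:selfcorrelation}) is $\mu_i\times\mu_i(\{(x,y):y\in x\Bow(n)\})\ll p^{-(4+\delta)n}$ at a depth with $p^{4n}\le D^{1/2-\eta}$, strictly below the volume scale. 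Via Lemma~\ref{lem:pair correlation to vectors} and Proposition~\ref{prop:from vectors to inner products}, this becomes $\#\mathcal{S}(n)\ll D^{1/2-\eta+\varepsilon}$ (Proposition~\ref{prop:maincountingestimate}). The trivial bound $D^2p^{-12n}$ of Lemma~\ref{lem:trivial bound} is $D^{1/2+3\eta}$ at that depth, and optimized over $n$ it only gives $p^{-(4-\varepsilon)n}$, i.e.\ entropy exactly $\tfrac12$.

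The saving comes from ideas absent from your sketch:
\begin{itemize}
\item For $A=\min(q)<D^{1/2-\delta_0'}$: the congruences combined with Lemma~\ref{lem:Pell} in $x_4$ (Lemma~\ref{lem:lowrangeA}).
\item For $A\ge D^{1/2-\delta_0'}$: split into $\ll np^{2n}$ classes $\mathbf{w}$ mod $p^{2n}$. In each class, linearize the quadratic congruence to a lattice $\Lambda_{\mathbf w}$ of covolume $p^{6n}$ and take a Minkowski-reduced basis for the box-adapted norm. Discard the few classes with $B_2\le D^{\delta}$ (Lemma~\ref{lem:smallB2}). Remove $\mathcal{S}_{\mathrm{deg}}$, so that by Lemma~\ref{eq:squarelemma} only $O(1)$ values of $z_2$ make $F_{\mathbf w}(\cdot,z_2,z_3,z_4)$ a square (Lemma~\ref{lem:squarefiniteness}). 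Then apply Theorem~\ref{thm:BP} to the curves $\disc(Q)x_0^2=F_{\mathbf w}(z_1,z_2,z_3,z_4)$ in $(x_0,z_1)$ to save $B_1^{1/2}$.
\end{itemize}

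Finally, passing from pairs of representations to $\mathcal{S}(n)$ is not a direct appeal to Appendix~\ref{sec:Siegel mass}. Its fibers are only bounded by $\mathsf{m}_1\mathsf{m}_2'D^{\varepsilon}$, and for non-fundamental $D$ the points with large $\mathsf{m}_2'$ need the separate case analysis in Proposition~\ref{prop:from vectors to inner products}.
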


The goal of this section is to reduce Theorem~\ref{thm:Haarcomponent} to a statement about entropy of the partial limit of the measures $\mu_i$.
Here, the crucial ingredient is the work of Einsiedler and Lindenstrauss \cite{EL-nonmaximal,EL-joiningsPIHES} on measure rigidity for higher rank diagonalizable actions on (arithmetic) homogeneous spaces.

\subsection{Local conjugacy of tori}\label{sec:localconjugacy}
Note that the homogeneous sets $Y_i$ have, as given, no common invariance.
The general discussion in this subsection will address this issue.

Let $p$ be an odd prime with $p \nmid \disc(Q)$.
For quadratic lattices over $\Z_p$, a procedure mimicking Gram-Schmidt exists --- see e.g.~\cite[Ch.~8]{cassels}.
For instance, if $v\in \Z_p^4$ satisfies $p \nmid Q(v)$ then the orthogonal projection $w \mapsto w - \frac{\langle w,v\rangle}{Q(v)}v$ is well-defined and hence $\Z_p^4 = \Z_p v \oplus \{w \in \Z_p^4: w \perp v\}$.
Using this, one can show that $Q$ is $\Z_p$-equivalent to $x_1^2+x_2^2+x_3^2+\alpha x_4^2$ for some $\alpha \in \Z_p^\times$.
Similarly, we prove the following integral version of the Witt extension theorem:

\begin{lemma}\label{lem:orbits over Zp}
Let $q$ be a binary quadratic form over $\Z_p$ with $p \nmid \disc(q)$.
Let $\iota_1, \iota_2: \Z_p^2 \to \Z_p^4$ be two (automatically primitive) representations of $q$ by $Q$. Then there exists $k\in \SO_Q(\Z_p)$ with $k\iota_1 = \iota_2$.
\end{lemma}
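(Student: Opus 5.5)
We would prove this as an integral Witt extension theorem over $\Z_p$, using that $p$ is odd, $p\nmid\disc(Q)$ and $p\nmid\disc(q)$. Set $M_j=\iota_j(\Z_p^2)\subset\Z_p^4$ for $j=1,2$. The first step is to split off $M_j$. Since $Q|_{M_j}\simeq q$ is unimodular over $\Z_p$ (here $p$ odd is used), the projection onto $M_j\otimes\Q_p$ along $M_j^\perp$ is defined over $\Z_p$, exactly as in the rank-one Gram--Schmidt remark before the lemma. Explicitly, for $w\in\Z_p^4$ the functional $\langle w,\cdot\rangle_Q|_{M_j}$ is $\Z_p$-valued, so unimodularity lets us represent it by a unique element of $M_j$. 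Hence
\begin{align*}
\Z_p^4=M_j\oplus N_j,\qquad N_j=\{w\in\Z_p^4: w\perp M_j\}.
\end{align*}
Taking discriminants, $\disc(Q)=\disc(q)\disc(Q|_{N_j})$ up to squares of units. So $Q|_{N_j}$ is a unimodular binary lattice with the same discriminant for $j=1,2$.

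The second step is to identify $N_1$ and $N_2$. Over $\Z_p$ with $p$ odd, unimodular lattices are classified by rank and discriminant modulo $(\Z_p^\times)^2$. This holds because they diagonalize as $\langle u_1,\dots,u_r\rangle$ with $u_i\in\Z_p^\times$, and every unit binary form over $\mathbb{F}_p$ represents all units, so we can reduce to $\langle 1,\dots,1,d\rangle$ (Cassels Ch.~8). Therefore there is an isometry $\phi:N_1\to N_2$. Define $k_0=(\iota_2\circ\iota_1^{-1})\oplus\phi$ on $\Z_p^4=M_1\oplus N_1$. This is an isometry of $(\Z_p^4,Q)$ onto itself, so $k_0\in \mathrm{O}_Q(\Z_p)$, and $k_0\iota_1=\iota_2$.

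The third step fixes the determinant, and this is the only real subtlety. If $\det k_0=-1$, we compose $\phi$ with a reflection of $N_2$ that preserves the lattice. Take a vector $n\in N_2$ with $Q(n)\in\Z_p^\times$; one exists since $N_2$ is unimodular and $p$ is odd. The reflection $s_n(w)=w-\frac{2\langle w,n\rangle_Q}{Q(n)}n$ then preserves $N_2$ and has determinant $-1$. Setting $k=(s_n\oplus\id_{M_2})k_0$, we get $k\in\SO_Q(\Z_p)$, and still $k\iota_1=\iota_2$ because $s_n$ fixes $M_2$ pointwise.

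I expect the main obstacle, mild as it is, to be the classification step (unimodular lattices determined by discriminant). Everything else is a direct consequence of unimodularity together with $2\in\Z_p^\times$. The parenthetical about primitivity also follows from the first step: $M_j$ is a direct summand of $\Z_p^4$, hence primitive.
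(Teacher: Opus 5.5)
Your proposal is correct and follows essentially the same route as the paper: split $\Z_p^4=\iota_j(\Z_p^2)\oplus\iota_j(\Z_p^2)^\perp$ by Gram--Schmidt, match the two unimodular binary complements via their discriminants, and extend $\iota_2\circ\iota_1^{-1}$ by that isometry. The only cosmetic differences are that the paper diagonalizes $q$ and the complements explicitly as $x^2+\beta_i y^2$ instead of citing the general classification, and that its orientation fix $u_2\mapsto -u_2$ (in an orthogonal basis $u_1,u_2$ of the complement) is exactly your reflection $s_{u_2}$.
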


\begin{proof}
Diagonalizing $q$ over $\Z_p$ we may assume that $q(x,y)= x^2+\beta y^2$ since $p \nmid \disc(q)$.
By the aforementioned version of Gram-Schmidt, we have for $i=1,2$
\begin{align}\label{eq:orthogonaldecomp}
\Z_p^4 = \iota_i(\Z_p^2) \oplus \{w\in \Z_p^4:w\perp\iota_i(\Z_p^2)\} 
=: \iota_i(\Z_p^2) \oplus \Lambda_i.
\end{align}
Explicitly, taking $v_1 = \iota_i(e_1)$ and $v_2 = \iota_i(e_2)$ one may consider the orthogonal projection $w \mapsto w - \frac{\langle w,v_1\rangle}{Q(v_1)}v_1 - \frac{\langle w,v_2\rangle}{Q(v_2)}v_2$.
Since $p \nmid \disc(Q)$, we have $p \nmid \disc(Q|_{\Lambda_i})$ by \eqref{eq:orthogonaldecomp}. Thus, $Q|_{\Lambda_i}$ is $\Z_p$-equivalent to $x^2+\beta_i y^2$ for some $\beta_i \in \Z_p^\times$ and, taking the discriminant in \eqref{eq:orthogonaldecomp}, $\beta\beta_2 \in \disc(Q)(\Z_p^\times)^2 = \beta\beta_1(\Z_p^\times)^2$. This shows that $\beta_1,\beta_2$ differ by a square and so $Q|_{\Lambda_1},Q|_{\Lambda_2}$ are $\Z_p$-equivalent.
This proves that there exists $k\in \mathrm{O}_Q(\Z_p)$ with $k\iota_1 = \iota_2$. If $u_1,u_2$ is an orthogonal basis of $\Lambda_2$, the map $u_1\mapsto u_1$, $u_2\mapsto-u_2$ preserves the quadratic form; post-composing $k$ with this orientation flip, if needed, yields the lemma.
\end{proof}

Recall that $Q$ is $\Z_p$-equivalent to $x_1^2+x_2^2+x_3^2+\alpha x_4^2$ for some $\alpha \in \Z_p^\times$.
We fix a binary quadratic form $q(x,y) = x^2+\beta y^2$ and a representation $\iota:\Z_p^2 \to \Z_p^4$ of $q$ by $Q$.
We assume that $-\disc(q)\disc(Q) = -\alpha \beta$ is a square in $\Z_p^\times$.

\subsubsection{Reducible case}\label{sec:diaggroup split}
Suppose that $\alpha$ is a square in $\Z_p^\times$.
Then $Q$ is in fact equivalent to the determinant form 
\begin{align*}
x_1x_4-x_2x_3 = \det\begin{pmatrix}
x_1 & x_2 \\ x_4 & x_4
\end{pmatrix}
\end{align*}
(e.g.~because the discriminants agree up to squares).
The action of $\SL_2 \times \SL_2$ on $2\times2$-matrices via $(g_1,g_2).A = g_1 A g_2^{-1}$ preserves the determinant and hence yields an isogeny $\SL_2\times \SL_2 \to \overline{\G}$ defined over $\Q_p$. The isogeny lifts to an isomorphism 
\begin{align}\label{eq:localisoSL2xSL2}
\SL_2\times \SL_2 \xrightarrow{\sim} \G.
\end{align}

Thus, consider $\iota_0:\Z_p^2 \to \Mat_2(\Z_p) \simeq \Z_p^4$ sending $(x,y)$ to $\mathrm{diag}(x,y)$. We let
\begin{align}\label{eq:stddiagonal for split}
A_p = \{g \in \G(\Q_p): g.\iota_0 = \iota_0\}
\end{align}
be the pointwise stabilizer group of $\iota_0$.
Under the isomorphism in \eqref{eq:localisoSL2xSL2} one may readily verify that
\begin{align*}
A_p \simeq 
\left\{ \left(\begin{pmatrix}
t & \\ & t^{-1}
\end{pmatrix},\begin{pmatrix}
t & \\ & t^{-1}
\end{pmatrix}\right): t\in \Q_p^\times\right\}.
\end{align*}
We fix
\begin{align}\label{eq:a_p-split}
\left(\begin{pmatrix}
p & \\ & p^{-1}
\end{pmatrix},\begin{pmatrix}
p & \\ & p^{-1}
\end{pmatrix}\right) \hat{=}\ a_p \in A_p.
\end{align}
The eigenvalues of $a_p$ acting on the Lie algebra of $\overline{\G}$ are $1,p^2,p^{-2}$, each with multiplicity $2$. 
 
Note that, since $\alpha$ is a square, so is $-\beta$ and, thus, $q$ is equivalent to the binary form $xy$.
By Lemma~\ref{lem:orbits over Zp} there exists $k_{\iota,p} \in \SO_Q(\Z_p)$ with $k_{\iota,p}.\iota_0 = \iota$ and, in particular,
\begin{align}\label{eq:conjugatingelement-splitcase}
k_{\iota,p}^{-1} \rho(\torus_\iota(\Q_p)) k_{\iota,p} = \rho(A_p).
\end{align}

\subsubsection{Irreducible case}\label{sec:diaggroup non-split}
Suppose now that $\alpha$ is not a square.
Then $\Q_p(\sqrt{\alpha})$ is the unramified quadratic extension of $\Q_p$ with ring of integers $\Z_p[\sqrt{\alpha}]$.
Consider now the space of Hermitian matrices over $\Q_p(\sqrt{\alpha})$
\begin{align*}
V= 
\left\{ 
\begin{pmatrix}
x_1 & x_2+\sqrt{\alpha}x_3 \\ x_2-\sqrt{\alpha}x_3 & x_4
\end{pmatrix}
: x_1,x_2,x_3,x_4 \in \Q_p\right\}
\end{align*}
and the $\Z_p$-lattice $\Lambda =V \cap \Mat_2(\Z_p[\alpha])$. The determinant on $\Lambda$ is, in the above coordinates, given by $x_1x_4 -x_2^2 + \alpha x_3^2$.
By determinant considerations, $Q$ is $\Z_p$-equivalent to this form and we may identify the quadratic lattices $(\Z_p^4,Q)$ and $(\Lambda,\det)$.

Note that $\SL_2(\Q_p(\sqrt{\alpha}))$ acts on $V$ via $g.v = gv\Bar{g}^t$ where $\Bar{g}$ denotes the Galois conjugate; the action preserves the determinant.
We obtain a homomorphism $\SL_2(\Q_p(\sqrt{\alpha})) \to \SO_Q(\Q_p)$.
More generally, we have an isogeny $ \Res_{\Q_p(\sqrt{\alpha})/\Q_p}(\SL_2) \to \SO_Q=\overline{\G} $ defined over $\Q_p$ and, by lifting, an isomorphism
\begin{align}\label{eq:localisoResSL2}
\Res_{\Q_p(\sqrt{\alpha})/\Q_p}(\SL_2) \to \G.
\end{align}

Let $\iota_0:\Z_p^2 \to \Lambda$ be given by  
\begin{align*}
\iota_0(x,y)=
\begin{pmatrix}
 & x+\sqrt{\alpha}y \\ x-\sqrt{\alpha}y & 
\end{pmatrix}.
\end{align*}
We let
\begin{align}\label{eq:stddiagonal for nonsplit}
A_p = \{g \in \G(\Q_p): g.\iota_0 = \iota_0\}
\end{align}
be the pointwise stabilizer group of $\iota_0$.
Under the isomorphism in \eqref{eq:localisoResSL2} one may readily verify that
\begin{align*}
A_p \simeq 
\left\{ \begin{pmatrix}
t & \\ & t^{-1}
\end{pmatrix}: t\in \Q_p^\times\right\}.
\end{align*}
We fix
\begin{align}\label{eq:a_p-nonsplit}
\begin{pmatrix}
p & \\ & p^{-1}
\end{pmatrix}\hat{=}\ a_p  \in A_p.
\end{align}
The eigenvalues of $a_p$ acting on the Lie algebra of $\overline{\G}$ are $1,p^2,p^{-2}$, each with multiplicity $2$. 

Since $\alpha$ and $-\beta$ differ by a square by assumption, $q$ is equivalent to $x^2-\alpha y^2$ or to $-x^2+\alpha y^2$, which is the quadratic form on $\iota_0(\Z_p^2)$.
By Lemma~\ref{lem:orbits over Zp} there exists $k_{\iota,p} \in \SO_Q(\Z_p)$ with $k_{\iota,p}.\iota_0 = \iota$ and, in particular,
\begin{align}\label{eq:conjugatingelement-nonsplitcase}
k_{\iota,p}^{-1} \rho(\torus_\iota(\Q_p)) k_{\iota,p} = \rho(A_p).
\end{align}

\subsection{Lower bounds on the entropy of limit measures}

We now phrase a theorem for the self-correlation of the homogeneous toral sets $Y_\iota$ or rather shifted versions thereof (to ensure common invariance, we use the discussions of \S\ref{sec:localconjugacy}).
We fix an odd prime $p$ and write $a=\rho(a_p)$ for simplicity where $a_p$ was defined in \S\ref{sec:localconjugacy}.
Define
\begin{align*}
\Bow_p(n) = \bigcap_{-n\leq i \leq n} a^i\{k \in\overline{\G}(\Z_p): k \equiv \id \mod p\}a^{-i}
\end{align*}
and the two-sided Bowen ball
\begin{align*}
\Bow(n) = \overline{\G}(\R) \times \prod_{q \neq p}\overline{\G}(\Z_q) \times \Bow_p(n).
\end{align*}
Here, $\overline{\G}(\Z_q) = \{g \in \overline{\G}(\Q_p):g\Z_q^4 =\Z_q^4\}$.
For any $x \in X$, the points in $x\Bow(n)$ can be thought of as the points having comparable trajectory to $x$ for times between $-n$ and $n$.
Indeed, if $y \in x\Bow(n)$ then $ya^i \in xa^i \overline{\G}(\R\times \hat{\Z})$ or, more informally, $ya^i$ is at distance $\ll 1$ from $xa^i$ for all $-n \leq i\leq n$.

\begin{theorem}[Linnik-type basic lemma]\label{thm:selfcorrelation}
For any $\delta_0>0$ there exists $\delta_1 >0$ (depending only on $\delta_0$) with the following property.

Let $p$ be an odd prime and $c_0>0$.
Let $q(x,y)$ be a primitive integral binary quadratic form with the following properties:
\begin{itemize}
    \item $-\disc(q)\disc(Q)$ is a non-zero square modulo $p$.
    \item $\min(q) \geq c_0|\disc(q)|^{\delta_0}$.
\end{itemize}
Let $\iota$ be a primitive representation of $q$ by $Q$ and let $Y_\iota' = Y_{\iota}k_{\iota,p}$ where $k_{\iota,p}$ is as in \eqref{eq:conjugatingelement-splitcase} resp.~\eqref{eq:conjugatingelement-nonsplitcase}.
Then $Y_\iota'$ is $a$-invariant and the uniform measure $\mu_{\iota}'$ on $Y_\iota'$ satisfies
\begin{align*}
\mu_\iota' \times \mu_\iota'(\{(x,y): y \in x \Bow(n)\})
\ll_{c_0,\delta_0,\delta} p^{-(4+\delta)n}
\end{align*}
for some $n\in \N$ with $|\disc(q)|^{\frac{1}{10}} \leq p^n \leq |\disc(q)|^{\frac{1}{8}}$ and any $\delta<\delta_1$. 
\end{theorem}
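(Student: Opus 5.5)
We plan to follow the outline in \S\ref{sec:structure}: first check $a$-invariance, then unfold the self-correlation into a count of pairs of representations, and finally bound that count. For invariance, \eqref{eq:conjugatingelement-splitcase} resp.~\eqref{eq:conjugatingelement-nonsplitcase} give $k_{\iota,p}^{-1}\rho(\torus_\iota(\Q_p))k_{\iota,p}=\rho(A_p)\ni a$. Hence $Y_\iota k_{\iota,p}a = Y_\iota \rho(t) k_{\iota,p}$ for some $t\in\torus_\iota(\Q_p)$, and this equals $Y_\iota'$, because $Y_\iota$ is invariant under the (commutative, hence normalized) torus.

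\textbf{Unfolding.} Write $\mu'_\iota\times\mu'_\iota(\{y\in x\Bow(n)\})$ as an integral over $x\in Y'_\iota$ of $\mu'_\iota(x\Bow(n))$. Lifting to $\overline{\G}(\A)$, a pair $x=\overline{\G}(\Q)\rho(t_1)k$, $y=\overline{\G}(\Q)\rho(t_2)k$ with $y\in x\Bow(n)$ corresponds to rational elements $\gamma\in\overline{\G}(\Q)$ with $\rho(t_1)^{-1}\gamma\rho(t_2)\in k\Bow(n)k^{-1}$. Conjugating by $k_{\iota,p}$ turns the $p$-adic Bowen condition into the following: $\gamma$ lies in $\overline{\G}(\Z[1/p])$-type integrality away from $p$ and is $p$-adically very close to the torus $k\rho(A_p)k^{-1}$ in the directions of the $p^{\pm2}$-eigenspaces, to order $p^{-2n}$. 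Since $\gamma$ acts on $\Z^4$ (away from $p$), the natural invariant to record is the pair of vectors $\gamma\iota(e_1),\gamma\iota(e_2)$ compared with $\iota$, or equivalently the Gram data $\langle\gamma\iota(e_j),\iota(e_k)\rangle_Q$. Modulo the torus (which fixes $\iota$), the relevant rational $\gamma$ are thus parametrized by the $2\times 2$ matrix $M=(\langle\gamma\iota(e_j),\iota(e_k)\rangle_Q)_{j,k}$ together with the $\torus_\iota(\Q)$-double coset data. The Bowen condition forces $M$ to be congruent modulo $p^{2n}$ (up to the diagonal part) to the matrix of $q$. The archimedean compactness bounds the entries of $M$ by $\ll \disc(q)^{1/2}$ relative to $q$.

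\textbf{Counting.} The measure is then at most
\begin{align*}
\frac{1}{\vol(Y_\iota)}\sum_{M} r(M),
\end{align*}
where $r(M)$ counts the representations of the ternary/quaternary form built from $\iota$ and $\gamma\iota$ (cf.~Appendix~\ref{sec:Siegel mass}), and $\vol(Y_\iota)\asymp \disc(q)^{1/2\pm\varepsilon}$. The Siegel mass formula bounds $r(M)$ by $\disc^{\varepsilon}$ times the discriminant of the resulting form. The admissible $M$ are integer points on the variety cut out by the condition that the $4\times4$ Gram matrix of $\iota(e_1),\iota(e_2),\gamma\iota(e_1),\gamma\iota(e_2)$ has determinant in $\disc(Q)\cdot(\text{square})$. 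Combined with $M^tq^{-1}M$-type constraints from $Q(\gamma\iota(e_j))=q$, this gives a degree-four hypersurface in $\mathbf{A}^5$. The points lie in a box of size $\asymp\disc(q)^{1/2}$ and in a congruence class mod $p^{2n}$ with $p^n\in[\disc(q)^{1/10},\disc(q)^{1/8}]$. The trivial count is $\disc(q)^{\dots}p^{-8n}$-like. The needed saving is $p^{-(4+\delta)n}$, i.e.~a power gain beyond the trivial volume heuristic. The contribution of degenerate $M$ (where $\gamma\iota$ shares a vector with $\iota$, i.e.~$\gamma$ close to the torus) is controlled using $\min(q)\ge c_0\disc(q)^{\delta_0}$: short vectors would otherwise produce many near-torus elements. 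This is where $\delta_1$ depends on $\delta_0$. We choose $n$ in the given range by pigeonholing over dyadic scales.

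\textbf{Main obstacle.} The hard step is the final point count on the quartic hypersurface with both archimedean box and $p$-adic congruence constraints. Here we would apply the determinant method of Bombieri--Pila and Heath-Brown in the rectangle form of Appendix~\ref{sec:det method}. We would fiber over two of the coordinates and treat the resulting plane curves uniformly in their height. The congruence mod $p^{2n}$ would be used to rescale the lattice of admissible points to a non-square rectangle. Any fibers where the curve contains lines or is reducible would be handled separately via the $\min(q)$ bound. We expect the uniformity in the curve's degree-four coefficients, which have size up to $\disc(q)^{O(1)}$, to be the delicate part.
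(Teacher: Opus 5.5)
Your architecture (unfolding to pairs of primitive representations, the Siegel mass formula, the quartic hypersurface in $\mathbf{A}^5$, the determinant method in rectangles) matches the paper's. However, the $p$-adic information you extract is too weak, and with it the plan cannot close.

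You claim the Bowen condition makes the Gram data congruent to (a rotation of) the matrix of $q$ only modulo $p^{2n}$. Write $D=\disc(q)$ and $x_1,\dots,x_4$ for the mixed inner products $\langle\iota_1(e_j),\iota_2(e_k)\rangle_Q$. The box has volume $\asymp A\cdot C\cdot AC\asymp D^2$, and the admissible residue classes mod $p^{2n}$ form a one-parameter family of $\ll np^{2n}$ classes. So for, say, $A\asymp C\asymp D^{1/2}$ you count $\approx D^2p^{-6n}$ points. The correct normalization is $\vol(Y_\iota)^{-2}\approx D^{-1}$ per pair (not $\vol(Y_\iota)^{-1}$). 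Hence the self-correlation bound you obtain is $\approx Dp^{-6n}\geq D^{1/4}$ for $p^n\le D^{1/8}$, which is not even nontrivial. Reaching $p^{-(4+\delta)n}$ would require saving a factor $\geq D^{3/4}$, whereas fiberwise Bombieri--Pila saves at most the square root of a side length, i.e.\ at most $D^{1/2}$.

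The missing idea is that these invariants are \emph{second order} in the Bowen displacement. After conjugating by $k_{\iota,p}$, the $p^{\pm 2}$-eigenspaces of $\Ad(a)$ consist exactly of the maps exchanging $\iota(\Q_p^2)$ and its orthogonal complement. So the first-order change of $\langle\gamma\iota(e_j),\iota(e_k)\rangle_Q$, beyond an in-plane rotation, vanishes. The paper makes this rigorous in two steps:
\begin{itemize}
\item $x_1x_4-x_2x_3\equiv D \bmod p^{4n}$ follows from $Q(w_1\wedge w_2-w_1'\wedge w_2')=O(p^{4n})$.
\item Upgrading the three linear congruences in \eqref{eq:originalcongeqforx} from $p^{2n}$ to $p^{4n}$ is a genuinely delicate lemma. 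It uses quadratic relations from pure $3$-wedges, shears $w_1\mapsto w_1+tw_2$, and a case split on whether $p\mid x_2-x_3$.
\end{itemize}
Only then does the trivial count become $D^2p^{-12n}$, giving self-correlation $\ll D^{1+\varepsilon}p^{-12n}+D^{-1/2+\varepsilon}$. At $p^{4n}\approx D^{1/2}$ this is exactly the half-entropy threshold $p^{-4n}$, so a small power saving suffices.

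The paper obtains that saving as follows. It reduces modulo $p^{2n}$ and writes $x=\mathbf{w}+p^{2n}y$, so the quadratic congruence mod $p^{4n}$ becomes linear in $y$, cutting out a lattice of covolume $p^{6n}$. It Minkowski-reduces this lattice with respect to a distorted norm. It then fibers over \emph{three} lattice coordinates to apply Bombieri--Pila to $\disc(Q)x_0^2=F_{\mathbf{w}}(z_1,\tau)$, whose $x_0$-degree $2$ gives the exponent $\tfrac12$. Fibering over two coordinates, as you propose, leaves surfaces rather than curves.

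Two further points. First, your Siegel-mass step is misstated. Bounding $r(M)$ by $\disc^{\varepsilon}$ times the discriminant of the Gram form would be fatal, since that discriminant is $\disc(Q)x_0^2$, as large as $D^2$. What holds is $r\ll \mathsf{m}_1\mathsf{m}_2'\disc^{\varepsilon}$ (Theorem~\ref{thm:app-equalrank}, Corollary~\ref{cor:app-codim1}). This is $D^{o(1)}$ when $-4D$ is fundamental, but otherwise $\mathsf{m}_2'$ can be as large as $D^{1/2}$. The paper therefore needs a separate argument: a large $\mathsf{m}_2(Q_x)$ forces $(x_1,x_2,x_3,x_4)\equiv\pm(A,\tfrac{B}{2},\tfrac{B}{2},C)$ modulo a large divisor of $4D$. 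Such points are then counted separately with Lemma~\ref{lem:Pell}, including the rank-three case $x_0=0$.

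Second, $\min(q)$ does enter where you guess. It controls pairs sharing the short vector $\iota(e_1)$, which is the term $C/p^{4n}\asymp D/(Ap^{4n})$ in the small-$A$ range. But fibers in which $F_{\mathbf{w}}(\cdot,\tau)$ is a square in $z_1$ are not handled via $\min(q)$. They are shown to be $O(1)$ per $(\tau_3,\tau_4)$ by an algebraic argument, which requires first discarding the degenerate set $Cx_1-Ax_4=Bx_1-A(x_2+x_3)=0$. In addition, lattices with small second successive minimum must be removed separately.
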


The main goal for the rest of the article will be to prove Theorem~\ref{thm:selfcorrelation}.
In the remainder of this section, we will see how Theorem~\ref{thm:selfcorrelation} implies Theorem~\ref{thm:Haarcomponent} (and, thus, also the main theorems).
It is a standard fact that self-correlation bounds as in Theorem~\ref{thm:selfcorrelation} imply lower bounds on entropy --- see for example \cite[\S4.2]{ELMV-Ens} or \cite[Prop.~3.2]{ELMV-DukeJ}.
The following corollary proves what we need.

\begin{cor}\label{cor:entropy>1/2}
Let $(q_i),(\iota_i)$ be a sequence satisfying the assumptions of Theorem~\ref{thm:selfcorrelation} and with $\disc(q_i) \to \infty$.
Then any weak${}^\ast$-limit $\mu'$ of the measures $\mu_{\iota_i}'$ satisfies
\begin{align*}
h_{\mu'}(a) \geq (2+\tfrac{1}{2}\delta_1)\log(p) = (1+\tfrac{1}{4}\delta_1) \tfrac{1}{2} h_{\mu_X}(a).
\end{align*}
Here, $h_{\nu}(a)$ denotes the measure-theoretic (Kolmogorov-Sinai) entropy of an $a$-invariant probability measure $\nu$.
\end{cor}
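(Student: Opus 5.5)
The plan is to follow the standard route from self-correlation bounds to entropy lower bounds, as in \cite[\S4.2]{ELMV-Ens} or \cite[Prop.~3.2]{ELMV-DukeJ}, using a fixed finite partition and passing to the limit via a uniform (in $i$) estimate for entropy over a time window $[-n_i,n_i]$. The correct normalization matters here. With the eigenvalues $p^{\pm2}$ of multiplicity $2$, the Haar measure of $\Bow_p(n)$ is $\asymp p^{-8n}$, which is the $p^{-4(2n)}$ decay of a two-sided Bowen ball of length $2n$. Accordingly, $h_{\mu_X}(a)=4\log p$, and the bound $p^{-(4+\delta)n}$ in Theorem~\ref{thm:selfcorrelation} sits just beyond half the maximal decay.

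\textbf{Step 1 (entropy of the finite-level measures).} Fix a finite measurable partition $\mathcal{P}$ of $X$ whose atoms have small diameter in a fixed compact neighbourhood, with $\mu'(\partial P)=0$ for all atoms. Take one large atom containing the (controlled) cusp-free part; $X$ is compact here since $Q$ is definite and $\G$ is anisotropic. Then each atom of the refinement $\mathcal{P}_{-n}^{n}=\bigvee_{|j|\le n}a^{-j}\mathcal{P}$ is contained in a set of the form $x\Bow(n)$ up to a bounded number of translates by a fixed compact set. Hence
\begin{align*}
\sum_{P\in\mathcal{P}_{-n}^n}\mu_i'(P)^2\ll \mu_i'\times\mu_i'(\{(x,y):y\in x\Bow(n)\})\ll p^{-(4+\delta)n},
\end{align*}
with $n=n_i$ as in Theorem~\ref{thm:selfcorrelation}. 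Concavity of $\log$ (Jensen) then gives $H_{\mu_i'}(\mathcal{P}_{-n_i}^{n_i})\ge -\log\sum\mu_i'(P)^2\ge (4+\delta)n_i\log p-O(1)$.

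\textbf{Step 2 (passing to the limit).} This is the main obstacle, since $n_i\to\infty$ and entropy of a fixed-length refinement is what is continuous under weak$^*$ limits. I would use the standard subadditivity trick. For fixed $m$, write $2n_i+1=km+r$ and use subadditivity together with $a$-invariance of $\mu_i'$:
\begin{align*}
H_{\mu_i'}(\mathcal{P}_{-n_i}^{n_i})\le \tfrac{2n_i+1}{m}H_{\mu_i'}(\mathcal{P}_0^{m-1})+O(m\log|\mathcal{P}|).
\end{align*}
Dividing by $2n_i+1$, one obtains $\frac1m H_{\mu_i'}(\mathcal{P}_0^{m-1})\ge \tfrac12(4+\delta)\log p-o(1)$. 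Letting $i\to\infty$, with $\mu'$-null boundaries so that $H_{\mu_i'}(\mathcal{P}_0^{m-1})\to H_{\mu'}(\mathcal{P}_0^{m-1})$, and then $m\to\infty$, gives $h_{\mu'}(a,\mathcal{P})\ge(2+\tfrac\delta2)\log p$.

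\textbf{Step 3 (conclusion).} Letting $\delta\uparrow\delta_1$ yields the claimed inequality $h_{\mu'}(a)\ge(2+\tfrac12\delta_1)\log p$. The remaining checks are technical. First, one needs to justify the containment of refinement atoms in Bowen balls. This uses the $p$-adic description of $\Bow_p(n)$ together with the real and away-from-$p$ components being fixed compact groups; it is best handled by working on the quotient by $\overline{\G}(\R)\prod_{q\ne p}\overline{\G}(\Z_q)$, or by choosing $\mathcal{P}$ invariant under that compact group. Second, one needs to confirm that $h_{\mu_X}(a)=4\log p$ from the Lie algebra eigenvalues, which gives the stated equality with $(1+\tfrac14\delta_1)\tfrac12h_{\mu_X}(a)$.
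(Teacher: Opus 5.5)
Your proposal is correct and follows the paper's proof essentially step by step: Jensen's inequality turns the self-correlation bound into $H_{\mu_i'}(\mathcal{P}_{-n_i}^{n_i})\geq(4+\delta)n_i\log p-O(1)$, subadditivity together with $a$-invariance transfers this to the fixed-length refinements $\mathcal{P}_0^{m-1}$, you pass to the weak$^\ast$ limit, and then you let $\delta\to\delta_1$. The only difference is cosmetic and concerns the partition. The paper takes $\mathcal{P}$ to be the orbits of a small open subgroup $\Omega=\prod_v\Omega_v\subset\rho(\G(\A))$ with $\Omega_\infty=\rho(\G(\R))$, which is essentially your Step~3 fix. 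With that choice the atoms are clopen and $[x]_{\mathcal{P}_{-n}^n}\subset x\Bow(n)$ holds literally, so your ``bounded number of translates'' hedge, the cusp remark, and the null-boundary condition are all unnecessary.
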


\begin{proof}[Proof of Corollary~\ref{cor:entropy>1/2}]
Let $\mathcal{P}$ be a finite measurable partition of $X$ such that
\begin{align*}
[x]_{\mathcal{P}_{-n}^n} \subset x \mathrm{Bow}(n)
\end{align*}
where $\mathcal{P}_{-n}^n$ denotes the refinement of $\mathcal{P}$ for times between $-n$ and $n$. That is, $x,y$ belong to the same partition element of $\mathcal{P}_{-n}^n$ if $xa^i,ya^i$ belong to the same partition element of $\mathcal{P}$ for all $-n\leq i \leq n$.
We note that constructing such a partition is simple in our context; one may take $\mathcal{P}$ to consist of orbits of a sufficiently small open subgroup $\Omega = \prod_v \Omega_v \subset \rho(\G(\A))$ where $\Omega_\infty=\rho(\G(\R))$.
In particular, we may assume that the partition elements of $\mathcal{P}_{-n}^n$ are open and closed.

To simplify notation, write $\mu_i' = \mu_{\iota_i}'$ and assume without loss of generality that $\mu_i' \to \mu'$.
Let $n_i$ be as in Theorem~\ref{thm:selfcorrelation} applied to $\iota_i$ so that for $\delta<\delta_1$
\begin{align}\label{eq:cor-boundcorrelation}
\mu_i' \times \mu_i'(\{(x,y): y \in [x]_{\mathcal{P}_{-n_i}^{n_i}}) \leq C p^{-(4+\delta)n_i}
\end{align}
for some $C=C(\delta)>0$ and $n_i \to \infty$.
By convexity of the logarithm, 
\begin{align*}
H_{\mu_i'}(\mathcal{P}_{-n_i}^{n_i})
= - \sum_{P\in \mathcal{P}_{-n_i}^{n_i}} \mu_i'(P) \log(\mu_i'(P))
\geq -\log \Big( \sum_{P\in \mathcal{P}_{-n_i}^{n_i}} \mu_i'(P)^2\Big).
\end{align*}
Moreover, for any partition element $P\in \mathcal{P}_{-n_i}^{n_i}$ we have that $x,y \in P$ is equivalent to $x \in P$ and $y \in [x]_{\mathcal{P}_{-n_i}^{n_i}}$ and so
\begin{align*}
\sum_{P\in \mathcal{P}_{-n_i}^{n_i}} \mu_i'(P)^2
= \sum_{P\in \mathcal{P}_{-n_i}^{n_i}} \mu_i' \times \mu_i'(P \times P)
= \mu_i' \times \mu_i'\big(\{(x,y): y \in [x]_{\mathcal{P}_{-n_i}^{n_i}}\big).
\end{align*}
Therefore, 
\begin{align*}
H_{\mu_i'}(\mathcal{P}_{-n_i}^{n_i})
\geq -\log\Big(\mu_i' \times \mu_i'\big(\{(x,y): y \in [x]_{\mathcal{P}_{-n_i}^{n_i}}\big)\Big)
\end{align*}
and, since $-\log$ is monotonely decreasing, \eqref{eq:cor-boundcorrelation} implies
\begin{align}\label{eq:cor-partitioning into smaller int}
H_{\mu_i'}(\mathcal{P}_{-n_i}^{n_i})
\geq (4+\delta)\log(p) n_i - \log(C).
\end{align}

Let $n \geq 1$. If $n_i \geq n$, we may partition the interval $[-n_i,n_i]$ into intervals of length $n$ together with a single interval of length at most $n$.
As $\mu_i'$ is $a$-invariant, we have $H_{\mu_i'}(\mathcal{P}_{\ell}^{\ell+n-1}) = H_{\mu_i'}(\mathcal{P}_{0}^{n-1})$ for any $\ell \in \Z$. 
This together with subadditivity implies
\begin{align*}
H_{\mu_i'}(\mathcal{P}_{-n_i}^{n_i}) \leq \lfloor \tfrac{2n_i+1}{n}\rfloor H_{\mu_i'}(\mathcal{P}_{0}^{n-1}) + n H_{\mu_i'}(\mathcal{P}).
\end{align*}
Inserting into \eqref{eq:cor-partitioning into smaller int} we obtain
\begin{align*}
H_{\mu_i'}(\mathcal{P}_{0}^{n-1}) \geq 
(4+\delta)\log(p) \tfrac{n_i}{2n_i+1} n - \tfrac{1}{2n_i+1} \big(n\log(C)+n^2H_{\mu_i'}(\mathcal{P})\big).
\end{align*}
Note that $H_{\mu_i'}(\mathcal{P})$ is bounded from above by $\log$ of the number of partition elements in $\mathcal{P}$. Using additionally that the partition elements of $\mathcal{P}_{0}^{n-1}$ are open and closed, we may take $i \to \infty$ to obtain
\begin{align*}
H_{\mu'}(\mathcal{P}_{0}^{n-1}) \geq 
\tfrac{1}{2}(4+\delta)\log(p)n.
\end{align*}
Finally, letting $\delta\to \delta_1$ this proves the corollary.
\end{proof}

\subsection{Setup for the proof of Theorem~\ref{thm:Haarcomponent}}\label{sec:setup measureclassification}
In the rest of this section, we shall deduce Theorem \ref{thm:Haarcomponent} from the entropy lower bound in Corollary \ref{cor:entropy>1/2}, in combination with measure classification theorems of Einsiedler and Lindenstrauss \cite{EL-joiningsPIHES,EL-nonmaximal}.
We use the notation and the assumptions in Theorem~\ref{thm:Haarcomponent}.
For each $\iota_i$, let $k_{\iota_i,p_1}\in \overline{\G}(\Z_{p_1})$ and $k_{\iota_i,p_2}\in \overline{\G}(\Z_{p_2})$ be elements as in \eqref{eq:conjugatingelement-splitcase} resp.~\eqref{eq:conjugatingelement-nonsplitcase}.
After switching to a subsequence, we may further assume that there exist $k_1 \in \overline{\G}(\Z_{p_1})$ and $k_2 \in \overline{\G}(\Z_{p_2})$ such that $\rho(\G(\Q_{p_j}))k_{\iota_i,p_j} = \rho(\G(\Q_{p_j}))k_{j}$ for $j=1,2$.
Here, we merely used that $\rho(\G(\Q_{p_j}))$ has finite index in $\overline{\G}(\Q_{p_j})$.
Let $g_i \in \G(\A)$ with $\rho(g_i) = k_{\iota_i,p_1}k_{\iota_i,p_2}(k_1k_2)^{-1}$.

Define
\begin{align*}
Y_i'    &= Y_{\iota_i} \rho(g_i) \subset X = \overline{\G}(\Q)\rho(\G(\A)),\\
\tilde{Y}_i &= \G(\Q) \torus_{\iota_i}(\A) g_i \subset \G(\Q)\backslash \G(\A) = \tilde{X}.
\end{align*}
By definition, $\rho(\tilde{Y}_i) = Y_i'$ where we write by abuse of notation $\rho:\tilde{X}\to X$ for the map induced by the isogeny $\rho:\G \to \overline{\G}$.
We let $\mu_i'$ resp.~$\tilde{\mu}_i$ be the uniform probability measure on $Y_i'$ resp.~$\tilde{Y}_i$ and note that $\rho_\ast \tilde{\mu}_i = \mu_i'$.
In view of the statement of Theorem~\ref{thm:Haarcomponent}, we may assume that $\tilde{\mu}_i \to \tilde{\mu}$ and
$\mu_i' \to \mu' = \rho_\ast \tilde{\mu}$ in the weak${}^\ast$-topology and show that $\mu' \geq \delta \mu_{X}$ for some $\delta>0$.

By definition of the bounded shifts $g_i$, the measures $\mu_i'$ are invariant under $k_1\rho(A_{p_1})k_1^{-1}$ and $k_2\rho(A_{p_2})k_2^{-1}$ and therefore also under $k_1 \rho(a_{p_1})k_1^{-1},k_2\rho(a_{p_2})k_2^{-1}$ where $a_{p_1},a_{p_2}$ are as in \eqref{eq:a_p-split} resp.~\eqref{eq:a_p-nonsplit}.
For $j\in {1,2}$, since $\rho(\G(\Q_{p_j})) < \overline{\G}(\Q_{p_j})$ is a normal subgroup, $k_j \rho(a_{p_j})k_j^{-1} = \rho(b_j)$ for some $b_j \in \G(\Q_{p_j})$.
Necessarily, the measures $\tilde{\mu}_i$ are invariant under $b_1,b_2$.
For simplicity, we let $A$ be the rank two free abelian group generated by $b_1,b_2$.

\subsubsection{Entropy bounds}
Observe that the sets $Y_i'$ are slightly modified from Theorem~\ref{thm:selfcorrelation} for $p=p_1$ as they are shifted in the two places $p_1,p_2$. Theorem~\ref{thm:selfcorrelation} in the form of Corollary~\ref{cor:entropy>1/2} applies regardless and we conclude that
\begin{align}\label{eq:ap1 entropy}
h_{\mu'}(\rho(b_1)) \geq (1+\tfrac{1}{4}\delta_1) \tfrac{1}{2} h_{\mu_X}(\rho(b_1)).
\end{align}
The same statement is true for $b_2$ and hence for any element in $A$.

Note that $h_{\mu_X}(\rho(a)) = h_{\mu_{\tilde{X}}}(a)$ for any $a\in A$ where $\mu_{\tilde{X}}$ is the uniform measure on $\tilde{X}$.
By the Abramov-Rokhlin formula, we thus also have for any $a\in A$
\begin{align}\label{eq:ap1 entropy sc}
h_{\tilde{\mu}}(a) \geq (1+\tfrac{1}{4}\delta_1) \tfrac{1}{2} h_{\mu_{\tilde{X}}}(a).
\end{align}
We will use this to show $\tilde{\mu} \geq \tfrac{1}{8}\delta_1 \mu_{\tilde{X}}$, which implies the theorem with $\delta= \tfrac{1}{8}\delta_1$.
We remark that the above passage to the simply connected cover will allow us to directly invoke the rigidity results of Einsiedler and Lindenstrauss \cite{EL-nonmaximal,EL-joiningsPIHES}.

\subsubsection{Passage to an $S$-arithmetic quotient}
In what follows, we consider projections of the given adelic measures onto $S$-arithmetic quotients. 
Let $S$ be a set of places of $\Q$ containing $\{\infty,p_1,p_2\}$ and let (omitting $\tilde{\cdot}$ for simplicity)
\begin{align*}
X^S:=\G(\mathbb{Z}^S)\backslash \G(\mathbb{{Q}}_S).
\end{align*}
Write $\pi^S$ for the projection from $\tilde{X}$ onto $X^S$ (which exists by strong approximation with respect to $S$, see \cite[Ch.~7]{platonov} or \cite{Rapinchuk-survey}). 
Then the projection $\pi^S_*\tilde{\mu}$ is an $A$-invariant probability measure on $X^S$.
By the Abramov-Rokhlin formula and \eqref{eq:ap1 entropy} we have for any $a \in A$
\begin{align}\label{eq:entropy in S factor}
h_{\pi^S_*\tilde{\mu}}(a) \geq (1+\tfrac{1}{4}\delta_1) \tfrac{1}{2} h_{\mu_{X^S}}(a).
\end{align}

\subsubsection{Ergodic decomposition}
For the $A$-invariant probability measure $\pi^S_*\tilde{\mu}$, we have the following ergodic decomposition:
\begin{equation}\label{eq:ergodicdecomposition}
    \pi^S_*\tilde{\mu}=\int_{X^S} \nu(\xi)\de\pi^S_*\tilde{\mu}(\xi),
\end{equation}
where $\nu(\xi)$ is an ergodic $A$-invariant probability measure for $\pi^S_*\tilde{\mu}$-a.e.~$\xi$. It is a standard fact that the entropy of an invariant measure is the average of the entropies of its ergodic components: for any $a\in A$,
\begin{equation}\label{eq:entropydecomposition}
    h_{\pi^S_*\tilde{\mu}}(a)=\int h_{\nu(\xi)}(a)\de\pi^S_*\tilde{\mu}(\xi).
\end{equation}

\subsection{Irreducible case}
We first consider the case where $\G$ is $\mathbb{Q}$-almost simple. 
Recall from \S\ref{sec:notation qf} that, equivalently, $\disc(Q)$ is not a square and hence this case should be considered `typical'.
We apply the following measure classification theorem of Einsiedler-Lindenstrauss for non-maximal torus actions on irreducible quotients of $\SL_2$'s.

\begin{theorem}[Special case of {\cite[Thm.~1.3]{EL-nonmaximal}}]\label{thm:nonmaximaltorusrigidity}
    Let $\Mbf$ be an algebraic group over $\Q$ that is $\Q$-almost simple and a form of $\SL_2^k$ with $k\geq 1$, and $\Gamma$ be an arithmetic lattice in $M=\Mbf(\Q_S)$ arising from $\Mbf$. Suppose $X=\Gamma \backslash M$ is compact and let $A<M$ be a closed abelian class-$\mathcal{A}'$ subgroup of higher rank. Let $\nu$ be an $A$-invariant and ergodic probability measure on $X$ such that $h_{\nu}(a)>0$ for some $a\in A$. Then the measure $\nu$ is \emph{algebraic}. 
    More precisely, there exists a connected semisimple subgroup $\Lbf < \Mbf$ defined over $\Q$, a finite index subgroup $L\leq \mathbf{L}(\Q_S)$, and some $g\in M$ so that $\nu$ is the uniform measure on $\Gamma Lg $ and so that $A<g^{-1}\mathbf{L}(\Q_S)g$.
\end{theorem}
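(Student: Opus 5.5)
The plan is to deduce the statement directly from the general form of \cite[Thm.~1.3]{EL-nonmaximal}. The argument has three steps: check that our hypotheses match those of Einsiedler–Lindenstrauss, invoke their high-entropy/low-entropy dichotomy to get unipotent invariance, and then pass to the algebraic description via the $S$-adic Ratner theorem.

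\textbf{Step 1 (hypotheses).} I would check the standing hypotheses of \cite{EL-nonmaximal} one by one.
\begin{itemize}
\item Since $X=\Gamma\backslash M$ is compact, $\Mbf$ is $\Q$-anisotropic by Godement's criterion. Hence no unipotent directions escape to the cusp, and the non-divergence input in \cite{EL-nonmaximal} is trivially satisfied.
\item The class-$\mathcal{A}'$ assumption on $A$ is exactly what \cite{EL-nonmaximal} needs. Every $a\in A$ is diagonalizable over the relevant local fields with eigenvalues whose absolute values are integral powers of the residue characteristic. This gives the Lyapunov decomposition of $\mathrm{Lie}(M)$ into coarse weight spaces and lets the leafwise measure machinery run. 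In our application, $A=\langle b_1,b_2\rangle$ with $b_j$ conjugate to $a_{p_j}$ from \S\ref{sec:localconjugacy}, so this holds.
\item Being of higher rank means that, after projecting to the places in $S$, $A$ contains a copy of $\Z^2$ that does not act through a single rank-one factor.
\end{itemize}

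\textbf{Step 2 (unipotent invariance).} Fix $a\in A$ with $h_\nu(a)>0$. Some coarse Lyapunov subgroup $U^{[\alpha]}$ then carries a nontrivial leafwise measure. Since $\Mbf$ is a form of $\SL_2^k$, each coarse Lyapunov subgroup is a product of root groups of the $\SL_2$-factors over the local fields in $\Q_S$. Apply either the high entropy method (two non-commuting coarse Lyapunov subgroups with nontrivial leafwise measures) or the low entropy method (a single one, using the product structure along $A$-directions). Either way, one obtains invariance of $\nu$ under a nontrivial unipotent subgroup $U$.

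\textbf{Step 3 (algebraic description).} Consider the group $H<M$ generated by all unipotent one-parameter subgroups preserving $\nu$. By the $S$-arithmetic Ratner theorem (Ratner, Margulis–Tomanov), each $H$-ergodic component of $\nu$ is homogeneous. Ergodicity under $A$, which normalizes $H$, then makes $\nu$ itself the uniform measure on a single orbit $\Gamma Lg$. Here $L$ is a finite-index subgroup of $\mathbf{L}(\Q_S)$ for a connected $\Q$-subgroup $\Lbf$, which is semisimple because it is generated by unipotents modulo the compact part. Finally, $A$-invariance of $\Gamma Lg$ together with $\Q$-almost simplicity (no proper invariant factor in which the unipotent invariance could be trapped) gives $A<g^{-1}\mathbf{L}(\Q_S)g$ after possibly enlarging $L$ within its finite-index class.

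\textbf{Main obstacle.} The delicate point is Step 2 in the low-entropy regime. There, positive entropy could a priori be concentrated along a coarse Lyapunov subgroup belonging to a single $\SL_2$-factor at a single place, on which some element of $A$ acts trivially. This is precisely the rank-one-factor obstruction. To exclude it, I would use $\Q$-almost simplicity: the $\SL_2$-factors are permuted transitively by Galois, so $\Gamma$ projects densely to each proper sub-product. A measure with entropy supported only in one factor would then contradict the higher-rank assumption on $A$. This is the step where I would cite \cite{EL-nonmaximal} for the careful verification rather than reprove it.
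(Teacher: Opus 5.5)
This is essentially the paper's route: the paper gives no argument of its own and imports the statement as a special case of \cite[Thm.~1.3]{EL-nonmaximal}. So your Step~1 (checking that the hypotheses specialize those of EL) is the entire proof, and the full algebraic conclusion, including $A<g^{-1}\Lbf(\Q_S)g$, comes with the citation. Your Steps~2--3 are therefore unnecessary, and as a sketch of EL's argument they are inaccurate in two places. First, for forms of $\SL_2^k$ any two non-opposite coarse Lyapunov subgroups commute, so the high-entropy method is vacuous. Second, entropy sitting at a single place, with another element of $A$ acting trivially there, is not excluded by any contradiction with higher rank: it is exactly the situation in the application ($b_1$ lives at $p_1$, where $b_2$ acts trivially), and it is the low-entropy case the cited theorem must handle.
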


\begin{proof}[Proof of Theorem \ref{thm:Haarcomponent} assuming Corollary \ref{cor:entropy>1/2} -- the irreducible case]

 We apply Theorem~\ref{thm:nonmaximaltorusrigidity} for $X^S=\G(\mathbb{Z}^S)\backslash\G(\mathbb{{Q}}_S)$ and $A$ the group defined above. 
 Let $\nu(\xi)$ be an ergodic component in \eqref{eq:ergodicdecomposition}. If $h_{\nu(\xi)}(a)>0$, then by Theorem \ref{thm:nonmaximaltorusrigidity} there exists a semisimple $\mathbb{Q}$-subgroup $\mathbf{L}_{\xi}<\G$ and a finite index subgroup $L_{\xi}<\mathbf{L}_{\xi}(\mathbb{Q}_S)$ such that $\nu(\xi)$ is the normalized Haar measure on a single orbit $\G(\mathbb{Z}^S)L_\xi g_\xi$ for some $g_\xi\in \G(\mathbb{Q}_S)$. Moreover, $\mathbf{L}_{\xi}$ is semisimple, $A<g_\xi^{-1}\mathbf{L}_{\xi}(\mathbb{Q}_S)g_\xi$, and $L_{\xi}=\mathbf{L}_{\xi}(\mathbb{Q}_S)$ (as necessarily $\mathbf{L}_{\xi}$ is simply connected). 

If $\Lbf_\xi$ is proper, then by mere dimension considerations it must be isogenous to a form of $\SL_2$.
Moreover, since $g_\xi^{-1}\Lbf_\xi(\Q_{p_1})g_\xi$ contains $b_1$, the Lie algebra of $\Ad(g_\xi)\mathrm{Lie}(\Lbf_{\xi})$ over $\Q_{p_1}$ is a sum of eigenspaces of $\Ad(b_1)$, necessarily with simple eigenvalues $1,p_1^2,p_1^{-2}$.
Thus, for such $\xi$ we have $h_{\nu(\xi)}(a)=\frac{1}{2}h_{\mu_{X^S}}(a)$ for all $a \in A$.

If $\Lbf_\xi= \G$, then $\nu(\xi)=\mu_{X^S}$.
 
To summarize, there are the following three scenarios:
\begin{enumerate}
    \item $\mathbf{L}_{\xi}=\G$ and $\nu(\xi)=\mu_{X^S}$,
    \item $\mathbf{L}_{\xi}$ is a $\Q$-form of $\SL_2$ and $h_{\nu(\xi)}(a)=\frac{1}{2}h_{\mu_{X^S}}(a)$ for all $a\in A$,
    \item $h_{\nu(\xi)}(a)=0$ for all $a\in A$.
\end{enumerate}
Let $\Xi_1,\Xi_2,\Xi_3\subseteq X^S$ denote the sets of $\xi$ satisfying (1),(2),(3), respectively. Then Corollary \ref{cor:entropy>1/2} (in the form of \eqref{eq:entropy in S factor}) and \eqref{eq:entropydecomposition} imply
\begin{align*}
\pi_*^S\tilde{\mu}(\Xi_1)\cdot h_{\mu_{X^S}}(a)+\pi_*^S\tilde{\mu}(\Xi_2)\cdot \tfrac{1}{2}h_{\mu_{X^S}}(a)\geq (1+\tfrac{1}{4}\delta_1) \tfrac{1}{2} h_{\mu_{X^S}}(a).
\end{align*}
It follows that $\pi_*^S\tilde{\mu}(\Xi_1)\geq \tfrac{1}{8}\delta_1$, hence $\pi_*^S\tilde{\mu}\geq \pi_*^S\tilde{\mu}(\Xi_1)\mu_{X^S}\geq \tfrac{1}{8}\delta_1\mu_{X^S}$. 

Since any continuous function on $X$ factors through an $S$-arithmetic quotient, the above shows that $\tilde{\mu} \geq \tfrac{1}{8}\delta_1 \mu_{\tilde{X}}$.
The theorem follows with $\delta=\tfrac{1}{8}\delta_1$.
\end{proof}

\subsection{Reducible case}
We now consider the case where $\G$ is not $\mathbb{Q}$-almost simple.
In this case, $\G$ is isomorphic over $\Q$ to $\SL_1(\Bbf) \times \SL_1(\Bbf)$ for some quaternion algebra $\mathbf{B}$. 
We write $\G_1,\G_2$ for the factors of $\G$ (both isomorphic to $\SL_1(\Bbf)$) and $\pi_i: \G\to \G_i$ for the projections for $i=1,2$.
We remark that $\SL_1(\Bbf)$ is equivalently the spin group for the norm form of $\Bbf$ restricted to the subspace $\Bbf^0$ of traceless quaternions in~$\Bbf$ (indeed, $\SL_1(\Bbf)$ acts by conjugation on $\Bbf^0$ preserving the norm).

\begin{lemma}\label{lem:applying AEW}
Suppose that $\iota:\Z^2 \to \Z^4$ is a primitive representation of a primitive integral binary quadratic form $q$ by $Q$. Denote by $\torus_\iota < \G$ as in \eqref{eq:stabtorus} the pointwise stabilizer. Then $\pi_i(\torus_\iota) < \G_i$ is the stabilizer of a rational line in $\Bbf^0$ of height $\gg |\disc(q)|$.
\end{lemma}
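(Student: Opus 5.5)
The plan is to make the reducible structure explicit and compute the projected tori by hand. Since $\G\simeq \SL_1(\Bbf)\times\SL_1(\Bbf)$ over $\Q$, the quadratic space $(\Q^4,Q)$ is similar to $(\Bbf,c\,\Nr)$ for some $c\in\Q^\times$. Under this identification $\G$ acts by $(g_1,g_2).x=g_1xg_2^{-1}$, and $\Z^4$ becomes a lattice $L\subset\Bbf$ commensurable with a fixed maximal order $\mathcal{O}$, with index $\ll_Q 1$. Let $v_1=\iota(e_1)$ and $v_2=\iota(e_2)$.

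Next I compute the stabilizer. The condition $g_1v_1g_2^{-1}=v_1$ gives $g_1=v_1g_2v_1^{-1}$. Substituting into $g_1v_2g_2^{-1}=v_2$ shows that $g_2$ commutes with $v_1^{-1}v_2$, hence with $\bar v_1v_2$, hence with its traceless part $u_2:=(\bar v_1 v_2)^0$. Symmetrically, $g_1$ commutes with $u_1:=(v_2\bar v_1)^0$. Since $g\in\SL_1(\Bbf)$ commutes with a nonzero pure quaternion $u$ exactly when it stabilizes the line $\Q u\subset\Bbf^0$ under conjugation, I get $\pi_i(\torus_\iota)\subseteq\mathrm{Stab}(\Q u_i)$. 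A dimension count (both sides are one-dimensional tori, and $\pi_i|_{\torus_\iota}$ has finite kernel because $\torus_\iota$ is anisotropic of rank one) then gives equality of identity components. Equality on the nose follows because the centralizer of $u_i$ in $\SL_1(\Bbf)$ is connected, and $g_2$ in that centralizer determines $g_1=v_1g_2v_1^{-1}$, which lies in $\torus_\iota$.

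Now the norms. A direct computation gives
\[
\Nr(u_2)=\Nr(v_1)\Nr(v_2)-\big(\tfrac12\Tr(\bar v_1v_2)\big)^2=\Nr(u_1).
\]
This is exactly the Gram determinant of $v_1,v_2$ for $\Nr$, which is $\asymp_Q|\disc(q)|$. Moreover $u_i\in\frac{1}{N}\mathcal{O}^0$ with $N\ll_Q1$. So the height of $\Q u_i$, meaning the norm of a primitive vector of $\Q u_i\cap\mathcal{O}^0$, is at least $\gg_Q|\disc(q)|/m_i^2$, where $m_i$ is the content of $Nu_i$. It remains to prove $m_i\ll_Q 1$.

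This content bound is the main obstacle, and I plan to argue prime by prime. The maps $(v_1,v_2)\mapsto u_1,u_2$ are, up to bounded constants, the projections of $v_1\wedge v_2\in\Lambda^2L$ onto the self-dual and anti-self-dual parts $\Lambda^2_\pm\cong\Bbf^0$; over $\Z_p$ with $p\nmid 2\disc(Q)$, $\Lambda^2L_p=\Lambda^2_+\oplus\Lambda^2_-$ integrally. Primitivity of $\iota$ gives $v_1\wedge v_2\not\equiv0\bmod p$.

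Suppose $p\mid u_1$. Then $v_1\wedge v_2$ is a nonzero decomposable bivector mod $p$ lying in $\Lambda^2_-\otimes\mathbb{F}_p$. Decomposable bivectors there correspond to the planes of one ruling of the quadric, which are totally isotropic (equivalently, $\Nr(u_2)=\Nr(u_1)\equiv0$ forces the Gram matrix of $v_1,v_2$ to vanish mod $p$). Hence $Q|_{\iota(\Z^2)}\equiv0\bmod p$, contradicting primitivity of $q$. The same argument applies to $u_2$. For the finitely many $p\mid2\disc(Q)$, the same reasoning with the bounded index of $\Lambda^2_+\oplus\Lambda^2_-$ in $\Lambda^2L_p$ bounds the $p$-part of $m_i$ by a power of $p$ depending only on $Q$. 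Together these give $m_i\ll_Q1$, which proves the lemma.
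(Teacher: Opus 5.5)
Your setup follows the paper's proof. Your $u_1=(v_2\bar v_1)^0$ and $u_2=(\bar v_1v_2)^0$ are, up to sign and a factor $2$, the paper's $a_1,a_2$. The identity $\Nr(u_1)=\Nr(u_2)=$ Gram determinant is the paper's $\Nr(a_i)=4\disc(\tilde q)$. In both arguments everything reduces to bounding the content of each $u_i$ in $\mathcal{O}^0$. Your stabilizer computation, including the converse $g_1=v_1g_2v_1^{-1}$, is fine. Your ruling argument correctly gives $p\nmid m_i$ at odd $p$ with $L_p=\mathcal{O}_p\cong M_2(\Z_p)$.

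\textbf{The gap is the sentence about the remaining primes.} Bounded index of $\Lambda^2_+\oplus\Lambda^2_-$ in $\Lambda^2L_p$, together with primitivity of $v_1\wedge v_2$, only gives $\min\{\ord_p(u_1),\ord_p(u_2)\}\ll_Q 1$. This is exactly what the paper extracts from injectivity of $w_1\wedge w_2\mapsto(a_1,a_2)$, and it cannot give more. For the elementary matrices $v_1=E_{11}$, $v_2=E_{21}$ in $M_2(\Z_p)$, the bivector is primitive, $u_1=0$, and $u_2=E_{21}$.

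So primitivity of $q$ must enter quantitatively at each bad prime. You need $\ord_p(u_i)\le\ord_p(\tilde q)+O(1)$, where $\tilde q=\Nr|_{W\cap\mathcal{O}}$ has content only $\ll_Q 1$, not $1$. Your ruling argument is a statement about a nondegenerate split quadric over the field $\mathbb{F}_p$, and it falls short in three ways:
\begin{itemize}
\item It covers only the case $\ord_p(\tilde q)=0$.
\item It says nothing modulo $p^k$.
\item It is unavailable at primes ramified in $\Bbf$ (these divide $\disc(Q)$), where $\Nr$ is degenerate modulo $p$ and there are no rulings.
\end{itemize}
Hence ``the same reasoning'' does not bound each $m_i$ separately.

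\textbf{A direct fix.} First replace $v_1,v_2$ by a basis $w_1,w_2$ of $W\cap\mathcal{O}$; this changes $u_i$ by a scalar of bounded height. Let $p$ be odd and suppose $(w_2\bar w_1)^0\in p^k\mathcal{O}_p$. Write $w_2\bar w_1=\lambda+p^ky$ with $\lambda\in\Z_p$ and $y\in\mathcal{O}_p$. Multiplying on the right by $w_1$ gives $\Nr(w_1)w_2-\lambda w_1\in p^k\mathcal{O}_p$. Multiplying on the left by $\bar w_2$ gives $\Nr(w_2)\bar w_1-\lambda\bar w_2\in p^k\mathcal{O}_p$. Both $\{w_1,w_2\}$ and $\{\bar w_1,\bar w_2\}$ span primitive sublattices, so $\Nr(w_1)$, $\Nr(w_2)$ and $\lambda=\tfrac12\Tr(w_1\bar w_2)$ all lie in $p^k\Z_p$. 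The same works for $(\bar w_1w_2)^0$, and at $p=2$ you lose a bounded power. This holds uniformly at split and ramified odd primes and also recovers your good-prime case.

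\textbf{The paper's route.} At unramified primes it quotes $\ord_p(\tilde q)=\max\{\ord_p(a_1),\ord_p(a_2)\}$ from \cite{AEW-2in4}. At ramified primes it uses that on the unique maximal order $\ord_p(a)=\lfloor\ord_p\Nr(a)/2\rfloor$. Then $\Nr(a_1)=\Nr(a_2)$ forces $\ord_p(a_1)=\ord_p(a_2)$, and the bound on the minimum suffices.

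\textbf{Smaller corrections.}
\begin{itemize}
\item Your parenthetical ``$\Nr(u_1)\equiv0$ forces the Gram matrix to vanish mod $p$'' is false; any primitive $q$ with $p\mid\disc(q)$ is a counterexample. Only $u_1\equiv 0$ itself forces it.
\item What you identify is the centralizer of $u_i$, i.e.\ the pointwise stabilizer of $\Q u_i$. The setwise stabilizer is the normalizer, which is not connected.
\item The kernel of $\pi_i|_{\torus_\iota}$ is trivial because $g_1=1$ forces $g_2=1$, not because $\torus_\iota$ is anisotropic.
\end{itemize}
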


Here, the notion of height is given as follows:
Fix a maximal order $\mathcal{O} \subset \Bbf(\Q)$. Given a rational line in $\Bbf^0(\Q)$ let $v$ be a non-zero shortest vector in the line which is contained in $\mathcal{O}^0$ and then take the norm of $v$.

\begin{proof}
It is clear by the considerations e.g.~in \S\ref{sec:diaggroup split} that $\torus_\iota$ projects non-trivially onto both factors. Moreover, any non-trivial torus in $\SL_1(\Bbf)$ is the stabilizer of a rational line in $\Bbf^0$ (as the conjugation action on $\Bbf^0$ can be identified with the adjoint action on the Lie algebra).
Therefore, we only need to verify the claim regarding the height.
To that end, we will make heavy use of arguments in \cite{AEW-2in4}.

By our assumption, $Q$ is rationally equivalent to a multiple of the norm form on $\Bbf$. 
Integrally, this means that $Q$ is equivalent to a multiple of the norm form when restricted to a full-rank sublattice $\Lambda$ of $\Bbf(\Q)$. 
Note that $\Lambda$ is commensurable to $\mathcal{O}$.
Therefore, we may view $\iota(\Z^2)$ is a rank two lattice in $\Bbf(\Q)$ for which there is a $2$-dimensional subspace $W$ such that $\iota(\Z^2) \subset W$ and $\iota(\Z^2)$ is commensurable to $W \cap\mathcal{O}$.
In particular, $\tilde{q} = \Nr|_{W \cap\mathcal{O}}$ satisfies that the greatest common divisor of its coefficients (in any basis) is $\ll 1$.
Also, $|\disc(\tilde{q})| \gg |\disc(q)|$.
Moreover,
\begin{align*}
\torus_\iota = \{(g_1,g_2) \in \SL_1(\Bbf)^2: g_1wg_2^{-1} =w \text{ for all } w\in W\}.
\end{align*}

Let $w_1,w_2$ be a basis of $W \cap\mathcal{O}$.
Following \cite{AEW-2in4} we define
\begin{align*}
a_1 = 2w_1 \overline{w_2} - \Tr(w_1 \overline{w_2}),\quad
a_2 = 2 \overline{w_2}w_1 - \Tr( \overline{w_2}w_1).
\end{align*}
By straight-forward calculations (e.g.~as in \cite[\S2]{AEW-2in4}) one shows that $a_1,a_2$ are traceless vectors in $\mathcal{O}$ which are fixed under $\pi_1(\torus_{\iota})$ and $\pi_2(\torus_{\iota})$, respectively, and which satisfy $\Nr(a_1)=\Nr(a_2) = 4\disc(\tilde{q})$.

It remains to examine primitivity properties of $a_1,a_2$. 
For any odd prime $p$ at which $\Bbf$ is unramified, we have by \cite[\S3.3]{AEW-2in4} that
\begin{align*}
\ord_p(\tilde{q}) = \max\{\ord_p(a_1),\ord_p(a_2)\}
\end{align*}
where $\ord_p(a_i) = \max\{j: p^{-j}a_i \in \mathcal{O}\}$.
If $\Bbf$ is unramified at $2$, one can show analogously that $|\ord_2(\tilde{q}) - \max\{\ord_2(a_1),\ord_2(a_2)\}|$ is bounded by an absolute constant.
If $\Bbf$ is ramified at a prime $p$, then
\begin{align*}
\mathcal{O}\otimes \Z_p = \{x \in \Bbf(\Q_p):\Nr(x) \in \Z_p\}
\end{align*}
is the unique maximal order, see e.g.~\cite[Prop.~13.3.4]{voight}.
In particular, $a \in \mathcal{O}\otimes \Z_p$ is not invertible if and only if $p \mid \Nr(a)$.
Applying this to $a=a_1,a_2$ we see that either $\ord_p(a_1)=\ord_p(a_2)=0$ or $\ord_p(a_1)=\ord_p(a_2)>0$. On the other hand, the linear map
\begin{align*}
w_1\wedge w_2 \in \textnormal{$\bigwedge$}^2\mathcal{O} \mapsto (a_1,a_2) \in (\Bbf^0 \cap \mathcal{O})^2 
\end{align*}
has trivial kernel and hence $\min\{\ord_p(a_1),\ord_p(a_2)\} \ll \ord_p(w_1\wedge w_2)$ (see also \cite[Lemma 2.4]{AEW-2in4}).
Overall, these arguments show that the lines through $a_1$ and $a_2$ have height $\gg \Nr(a_1)=\Nr(a_2) \gg |\disc(\tilde{q})| \gg |\disc(q)|$ which concludes the lemma.
\end{proof}

The following proposition, which is an application of a version of Duke's theorem and its generalizations, is phrased in the simply connected cover.
Recall that we fixed a sequence of primitive representations $(\iota_i)_i$ of $q_i$ by $Q$.

\begin{proposition}\label{prop:joining}
Let $p$ be an odd prime.
Let $(g_i)_i$ be a bounded sequence of elements in $\G(\A_f)$ such that $g_{i,p}\torus_{\iota_i}g_{i,p}^{-1}$ is a torus independent of $p$ and split over $\Q_p$.
For each $i$, let $\nu_i$ be the uniform probability measure on $\G(\Q) \torus_{\iota_i}(\A)g_i \subset \G(\Q)\backslash\G(\A)$. 
Then for $j=1,2$ the measures $(\pi_j)_\ast \nu_i$ are equidistributed in the factor $\G_j(\Q)\backslash \G_j(\A)$ with respect to the Haar measure.
\end{proposition}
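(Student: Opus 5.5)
We reduce to Duke's theorem for a single factor, using Lemma~\ref{lem:applying AEW}. Fix $j\in\{1,2\}$ and write $\Tbf_i^{(j)} = \pi_j(\torus_{\iota_i}) < \G_j = \SL_1(\Bbf)$. By Lemma~\ref{lem:applying AEW}, $\Tbf_i^{(j)}$ is the stabilizer of a rational line in $\Bbf^0$ of height $\gg|\disc(q_i)|\to\infty$. Since $\pi_j:\torus_{\iota_i}\to\Tbf_i^{(j)}$ is an isogeny of one-dimensional tori, the push-forward $(\pi_j)_\ast\nu_i$ is the uniform measure on
\begin{align*}
\G_j(\Q)\,\pi_j(\torus_{\iota_i}(\A))\,\pi_j(g_i).
\end{align*}
This set contains, as a subset of bounded index, the measure on $\G_j(\Q)\Tbf_i^{(j)}(\A)\pi_j(g_i)$. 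The index is bounded because the cokernel of $\torus(\A)\to\Tbf^{(j)}(\A)$ is $2$-torsion and meets only finitely many classes after quotienting by $\Tbf^{(j)}(\Q)$ and a fixed compact open subgroup. More precisely, it is an average of boundedly many translates of such a packet. It therefore suffices to prove equidistribution of the toral packets $\G_j(\Q)\Tbf_i^{(j)}(\A)h_i$ with $h_i$ bounded. Equidistribution should be tested against functions orthogonal to the character spectrum of $\G_j$; since $\G_j$ is simply connected and $\G_j(\R)$ is compact, strong approximation shows there is no character spectrum on $[\G_j(\A)]$ in the simply connected setting, which is another reason to work with the simply connected cover.

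Next, I would invoke the effective equidistribution result of Appendix~\ref{sec:Duke} (Theorem~\ref{thm:uniform Duke}) for toral periods in the anisotropic inner form $\mathbf{PB}^\times$. Pass from $\SL_1(\Bbf)$ to $\mathbf{PB}^\times$ via the natural map, so the torus becomes the image of $K_i^\times$ for the quadratic field $K_i=\Q(a)$ generated by the traceless vector $a$ spanning the line. The packet is then a CM-type toral packet whose discriminant is comparable to the height of the line, and the bounded shift $h_i$ changes the level only by a bounded factor, so the Sobolev norms change by $O(1)$. Theorem~\ref{thm:uniform Duke} then gives
\begin{align*}
\Big|\int f\,\de(\pi_j)_\ast\nu_i - \int f\,\de\mu_{[\G_j]}\Big| \ll \disc(q_i)^{-\star}\,\Scal(f)
\end{align*}
for smooth $f$ orthogonal to the character spectrum of $\mathbf{PB}^\times$. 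On the image of $[\G_j(\A)]$ the character part is constant, which yields equidistribution on the factor $\G_j(\Q)\backslash\G_j(\A)$. Density of smooth functions then finishes the argument.

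The main obstacle is to verify that Duke's theorem applies uniformly, with the torus not \emph{a priori} of optimal-embedding type: the order $\Q(a)\cap\mathcal{O}'$ in the relevant Eichler or maximal order may be non-maximal, with conductor possibly large. This is why the height bound in Lemma~\ref{lem:applying AEW}, rather than merely $\Nr(a)$, is needed: the volume of the packet is $\asymp$ height$^{1/2+o(1)}$ by Siegel's lower bound for class numbers of orders, and this volume drives the subconvexity bound in Theorem~\ref{thm:uniform Duke}.

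The splitting hypothesis at $p$ is not needed for the effective route. Alternatively, one can use it for a Linnik-type argument, since $\Tbf_i^{(j)}$ is split at $p$ with a common conjugate: a self-correlation bound at $p$ gives maximal entropy for $\SL_2$ limits, and then Theorem~\ref{thm:nonmaximaltorusrigidity} with $k=1$, or Lindenstrauss-type uniqueness, would yield Haar measure. I would first attempt the direct appeal to Theorem~\ref{thm:uniform Duke}, falling back on the entropy route if the level bookkeeping fails.
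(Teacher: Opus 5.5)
Your primary route, the direct appeal to Theorem~\ref{thm:uniform Duke}, has a genuine gap at the passage from $\G_j=\SL_1(\Bbf)$ to $\mathbf{PB}^\times$.

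First, the index discussion in your first paragraph is misplaced. $\pi_j$ is injective on $\torus_{\iota_i}$, because $g_1wg_2^{-1}=w$ with $w\in W$ invertible determines $g_2$ from $g_1$. So $(\pi_j)_\ast\nu_i$ is exactly the period of $\widetilde{\Tbf}:=\pi_j(\torus_{\iota_i})\simeq\Res_{K/\Q}(\Gm)^1$.

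The real problem sits at $\rho:\SL_1(\Bbf)\to\mathbf{PB}^\times$. Let $\Tbf=\Res_{K/\Q}(\Gm)/\Gm$ be the image torus. Theorem~\ref{thm:uniform Duke} bounds the full period $[\Tbf(\A)h]$, but $\rho_\ast(\pi_j)_\ast\nu_i$ is only the (translated) period of $\rho(\widetilde{\Tbf}(\A))=\Tbf(\A)^2$. Modulo $\Tbf(\Q)$ and a compact open subgroup, this subgroup has index at least the order of the genus group $\mathrm{Pic}/\mathrm{Pic}^2$ of the relevant order. That order is roughly $2^{\omega}$, where $\omega$ is the number of prime factors of the discriminant, so it is not bounded.

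Worse, the Hasse norm theorem gives
\[
\Tbf(\A)\cap\mathbf{PB}^\times(\Q)\,\rho(\SL_1(\Bbf)(\A))=\Tbf(\Q)\,\rho(\widetilde{\Tbf}(\A)),
\]
so distinct genus cosets lie on distinct $\rho(\SL_1(\Bbf)(\A))$-orbits. Consequences:
\begin{itemize}
\item The measure controlled by Theorem~\ref{thm:uniform Duke} is not supported on the image of $[\G_j(\A)]$, so your remark that the character part is constant there does not help.
\item Testing the full period against a compact-open-invariant extension of a test function only shows that an \emph{average} of translated genus-coset sub-periods equidistributes, not the single one you need.
\item Isolating your coset requires $\int_{[\Tbf(\A)h]}f\chi\to0$ for every genus character $\chi$. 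Extending $\chi$ as $\chi_d\circ\Nr$ does put $f\cdot(\chi_d\circ\Nr)$ in $L^2_{00}$, but at level $\asymp d$, with $d\mid D_E$ possibly comparable to $\disc(q_i)$. Then $\Scal^H_{10}$ grows like a power of $d$ and swamps the saving $D_E^{-\delta}$.
\end{itemize}
Repairing this would need Waldspurger's formula for the twisted periods, together with hybrid subconvexity for $L(\tfrac12,\pi\otimes\chi_{d_1})L(\tfrac12,\pi\otimes\chi_{d_2})$ with $d_1d_2=D_E$. This is the spinor-genus issue (cf.~\cite{DukeSchulzePillot}). The paper explicitly sidesteps it, noting that the non-split version is folklore but not in the literature verbatim.

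The paper instead combines Lemma~\ref{lem:applying AEW} (height of the line $\to\infty$) with \cite[\S7]{W-Linnik}, i.e.\ Linnik's ergodic method run directly on the $\SL_1(\Bbf)$-period:
\begin{itemize}
\item the splitting at $p$ provides a common split element $a_p$;
\item a Linnik-type self-correlation bound gives maximal $a_p$-entropy for every limit, hence $\G_j(\Q_p)$-invariance;
\item strong approximation with respect to $p$ (where $\Bbf$ splits) then forces the Haar measure.
\end{itemize}
Working in the simply connected group makes genus characters irrelevant. This is your fallback, but you only gesture at it. Also, Theorem~\ref{thm:nonmaximaltorusrigidity} is not the right input there: with $k=1$ the acting group $\langle a_p\rangle$ has rank one. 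What is needed is the maximal-entropy characterization of Haar measure.
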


Without the given splitting condition at the prime $p$, the above proposition is certainly folklore, though to the authors' knowledge not verbatim contained in the existing literature.
With the splitting condition, we can invoke refinements of Linnik's ergodic method \cite{linnik} (see \cite{ELMV-Ens,W-Linnik}).

\begin{proof}
Fix $j \in \{1,2\}$ and let $\torus_{i,j} = \pi_j(\torus_{\iota_i})$ as well as $g_{i,j} = \pi_j(g_i)$.
The measure $(\pi_j)_\ast \nu_i$ is the uniform probability measure on the period $\G_j(\Q)\torus_{i,j}(\A)g_{i,j}$ in $\G_j(\Q) \backslash \G_j(\A)$.
By Lemma~\ref{lem:applying AEW}, the torus $\torus_{i,j}$ is the stabilizer group of a rational line whose height goes to infinity as $i \to \infty$.
The claim follows from \cite[\S7]{W-Linnik}.
\end{proof}

\begin{proof}[Proof of Theorem~\ref{thm:Haarcomponent} assuming Corollary \ref{cor:entropy>1/2} -- the reducible case]

We proceed as in the previous case, though we appeal to the classification of joinings of Einsiedler and Lindenstrauss \cite{EL-joiningsPIHES}.
By Proposition~\ref{prop:joining} applied to \S\ref{sec:setup measureclassification}, the measure $\pi^S_\ast \tilde{\mu}$ is a convex combination of joinings $\nu(\xi)$ for the respective Haar measures on the factors of $X^S$.
Each of these measures $\nu(\xi)$ is either (a) the Haar probability measure on $X^S$ or (b) there exists a proper semisimple $\mathbb{Q}$-subgroup $\mathbf{L}_{\xi}<\G$ and a finite index subgroup $L_{\xi}<\mathbf{L}_{\xi}(\mathbb{Q}_S)$ such that $\nu(\xi)$ is the  Haar probability measure on a single orbit $\G(\mathbb{Z}^S)Lg_\xi$ for some $g_\xi\in \G(\mathbb{Q}_S)$. 
Moreover, in case (b) $\mathbf{L}_{\xi}$ projects surjectively on each of the factors $\G_1,\G_2$ of $\G$ and $A<g_\xi^{-1}\mathbf{L}_{\xi}(\mathbb{Q}_S)g_\xi$. 
The entropy of $\nu(\xi)$ in case (b) is half of the entropy of the Haar probability measure on $X^S$ and therefore one can conclude the proof as before (though we note that there are no potential zero entropy ergodic components).
\end{proof}

\section{Reducing the Linnik-type basic lemma to a point counting problem}\label{sec:reduce to counting}

This section marks the beginning of our proof of Theorem~\ref{thm:selfcorrelation}.
Here, we will reduce Theorem~\ref{thm:selfcorrelation} to a point counting problem for a certain hypersurface in $\mathbf{A}^5$ of degree four.

Throughout the section we fix an odd prime $p$.
We also adhere to the notations and assumptions of Theorem~\ref{thm:selfcorrelation}.
In particular, $q$ denotes a primitive integral binary quadratic form of discriminant $D$.
Moreover, we assume that $q(x,y) = Ax^2+Bxy+ Cy^2$ is Minkowski reduced and $A \geq c_0 D^{\delta_0}$.
Note that $D = AC-\frac{B^2}{4}>0$ and that $p \nmid D$ by assumption.
For simplicity, we will ignore $c_0,\delta_0$ in the implicit (multiplicative) constants henceforth.

\subsection{Reduction to counting nearby pairs of representations}

We begin with a volume estimate for the periods $Y_\iota$. We refer to e.g.~\cite[\S4.3]{ELMVAnn} for the notion of volume we use here.

\begin{lemma}[Volume estimates]\label{lem:volume}
Let $\iota:\Z^2 \to \Z^4$ be a primitive representation of $q$ by $Q$. Then
\begin{align*}
\vol(Y_\iota')=\vol(Y_\iota) \asymp |D|^{\frac{1}{2}+o(1)}.
\end{align*}
\end{lemma}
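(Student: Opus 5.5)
The equality $\vol(Y_\iota')=\vol(Y_\iota)$ is immediate. The set $Y_\iota' = Y_\iota k_{\iota,p}$ is a right translate by an element $k_{\iota,p}\in\overline{\G}(\Z_p)$. The volume in the sense of \cite[\S4.3]{ELMVAnn} is defined relative to a fixed compact open neighbourhood $\Omega$ of the identity in $\overline{\G}(\R\times\hat{\Z})$. Conjugating $\Omega$ by $k_{\iota,p}$ preserves it, up to replacing $\Omega$ by a conjugation-invariant subgroup; such a subgroup exists since $\overline{\G}(\Z_p)$ normalizes its principal congruence subgroups. So the volume is unchanged, and the content of the lemma is the asymptotic $\vol(Y_\iota)\asymp |D|^{1/2+o(1)}$.

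For that I would follow the standard computation for toral packets in \cite[\S4]{ELMVAnn}. The volume of $Y_\iota$ is comparable to the Tamagawa-type product
\[
\vol(Y_\iota)\asymp \prod_v \bigl[\overline{\torus}_\iota(\Q_v) : \overline{\torus}_\iota(\Q_v)\cap \Omega_v\bigr]\cdot \mathrm{covol},
\]
or more concretely to the class number of the torus times local indices. Here $\overline{\torus}_\iota\simeq\SO_{Q|_{W^\perp}}$ with $W=\iota(\Q^2)$, and this is the norm-one torus of the imaginary quadratic field $K=\Q(\sqrt{-\disc(q)\disc(Q)})$. By Lemma~\ref{lem:orthocomplement disc}, the lattice $W^\perp(\Z)$ has discriminant $\asymp_Q|D|$. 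So the binary form $Q|_{W^\perp(\Z)}$ corresponds to an invertible ideal class of an order $\mathcal{O}$ in $K$ of discriminant $\asymp|D|$, up to the bounded factor $\disc(Q)$.

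The key steps are as follows.
\begin{enumerate}
\item Identify $\overline{\torus}_\iota(\A_f)\cap\overline{K}_f$ with $\mathcal{O}_{\hat{\Z}}^{\times,1}$ (norm-one units of $\hat{\mathcal{O}}$), up to bounded index at primes dividing $2\disc(Q)$. This works because the stabilizer of $\Z^4$ and of $\iota(\Z^2)$ essentially stabilizes the lattice $W^\perp(\Z)$, by the index bound in Lemma~\ref{lem:orthocomplement disc}.
\item Use this to write $\vol(Y_\iota)\asymp \#\bigl(\overline{\torus}_\iota(\Q)\backslash\overline{\torus}_\iota(\A_f)/(\overline{\torus}_\iota(\A_f)\cap\overline{K}_f)\bigr)$. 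This is justified because $\overline{\torus}_\iota(\R)$ is compact and $\rho(\torus_\iota(\A))$ has bounded index in $\overline{\torus}_\iota(\A)$, so $Y_\iota$ and $\overline{Y}_\iota$ have comparable volumes.
\item Relate this count to the class number $h(\mathcal{O})$ via the norm map $K^\times\to K^{1}$, $z\mapsto z/\bar z$ (Hilbert 90). The resulting factors are a power of $2$ per prime dividing $D$, hence $|D|^{o(1)}$.
\item Apply Siegel's theorem $h(\mathcal{O})=|D|^{1/2+o(1)}$, including the non-maximal order case via $h(\mathcal{O})=h(\mathcal{O}_K)\,f\prod_{\ell\mid f}(1-(\tfrac{d_K}{\ell})\ell^{-1})/[\mathcal{O}_K^\times:\mathcal{O}^\times]$, whose extra factor is $f^{1+o(1)}$.
\end{enumerate}

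I expect the main obstacle to be step 1 and the bookkeeping of local indices. One must check, prime by prime, that the local stabilizer indices differ from those of $\hat{\mathcal{O}}^{\times,1}$ by at most $|D|^{o(1)}$, using primitivity of $\iota$. At primes $\ell\nmid 2\disc(Q)$, Lemma~\ref{lem:orbits over Zp}-style local normal forms handle this. The remaining primes are bounded in number, and there the discrepancies are bounded by powers of $\disc(Q)$, which are absorbed in the implicit constants. Alternatively, I could avoid the class number and use the Siegel mass formula as in Appendix~\ref{sec:Siegel mass}. The mass of representations of $q$ by the genus is $\asymp|D|^{1/2+o(1)}$ by \cite[Lemma 2.1]{SchulzePillot-survey04} together with the standard upper bound, and by injectivity of $\overline{\Phi}_{\mathfrak{L},\iota}$ this matches the orbit size up to bounded factors.
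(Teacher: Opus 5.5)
Your outline matches the paper's skeleton: torus class group, then the Picard group of a quadratic order of discriminant $\asymp D$, then genus-theory factors $2^{\omega(D)}$, then Siegel. The gap is Step~1, where the paper's actual work lies. It is not established, and the reason you give for it is wrong.

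Lemma~\ref{lem:orthocomplement disc} gives no bound on the glue index. The identity in its proof reads $[\Z^4:\iota(\Z^2)\oplus W^\perp(\Z)]^2\disc(Q)=\disc(q)\disc(Q|_{W^\perp(\Z)})$, so this index is $\asymp_Q D$.

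That the stabilizer of $\Z^4$ in $\overline{\torus}_\iota$ lies in the norm-one units of the multiplier ring $\mathcal{O}$ of $W^\perp(\Z)$ is automatic. What must be shown is the converse up to small index. A norm-one unit, extended by the identity on $W$, preserves $\Z^4$ only if it acts trivially on the glue, which is a subgroup of order $\asymp D$ of the discriminant group of $W^\perp(\Z)$.

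At odd $\ell\nmid\disc(Q)$, the glue is the whole discriminant group of $W^\perp(\Z_\ell)$. This group is cyclic because $q$ (not merely $\iota$) is primitive, and norm-one units act on it by $\pm1$. Hence the index is exactly $2$ at every odd $\ell\mid D$ with $\ell\nmid\disc(Q)$; for example, $-1$ does not preserve $\Z_\ell^4$. So the discrepancy is not confined to $\ell\mid 2\disc(Q)$. Lemma~\ref{lem:orbits over Zp} assumes $\ell\nmid\disc(q)$, so it says nothing at exactly the primes that matter.

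The same primitivity input is needed for your claim $\disc(\mathcal{O})\asymp D$. If $Q|_{W^\perp(\Z)}$ had content $c$, the multiplier ring would have discriminant $\asymp D/c^2$.

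All of this costs only $D^{o(1)}$, so the lemma survives, but the local computation has to be supplied. The paper does it in the even Clifford algebra, where $\overline{\torus}\simeq\Res_{K/\Q}(\Gm)/\Gm$ with $K=\Q(w_1w_2)$ and $w_1,w_2$ a basis of $W^\perp(\Z_\ell)$. Integrality of $tw_1t^{-1}$ and $tw_3t^{-1}$, for an auxiliary $w_3\perp w_1$ with $\langle w_2,w_3\rangle_Q=1$, forces $t\in\Z_\ell[w_1w_2]$ up to scalars. This uses $\ell\nmid Q(w_1)$, which is where primitivity of $q$ enters via \cite[Prop.~B.6]{AMW-higherdim}.

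Step~2 also rests on a false claim. The index of $\rho(\torus_\iota(\A))$ in $\overline{\torus}_\iota(\A)$ is not bounded, and not even finite: $\overline{\torus}_\iota(\Q_\ell)/\rho(\torus_\iota(\Q_\ell))$ is nontrivial at every split prime. Even after dividing by rational and integral points, $\vol(\overline{Y}_\iota)/\vol(Y_\iota)$ is comparable to the order of the genus group $\mathrm{Pic}(\mathcal{O})/\mathrm{Pic}(\mathcal{O})^2$, which is unbounded in $D$. This is again only a $D^{o(1)}$ loss. The paper handles it by sandwiching $\mathcal{F}=Y_\iota/(\rho(\torus(\A))\cap\overline{\G}(\R)\overline{K}_f)$ between the class groups of $\torus$ and $\overline{\torus}$ and invoking \eqref{eq:genustheory}. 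Your $O(1)$ comparison, however, is false as stated.

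Finally, your Siegel-mass fallback is incomplete. Injectivity of $\overline{\Phi}_{\mathfrak{L},\iota}$ only bounds one packet from above by the number of representation classes. A matching lower bound would also need the number of distinct images in Lemma~\ref{lem:partition image} to be $D^{o(1)}$, plus the same passage from $\overline{Y}_\iota$ to $Y_\iota$.
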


\begin{proof}
The first equality is clear and we prove the growth rate for $\vol(Y_\iota)$.
For simplicity, write $\torus = \torus_{\iota}$ and $Y = Y_{\iota}$. 
In estimating the volume, the main step consists of determining the compact open subgroups $\torus(\Z_{\ell}) = \{t\in \torus(\Q_{\ell}): t.\Z_{\ell}^4 =\Z_{\ell}^4\}$ and $\overline{\torus}(\Z_{\ell}) = \{t\in \overline{\torus}(\Q_{\ell}): t.\Z_{\ell}^4 =\Z_{\ell}^4\}$ for any prime $\ell$.

To that end, we will use the Clifford algebra of $Q$. 
We refer to \cite{Knus} for a thorough treatment and limit ourselves to introducing what we need.
The Clifford algebra $\mathcal{C}$ for $Q$ can be defined to be the tensor algebra over $\Q^4$ modulo the ideal generated by the relations $v^2 = Q(v)$ for $v \in V=\Q^4$.
The vector space $V$ is naturally contained in $\mathcal{C}$ and for any $v,w\in V$ we have $vw+wv = 2\langle v,w\rangle_Q$.
By definition, the even Clifford algebra $\mathcal{C}^e$ consists of even tensors in $\mathcal{C}$.
Note that the subspace $W = \iota(\Q^2)^\perp$ has its own Clifford algebra (for the restriction of $Q$ to $W$), whose even part forms a quadratic subalgebra $K \subset \mathcal{C}^e$; it is generated by $1$ and $w_1w_2$ for $w_1,w_2 \in W$.
We write $\sigma: \mathcal{C}^e \to \mathcal{C}^e,\ vw \mapsto wv$ for the standard involution, which restricts to the non-trivial Galois automorphism on $K$. 
Set $\Nr(v) = v \sigma(v)$.
We note that these constructions are well-behaved with respect to field extensions.
The group $\G$ can be seen as the group of norm one units in the even Clifford algebra (where we use that $Q$ has four variables) and the action on $V$ is by conjugation within the Clifford algebra.
Similarly, there are identifications $\Res_{K/ \Q}(\Gm)^1 \simeq \torus$ and $\Res_{K/ \Q}(\Gm)/\Gm \simeq  \overline{\torus}$, which will be implicit in the following.

Let $w_1,w_2$ be an integral basis of $W\cap \Z^4$. 

\begin{claim*}
Let $\ell$ be a prime. Then
\begin{align*}
\torus(\Z_{\ell}) &\supseteq \big\{t = t_1+ t_2 w_1w_2: t_1,t_2 \in \Z_{\ell},\ \Nr(t) =1 \big\},\\
\overline{\torus}(\Z_{\ell}) &\supseteq \big\{t = t_1+ t_2 w_1w_2: t_1,t_2 \in \Z_{\ell},\  \Nr(t) \in \Z_{\ell}^\times \big\}\Q_{\ell}^\times/\Q_{\ell}^\times\\
&\simeq \big\{t = t_1+ t_2 w_1w_2: t_1,t_2 \in \Z_{\ell},\  \Nr(t) \in \Z_{\ell}^\times \big\}/\Z_{\ell}^\times
\end{align*}
with equality if ${\ell}$ is odd with ${\ell} \nmid \disc(Q)$.
Moreover, if ${\ell}=2$ or ${\ell}\mid \disc(Q)$ the index is bounded depending only on $Q$.
\end{claim*}

\begin{proof}[Proof of Claim]
It is clear that the inclusion `$\supseteq$' holds and it remains to prove the claims regarding the index.
For simplicity, we assume ${\ell}\neq 2$; the argument for ${\ell}=2$ is analogous to the case ${\ell} \mid \disc(Q)$ below.
We may replace $w_1,w_2$ by an integral basis of $W(\Q_{\ell})\cap\Z_{\ell}^4$ which we can choose to be orthogonal and satisfying $\alpha \mid \beta$ where $\alpha = Q(w_1)$ and $\beta = Q(w_2)$.
Note that $\mathcal{O}_{\alpha\beta} =\{t_1+ t_2 w_1w_2: t_1,t_2 \in \Z_{\ell}\}$ is an order isomorphic to $\Z_{\ell}[\sqrt{\alpha\beta}]$ in the quadratic \'etale algebra $K \otimes \Q_{\ell} \simeq \Q_{\ell}[\sqrt{\alpha\beta}]$.
As any $t\in \torus(\Z_{\ell})\subset K \otimes \Q_{\ell}$ preserves $\Z_{\ell}^4$ under $\rho$, $t$ is necessarily an integral element so that the claim is obvious if $\Z_{\ell}[\sqrt{\alpha\beta}]$ is the maximal order in $\Q_{\ell}[\sqrt{\alpha\beta}]$ i.e.~if ${\ell}^2 \nmid \alpha\beta$ (the same argument applies to $\overline{\torus}(\Z_{\ell})$).
In the following, we assume ${\ell} \mid \beta$ and let $t = t_1+t_2w_1w_2 \in K \otimes \Q_{\ell}$ be such that $\rho(t)$ preserves $\Z_{\ell}^4$.

We assume first ${\ell} \nmid \disc(Q)$.
In this case, the argument in \cite[Prop.~B.6]{AMW-higherdim} together with the fact that the quadratic form $q$ (on the complement of $W$) is assumed to be primitive implies that $Q(xw_1+yw_2)$ is primitive or equivalently that $\ell \nmid \alpha$.
Then
\begin{align*}
\Z_{\ell}^4\ni t w_1 t^{-1} = \tfrac{1}{\Nr(t)} t w_1 \sigma(t) 
= \tfrac{1}{\Nr(t)} \big((t_1^2-t_2^2 \alpha\beta)w_1 -2 \alpha t_1t_2 w_2\big)
\end{align*}
so that $t_1^2-t_2^2 \alpha\beta,\ t_1t_2 \in \Nr(t) \Z_{\ell}$.
As $w_2$ is isotropic modulo ${\ell}$ and $\ell \nmid \disc(Q)$, there exists an auxiliary vector $w_3 \in \Z_{\ell}^4$ with $\langle w_1,w_3\rangle = 0$ and $\langle w_2,w_3\rangle = 1$.
In this case, we have $w_2w_3 = -w_3w_2 + 2$ and so
\begin{align*}
\Z_{\ell}^4 \ni
t w_3 t^{-1} 
&= \tfrac{1}{\Nr(t)} t w_3 \sigma(t) 
= \tfrac{1}{\Nr(t)} (t_1 w_3 + t_2w_3w_1w_2 + 2t_2w_1) \sigma(t)\\
&= w_3 + \tfrac{1}{\Nr(t)} 2t_2w_1 \sigma(t)
= w_3 + \tfrac{1}{\Nr(t)} 2t_1t_2w_1 + \tfrac{1}{\Nr(t)}2 \alpha t_2^2 w_2 
\end{align*}
concluding $t_1^2,t_2^2 \in \Nr(t)\Z_{\ell}$.
If $\Nr(t) =1$, this shows $t_1,t_2 \in \Z_{\ell}$ as claimed. Otherwise, we multiply $t$ by a scalar so that $(t_1,t_2) \in \Z_{\ell}^2 \setminus \ell \Z_{\ell}^2$.
Thus, $t_1^2,t_2^2 \in \Nr(t)\Z_{\ell}$ shows $\Nr(t) \in \Z_{\ell}^\times$ and the lemma follows for ${\ell}$ not dividing $\disc(Q)$.

Assume now $\ell \mid \disc(Q)$. 
Using \cite[Prop.~B.6]{AMW-higherdim} again, the valuation of $\alpha$ is bounded absolutely in terms of $Q$.
As before, we have $t_1^2,\ \alpha t_1t_2 \in \Nr(t) \Z_{\ell}$.
Moreover, one can find an auxiliary vector $w_3 \in \Z_{\ell}^4$ with $\langle w_1,w_3\rangle = 0$ and $\langle w_2,w_3\rangle = \ell^n$ for some $n$ bounded absolutely in terms of $Q$.
Then $t w_3 t^{-1}\in \Z_{\ell}^4$ implies $\ell^n t_1t_2,\ell^n\alpha t_2^2 \in \Nr(t)\Z_{\ell}$.
If $\Nr(t) =1$ then $t_1\in \Z_{\ell}$ and $t_2 \in \ell^{-\lfloor(\ord_{\ell}(\alpha)+n)/2\rfloor}\Z_{\ell}$ which shows that $t$ is contained in an order $\mathcal{O}'\supset \mathcal{O}_{\alpha\beta}$ with index $[\mathcal{O'}:\mathcal{O}_{\alpha\beta}]$ an absolutely bounded power of ${\ell}$; this implies the lemma in this case.
For the claim regarding $\overline{\torus}(\Z_{\ell})$ we rescale $t$ such that $t_1\in \Z_{\ell}^\times$. Then $\Nr(t) \in \Z_{\ell}^\times$ and so $t_2 \in \ell^{-n}\Z_{\ell}$.
This proves the lemma if $\ell^{2n} \mid \alpha\beta$. 
Otherwise the index of $\mathcal{O}_{\alpha\beta}$ in the maximal order is absolutely bounded which also implies the claim.
\end{proof}

Let $\mathcal{O}$ be the order generated by $w_1w_2$ in $K$ and set $\mathcal{O} \otimes \hat{\Z} = \hat{\mathcal{O}}$.
Let $\torus(\hat{\Z}) = \prod_{\ell}\torus(\Z_{\ell})$ and similarly for $\overline{\torus}$.
By the above claim, we have surjections
\begin{equation}\label{eq:class groups for T,Tbar}
\begin{split}
\lrquot{K^1}{\A_K^1}{\hat{\mathcal{O}}^1} \to 
\lrquot{{\torus}(\Q)}{{\torus}(\A_f)}{{\torus}(\hat{\Z})},\\
\lrquot{K^\times}{\A_K^\times}{\hat{\mathcal{O}}^\times \A_\Q^\times} \to 
 \lrquot{\overline{\torus}(\Q)}{\overline{\torus}(\A_f)}{\overline{\torus}(\hat{\Z})} 
\end{split}
\end{equation}
with kernel of size $O_Q(1)$.
As $\Q$ has class number one, $\lrquot{K^\times}{\A_K^\times}{\hat{\mathcal{O}}^\times \A_\Q^\times} \simeq \lrquot{K^\times}{\A_K^\times}{\hat{\mathcal{O}}^\times}$.
Moreover, by genus theory (see e.g.~\cite[\S7.1]{W-Linnik}), the natural map induced by inclusion 
\begin{align}\label{eq:genustheory}
 \lrquot{K^1}{\A_K^1}{\hat{\mathcal{O}}^1} \to \lrquot{K^\times}{\A_K^\times}{\hat{\mathcal{O}}^\times}   
\end{align}
is injective with cokernel of size $D^{o(1)}$.
Lastly, the group $\lrquot{K^\times}{\A_K^\times}{\hat{\mathcal{O}}^\times}$ is isomorphic to the Picard group of the order $\mathcal{O}$ (because proper $\mathcal{O}$-ideals are locally principal, see e.g.~\cite[Prop.~2.1]{ELMV-Ens}). By the class number formula and Siegel's lower bound, it is of size $D^{\frac{1}{2}+o(1)}$.
By \eqref{eq:class groups for T,Tbar}, the same estimate holds for $\#(\lrquot{{\torus}(\Q)}{{\torus}(\A_f)}{{\torus}(\hat{\Z})})$ and $\#(\lrquot{\overline{\torus}(\Q)}{\overline{\torus}(\A_f)}{\overline{\torus}(\hat{\Z})})$.

We finally turn to proving the claimed volume estimates. Up to a proportionaliy constant, the volume of $Y_\iota$ is given by the cardinality of the finite group
\begin{align*}
\mathcal{F} = Y_\iota / (\rho(\torus(\A)) \cap \overline{\G}(\R)\overline{K}_f) 
\simeq \G(\Q) \backslash \G(\Q)\rho(\torus(\A)) / (\rho(\torus(\A)) \cap \overline{\G}(\R)\overline{K}_f).
\end{align*}
where $\overline{K}_f$ is as in \eqref{eq:maximalcompact}.
We have maps
\begin{align*}
\lrquot{{\torus}(\Q)}{{\torus}(\A_f)}{{\torus}(\hat{\Z})} 
\to \mathcal{F} \to \lrquot{\overline{\torus}(\Q)}{\overline{\torus}(\A_f)}{\overline{\torus}(\hat{\Z})} 
\end{align*}
where the first map is surjective and the compositum has kernel of size $O_Q(1)$ by \eqref{eq:class groups for T,Tbar} and \eqref{eq:genustheory}.
This proves $\#\mathcal{F} = D^{\frac{1}{2}+o(1)}$ and hence the lemma.
\end{proof}

For any positive definite integral quaternary quadratic form $Q'$ and $n\in \N$ consider the set $\mathcal{X}(Q',n)$ of tuples $(\iota_1,\iota_2)$ with the following properties:
\begin{itemize}
\item $\iota_1,\iota_2$ are primitive representations of $q$ by $Q'$ with distinct images.
    \item There exists $k \in \SO_{Q'}(\Z_p)$ with $k|_{\iota_1(\Q^2)^\perp}=\id$ such that
    \begin{align}\label{eq:rot calX}
    \iota_2(x) \equiv k\iota_1(x) \mod p^{n}
    \end{align}
    for all $x\in \Z^2$.
\end{itemize}
Whenever $Q',Q''$ are equivalent, we have $\#\mathcal{X}(Q',n)=\#\mathcal{X}(Q'',n)$.

\begin{lemma}\label{lem:pair correlation to vectors}
We have for any $n \geq 1$
\begin{align*}
\mu_\iota' \times \mu_\iota'&(\{(x,y): y \in x \Bow(n)\})\\
&\ll_\varepsilon D^{-\frac{1}{2}+\varepsilon} + D^{-1+\varepsilon} \max_{Q'\in \spn(Q)} \#\mathcal{X}(Q',2n)
\end{align*}
\end{lemma}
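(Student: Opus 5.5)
Since $Y_\iota'$ is a single orbit of the torus, we first translate the correlation into a count of torus elements. Write $Y_\iota' = \overline{\G}(\Q)\rho(\torus(\A)g_{\iota,\infty})k$ with $k=k_{\iota,p}$, and parametrize $\mu_\iota'$ by Haar measure on $\torus(\A)$ modulo the stabilizer. Then
\[
\mu_\iota'\times\mu_\iota'(\{(x,y): y\in x\Bow(n)\}) = \mu_\iota'\big(\{x = x_0 t : x_0 t \in x_0\Bow(n)\}\big)
\]
for a fixed base point $x_0$ (transitivity of the torus action). Equivalently, it is the measure of the set of torus elements $t$ such that there is $\gamma\in\overline{\G}(\Q)$ with
\[
\gamma\,\rho(g_{\iota,\infty})k \in \rho(g_{\iota,\infty}) k\, \rho(t)\,\Bow(n)\text{-type neighbourhood}.
\]
We decompose this set according to $\gamma$. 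By the description of the Bowen ball (bounded at $\infty$ and at $\ell\neq p$, congruent to the identity mod $p^{n}$ after conjugation by $a^{\pm n}$), every relevant $\gamma$ lies in $\SO_{Q'}(\Z[1/p])$-type arithmetic data attached to some lattice $\mathfrak{L}=t_\star\Z^4$ in the spin genus. Passing to $\mathfrak{L}$ and its form $Q'\in\spn(Q)$, $\gamma$ maps the representation $\iota$ into a new primitive representation $\iota_2=\gamma\iota$ of $q$ by $Q'$.

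We then split into two cases. In the first, $\iota_2$ and $\iota$ have the same image, i.e.\ $\gamma$ normalizes the torus. These $\gamma$ contribute the diagonal-type term: the measure of $x_0\Bow(n)$ intersected with a bounded number of $\torus$-translates. Because $\torus(\A)\cap\Bow(n)$ lies in $\torus(\hat{\Z})\torus(\R)$, this measure is at most $\#\{\text{class group}\}^{-1} \cdot O(1)\ll D^{-1/2+\varepsilon}$ by Lemma~\ref{lem:volume} and its proof.

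In the second case the images are distinct, and we have to identify the $p$-adic condition. Conjugating by $k$ moves the torus to $A_p$. The Bowen ball $\Bow_p(n)=\bigcap_{|i|\le n}a^i K_p[1]a^{-i}$ has, as its $A_p$-invariant description, the elements that are $A_p$-translates of something congruent to the identity modulo $p^{2n}$ on the eigenspace coordinates. Indeed, the eigenvalues of $a$ are $p^{\pm2}$ and $1$, and the two-sided ball contracts both nontrivial eigendirections by $p^{2n}$. This step is where I expect the main obstacle: making precise that $y\in x\Bow(n)$ forces $\iota_2 \equiv k'\iota_1 \mod p^{2n}$ for some $k'\in\SO_{Q'}(\Z_p)$ acting trivially on $\iota_1(\Q^2)^\perp$. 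The natural choice is $k'$ equal to the $p$-component of the torus element, which fixes the complement. I would verify this via the explicit models in \S\ref{sec:diaggroup split}, \S\ref{sec:diaggroup non-split}. There $\Bow_p(n)$ is $A_p(\Z_p)$ times matrices whose off-diagonal (root) entries are divisible by $p^{2n}$, and these act on $\iota_0$ modulo $p^{2n}$ as elements of $A_p$.

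Each pair $(\iota,\iota_2)$ counted this way lies in $\mathcal{X}(Q',2n)$. The multiplicity is controlled by the fact that each such pair accounts for torus measure at most $\vol(Y_\iota)^{-1}$ per coset. Summing over the $O(D^{1/2+\varepsilon})$ base points (the lattices $t_\star\Z^4$, up to equivalence in $\spn(Q)$) and normalizing by $\vol(Y_\iota)^{2}\gg D^{1-\varepsilon}$, we obtain the bound $D^{-1+\varepsilon}\max_{Q'}\#\mathcal{X}(Q',2n)$. Here the sum over classes is absorbed because the total number of pairs, summed over the orbit, is the number of pairs up to $\SO_{Q'}(\Z)$, and $|\SO_{Q'}(\Z)|$ is bounded. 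Adding the two cases gives the lemma.
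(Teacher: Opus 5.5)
There is a genuine gap, and it sits exactly at the step you flag as the main obstacle: the resolution you sketch is incorrect. Since $\Ad(a)$ has eigenvalue $1$ with multiplicity two, the centralizer of $a$ is (up to finite index) the two-dimensional torus $k_{\iota,p}^{-1}\rho(\torus_\iota(\Q_p))\rho(\torus_{\iota^\perp}(\Q_p))k_{\iota,p}$, not a conjugate of $A_p$. Here $\torus_{\iota^\perp}$ is the spin group of $\iota(\Q^2)$: it rotates $\iota(\Q^2)$ and fixes $\iota(\Q^2)^\perp$.

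Concretely, take the split model of \S\ref{sec:diaggroup split} and the element $(\diag(s,s^{-1}),\diag(s^{-1},s))$ with $s=1+p$. It commutes with $a_p$ and is congruent to the identity mod $p$, so it lies in $\Bow_p(n)$ for every $n$. It fixes the anti-diagonal matrices, which form the complement of $\iota_0(\Q_p^2)$. It sends $\iota_0(x,y)=\diag(x,y)$ to $\diag(s^2x,s^{-2}y)$, which is not congruent to $\iota_0(x,y)$ modulo $p^2$. So your claim that elements of $\Bow_p(n)$ act on $\iota_0$ modulo $p^{2n}$ as elements of $A_p$ (i.e.\ trivially) is false. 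The neutral directions of $\Bow_p(n)$ are constrained only modulo $p$, and they are not all contained in $A_p$.

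Your `natural choice' of $k'$ as the $p$-component of the torus element fails for a related reason. $\torus_\iota$ fixes $\iota(\Q^2)$ pointwise and acts on the complement, so it acts trivially on $\iota$ and is the wrong torus.

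The missing idea is the factorization the paper uses. Every $g\in k_{\iota,p}\Bow_p(n)k_{\iota,p}^{-1}$ can be written as $g=g'tt'$, where
\begin{itemize}
\item $g'\in\overline{\G}(\Z_p)$ with $g'\equiv\id\mod p^{2n}$;
\item $t\in\rho(\torus_\iota(\Q_p))\cap\overline{\G}(\Z_p)$ and $t'\in\rho(\torus_{\iota^\perp}(\Q_p))\cap\overline{\G}(\Z_p)$, with $t\equiv t'\equiv\id\mod p$.
\end{itemize}
Then $t$ fixes $\iota$ and $g'$ is invisible modulo $p^{2n}$. This gives $\iota_y\equiv (\gamma_xt_x)_p\,t'\,(\gamma_xt_x)_p^{-1}\iota_x\mod p^{2n}$, and the conjugate of $t'$ is an integral rotation fixing $\iota_x(\Q_p^2)^\perp$. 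This second torus is exactly why $\mathcal{X}(Q',n)$ is defined with a rotation $k$ satisfying $k|_{\iota_1(\Q^2)^\perp}=\id$.

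A secondary problem is your opening reduction to a single base point. The set $\{(x,y):y\in x\Bow(n)\}$ is not invariant under the diagonal torus action. Indeed, $y\in x\Bow(n)$ is equivalent to $ys\in xs\,(s^{-1}\Bow(n)s)$, and already $a$ does not normalize $\Bow_p(n)$. You must keep both coordinates, as the paper does:
\begin{itemize}
\item attach to $x$ the class $\mathfrak{L}_i$ of $(t_{x,f})_\star\Z^4$ in $\spn(Q)$; the Bowen condition gives the same class for $y$;
\item form $(\iota_x,\iota_y)$ as representations of $q$ by the same $\mathfrak{L}_i$;
\item use that the fibres of $(x,y)\mapsto\Aut(\mathfrak{L}_i).(\iota_x,\iota_y)$ lie in $xT^{\mathrm{cpt}}\times yT^{\mathrm{cpt}}$, which has measure $\ll D^{-1+\varepsilon}$ by Lemma~\ref{lem:volume}.
\end{itemize}
Your last paragraph gestures at this, but it conflates the roughly $D^{1/2}$ orbit points modulo $T^{\mathrm{cpt}}$ with the $O_Q(1)$ classes of $\spn(Q)$. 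With these two corrections, your outline becomes the paper's proof.
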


\begin{proof}
The proof of the lemma is arguably standard and a variant of it is, for instance, already contained in \cite{ELMV-Ens,W-Linnik}. For the readers' convenience, we provide a proof nevertheless.
We follow the notation set up in \S\ref{sec:translation}.
We also fix a set of representatives $\Z^4=\mathfrak{L}_1,\mathfrak{L}_2,\ldots,\mathfrak{L}_r$ for the $\SO_Q(\Q)$-orbits on $\spn(Q)$.
(As usual, each of these lattices gives rise to a quadratic form by representing $Q$ on a basis of the lattice.)
By weak approximation, we may assume $\mathfrak{L}_i \otimes \Z_p = \Z_p^4$ for all $i$.

First, we show how to construct from any a pair $(x,y) \in (Y_{\iota}')^2$ with  $y \in x \Bow(n)$ a pair of representation; much of this discussion is already contained in \S\ref{sec:period stabilizer}.
Write
\begin{align*}
x = \SO_Q(\Q)t_x\rho(g_{\iota,\infty})k_{\iota,p}, \quad
y = \SO_Q(\Q)t_y\rho(g_{\iota,\infty})k_{\iota,p}
\end{align*}
for $t_x,t_y \in \rho(\torus_{\iota}(\A))$.
Let $i \in \{1,\ldots,r\}$ be such that $(t_{x,f})_\ast \mathfrak{L} \in \SO_Q(\Q)\mathfrak{L}_i$.
Since $y \in x \Bow(n)$, this also implies $(t_{y,f})_\ast \mathfrak{L} \in \SO_Q(\Q)\mathfrak{L}_i$.
We may therefore select $\gamma_x,\gamma_y \in \SO_Q(\Q)$ so that $(\gamma_x t_x)_f,(\gamma_{y}t_y)_f$ map the lattice $\mathfrak{L} = \Z^4$ into the lattice $\mathfrak{L}_i$ and so that $\gamma_y t_y \rho(g_{\iota,\infty}) k_{\iota,p} \in \gamma_x t_x \rho(g_{\iota,\infty}) k_{\iota,p} \Bow(n) $.
As was established in \eqref{eq:from torus to rep}, $\iota_x=\gamma_x\iota$ and $\iota_y = \gamma_y \iota$ are two primitive representations of $q$ by $\mathfrak{L}_i$ (or, equivalently, the quadratic form on $\mathfrak{L}_i$).
One can check that the pair of primitive representations $(\iota_x,\iota_y)$ is uniquely defined modulo the diagonal left action of $\Aut(\mathcal{L}_i)$.
Moreover, the map $(x,y) \mapsto \Aut(\mathcal{L}_i).(\iota_x,\iota_y)$ satisfies the following injectivity property: whenever $(x,y),(x',y')$ have the same image, they differ by an element of the compact group
\begin{align*}
T^{\mathrm{cpt}} = k_{\iota,p}^{-1}(\overline{\torus}_\iota(\Q)\torus_\iota(\A)\cap \overline{\G}(\R)K_f)k_{\iota,p}
\end{align*}
in each coordinate.
Similarly, $\iota_x = \iota_y$ if $x,y$ differ by an element of $T^{\mathrm{cpt}}$.

Next, we record the meaning of the Bowen ball displacement in terms of the representations.
Any $g \in k_{\iota,p} \Bow_p(n) k_{\iota,p}^{-1}$ can be written as $g = g' t t'$ where $g'\in \overline{\G}(\Z_p)$ satisfies $g' \equiv \id \mod p^{2n}$ and where $t\in \rho(\torus_{\iota}(\Q_p))\cap \overline{\G}(\Z_p)$, $t'\in \rho(\torus_{\iota^\perp}(\Q_p)) \cap \overline{\G}(\Z_p)$ satisfy $t\equiv t' \equiv \id \mod p$.
Here, $\torus_{\iota^\perp}<\G$ is the spin group of $\iota(\Q^2)$.
We may thus write $(\gamma_y t_y)_p \in (\gamma_x t_x)_p g'tt'$ for $g',t,t'$ as above. Therefore, for all $v\in \Z^2$
\begin{align*}
\iota_y(v) = (\gamma_yt_y)_p \iota(v) = (\gamma_x t_x)_p g'tt' \iota(v)
\equiv \big((\gamma_x t_x)_pt'(\gamma_x t_x)_p^{-1}\big) \iota_x(v) \mod p^{2n}
\end{align*}
where we used that $t,t',t_x$ commute and that $(\gamma_x t_x)_p$ is integral.
Notice that $(\gamma_x t_x)t'(\gamma_x t_x)^{-1}$ is an integral rotation fixing the orthogonal complement of $\iota_x(\Q_p^2)$.
This proves that $(\iota_x,\iota_y) \in \mathcal{X}(\mathfrak{L}_i,2n)$ unless $\iota_x=\iota_y$ or equivalently $x,y$ differ by an element in $T^{\mathrm{cpt}}$.
As $\torus_\iota$ is abelian and using Lemma~\ref{lem:volume}, we have $\mu_\iota'(xT^{\mathrm{cpt}})\ll \vol(Y_{\iota})^{-1}\ll_\varepsilon D^{-\frac{1}{2}+\varepsilon}$ for any $x \in Y_\iota$.
By Fubini, the set $\{(x,y): y\in xT^{\mathrm{cpt}}\}$ has $\mu_\iota'\times \mu_\iota'$-measure $\ll_\varepsilon D^{-\frac{1}{2}+\varepsilon}$.
Similarly, for any $(x,y) \in Y_\iota$ we have
\begin{align*}
\mu_\iota' \times \mu_\iota'\big(\{
(x',y'):(\iota_{x'},\iota_{y'})=(\iota_{x},\iota_{y})
\}\big) 
= \mu_\iota' \times \mu_\iota'( xT^{\mathrm{cpt}} \times  yT^{\mathrm{cpt}}) 
\ll_\varepsilon D^{-1+\varepsilon}.
\end{align*}
This proves the lemma.
\end{proof}

\subsection{Reduction to a counting problem on a hypersurface}

Henceforth, our aim is to establish (non-trivial) upper bounds on $\max_{Q'\in \spn(Q)} \#\mathcal{X}(Q',2n)$.
Since $\mathcal{X}(Q',2n)=\mathcal{X}(Q'',2n)$ for all $Q',Q''$ equivalent, it suffices to establish an upper bound for an individual set $\#\mathcal{X}(Q',2n)$.
For simplicity of the exposition, we will consider below only $Q'=Q$.

First, we apply a corollary of the Siegel mass formula to reduce the counting problem for the sets $\mathcal{X}(Q,2n)$ to a more manageable counting problem on a degree $4$ hypersurface in $\mathbf{A}^5$.
We begin with the following elementary, though maybe surprisingly intricate, lemma which captures the congruence conditions at the prime~$p$.

\begin{lemma}
Let $(\iota_1,\iota_2) \in \mathcal{X}(Q,2n)$ for some $n\in \N$.
Define the half-integers
\begin{equation}\label{eq:xi's from reps}
\begin{split}
x_1 = \langle \iota_1(e_1),\iota_2(e_1)\rangle_{Q},\ x_2 = \langle \iota_1(e_1),\iota_2(e_2)\rangle_{Q},\\
x_3 = \langle \iota_1(e_2),\iota_2(e_1)\rangle_{Q},\ x_4 = \langle \iota_1(e_2),\iota_2(e_2)\rangle_{Q}.
\end{split}
\end{equation}
Then we have
\begin{align*}
x_1x_4-x_2x_3 &\equiv D \mod p^{4n},\\
Cx_1-Ax_4 &\equiv 0 \mod p^{4n},\\
C(x_2+x_3)-Bx_4 &\equiv 0 \mod p^{4n},\\
A(x_2+x_3)-Bx_1 &\equiv 0 \mod p^{4n}.
\end{align*}
\end{lemma}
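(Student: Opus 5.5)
The plan is to express the Gram-type matrix $X=(x_i)$ through a matrix $T\in\Mat_2(\Z_p)$ that lies in $\SO_q$ modulo $p^{4n}$, and then read off all four congruences from the algebraic structure of $\SO_q(\Z/p^{4n})$. Throughout we work in $\Z_p$ with $p$ odd, so half-integers cause no trouble. Write $G=\begin{pmatrix}A&B/2\\B/2&C\end{pmatrix}$ for the Gram matrix of $q$, so $\det G=D$ and $p\nmid D$. Put $V=\iota_1(\Q_p^2)$. Since $p\nmid D$, the orthogonal projection $\Z_p^4\to V$ is integral and $\Z_p^4=\iota_1(\Z_p^2)\oplus(V^\perp\cap\Z_p^4)$, by the Gram--Schmidt remark preceding Lemma~\ref{lem:orbits over Zp}.

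\emph{Step 1: the matrix $T$.} Take $k$ as in the definition of $\mathcal{X}(Q,2n)$. Since $k$ fixes $V^\perp$ pointwise, it preserves $V$. Let $R\in\Mat_2(\Z_p)$ be the matrix of $k|_V$ in the basis $\iota_1(e_1),\iota_1(e_2)$; then $R^TGR=G$ and $\det R=1$. Decompose $\iota_2(e_j)=\iota_1(S'e_j)+w_j$ with $w_j\in V^\perp\cap\Z_p^4$. Comparing with $\iota_2\equiv k\iota_1 \mod p^{2n}$ and using the integral direct-sum decomposition gives $S'\equiv R\mod p^{2n}$ and $w_j\equiv 0\mod p^{2n}$. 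Set $T=S'$. Then $X=G T$, since $\langle\iota_1(e_i),\iota_1(Te_j)\rangle_Q=(GT)_{ij}$ and $w_j\perp V$.

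\emph{Step 2: the key point, $T\in\SO_G$ modulo $p^{4n}$.} This is the step where the exponent doubles from $2n$ to $4n$. Both $\iota_2$ and $\iota_1$ represent $q$, so $G=T^TGT+\big(\langle w_i,w_j\rangle_Q\big)_{ij}$. The second term is $\equiv0\mod p^{4n}$ because each $w_j$ is divisible by $p^{2n}$. Hence $T^TGT\equiv G\mod p^{4n}$, and taking determinants gives $(\det T)^2\equiv1\mod p^{4n}$. Also $\det T\equiv\det R=1\mod p$, and since $p$ is odd this forces $\det T\equiv 1\mod p^{4n}$. Therefore $x_1x_4-x_2x_3=\det(GT)\equiv D\mod p^{4n}$, which is the first congruence.

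\emph{Step 3: the linear congruences.} Let $\Omega=\begin{pmatrix}0&1\\-1&0\end{pmatrix}$. For $\det T\equiv1$ we have $T^{-T}\equiv\Omega T\Omega^{-1}$, so $T^TGT\equiv G$ becomes $MT\equiv TM$ with $M=\Omega^{-1}G$. The entries of $M$ are, up to sign, $A,C,\pm B/2$, and not all of them are divisible by $p$, since otherwise $p\mid D$. Thus $M$ is non-scalar modulo $p$, and its commutant in $\Mat_2(\Z/p^{4n})$ is $\{a+bM\}$. I expect this commutant argument to need the only real care: I would handle it by a direct $2\times2$ computation, solving for the commutant using whichever entry of $M$ is a unit. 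Granting it, write $T\equiv a+bM$, so $X=GT\equiv aG+bG\Omega^{-1}G$. Since $G\Omega^{-1}G=-D\,\Omega$ (a determinant identity for symmetric $2\times2$ $G$), we get $X\equiv aG+b'\Omega$. Hence $x_1\equiv aA$, $x_4\equiv aC$, and $x_2+x_3\equiv aB$. The three linear congruences $Cx_1-Ax_4\equiv0$, $C(x_2+x_3)-Bx_4\equiv0$ and $A(x_2+x_3)-Bx_1\equiv0$ modulo $p^{4n}$ then follow immediately.
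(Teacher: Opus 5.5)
Your proof is correct, and it takes a genuinely different route from the paper's. The paper never isolates a $2\times 2$ matrix like your $T$. It gets the determinant congruence by expanding $Q(w_1\wedge w_2-w_1'\wedge w_2')$, which is $O(p^{4n})$ because $k$ fixes $w_1\wedge w_2$; this is the same input as your $\det(k|_V)=1$. For the linear congruences it first proves them only modulo $p^{2n}$ by comparing $2\times 2$ Gram minors. It then upgrades them to $p^{4n}$ using the four $3$-fold Gram determinants such as $Q(w_1\wedge w_2\wedge w_1')\equiv 0 \bmod p^{4n}$, applied to sheared bases $w_1+tw_2$ and expanded as polynomials in $t$. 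That step needs a case split on whether $p\mid x_2-x_3$, and a further subcase $p\mid C$ handled with the opposite shear.

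You instead use the integral splitting $\Z_p^4=\iota_1(\Z_p^2)\oplus(V^\perp\cap\Z_p^4)$, valid since $p\nmid 2D$, and this makes the doubling $2n\to 4n$ transparent. The $p^{2n}$-error lives entirely in the $V^\perp$-components $w_j$ and enters $T^{T}GT\equiv G$ only quadratically. Hence $T$ lies in $\SO_q$ modulo $p^{4n}$. Because $\SO_q(\Z/p^{4n})$ is contained in the centralizer $\{a+bM\}$ of the regular element $M=\Omega^{-1}G$, you get $X\equiv aG-bD\Omega$ in one stroke.

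This buys a short, case-free argument and slightly more information. It shows directly that $(x_1,x_2+x_3,x_4)\equiv a(A,B,C)\bmod p^{4n}$, with $x_2-x_3\equiv -2bD$ and $a^2+b^2D\equiv 1$. The paper only extracts that parametrization later in the counting argument, from the linear congruences.

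Your commutant step is fine, with one tightening. The condition $MT=TM$ amounts to the three $2\times 2$ minors of the rows $(t_{11}-t_{22},t_{12},t_{21})$ and $(m_{11}-m_{22},m_{12},m_{21})=(-B,-C,A)$ vanishing. So what you need is a unit among $A,B,C$, and $p\nmid D$ guarantees one. Your phrase ``not all entries divisible by $p$, hence non-scalar'' implicitly uses $\operatorname{tr}M=0$ and $p$ odd.
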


\begin{proof}
It will be convenient to define an inner product on $\wedge^2 \Z^4$ through
\begin{align}\label{eq:disc}
\langle v_1\wedge v_2,u_1\wedge u_2\rangle_Q 
= \det\begin{pmatrix}
\langle v_1,u_1 \rangle_Q & \langle v_1,u_2 \rangle_Q \\
\langle v_2,u_1 \rangle_Q & \langle v_2,u_2 \rangle_Q
\end{pmatrix}
\end{align}
and similarly on $\wedge^3 \Z^4$.
The quadratic form $Q$ extends analogously (this is the discriminant).

Let $w_1 = \iota_1(e_1)$, $w_2 = \iota_1(e_1)$, $w_1' = \iota_2(e_1)$, and $w_2' = \iota_2(e_2)$. Let $k$ be a rotation as in \eqref{eq:rot calX} so that $w_i' \equiv kw_i \mod p^{2n}$ for $i=1,2$.
We begin by proving the quadratic congruence equation.
For this, notice that
\begin{align*}
Q(w_1\wedge w_2 - w_1' \wedge w_2') =D - 2\langle w_1 \wedge w_2,w_1'\wedge w_2'\rangle_Q +D
= 2D - (x_1x_4-x_2x_3)
\end{align*}
and on the other hand
\begin{align*}
Q(w_1\wedge w_2 - w_1' \wedge w_2') &= Q(w_1\wedge w_2 - k.w_1 \wedge k.w_2+O(p^{2n})) = O(p^{4n}).
\end{align*}
This verifies the quadratic congruence condition in the lemma.

The linear congruence conditions are more involved.
We begin by noting that they are valid mod $p^{2n}$ (instead of $p^{4n}$).
Indeed,
\begin{align*}
Cx_1-\tfrac{1}{2}Bx_2 &=
\det \begin{pmatrix}
\langle w_1,w_1'\rangle & \langle w_1,w_2'\rangle \\
 \langle w_2',w_1'\rangle & \langle w_2',w_2'\rangle
\end{pmatrix}
\equiv \det \begin{pmatrix}
\langle w_1,w_1\rangle & \langle w_1,w_2\rangle \\
 \langle w_2',w_1\rangle & \langle w_2',w_2\rangle
\end{pmatrix}\\
&= Ax_4 - \tfrac{1}{2}Bx_2 \mod p^{2n}
\end{align*}
and similarly for the other linear congruence equations.

We now aim to upgrade the above congruence equations mod $p^{2n}$ to the desired linear congruence equations mod $p^{4n}$.
For this, it will be convenient to consider for $t = 0,\ldots,p-1$ the vectors
\begin{align*}
w_1(t) = w_1+tw_2,\ w_2(t)= w_2,\ w_1'(t) = w_1'+tw_2',\ w_2'(t)= w_2'.
\end{align*}
Clearly, $w_1(t)' \equiv k.w_1(t) \mod p^{2n}$. We define $x_i(t)$ in analogy to the previous inner products $x_i$ so that, in particular, $x_i(0)=x_i$.
Furthermore, set $A(t) = \langle w_1(t), w_1(t)\rangle$ and define $B(t),C(t)$ similarly.
It is easy to see that 
\begin{align*}
B(t)^2-4A(t)C(t) =-4D,\quad x_1(t)x_4(t)-x_2(t)x_3(t) = x_1x_4-x_2x_3
\end{align*}
and that the analogous linear congruence conditions are valid mod $p^{2n}$.
Also, notice that $x_1(\cdot),A(\cdot)$ are quadratic polynomials in $t$, $x_2(\cdot),x_3(\cdot),B(\cdot)$ are linear polynomials, and $C(\cdot),x_4(\cdot)$ are constant.

We now find additional quadratic congruence conditions.
Notice that
\begin{align*}
O(p^{4n}) &= Q(w_1(t)\wedge w_2(t) \wedge w_1'(t)) \\
&= -C(t)x_1(t)^2 +B(t)x_1(t)x_3(t)-Ax_3(t)^2 + A(t)D.
\end{align*}
Proceeding similarly with the pure wedges $w_1(t)\wedge w_2(t) \wedge w_2'(t),w_1(t)\wedge w_1'(t) \wedge w_2'(t)$, and $w_2(t)\wedge w_1'(t) \wedge w_2'(t)$, we obtain
\begin{equation}\label{eq:quadcongeqfrom3wedge}
\begin{split}
C(t)x_1(t)^2 -B(t)x_1(t)x_3(t) + A(t)x_3(t)^2 \equiv A(t)D \mod p^{4n},\\
C(t)x_2(t)^2 -B(t)x_2(t)x_4(t) + A(t)x_4(t)^2 \equiv C(t)D \mod p^{4n},\\
C(t)x_1(t)^2 -B(t)x_1(t)x_2(t) + A(t)x_2(t)^2 \equiv A(t)D \mod p^{4n},\\
C(t)x_3(t)^2 -B(t)x_3(t)x_4(t) + A(t)x_4(t)^2 \equiv C(t)D \mod p^{4n}.
\end{split}
\end{equation}
In particular, taking the difference of the first and the third resp.~the second and the fourth equation in \eqref{eq:quadcongeqfrom3wedge}, we deduce
\begin{equation}\label{eq:quadcongeqfrom3wedge-2}
\begin{split}
(x_2-x_3)\big(A(t)(x_2(t)+x_3(t))-B(t)x_1(t)\big) \equiv 0 \mod p^{4n},\\
(x_2-x_3)\big(C(t)(x_2(t)+x_3(t))-B(t)x_4(t)\big) \equiv 0 \mod p^{4n}.
\end{split}
\end{equation}
using $x_2(t)-x_3(t)=x_2-x_3$.
Similarly, taking $C(t)$ times the first minus $A(t)$ times the fourth equation in \eqref{eq:quadcongeqfrom3wedge} (or similarly with the second and third), we obtain
\begin{equation}\label{eq:quadcongeqfrom3wedge-3}
\begin{split}
\big(C(t)x_1(t)-A(t)x_4(t)\big)\big(C(t)x_1(t)+A(t)x_4(t)-B(t)x_3(t)\big) \equiv 0 \mod p^{4n},\\
\big(C(t)x_1(t)-A(t)x_4(t)\big)\big(C(t)x_1(t)+A(t)x_4(t)-B(t)x_2(t)\big) \equiv 0 \mod p^{4n}.
\end{split}
\end{equation}

\textsc{Case 1:} $p \nmid (x_2-x_3)$.

In particular, we have $A(x_2+x_3)-Bx_1 \equiv C(x_2+x_3)-Bx_4 \equiv0 \mod p^{4n}$ by \eqref{eq:quadcongeqfrom3wedge-2} at $t=0$.
Applying the first congruence equation in \eqref{eq:quadcongeqfrom3wedge-2} at arbitrary $t$ and expanding the polynomials in $t$ we also obtain
\begin{align*}
0 &\equiv \big(B x_{4}-C(x_{2} + x_{3})\big) t^{2} + 2(- C x_{1} + A x_{4}) t - B x_{1} + A x_{2} + A x_{3}\\
&\equiv 2(- C x_{1} + A x_{4}) t - B x_{1} + A x_{2} + A x_{3} \mod p^{4n}
\end{align*}
Since $p \neq 2$, this shows also $ A x_{4} - C x_{1}\equiv 0\mod p^{4n}$ concluding the proof in this case.

\textsc{Case 2:} $p \mid (x_2-x_3)$.

Since the linear congruence conditions are valid mod $p$, we have $x_2(t) \equiv x_3(t)$, $C(t)x_1(t)\equiv A(t) x_4(t)$, $C(t)x_2(t) \equiv \frac{B(t)}{2}x_4(t)$, and $A(t)x_2(t)\equiv \frac{B(t)}{2}x_1(t)$ modulo $p$.
Since $p \nmid D$, at least one of $A(t),B(t),C(t)$ is not divisible by $p$ and, therefore, the vector $(x_1(t),x_2(t),x_3(t),x_4(t))$ is a multiple of $(A(t),\frac{B(t)}{2},\frac{B(t)}{2},C(t))$.
By the quadratic congruence condition $x_1x_4-x_2x_3 \equiv D\mod p$, the multiple is non-zero, say $(x_1,x_2,x_3,x_4) \equiv \lambda_t(A,\frac{B}{2},\frac{B}{2},C)\mod p$ for some $\lambda_t \in \{1,\ldots,p-1\}$.
Thus,
\begin{align*}
C(t)x_1(t)+A(t)x_4(t)-B(t)x_2(t)
\equiv \lambda_t (2A(t)C(t) -\tfrac{1}{2}B(t)^2) \not\equiv 0 \mod p
\end{align*}
and by \eqref{eq:quadcongeqfrom3wedge-3} we deduce $C(t)x_1(t)-A(t)x_4(t)\equiv 0 \mod p^{4n}$.
Equivalently,
\begin{align*}
\big(C (x_{2} + x_{3}) - B x_{4}\big) t + C x_{1} - A x_{4} \equiv 0 \mod p^{4n}
\end{align*}
and we obtain $C (x_{2} + x_{3}) - B x_{4}\equiv C x_{1} - A x_{4} \equiv 0 \mod p^{4n}$.
If $p \nmid C$, this implies the lemma.

Assume $p \mid C$.
We replace the shear used above by the shear $(w_1,w_2)\mapsto (w_1,w_2+tw_1)$ and similarly for $(w_1',w_2')$. Following the same argument we arrive at $C(t)x_1(t)-A(t)x_4(t)\equiv 0 \mod p^{4n}$ where $A(\cdot),\ldots$ are defined analogously. Expanding again in $t$ we find 
\begin{align*}
\big(B x_{1} - A (x_{2} +x_{3})\big) t + C x_{1} - A x_{4}\equiv 0 \mod p^{4n}
\end{align*}
and conclude.
\end{proof}

Suppose that $\iota_1,\iota_2$ are two representations of $q$ by $Q$ with distinct images and define $x_1,x_2,x_3,x_4$ as in \eqref{eq:xi's from reps}.
If the lattice $\iota_1(\Z^2)+\iota_2(\Z^2)$ has full rank, i.e.~rank $4$, then the discriminant on this lattice is, clearly, the index squared times the discriminant of $Q$. 
Denote this index by $x_0$.
Then
\begin{align*}
\disc(Q)x_0^2 &= \det \begin{pmatrix}
A & B/2 & x_1 & x_2 \\
B/2 & C & x_3 & x_4 \\
x_1 & x_3 & A & B/2 \\
x_2 & x_4 & B/2 & C
\end{pmatrix} \\
&= (x_1x_4-x_2x_3 -D)^2 - (Cx_1-Ax_4)^2 \\
&\qquad- (Bx_1-A(x_2+x_3))(Bx_4-C(x_2+x_3))
\end{align*}
When the lattice spanned by $\iota_1(\Z^2)$ and $\iota_2(\Z^2)$ has rank $3$, the above matrix has determinant zero and we obtain a solution to the above equation with $x_0 = 0$.
The above equation cuts out the aforementioned hypersurface in $\mathbf{A}^5$.
If $(\iota_1,\iota_2) \in \mathcal{X}(Q,2n)$ then $x_0 \equiv 0 \mod p^{4n}$.

Note that the Cauchy-Schwarz inequality implies the Archimedean bounds
\begin{align*}
|x_0|\ll D,\quad
|x_1| \leq A,\quad |x_4| \leq  C,\quad |x_2|,|x_3| \leq \sqrt{AC}.
\end{align*}
The specific bound on $x_0$ will be inconsequential to us and we note that the above degree $4$ equation implies $|x_0|\ll D$ as well.

Also, whenever $x_1 =A$ then $\iota_1(e_1)=\iota_2(e_1)$ and so $x_2=x_3=\tfrac{B}{2}$. The same conclusion is reached when $x_4 =C$.
Also, when $x_1=-A$ or $x_4 = -C$ then $x_2=x_3=-\tfrac{B}{2}$.
In all of these cases, $x_0=0$.

The following definitions collects all the properties of the tuples $(x_0,\ldots,x_4)$.

\begin{definition}\label{def:set S}
For any $n\in \N$ the subset $\mathcal{S}(n) \subset \frac{1}{2}\Z^5$ is the set of tuples $(x_0,\ldots,x_4)$ satisfying
\begin{equation}\label{eq:degree4 equation}
\begin{split}
\disc(Q)x_0^2 &= (x_1x_4-x_2x_3 -D)^2 - (Cx_1-Ax_4)^2 \\
&\qquad- (Bx_1-A(x_2+x_3))(Bx_4-C(x_2+x_3))
\end{split}
\end{equation}
as well as the following constraints:
\begin{enumerate}[label=\textnormal{(S\arabic*)}]
    \item\label{item:S1} $|x_1| \leq A$, $|x_4| \leq C$, and $|x_2|,|x_3| \leq \sqrt{AC}$.
    \item\label{item:S2} We have
\begin{equation}\label{eq:originalcongeqforx}
    \begin{aligned}
x_1x_4-x_2x_3 &\equiv D \mod p^{4n},\\
Cx_1-Ax_4 &\equiv 0 \mod p^{4n},\\
C(x_2+x_3)-Bx_4 &\equiv 0 \mod p^{4n},\\
A(x_2+x_3)-Bx_1 &\equiv 0 \mod p^{4n}.
\end{aligned}
\end{equation}
\item\label{item:S3} If $x_1 =A$ or $x_4 = C$ then $x_0 = 0$, $x_2=x_3=\frac{B}{2}$.
If $x_1 =-A$ or $x_4 = -C$ then $x_0 = 0$, $x_2=x_3=-\frac{B}{2}$.
\end{enumerate}
\end{definition}

\begin{proposition}\label{prop:from vectors to inner products}
For any $n\in \N$ with $p^{4n} < \sqrt{D}$ and any $\delta\in (0,\frac{1}{4})$ we have
\begin{align*}
\# \mathcal{X}(Q,2n) \ll_{Q,\varepsilon} D^{\delta+\varepsilon} \#\mathcal{S}(n) + \frac{D^{\frac{3}{2}-\delta + \varepsilon}}{p^{8n}} + \frac{D^{7/8+\varepsilon}}{p^{5n}}  +D^{\frac{1}{2}+\varepsilon}.
\end{align*}
Moreover, if $D$ is fundamental, we have for any $n\in \N$
\begin{align*}
\# \mathcal{X}(Q,2n) \ll_{Q,\varepsilon} D^{\varepsilon} \#\mathcal{S}(n).
\end{align*}
The analogous bounds hold for any $Q' \in \spn(Q)$.
\end{proposition}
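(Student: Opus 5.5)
We study the map $\Psi:\mathcal{X}(Q,2n)\to \frac12\Z^5$ sending $(\iota_1,\iota_2)$ to $(x_0,x_1,\dots,x_4)$. Here $x_i$ are the inner products in \eqref{eq:xi's from reps} and $x_0$ is the (signed) index of $\iota_1(\Z^2)+\iota_2(\Z^2)$ in its saturation, set to $0$ in rank three. By the preceding lemma and the discussion before Definition~\ref{def:set S}, $\Psi$ lands in $\mathcal{S}(n)$. Condition \ref{item:S1} is Cauchy--Schwarz, \ref{item:S2} is the lemma, and \ref{item:S3} comes from equality cases. The proof therefore reduces to bounding fibre sizes of $\Psi$. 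A fibre is determined by the Gram matrix $G$ of $(\iota_1(e_1),\iota_1(e_2),\iota_2(e_1),\iota_2(e_2))$, so we split according to the rank of the lattice $L=\iota_1(\Z^2)+\iota_2(\Z^2)$.

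\emph{Rank four.} Here $G$ is the Gram matrix of a full-rank quaternary form $Q_G$ with $\disc(Q_G)=\disc(Q)x_0^2$. The fibre injects into the set of representations of $Q_G$ by $Q$, i.e.\ embeddings of $L$ into $\Z^4$ with given Gram matrix. By Appendix~\ref{sec:Siegel mass} (Siegel mass formula), this number is $\ll_{Q,\varepsilon} D^{\varepsilon}$ times a product of local densities. The densities are bounded unless $x_0$ carries large square factors, measuring how far $L$ is from primitive. To handle this we split by the index $x_0'$ of $L$ in its saturation. If $x_0'\le D^{\delta}$, we crudely bound the fibre by $D^{\delta+\varepsilon}$, which gives the term $D^{\delta+\varepsilon}\#\mathcal{S}(n)$. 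If $x_0'>D^{\delta}$, we instead count such pairs directly. Given $\iota_1$, the vector $\iota_2(e_j)$ lies in the set of vectors of norm $\le C$ in a coset modulo $p^{2n}$. We impose in addition that $L$ has large index, i.e.\ $\iota_2(\Z^2)$ lies in a sublattice of index $\ge D^{\delta}$ of $\Z^4$ containing $\iota_1(\Z^2)$ up to torsion. A lattice point count over these sublattices, with $\sum$ over $\iota_1$ contributing $r(q,Q)\ll D^{1/2+\varepsilon}$, should produce $D^{3/2-\delta+\varepsilon}p^{-8n}$. The factor $p^{-8n}$ arises because both $\iota_2(e_1),\iota_2(e_2)$ are fixed mod $p^{2n}$ in four coordinates. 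That factor is genuine only when $p^{2n}$ is below the relevant successive minima, which is where $p^{4n}<\sqrt D$ enters.

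\emph{Rank three.} Here $x_0=0$ and $G$ is a positive semidefinite matrix of rank three. The fibre is counted via representations of the ternary form on the saturation of $L$, again by Appendix~\ref{sec:Siegel mass}, giving $D^{\varepsilon}$ per point. The degenerate points with $\iota_1(e_j)=\pm\iota_2(e_j)$ (condition \ref{item:S3}) are handled separately. There, a whole vector is shared, and counting the remaining freedom gives the terms $D^{7/8+\varepsilon}p^{-5n}$ and $D^{1/2+\varepsilon}$. The latter is essentially $r(q,Q)$ times $O(1)$ choices. The former comes from vectors in a fixed coset mod $p^{2n}$ of the orthogonal complement of a shared vector, using $A\ge D^{\delta_0}$.

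For fundamental $D$, every representation is primitive and $L$ is forced to be saturated-compatible. Local densities are then uniformly bounded and no large-index case arises, so each fibre has size $\ll D^\varepsilon$. For $Q'\in\spn(Q)$ the argument is identical. The main obstacle is the large-index rank-four case. One must count pairs whose span has imprimitive index exceeding $D^\delta$ with the congruence mod $p^{2n}$ retained, and I would first try to organize this by summing over the saturation $M\supset L$ and counting $\iota_2(e_j)\in M$ with a geometry-of-numbers bound.
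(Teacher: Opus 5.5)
\textbf{The invariant you split on is the wrong one, so the rank-four case has a genuine gap.} In rank four your $x_0'$ is just $[\Z^4:L]=|x_0|$. For every pair in $\mathcal{X}(Q,2n)$ the congruences \ref{item:S2} force $p^{4n}\mid x_0$. Moreover, $\disc(Q)x_0^2$ is the right-hand side of \eqref{eq:degree4 equation}, which is generically of size $D^2$ on the box \ref{item:S1}, so typically $|x_0|\asymp D$. Hence whenever $p^{4n}>D^{\delta}$ (in particular in the application, where $p^{4n}\approx D^{1/2-\eta}$), your small-index case is empty. Your ``direct count'' must then handle every rank-four pair.

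The bound you want from that count is $D^{3/2-\delta+\varepsilon}p^{-8n}$ for every $\delta<\frac14$. This would be a power saving over the trivial count (cf.~Lemma~\ref{lem:trivial bound}) for essentially all of $\mathcal{X}(Q,2n)$, far more than the determinant method of Section~\ref{sec:solve counting} achieves. The mechanism you sketch gives no saving at all. Over-lattices of $\iota_1(\Z^2)$ of index $m$ correspond to index-$m$ sublattices of $\Z^4/\iota_1(\Z^2)\cong\Z^2$, so there are $\sigma_1(m)\ge m$ of them, which cancels the $1/m$ density gain.

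Nor does the index govern the fibres. Theorem~\ref{thm:app-equalrank} bounds the fibre over $x$ by $\mathsf{m}_1(Q_x)\mathsf{m}_2'(Q_x)D^{\varepsilon}$. By Lemma~\ref{lem: non-trivial m2}, every prime power exactly dividing $\mathsf{m}_2(Q_x)$ divides $4D$, so $p\nmid\mathsf{m}_2(Q_x)$ even though the square $p^{4n}$ divides $x_0$. The same confusion affects your fundamental-discriminant argument. Large index does occur there; what is true is that $\mathsf{m}_1(Q_x)$ and $\mathsf{m}_2'(Q_x)$ are $\ll 1$, since $\mathsf{m}_2(Q_x)\mid 4D$ and $D$ is essentially squarefree.

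\textbf{What is missing is the trade-off in $\mathsf{m}=\mathsf{m}_2(Q_x)$.} One splits according to whether $\mathsf{m}_2'(Q_x)\le D^{\delta}$ or not.
\begin{itemize}
\item If $\mathsf{m}_2'(Q_x)\le D^{\delta}$, the fibre is $\ll D^{\delta+\varepsilon}$, which gives the term $D^{\delta+\varepsilon}\#\mathcal{S}(n)$.
\item If $\mathsf{m}_2'(Q_x)> D^{\delta}$, Lemma~\ref{lem: non-trivial m2} forces $(x_1,x_2,x_3,x_4)\equiv\epsilon(A,\frac{B}{2},\frac{B}{2},C)$ modulo a divisor $\tilde{\mathsf{m}}$ of $\mathsf{m}$ with $\mathsf{m}/\tilde{\mathsf{m}}\le 4$. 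Together with the linear congruences modulo $p^{4n}$, this leaves $\ll\max\{D^{3/2}p^{-8n}\mathsf{m}^{-1},1\}$ choices of $(x_1,x_2+x_3,x_4)$. Lemma~\ref{lem:Pell}, with Lemma~\ref{lem:degenerate S} in the degenerate case, then fixes $x$ up to $D^{\varepsilon}$. Each such $x$ has fibre only $\ll\sqrt{\mathsf{m}}\,D^{\varepsilon}$. Summing over the $D^{o(1)}$ divisors $\mathsf{m}$ gives $\max\{D^{3/2-\delta}p^{-8n},\sqrt{D}\}D^{\varepsilon}$.
\end{itemize}

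Your rank-three part has the same gap. Corollary~\ref{cor:app-codim1} gives fibres $\mathsf{m}_1\mathsf{m}_2'(Q_x^i)D^{\varepsilon}$, not $D^{\varepsilon}$. The terms $D^{7/8+\varepsilon}p^{-5n}$ and $D^{1/2+\varepsilon}$ come from running the same trade-off on the non-degenerate ternary minors $Q_x^i$: each large $\mathsf{m}_2'(Q_x^i)$ pins two of the $x_j$ modulo a large divisor of $4D$. They do not come from the points in \ref{item:S3}, and the hypothesis $A\ge D^{\delta_0}$ plays no role.
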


We note that, when applying Proposition~\ref{prop:from vectors to inner products}, $\delta>0$ will be very small and $p^{4n}$ will be very close to $\sqrt{D}$ so that all terms other than $D^{\delta+\varepsilon} \#\mathcal{S}(n)$ will be controlled by $D^{\frac{1}{2}+\varepsilon}$.

\begin{proof}[Proof of Proposition~\ref{prop:from vectors to inner products} for fundamental discriminants]
Suppose that $D$ is fundamental.
For any  $(\iota_1,\iota_2) \in \mathcal{X}(Q',2n)$ define $x_1,x_2,x_3,x_4$ as in \eqref{eq:xi's from reps}.
Moreover, define $x_0$ to be the index of $\iota_1(\Z^2)+\iota_2(\Z^2)$ in $\Z^4$ whenever the rank of $\iota_1(\Z^2)+\iota_2(\Z^2)$ is four. When the rank is three, set $x_0 = 0$.
As already established, we have defined in this way a map
\begin{align*}
\Phi: \mathcal{X}(Q',2n) \to \mathcal{S}(n).
\end{align*}

We wish to bound the size of the fiber $\Phi^{-1}(x)$ above a point $x \in \mathcal{S}(n)$.
In other words, we count the number of pairs of representations $(\iota_1,\iota_2)$ for which the quadratic form on $\Z \iota_1(e_1)+\Z \iota_1(e_2)+\Z \iota_2(e_1)+\Z \iota_2(e_2)$ is given by the form $Q_x$ with matrix representation
\begin{align}\label{eq:Qx}
\begin{pmatrix}
A & B/2 & x_1 & x_2 \\
B/2 & C & x_3 & x_4 \\
x_1 & x_3 & A & B/2 \\
x_2 & x_4 & B/2 & C
\end{pmatrix}.
\end{align}

If $x_0 \neq 0$, the quadratic form $Q_x$ is non-degenerate.
As $D$ is fundamental, the greatest common divisor of the entries of $2Q_x$ is $\ll 1$ and $\mathsf{m}_2(Q_x) \ll 1$ (cf.~\eqref{eq:m_j}).
Thus, by the application of the Siegel mass formula in Theorem~\ref{thm:app-equalrank}, we have $\#\Phi^{-1}(x) \ll_\varepsilon D^\varepsilon$.

So suppose that $x_0=0$.
Without loss of generality, we may assume that the rank of $\iota_1(\Z^2)+\Z\iota_2(e_1)$ is three and write $Q_x'$ for the quadratic form on this sublattice.
As $D$ is fundamental, the greatest common divisor of the coefficients of $Q_x'$ is $\ll 1$ and $\mathsf{m}_2(Q_x') \ll 1$.
By the application of the Siegel mass formula in Corollary~\ref{cor:app-codim1} the point $x \in \mathcal{S}(n)$ determines $\iota_1$ and $\iota_2(e_1)$ up to a factor $\ll D^\varepsilon$.
Moreover, the vector $\iota_2(e_2)$ is contained in the three-dimensional subspace spanned by $\iota_1(\Q^2)$ and $\iota_2(e_1)$ (since $x_0=0$) and has determined inner products with each of the vectors $\iota_1(e_1)$, $\iota_1(e_2)$, and $\iota_2(e_1)$. 
Thus, $\iota_2(e_2)$ is also determined.
Overall, this shows $\#\Phi^{-1}(x) \ll_\varepsilon D^{\varepsilon}$ and proves the claim.
\end{proof}

For $D$ non-fundamental the proof of Proposition~\ref{prop:from vectors to inner products} is more involved and we will require a few preliminaries.
The first two of the following lemmas will also be used later for other purposes.

\begin{lemma}\label{lem:Pell}
Let $f(x,y) \in \Z[x,y]$ be of total degree two so that its degree two homogeneous constituent $q'(x,y)$ (a binary quadratic form) is non-degenerate.
Then the number of solutions to $f(x,y) = a$ with $|x|,|y| \leq R$ is $(|a|\,R\,\height(f))^{o(1)}$.
\end{lemma}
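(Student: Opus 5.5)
We reduce to a diagonal form and count points on conics; the main obstacle is the degenerate case. Write
\[
f(x,y)=q'(x,y)+\ell(x,y)+c
\]
with $q'(x,y)=ax^2+bxy+cy^2$, $\Delta=b^2-4ac\neq 0$, and all coefficients bounded by $H=\height(f)$.

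The first step is to complete the square. Multiply the equation by $4a$ (if $a=0$, first swap $x,y$, or apply the unimodular substitution $x\mapsto x+y$ to make the leading coefficient nonzero; since $q'$ is nondegenerate this is possible with coefficients $\ll H$). This gives $(2ax+by+d)^2-\Delta y^2+ey=4a(a-c_0)$, where $c_0$ is the constant term. Then multiply by $\Delta$ and complete the square in $y$, so the equation becomes
\[
\Delta U^2 - V^2 = N,
\]
with $U,V$ integral linear functions of $(x,y)$ and $N\in\Z$. The substitution $(x,y)\mapsto(U,V)$ is injective and has coefficients $\ll H^{O(1)}$, so $|U|,|V|\ll (RH)^{O(1)}$ and $|N|\ll (|a|RH)^{O(1)}$. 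It therefore suffices to count solutions of $X^2-\Delta Y^2=N$ with $|X|,|Y|\le R'=(|a|RH)^{O(1)}$, and to show there are $(|N|R'|\Delta|)^{o(1)}$ of them.

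If $N\neq0$, we factor in the order $\Z[\sqrt{\Delta}]$ (or $\Z[\omega]$). When $\Delta<0$ or $\Delta$ is a square, the number of solutions with $N\neq 0$ is bounded by a divisor function: in the square case, $(X-sY)(X+sY)=N$ gives at most $2\tau(|N|)$ pairs. In the imaginary case, the number of elements of norm $N$ is at most $\#\text{units}\cdot\tau_{K}(N)\ll|N|^{o(1)}$, using that the order has conductor bounded by $\Delta$ and the bounded unit group. When $\Delta>0$ is a nonsquare, the solutions fall into at most $\tau_K(N)\cdot[\mathcal O_K^\times:\mathcal O^\times]\ll(|N||\Delta|)^{o(1)}$ orbits under the unit group of the order. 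Within one orbit, the solutions are $\pm\alpha\varepsilon^k$. Since $|X|,|Y|\le R'$ forces $|\alpha\varepsilon^k|,|\bar\alpha\bar\varepsilon^k|\ll R'\sqrt\Delta$, and since $\varepsilon\ge (1+\sqrt5)/2$, only $O(\log(R'\Delta |N|))$ values of $k$ are admissible. This is $(R'|N|\Delta)^{o(1)}$.

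The case $N=0$ is where I expect the main obstacle. It is not covered by the divisor bound: if $\Delta$ is not a square we get $X=Y=0$, a single point, but if $\Delta$ is a square the conic degenerates into two lines with up to $\asymp R$ points. I would handle this by noting that the degenerate case corresponds to $f-a$ factoring into two linear forms over $\Q$. The lemma as stated presumably intends that this does not occur, or else the $o(1)$ must absorb it, which it cannot. So I would check the applications and add the hypothesis that either $\Delta$ is not a square or $N\neq0$, that is, that $f-a$ is not a product of linear factors. In the remaining cases the bound follows from the above. Finally, all the losses are of the form $\tau(\cdot)$ or a logarithm of quantities polynomial in $|a|,R,\height(f)$, which gives $(|a|R\height(f))^{o(1)}$.
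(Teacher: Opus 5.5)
Your route is essentially the paper's: reduce to a homogeneous norm equation in a quadratic order, bound the number of principal $\mathcal{O}_K$-ideals of the relevant norm by a divisor function, and count the unit multiples lying in a box by $O(\log)$. The only difference is cosmetic. You diagonalize to $X^2-\Delta Y^2=N$ and work in $\Z[\sqrt{\Delta}]\subset\mathcal{O}_K$, whereas the paper realizes $q'$ as the normalized norm form on a proper ideal of the order of discriminant $d$. Your version avoids the form--ideal dictionary at the price of polynomially inflating $N$ and the box, which costs nothing for an exponent $o(1)$.

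Your concern about the degenerate case is correct, and the paper's proof overlooks it. After its reduction to $f=q'$ the target may become $0$, and when $d$ is a square the ``norm'' equation on $\Q\times\Q$ then has a whole line of solutions. For instance, $f=xy+x+y$ with $a=-1$ gives $(x+1)(y+1)=0$, which has $4R+1$ solutions although $\height(f)=1$. So the hypothesis you propose (that $f-a$ is not a product of two linear forms over $\Q$) is genuinely needed. It is harmless for the paper:
\begin{itemize}
\item In Case~1.2 of the proof of Proposition~\ref{prop:from vectors to inner products}, the equation is $u^2-\disc(Q)x_0^2=a$ with $u=x_1x_4-x_2x_3-D$ and $a\neq 0$ assumed, so it cannot factor.
\item In Case~2.2 and in Lemma~\ref{lem:lowrangeA}, the quadratic part is positive definite.
\end{itemize}

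One step of yours fails as written. The index $[\mathcal{O}_K^\times:\mathcal{O}^\times]$ is not $(|N||\Delta|)^{o(1)}$ in general. For the order $\Z[p\sqrt2]$ it is the least $k\geq 1$ with $p\mid b_k$, where $(1+\sqrt2)^k=a_k+b_k\sqrt2$. This is already $12$ for $p=11$. Counting the prime divisors of $b_1\cdots b_Y$ shows it exceeds $p^{1/2-o(1)}$ for most primes $p$.

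The detour through the order's unit group is unnecessary. Group the solutions by the principal $\mathcal{O}_K$-ideal they generate; there are at most $\tau(|N|)$ such ideals. Write each class as $\pm\alpha\varepsilon_K^k$ with $\varepsilon_K$ the fundamental unit of $\mathcal{O}_K$. Your box argument then bounds the admissible $k$ by $O(\log(R'|\Delta||N|))$, whether or not $\alpha\varepsilon_K^k$ lies in the order. This is exactly how the paper concludes.
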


Here, the height $\height(f)$ of $f$ is the maximum absolute value of its coefficients.

\begin{proof}
By a linear change of coordinates and using the assumption that $q'$ is non-degenerate, one readily reduces the lemma to the case $f=q'$.
We assume without loss of generality that $q'$ is primitive throughout.
We further reduce the statement to the following claim regarding the number of solutions to an equation of Pell-type.
Let $K =\Q(\sqrt{d})$ and write $\norm{x}$ for the Euclidean norm of $x\in K$ in the image under $K \hookrightarrow \R^2$ or $K \hookrightarrow \C$.

\begin{claim*}
The number of $x \in \mathcal{O}_K^\times$ with $\norm{x} \leq R$ is $\ll \log(R)$.
\end{claim*}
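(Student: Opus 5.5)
The plan is to use the logarithmic embedding of units. By Dirichlet's unit theorem for the quadratic field $K=\Q(\sqrt{d})$, the unit group $\mathcal{O}_K^\times$ is finite when $d<0$. Indeed it has at most $6$ elements, all of norm one, so the claim is trivial in that case. For $d>0$ we have $\mathcal{O}_K^\times = \{\pm \epsilon^k : k\in\Z\}$ for a fundamental unit $\epsilon>1$. We write $\sigma$ for the Galois conjugation, so that under $K\hookrightarrow\R^2$ the element $x$ maps to $(x,\sigma(x))$ and $\norm{x}^2 = x^2+\sigma(x)^2$.

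For a unit $x=\pm\epsilon^k$ we have $|\sigma(x)| = \epsilon^{-k}$, since $|\Nr(x)|=1$. The condition $\norm{x}\leq R$ therefore forces both $\epsilon^{k}\leq R$ and $\epsilon^{-k}\leq R$, that is $|k|\log\epsilon \leq \log R$. The key input is a uniform lower bound on the regulator. The fundamental unit is an algebraic integer $\epsilon=(a+b\sqrt{d'})/2$ with $a,b\geq 1$, where $d'$ is the discriminant; hence $\epsilon \geq (1+\sqrt{5})/2$, and $\log\epsilon\geq \log\frac{1+\sqrt5}{2}>0$ is an absolute constant. So the number of admissible $k$ is at most $2\log R/\log\frac{1+\sqrt5}{2}+1$, and accounting for the sign $\pm$ gives $\ll \log R$ for $R\geq 2$. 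The same argument works verbatim for units of any order $\mathcal{O}\subset\mathcal{O}_K$, because $\mathcal{O}^\times\subset\mathcal{O}_K^\times$; this is the form actually needed later for non-maximal orders.

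It remains to connect this to Lemma~\ref{lem:Pell}. Solutions of $q'(x,y)=a$ with $q'$ primitive of discriminant $\Delta$ correspond to elements $\xi=x\,\alpha+y\,\beta$ of a fixed proper fractional ideal $\mathfrak{a}=\Z\alpha+\Z\beta$ of the order $\mathcal{O}$ of discriminant $\Delta$, with $\Nr(\xi)=a\Nr(\mathfrak{a})$. There are $\ll (|a|\height(q'))^{o(1)}$ principal ideals $\xi\mathcal{O}$ of the given norm, by the divisor bound in $\mathcal{O}_K$ together with the bounded index of $\mathcal{O}$ in $\mathcal{O}_K$ at primes dividing the conductor. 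Each such ideal accounts for a single coset $\xi\mathcal{O}^\times$. The box $|x|,|y|\leq R$ translates into $\norm{\xi u}\ll R\,\height(q')^{O(1)}$. Applying the claim to $u$ relative to a fixed generator $\xi$, which rescales the norm by a factor $\ll(|a|R\height(q'))^{O(1)}$, then bounds each coset's contribution by $\ll\log(|a|R\height(q'))$.

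The main obstacle is the uniformity of the regulator lower bound in $d$. This is, however, exactly handled by the integrality estimate $\epsilon\geq(1+\sqrt5)/2$, so no analytic input is needed.
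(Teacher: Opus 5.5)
Your proof of the claim is correct and follows the paper's route. The imaginary case is trivial, and in the real case everything rests on a uniform lower bound for $\log\epsilon$, which the paper cites as a standard observation and you make explicit via $\epsilon=(a+b\sqrt{d'})/2\geq(1+\sqrt5)/2$. Your extra paragraph on Lemma~\ref{lem:Pell} lies outside the claim, but it contains a false step: $[\mathcal{O}_K:\mathcal{O}]$ is the conductor and is unbounded, and the number of principal $\mathcal{O}$-ideals of a given norm can be as large as $[\mathcal{O}_K^\times:\mathcal{O}^\times]$ (consider the ideals $fu\mathcal{O}$ with $u\in\mathcal{O}_K^\times$, where $f$ is the conductor), so it is not $\ll(|a|\height(q'))^{o(1)}$ in general; this is why the paper instead groups solutions by the $\mathcal{O}_K$-ideal $x\mathcal{O}_K$ and uses $\mathcal{O}_K^\times$.
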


If $K$ is imaginary, the statement is trivial as $\#\mathcal{O}_K^\times \ll 1$, so assume $K$ is real.
In that case, it is a standard observation that any fundamental unit $\varepsilon \in \mathcal{O}_K^\times$ satisfies $|\log(|\varepsilon|)| \geq \kappa$ for some absolute $\kappa$. Thus the claim follows.

Let $\mathcal{O}$ be the quadratic order of discriminant $d$; if $d$ is fundamental, $\mathcal{O}$ is the ring of integers in $K = \Q(\sqrt{d})$.
Recall that a proper fractional ideal $I$ for $\mathcal{O}$ is a rank two $\Z$-submodule of $\Q(\sqrt{d})$ with the property that $\mathcal{O} = \{x\in K: xI \subset I\}$.
Any such ideal $I$ gives rise to the primitive binary quadratic form $x \in I \mapsto \tfrac{1}{\mathrm{N}(I)} \mathrm{Nr}_{K/\Q}(x)$ where $\mathrm{N}(I) = \frac{(\mathcal{O}:\mathcal{O}\cap I)}{(I:\mathcal{O}\cap I)}$ is the norm of the ideal $I$.
Moreover, any primitive binary quadratic form of discriminant $d$ is of this shape.

Let $I$ be as above for our binary form $q'$; we may assume $I \subset \mathcal{O}$ and $N(I) \ll |d|^{\frac{1}{2}}$ by Minkowski's theorem.
It thus suffices to estimate the number of solutions $x\in I$ to $\mathrm{Nr}_{K/\Q}(x) = a'$ where $a' = N(I)a \in \Z$ and $\norm{x}$ is bounded linearly in $R$. 
Note that the number of $\mathcal{O}_K$-ideals with norm $a'$ is bounded by $|a'|^{o(1)}$ by standard bounds on divisor functions.
We have reduced to counting for a given $x_0 \in \mathcal{O}_K$ the number of $x \in \mathcal{O}_K$ with norm $a'$ and $\mathcal{O}_Kx = \mathcal{O}_K x_0$ and $\norm{x} \ll \sqrt{d}R$.
Clearly, $x,x_0$ differ by a unit in $\mathcal{O}_K^\times$ and so the above claim implies the lemma.
\end{proof}

\begin{lemma}\label{lem:degenerate S}
For any $n \in \N$ with $p^{4n} \leq \sqrt{D}$ we have
\begin{align}\label{eq:Sdeg-def}
\#\{x \in \mathcal{S}(n): Cx_1-Ax_4=A(x_2+x_3)-Bx_1=0\} \ll \frac{\sqrt{D} }{p^{4n}}.
\end{align}
Moreover, $x \in \mathcal{S}(n)$ is contained in the above set if and only if
\begin{align*}
(Cx_1-Ax_4)^2 + (Bx_1-A(x_2+x_3))(Bx_4-C(x_2+x_3))=0.
\end{align*}
\end{lemma}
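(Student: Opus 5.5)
The plan is to exploit a linear relation among the three linear forms in \eqref{eq:degree4 equation}, which shows that the degenerate locus forces $(x_1,x_2+x_3,x_4)$ to be proportional to $(A,B,C)$. After that, the count reduces to a one-variable congruence count.

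\emph{Step 1: the quadratic identity (the ``moreover'' part).} Write $s=x_2+x_3$, $L_1=Cx_1-Ax_4$, $L_2=As-Bx_1$ and $L_3=Cs-Bx_4$. Then $(Bx_1-As)(Bx_4-Cs)=L_2L_3$. A direct expansion gives the linear identity
\begin{align*}
AL_3-CL_2=BL_1 .
\end{align*}
Multiplying the expression $L_1^2+L_2L_3$ by $A$ and substituting this identity gives
\begin{align*}
A\big(L_1^2+L_2L_3\big)=AL_1^2+BL_1L_2+CL_2^2=q(L_1,L_2).
\end{align*}
Since $q$ is positive definite ($D>0$) and $A>0$, the expression vanishes exactly when $L_1=L_2=0$. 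This is the claimed equivalence, and it also shows that $L_3=0$ on the set in \eqref{eq:Sdeg-def}.

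\emph{Step 2: parametrising the degenerate locus.} The conditions $L_1=L_2=L_3=0$ say that $(x_1,s,x_4)$ is a rational multiple $\lambda(A,B,C)$. The $x_i$ are half-integers and $q$ is primitive, so $\gcd(A,B,C)=1$ and hence $2\lambda\in\Z$. The bound $|x_1|\le A$ from \ref{item:S1} gives $|\lambda|\le1$, so $\lambda\in\{0,\pm\frac12,\pm1\}$. If $\lambda=\pm1$, then $x_1=\pm A$, and \ref{item:S3} pins down the whole tuple: $x_2=x_3=\pm\frac{B}{2}$ and $x_0=0$. These cases contribute $O(1)$ points. For the remaining $\lambda$, the only free parameter is $u=x_2-x_3$.

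Expressing $x_2x_3$ through $s$ and $u$ gives $x_1x_4-x_2x_3=\lambda^2D+\frac{u^2}{4}$. Once $\lambda$ and $u$ are fixed, equation \eqref{eq:degree4 equation} reads $\disc(Q)x_0^2=\big(\lambda^2D+\frac{u^2}{4}-D\big)^2$. This determines $x_0$ up to sign.

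\emph{Step 3: the congruence count.} The main point is the quadratic congruence in \ref{item:S2}. It becomes
\begin{align*}
u^2\equiv 4(1-\lambda^2)D \mod p^{4n}.
\end{align*}
For $\lambda=0$ the right-hand side is a $p$-adic unit because $p\nmid D$. For $\lambda=\pm\frac12$ it equals $3D$. If $p\neq3$ this is again a unit. If $p=3$, any solution forces $3\mid u$ and then $9\mid 3D$, contradicting $p\nmid D$, so there are no solutions.

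Since $p$ is odd, in every surviving case $u$ lies in at most two residue classes modulo $p^{4n}$. Because $q$ is Minkowski reduced, $AC\asymp D$, so \ref{item:S1} gives $|u|\le 2\sqrt{AC}\ll\sqrt D$. Hence the number of admissible $u$ is $\ll \sqrt D/p^{4n}+1$.

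Summing over the $O(1)$ values of $\lambda$ and the two signs of $x_0$ gives a total of $\ll \sqrt D/p^{4n}+1$. The additive $1$ is absorbed into the main term using the hypothesis $p^{4n}\le\sqrt D$.

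The one delicate point is the boundary case $\lambda=\pm1$. Without \ref{item:S3}, the congruence would degenerate to $u^2\equiv0$, which only forces $p^{2n}\mid u$ and would allow $\sqrt D/p^{2n}$ values of $u$, too many for the claimed bound. This is exactly why \ref{item:S3} was built into Definition~\ref{def:set S}.
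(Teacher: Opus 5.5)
Your proof is correct and follows the paper's route. The conditions $L_1=L_2=0$ force $(x_1,x_2+x_3,x_4)$ to be proportional to $(A,B,C)$, the endpoint multiples are pinned down by \ref{item:S3}, the remaining free variable is counted through the quadratic congruence, and the ``moreover'' part reduces via the same identity $A(L_1^2+L_2L_3)=q(L_1,L_2)$ to the anisotropy of $q$. You are in fact slightly more careful than the paper: its proof only allows the multiples $0,\pm1$, overlooking the half-integral multiples $\lambda=\pm\frac12$ that $\mathcal{S}(n)\subset\frac12\Z^5$ permits, and it leaves the sign of $x_0$ implicit. Your treatment via $u^2\equiv 3D \mod p^{4n}$ fills that small gap without changing the bound.
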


\begin{proof}
        Recall that $q$ is primitive so that $\operatorname{gcd}(A,B,C)=1$.
        Thus, for $x$ contained in the set in \eqref{eq:Sdeg-def} we see that $(x_1,x_2+x_3,x_4)$ is a multiple of $(A,B,C)$. 
        Since $|x_1|\leq A$ and $|x_4|\leq C$, this implies that $(x_1,x_2+x_3,x_4)$ is either $(0,0,0)$ or $\pm(A,B,C)$. 
        If $(x_1,x_2+x_3,x_4)=(0,0,0)$ then $x_2$ mod $p^{4n}$ is determined by the quadratic congruence equation $x_2^2\equiv x_1x_4-x_2x_3\equiv D$ mod $p^{4n}$. If $(x_1,x_2+x_3,x_4)=\pm(A,B,C)$ then the last property of $\mathcal{S}(n)$ in Definition \ref{def:set S} determines $x_2=x_3=\pm\frac{B}{2}$. 
        The first part of the lemma follows.

        Suppose now that $x\in \mathcal{S}(n)$ satisfies $(Cx_1-Ax_4)^2 + (Bx_1-A(x_2+x_3))(Bx_4-C(x_2+x_3))=0$.
        Then setting $s= Cx_1-Ax_4$ and $t = Bx_1-A(x_2+x_3)$ we see that $s^2 + t(-Bs+Ct)\frac{1}{A}=0$. Equivalently, $q(s,-t) =0$ and, as $q$ is anisotropic, $s=t=0$ which concludes the second part of the lemma.
\end{proof}

In the following, $\mathsf{m}_1(\cdot), \mathsf{m}_2(\cdot),\ldots$ are the quantities defined in \eqref{eq:m_j} for quadratic forms (these are products of local factors capturing elementary divisors).

\begin{lemma}\label{lem: non-trivial m2}
For $x\in \mathcal{S}(n)$ let $Q_x$ be as in \eqref{eq:Qx} and suppose that $Q_x$ is non-degenerate.
Let $p_0$ be a prime and suppose that $p_0^\ell \Vert \mathsf{m}_2(Q_x)$ for $\ell>0$.
Then $p_0^\ell \mid 4D$.
If $p_0 \neq 2$, there exists $\epsilon \in \{\pm 1\}$ with
\begin{align*}
x_1 \equiv \epsilon A \mod p_0^\ell,\
x_2 \equiv x_3\equiv \tfrac{\epsilon}{2} B \mod p_0^\ell,\
x_4 \equiv \epsilon C \mod p_0^\ell.
\end{align*}
If $p_0 = 2$ and $\ell \geq 2$, there exists $\epsilon \in \{\pm 1\}$ with
\begin{align*}
x_1 \equiv \epsilon A \mod 2^{\ell-2},\
x_2 \equiv x_3\equiv \epsilon B \mod 2^{\ell-2},\
x_4 \equiv \epsilon C \mod 2^{\ell-2}.
\end{align*}
\end{lemma}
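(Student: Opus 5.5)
The plan is to turn the hypothesis $p_0^\ell \Vert \mathsf{m}_2(Q_x)$ into the statement that the Gram matrix in \eqref{eq:Qx} has ``rank one modulo $p_0^\ell$'', and then to read the congruences off its $2\times 2$ minors. I will assume that, by the definition in \eqref{eq:m_j}, $p_0^\ell \mid \mathsf{m}_2(Q_x)$ means the following. After dividing $2Q_x$ by the $p_0$-part of its content, every $2\times 2$ minor of its matrix is divisible by $p_0^\ell$ (up to an absolutely bounded power of $2$). Equivalently, over $\Z_{p_0}$ the form $Q_x$ is $\Z_{p_0}$-equivalent to $u x^2 + p_0^{\ell}(\ldots)$ with $u$ a unit. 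If the definition in the appendix is phrased through a Jordan decomposition, I would first translate it into this minor condition.

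First, the content is a unit. The upper-left $2\times2$ block is the matrix of $q$, and $\gcd(A,B,C)=1$ because $q$ is primitive. Hence the $p_0$-content of $2Q_x$ is trivial for $p_0$ odd and at most $2$ for $p_0=2$. The minor of rows and columns $\{1,2\}$ equals $AC-\tfrac{B^2}{4}=D$, so $p_0^\ell \mid 4D$ follows at once.

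Next, I pin down a unit pivot. Take $p_0$ odd. Since $p_0 \mid D$ and $\gcd(A,B,C)=1$, the entries $A$ and $C$ cannot both be divisible by $p_0$: if $p_0\mid A$ then $p_0 \mid B^2/4$, hence $p_0\mid B$, hence $p_0\nmid C$. Say $p_0 \nmid A$ (the case $p_0\nmid C$ is symmetric, exchanging the indices $1\leftrightarrow 2$ and $3\leftrightarrow 4$). The minor on rows and columns $\{1,3\}$ is $A^2-x_1^2=(A-x_1)(A+x_1)$. The two factors sum to $2A$, which is a unit at $p_0$, so one of them is a unit and the other is $\equiv 0 \bmod p_0^\ell$. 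This gives $x_1\equiv \epsilon A \bmod p_0^\ell$ for some $\epsilon\in\{\pm1\}$.

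Then I propagate the sign using the rank-one structure. Because $A$ is a unit, all minors containing the pivot row and column $1$ vanish modulo $p_0^\ell$, so the matrix is congruent to $A^{-1} r r^{t}$ with $r=(A,\tfrac B2,x_1,x_2)$ its first row. Comparing the $(3,3)$ entry gives $A\equiv A^{-1}x_1^2$, which is consistent with the previous step. The minor on rows $\{1,3\}$ and columns $\{1,2\}$ gives $A x_3 \equiv \tfrac B2 x_1 \equiv \epsilon A\tfrac B2$, hence $x_3\equiv \tfrac{\epsilon}{2}B$. The minor on rows $\{1,3\}$ and columns $\{2,3\}$ (or the $(3,4)$ entry) gives $x_2 \equiv \tfrac{\epsilon}{2}B$ in the same way. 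The $(2,4)$ entry gives $A x_4\equiv \tfrac B2 x_2\cdot$ (after substitution) $\epsilon \tfrac{B^2}{4}\equiv \epsilon(AC - D)\equiv \epsilon AC$, hence $x_4\equiv\epsilon C$. Here I use $p_0^\ell\mid D$, which holds because $p_0$ is odd.

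Finally, the case $p_0=2$ runs along the same lines and is where the losses appear. Entries are half-integers, so I multiply by $2$. The content may be $2$, and in the factorization $(A-x_1)(A+x_1)$ both factors can be even, since their sum $2A$ is only a unit times $2$. Each of these costs one factor of $2$, which is why only $\bmod\ 2^{\ell-2}$ survives and why the condition $\ell\ge2$ is needed. The main obstacle is the bookkeeping at $p_0=2$, together with confirming that the definition of $\mathsf{m}_2$ really yields the minor condition with at most these losses. I would handle this by working with the integral matrix $2Q_x$ throughout and tracking the $2$-adic valuations explicitly in each of the identities above.
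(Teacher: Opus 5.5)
Your proof is essentially the paper's: the $2\times2$ minors through the unit pivot $A$ that you use are $A$ times the entries of the $3\times3$ block obtained by projecting to $e_1^\perp$, and their vanishing modulo $p_0^\ell$ is exactly what the paper reads off; your observation that $p_0\mid D$ forces $p_0\nmid A$ or $p_0\nmid C$ also shows that the paper's shear case ($p_0\nmid B$, $p_0\mid A,C$) never occurs. Two small points: the congruence for $x_2$ comes from the minor on rows $\{1,4\}$ and columns $\{1,3\}$ (your ``$(3,4)$ entry''), not rows $\{1,3\}$ and columns $\{2,3\}$; and at $p_0=2$ your route gives $x_2\equiv x_3\equiv \tfrac{\epsilon}{2}B \mod 2^{\ell-2}$ (one factor of $2$ is lost to $k_1(2,Q_x)=1$, one to $(A-x_1)(A+x_1)$, and the off-diagonal pivot minors are controlled by polarization), so the $\epsilon B$ in the statement is presumably a misprint for $\tfrac{\epsilon}{2}B$.
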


\begin{proof}
The assertion $p_0^\ell \mid 4D$ is immediate from the definitions.
We prove the lemma for $p_0\neq 2$ first.
Since $q$ is primitive, we have $\gcd(A,B,C) =1$.
Suppose that $p_0 \nmid A$.
Then $Q_x(e_1) = A$ is a unit mod $p_0$, and after orthogonal projection to the complement of $e_1$ the quadratic form $Q_x$ is equivalent to
\begin{align}\label{eq:proj e1perp}
\begin{pmatrix}
A & 0 & 0 & 0 \\
0 & \frac{D}{A}&
    -\frac{B x_1}{2A} + x_3 &
    -\frac{B x_2}{2A} + x_4 \\
0 & -\frac{B x_1}{2A} + x_3 &
    A - \frac{x_1^2}{A} &
    \frac{B}{2} - \frac{x_1 x_2}{A} \\
0 & -\frac{B x_2}{2A} + x_4 &
    \frac{B}{2} - \frac{x_1 x_2}{A} &
    C - \frac{x_2^2}{A}
\end{pmatrix}.
\end{align}
The greatest common divisor of the so-obtained $3\times3$ symmetric matrix in the bottom right corner is precisely $p_0^\ell$ by definition of the elementary divisors (cf.~Definition~\ref{def:elementary divisors}). 
In particular, $x_1^2 \equiv A^2 \mod p_0^\ell$.
Since $p_0 \nmid A$ we have $x_1 \equiv \epsilon A\mod p_0^\ell$ for $\epsilon \in \{\pm1\}$.
Inserting this into the other entries of the matrix the lemma follows.

If $p_0 \nmid C$ we may apply a coordinate switch $e_1 \leftrightarrow e_2$ and $e_3 \leftrightarrow e_4$ which exchanges $x_1 \leftrightarrow x_4$ and $x_2\leftrightarrow x_3$ so that the above discussion yields the desired congruence conditions.
If $p_0\nmid B$ and $p_0 \mid A,C$, we apply a shear $e_1 \mapsto e_1 + e_2$, $e_2 \mapsto e_2$, $e_3 \mapsto e_3 + e_4$, $e_4 \mapsto e_4$ and again use the previous discussion.

Suppose now that $p_0 =2$ and, without loss of generality, that $2 \nmid A$. 
In this case, the above orthogonal projection to $e_1^\perp$ is well-defined on $\Lambda = \Z_2 e_1 \oplus 2 \bigoplus_{i>1} \Z_2 e_i$, where $Q_x' =Q_x|_\Lambda$ is equivalent to \eqref{eq:proj e1perp} after multiplying the lower right $3\times 3$ block by $4$.
We denote the latter ternary form by $Q_x''$.
From Definition~\ref{def:elementary divisors}, we see that $k_1(2,Q_x)=k_1(2,Q_x')=1$ and $k_2(2,Q_x) \leq k_2(2,Q_x')$ where $k_2(2,Q_x)=\ell$ by assumption.
Moreover, notice that $k_2(2,Q_x') \leq k_1(2,Q_x'')$ so that overall $\ell \leq k_1(2,Q_x'')$. From this, we conclude that $4(x_1^2-A^2) \equiv 0 \mod 2^\ell$
which implies $x_1 \equiv \epsilon A \mod 2^{\ell-2}$ for $\epsilon \in \{\pm1\}$. The other congruence conditions follow similarly.
\end{proof}

\begin{proof}[Proof of Proposition~\ref{prop:from vectors to inner products} for non-fundamental discriminants]
As in the fundamental case, define $x_0,\ldots,x_4$ for any $(\iota_1,\iota_2) \in \mathcal{X}(Q,2n)$ as well as a map
\begin{align*}
\Phi: \mathcal{X}(Q',2n) \to \mathcal{S}(n).
\end{align*}
Recall the definition of $\mathsf{m}_2(\cdot),\mathsf{m}_2'(\cdot)$ from Appendix~\ref{sec:Siegel mass}.
In the following we estimate contributions to $\mathcal{X}(Q',2n)$ arising through different subsets of $\mathcal{S}(n)$.

\medskip
\textsc{Case 1:} \textit{$x_0 \neq 0$.}

(For this proof, this is a short-hand for estimating the number of $(\iota_1,\iota_2)$ for which $x = \Phi(\iota_1,\iota_2)$ satisfies $x_0\neq 0$. Similar short-hands are applied throughout the proof.)

\textsc{Case 1.1:} \textit{$\mathsf{m}_2'(Q_x) \leq D^\delta$.}

In this case, the same argument as for $D$ fundamental (based on Theorem~\ref{thm:app-equalrank}) applies to show that the contribution of such points is $\ll D^\delta \#\mathcal{S}(n)$.

\textsc{Case 1.2}: \textit{$\mathsf{m}_2'(Q_x) > D^\delta$.}

Let $\mathsf{m}$ be a fixed divisor of $D$. 
We claim that the number of $x\in \mathcal{S}(n)$ so that $\mathsf{m}_2(Q_x) = \mathsf{m}$ is $\ll \max\{ \tfrac{D^{3/2}}{p^{8n}\mathsf{m}},1\}$.
Combining this claim with Theorem~\ref{thm:app-equalrank} we obtain that the number of pairs of representations $\underline{\iota} = (\iota_1,\iota_2) \in \mathcal{X}(Q',2n)$ so that $x=\Phi(\underline{\iota})$ satisfies $\mathsf{m}_2(Q_x) = \mathsf{m}> D^{2\delta}$ is
\begin{align*}
\ll \max\{ \tfrac{D^{3/2}}{p^{8n}\mathsf{m}},1\}\mathsf{m}_2'(Q_x) \ll  \max\{ \tfrac{D^{3/2}}{p^{8n}\sqrt{\mathsf{m}}},\sqrt{\mathsf{m}}\} \ll \max\{ \tfrac{D^{3/2-\delta}}{p^{8n}},\sqrt{D}\}.
\end{align*}
As $D$ has $D^{o(1)}$ many divisors, this suffices.

To prove the above claim, we leverage the congruence conditions of Lemma~\ref{lem: non-trivial m2}.
Losing a factor of $\ll_\varepsilon D^\varepsilon$, we may also fix the signs $\epsilon$ in that lemma.
Therefore, there exists a positive divisor $\tilde{\mathsf{m}} \mid \mathsf{m}$ with $\frac{\mathsf{m}}{\tilde{\mathsf{m}}} \leq 4$ so that $x_1,\ldots,x_4$ are determined modulo $\tilde{\mathsf{m}}$.

Let $\ell \geq 0$ be such that $p^\ell \Vert A$.
Then $p^\ell \mid x_1$.
With $x_1$ given, $x_2+x_3$ is determined modulo $p^{4n-\ell}$ and, if $x_2+x_3$ is also given, $x_4$ is determined modulo $p^{4n}$; see \eqref{eq:originalcongeqforx}.
Thus, the total number of choices for $x_1,x_2+x_3,x_4$ is
\begin{align*}
\ll \max\{\tfrac{A}{\mathsf{m}p^{\ell}},1\}\max\{\tfrac{\sqrt{D}}{p^{4n-\ell}\mathsf{m}},1\} \max\{\tfrac{C}{p^{4n}\mathsf{m}},1\} 
\ll \max\{ \tfrac{D^{3/2}}{p^{8n}\mathsf{m}},1\}
\end{align*}
where we used $p^{4n} < \sqrt{D}$.
With $x_1,x_2+x_3,x_4$ determined, we wish to apply Lemma~\ref{lem:Pell}.
Let $a=(Cx_1-Ax_4)^2 + (Bx_1-A(x_2+x_3))(Bx_4-C(x_2+x_3))$.
If $a \neq 0$, Lemma~\ref{lem:Pell} shows that $x$ is determined by $x_1,x_2+x_3,x_4$ up to $\ll D^\varepsilon$ many choices. 
By Lemma~\ref{lem:degenerate S} the number of $x\in \mathcal{S}(n)$ with $a=0$ is $\ll \frac{\sqrt{D}}{p^{4n}}$.
Overall, this concludes the above claim and hence this Case 1.2.

\medskip
\textsc{Case 2:} $x_0 =0$.

In this case, the quadratic form $Q_x$ is degenerate (i.e.~has discriminant zero).
We shall denote by $Q_x^i$ the non-degenerate quadratic form obtained by omitting the $i$-th row and column in $Q_x$.
For any $x$ in the image of $\Phi$, one of the quadratic forms $Q_x^3$ and $Q_x^4$ is non-degenerate and similarly for $Q_x^1$ and $Q_x^2$ (because $\mathcal{X}(Q',2n)$ consists of pairs of primitive representations with distinct images).

\textsc{Case 2.1:} \textit{There exists $i$ so that $Q_x^i$ is non-degenerate and $\mathsf{m}_2'(Q_x^i) \leq D^\delta$.}

In this case, the same argument as for $D$ fundamental (based on Corollary~\ref{cor:app-codim1}) applies to show that the contribution of such points is $\ll D^\delta \#\mathcal{S}(n)$.

\textsc{Case 2.2:} \textit{The form $Q_x^i$ is degenerate for two $i\in \{1,\ldots,4\}$ and otherwise $\mathsf{m}_2'(Q_x^i) > D^\delta$.}

For concreteness, suppose $Q_x^i$ is degenerate for $i=2,4$. Then 
\begin{align*}
Ax_2^2 - Bx_1x_2 + C x_1^2 = AD,\\
Ax_3^2 - Bx_1x_3 + C x_1^2 = AD.
\end{align*}
In particular, $(x_1,x_2,x_3)$ are determined up to $\ll_\varepsilon D^\varepsilon$ choices by Lemma~\ref{lem:Pell}.
Moreover, since $Q_x^1$ is non-degenerate and $\mathsf{m}_2'(Q_x^1) > D^\delta$, one can show as in the proof of Lemma~\ref{lem: non-trivial m2} that $x_4$ is determined modulo $\mathsf{m}$ for some $\mathsf{m} \mid \mathsf{m}_2(Q_x^1)$ with $\frac{\mathsf{m}_2(Q_x^1)}{\mathsf{m}} \leq 4$. 
Therefore, $x_4$ has $\ll \max\{\frac{C}{p^{4n}\mathsf{m}},1\}$ choices. 
Together with Corollary~\ref{cor:app-codim1} this concludes this case when $Q_x^i$ is degenerate for $i=2,4$.
All other cases are treated similarly.

\textsc{Case 2.3:} \textit{The form $Q_x^i$ is degenerate for at most one $i\in \{1,\ldots,4\}$ and otherwise $\mathsf{m}_2'(Q_x^i) > D^\delta$.}

For any of the three non-exceptional $i$, we can apply the argument in the proof of Lemma~\ref{lem: non-trivial m2} to the form $Q_x^i$ to find congruence conditions on two of the variables $x_1,\ldots,x_4$ (for instance, $Q_x^4$ yields congruence conditions for $x_1,x_3$).
Fixing the signs $\epsilon$ in each of these congruence conditions, we obtain that, after a loss of $D^{o(1)}$, each $x_i$ is determined modulo $\mathsf{m}_i$ for some divisor $\mathsf{m}_i$ of $4D$ with $\mathsf{m}_i\gg D^{2\delta}$.
In particular, $x_2+x_3$ is determined modulo $\mathsf{m}_{23} = \gcd(\mathsf{m}_2,\mathsf{m}_3)$.
As in Case 1.2 one argues that the total number of choices for $x\in \mathcal{S}(n)$ is
\begin{align*}
&\ll_\varepsilon D^\varepsilon \max\{\tfrac{A}{\mathsf{m}_1},1\} \max\{\tfrac{\sqrt{D}}{\mathsf{m}_{23}p^{4n}},1\} \max\{ \tfrac{C}{\mathsf{m}_4 p^{4n}},1\}
\\
&\ll_\varepsilon  D^\varepsilon\max\big\{\tfrac{D^{3/2}}{\mathsf{m}_4 p^{8n}}, \tfrac{D}{\mathsf{m}_1 p^{4n}}, \tfrac{\sqrt{D}}{\mathsf{m}_{23}p^{4n}},1\big\}
\end{align*}
using $p^{4n} < \sqrt{D}$.
We now distinguish cases (I), (II), (III), (IV) according to where the latter maximum is attained (ordered linearly). 
In Case (I), we apply Corollary~\ref{cor:app-codim1} to either $Q_x^1$ or $Q_x^3$ (at least one of these is non-degenerate) and obtain a total count of $\ll_\varepsilon \tfrac{D^{3/2+\varepsilon}}{\sqrt{\mathsf{m}_4} p^{8n}} \leq \tfrac{D^{3/2-\delta+\varepsilon}}{p^{8n}}$. Case (II) is analogous using $Q_x^2$ or $Q_x^4$.
For Case (III) assume without loss of generality that $\mathsf{m}_{2} \leq \mathsf{m}_{3}$. Note that $\tfrac{\sqrt{D}}{\mathsf{m}_{23}p^{4n}} \geq 1$ implies $\mathsf{m}_2^2 \leq \mathsf{m}_2\mathsf{m}_3 \leq \mathrm{lcm}(\mathsf{m}_2,\mathsf{m}_3) \gcd(\mathsf{m}_2,\mathsf{m}_3) \leq D^{3/2} p^{-4n}$.
Applying Corollary~\ref{cor:app-codim1} to $Q_x^2$ or $Q_x^3$ we obtain a total count in this case of $\ll_\varepsilon \sqrt{\mathsf{m}_2}\,\tfrac{D^{1/2+\varepsilon}}{p^{4n}} \leq D^{7/8+\varepsilon} p^{-5n}$.
In Case (IV), the total count is $\ll_\varepsilon \sqrt{\mathsf{m}_1}D^\varepsilon\ll D^{1/2+\varepsilon}$ by Corollary~\ref{cor:app-codim1} and we conclude.
\end{proof}

\subsection{Bounds on $\mathcal{S}(n)$}

With Lemma~\ref{lem:pair correlation to vectors} and Proposition~\ref{prop:from vectors to inner products} we have reduced the claimed self-correlation bounds in Theorem~\ref{thm:selfcorrelation} to a more tractable problem on counting certain (half-)integral points on a hypersurface of degree four. 
Specifically, we are reduced to give bounds on the set $\mathcal{S}(n)$ in Definition~\ref{def:set S}.

\subsubsection{Trivial bounds}
For illustration purposes, the following lemma establishes what could be considered a `trivial' bound on $\mathcal{S}(n)$.

\begin{lemma}\label{lem:trivial bound}
We have $\#\mathcal{S}(n) \ll \frac{D^{2}}{p^{12n}}$ for any $n\in \N$ with $p^{4n}< \sqrt{D}$.
\end{lemma}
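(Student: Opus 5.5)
We count the tuples by fixing the coordinates $x_1,x_2+x_3,x_4$ via the linear congruences in \ref{item:S2} and the archimedean box \ref{item:S1}, then resolve $x_2$ (and $x_0$) using the quartic equation \eqref{eq:degree4 equation} and the quadratic congruence. Let $\ell\ge 0$ with $p^\ell\Vert A$; since $p\nmid D$ and $\gcd(A,B,C)=1$, we may assume $\ell\le 4n$ (otherwise the argument is symmetric in the roles of $A$ and $C$, or is even easier).

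\textbf{Counting $x_1$ and $x_2+x_3$.} From $Cx_1\equiv Ax_4 \bmod p^{4n}$ and $A(x_2+x_3)\equiv Bx_1 \bmod p^{4n}$, we will argue as in Case~1.2 of the proof of Proposition~\ref{prop:from vectors to inner products}. Concretely, $p^{\ell}\mid x_1$ when $p\nmid C$, and in general $x_1$ lies in a set of size $\ll A/p^{\ell}+1$. Given $x_1$, the value $x_2+x_3$ is determined modulo $p^{4n-\ell}$, giving $\ll \sqrt{AC}/p^{4n-\ell}+1$ choices.

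\textbf{Counting $x_4$ and combining.} Given $x_1$, the value $x_4$ is determined modulo $p^{4n}$ when $p\nmid A$. When $p\mid A$ we have $p\nmid C$ (as $p\nmid D$), so we swap the roles of $A$ and $C$. Either way $x_4$ has $\ll C/p^{4n}+1$ choices. Since $A\le C$, $AC\asymp D$, and $p^{4n}<\sqrt D\le C$, the product of the three counts is
\[
\ll \frac{A}{p^{\ell}}\cdot\frac{\sqrt{D}\,p^{\ell}}{p^{4n}}\cdot\frac{C}{p^{4n}} \asymp \frac{D^{3/2}}{p^{8n}},
\]
up to the boundary ``$+1$'' terms, which are handled by the same bookkeeping.

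\textbf{Counting $x_2$ and $x_0$.} With $x_1,s=x_2+x_3,x_4$ fixed, the quadratic congruence $x_1x_4-x_2(s-x_2)\equiv D \bmod p^{4n}$ is a monic quadratic in $x_2$. Since $p$ is odd, it has $O(1)$ solutions modulo $p^{4n}$ (the roots are nondegenerate unless the discriminant is divisible by $p$; that case needs a small Hensel-type count, which at worst loses a bounded factor after using the linear relations). Hence $x_2$ has $\ll \sqrt{D}/p^{4n}+1$ choices, and $x_3=s-x_2$ follows. Finally, \eqref{eq:degree4 equation} determines $x_0^2$, so $x_0$ has at most two values. The total is $\ll D^{3/2}p^{-8n}\cdot \sqrt D\,p^{-4n}=D^2p^{-12n}$.

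The main obstacle is the bookkeeping of the $\max\{\cdot,1\}$ terms and the case $p\mid A$ or $p\mid C$. We will handle it by always dividing by whichever of $A,C$ is a $p$-unit, so that the linear congruences pin down $(x_1,s,x_4)$ on a sublattice of index $p^{8n}$ in $\Z^3$. Using $p^{4n}<\sqrt D\le C$ and $\sqrt{AC}\ge p^{4n}$, the trivial terms are then dominated, and we obtain the stated bound.
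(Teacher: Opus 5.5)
Your count of the triples $(x_1,s,x_4)$ with $s=x_2+x_3$ is essentially right. They lie in $\{(x_1,s,x_4)\equiv t(A,B,C) \bmod p^{4n}\}$, a sublattice of index $p^{8n}$, which gives $\ll D^{3/2}p^{-8n}$. One aside: your claim that $p\mid A$ forces $p\nmid C$ is false, since $p$ can divide both when $p\nmid B$; you simply parametrize via whichever of $A,B,C$ is a $p$-unit.

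\textbf{The gap is the $x_2$ step.} The discriminant of $x_2^2-sx_2+x_1x_4-D$ is $\Delta=s^2-4(x_1x_4-D)\equiv -4D(t^2-1) \bmod p^{4n}$. So $k=\ord_p(\Delta)$ ranges over all of $0,\dots,4n$, and it is positive as soon as $t\equiv\pm1 \bmod p$. For even $k<4n$ the congruence can have $\asymp p^{k/2}$ roots modulo $p^{4n}$, and it has $p^{2n}$ roots when $p^{4n}\mid\Delta$. The linear relations do not bound this loss; they are exactly what produces it.

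Concretely, take $p\nmid A$ and $x_1=A$. Then $s\equiv B$, $x_4\equiv C$, and $(x_2-\tfrac{B}{2})^2\equiv 0 \bmod p^{4n}$, so $x_2$ is only fixed modulo $p^{2n}$. The tuples obeying (S1) and (S2) with $x_1=A$ alone number $\asymp \tfrac{\sqrt{D}}{p^{4n}}\cdot\tfrac{C}{p^{4n}}\cdot\tfrac{\sqrt{D}}{p^{2n}}\asymp \tfrac{D^2}{Ap^{10n}}$. This exceeds $D^2p^{-12n}$ whenever $A<p^{2n}$, which the standing assumption $A\ge c_0D^{\delta_0}$ allows. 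So an argument using only (S1), (S2) and ``$x_0$ has at most two values'' cannot prove the lemma.

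What is missing are the two ingredients of the paper's proof.
\begin{enumerate}
\item Stratify by $k$. The condition $p^k\mid\Delta$ forces $t\equiv\pm1 \bmod p^k$, i.e.\ $x_1\equiv\pm A \bmod p^k$ when $p\nmid A$. This cuts the $x_1$-count to $\ll Ap^{-k}$ and the triple count to $\ll D^{3/2}p^{-8n-k}$. The $x_2$-count is $\ll \sqrt{D}\,p^{-4n+k/2}$, so the stratum contributes $\ll D^2p^{-12n-k/2}$, which is summable in $k$. The stratum $k=4n$ gives $\ll D^2p^{-14n}$.
\item Handle the endpoint $x_1=\pm A$ by (S3). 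The bound $Ap^{-k}$ (rather than $Ap^{-k}+1$) is legitimate only because, once $x_1=\pm A$ is excluded, the stratum is empty when $p^k>2A$. The excluded endpoint is handled by (S3): it forces $x_2=x_3=\pm\tfrac{B}{2}$ and $x_0=0$, leaving $\ll C/p^{4n}\ll D^2p^{-12n}$ points.
\end{enumerate}
Your proposal never invokes (S3). By the example above, something beyond (S1), (S2) is genuinely needed there.
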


Applying Lemma~\ref{lem:pair correlation to vectors} and Proposition~\ref{prop:from vectors to inner products} in addition, this shows that for all $n\in \N$
\begin{align*}
\mu_\iota' \times \mu_\iota'&(\{(x,y): y \in x \Bow(n)\}) 
\ll_{\varepsilon} D^{-\frac{1}{2}+\varepsilon} + D^{1+\varepsilon}p^{-12n}.
\end{align*}
Choosing $n$ to roughly balance the two terms on the right, we have
\begin{align*}
\mu_\iota' \times \mu_\iota'&(\{(x,y): y \in x \Bow(n)\}) 
\ll_{\varepsilon} p^{-(4-\varepsilon)n}.
\end{align*}
which falls just short of Theorem~\ref{thm:selfcorrelation}.

\begin{proof}[Sketch of proof of Lemma~\ref{lem:trivial bound}]
As we will not use Lemma~\ref{lem:trivial bound} later, we will assume $p \nmid A$.
In this case, we make a point of \emph{not} using the degree $4$ equation in \eqref{eq:degree4 equation}, but only the weak information that $x_0$ is determined by $(x_1,\ldots,x_4)$ up to $D^{o(1)}$ choices.
The degree four equation will eventually yield a small improvement over the trivial bound in Lemma~\ref{lem:trivial bound}.

If $x \in \mathcal{S}(n)$ satisfies $x_1=\pm A$ then $x_2,x_3,x_0$ are determined by \ref{item:S3} and $x_4$ is determined modulo $p^{4n}$, and so the contribution of such points to $\mathcal{S}(n)$ is $\ll \frac{C}{p^{4n}} \ll \frac{D^{2}}{p^{12n}}$.

Let $4n \geq k \geq 0$ be fixed.
Consider the set $\mathcal{S}_k$ of triples $(x_1,x_2+x_3,x_4)$ satisfying
\begin{itemize}
    \item the linear congruence conditions in \eqref{eq:originalcongeqforx},
    \item the Archimedean bounds $|x_1| < A$, $|x_2+x_3| \leq \sqrt{AC}$, $|x_4| \leq C$,
    \item if $k< 4n$ then either $p^k \Vert (x_1-A)$ or $p^k \Vert (x_1+A)$,
    \item if $k= 4n$ then either $p^k \mid (x_1-A)$ or $p^k \mid (x_1+A)$.
\end{itemize}
Notice that $p^k \leq 2A$ or $\mathcal{S}_k = \emptyset$.
Moreover, we have $\#\mathcal{S}_k \ll D^{\frac{3}{2}}p^{-8n-k}$.
Indeed, if we fix $x_1$ (for which there are $\ll \max\{Ap^{-k},1\}\ll Ap^{-k}$ choices) then $x_2+x_3$ and $x_4$ are determined modulo $p^{4n}$ by the linear congruence conditions, which proves the desired estimate.

We now count $x\in \mathcal{S}(n)$ with given $(x_1,x_2+x_3,x_4)\in \mathcal{S}_k$.
We estimate the number of possible choices of $x_2$.
Note that $x_2$ satisfies the, by now, single-variable polynomial congruence equation
\begin{align*}
x_2^2 -(x_2+x_3)x_2 +x_1x_4-D \equiv 0 \mod p^{4n}.
\end{align*}
The discriminant of this polynomial modulo $p^{4n}$ is congruent to
\begin{align*}
(x_2+x_3)^2-4(x_1x_4-D) \equiv -4D ((x_1/A)^2-1)
\end{align*}
using the linear congruence conditions. In particular, if $k< 4n$, it is divisible by $p^k$ and not by $p^{k+1}$.
Thus, if $\alpha,\beta \in \Z_p$ denote the two roots then $p^{\lfloor k/2\rfloor} \Vert (\alpha-\beta)$. Hence, if $(x_2-\alpha)(x_2-\beta)\equiv 0 \mod p^{4n}$ then either $x_2 \equiv \alpha \mod p^{4n-\lfloor k/2\rfloor-1}$ or $x_2 \equiv \beta \mod p^{4n-\lfloor k/2\rfloor-1}$.
The number of $x_2$ satisfying these congruence conditions and is clearly $\ll \sqrt{D}p^{-4n+k/2}$.
The total contribution for $k < 4n$ is thus $\ll D^{3/2}p^{-8n-k} \sqrt{D}p^{-4n+k/2} \ll D^2p^{-12n-k/2}$ which sums to $\ll \frac{D^{2}}{p^{12n}}$.

If $k=4n$, then $x_1 \equiv A \mod p^{4n}$, $x_2+x_3 \equiv B \mod p^{4n}$, and $x_4 \equiv C\mod p^{4n}$. By the quadratic congruence condition in \eqref{eq:originalcongeqforx}, we have $(x_2-\frac{B}{2})^2 \equiv 0 \mod p^{4n}$ and so $x_2$ is determined modulo $p^{2n}$. This case yields a total count of $\ll D^{2}p^{-14n}$ which suffices.
\end{proof}

\subsubsection{Non-trivial bounds}

Our main counting estimate is the following.
Recall that we assume $A= \min(q) \geq c_0D^{\delta_0}$.

\begin{proposition}\label{prop:maincountingestimate}
There exists $\eta = \eta(\delta_0)\in (0,\frac{1}{4})$ with the following property:
Let $n\in \N$ be maximal with $p^{4n}\leq D^{\frac{1}{2}-\eta}$.
Then 
\begin{align*}
\#\mathcal{S}(n)\ll_{\varepsilon} D^{\frac{1}{2}+\varepsilon-\eta} \leq D^{1+\varepsilon} p^{-\frac{4n}{1-2\eta}}.
\end{align*}
\end{proposition}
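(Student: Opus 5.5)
We take $n$ maximal with $p^{4n}\le D^{\frac12-\eta}$, so $p^{4n}\asymp_p D^{\frac12-\eta}$ and the lattice $p^{4n}\Z$ is nearly as fine as $\sqrt D$. The target $D^{\frac12-\eta+\varepsilon}$ saves a power $D^{\eta}$ over $\sqrt D$. For comparison, Lemma~\ref{lem:trivial bound} gives $D^2p^{-12n}\approx D^{\frac12+3\eta}$. So we must recover about $D^{4\eta}$, and for this we will use the degree four equation \eqref{eq:degree4 equation}, which the trivial bound ignores. The second inequality in the statement is only a rewriting: $p^{4n/(1-2\eta)}\le D^{1/2}$.

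\textbf{Linear parametrization.} First we parametrize by the linear data. Following the proof of Lemma~\ref{lem:trivial bound}, we reduce to $p\nmid A$ (otherwise use $C$, or shear). Set $s=Cx_1-Ax_4$, $t=A(x_2+x_3)-Bx_1$ and $u=x_1x_4-x_2x_3-D$, so that \ref{item:S2} reads $s\equiv t\equiv u\equiv 0 \bmod p^{4n}$. A short computation shows that $(Cx_1-Ax_4)^2+(Bx_1-A(x_2+x_3))(Bx_4-C(x_2+x_3))=q(s,-t)/A$, exactly as in the proof of Lemma~\ref{lem:degenerate S}. Hence \eqref{eq:degree4 equation} becomes
\begin{align*}
\disc(Q)x_0^2 + \tfrac{1}{A}q(s,-t) = u^2 .
\end{align*}
Write $s=p^{4n}s'$, $t=p^{4n}t'$, $u=p^{4n}u'$. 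The archimedean bounds \ref{item:S1} give $|s|\ll AC$, $|t|\ll A\sqrt{AC}$ and $|u|\ll D$. Since $p^{2\cdot 4n}$ divides $\disc(Q)x_0^2$, we also get $p^{4n}\mid x_0$ because $p\nmid\disc(Q)$ (up to finitely many $p$; otherwise we lose a constant power of $p$). Writing $x_0=p^{4n}x_0'$, the equation becomes
\begin{align*}
\disc(Q)A x_0'^2 + q(s',-t') = A u'^2,
\end{align*}
a ternary-plus-one quadratic relation in small variables. Here $|x_0'|,|u'|\ll D^{\frac12+\eta}$, while $s',t'$ range over boxes of sizes $AC/p^{4n}$ and $A^{3/2}C^{1/2}/p^{4n}$. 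The points with $s=t=0$ are handled by Lemma~\ref{lem:degenerate S}, which contributes $\sqrt D/p^{4n}=D^{\eta}$, acceptable for small $\eta$. For fixed $(x_1,x_2+x_3,x_4)$, Lemma~\ref{lem:Pell} recovers $x_2$ with $D^{o(1)}$ loss.

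\textbf{Counting the quadric.} The remaining task is to count integer points on this quadric in lopsided boxes, together with the residual constraint linking $(s',t',u')$ back to $x_1,x_2,x_3,x_4$. We plan to fix $x_1$, which has $\ll A$ choices up to the congruence $x_1\equiv \pm A$ structure. Once $x_1$ is fixed, $x_4$ and $x_2+x_3$ are determined mod $p^{4n}$, so the remaining free parameters $(m,m')$ with $x_4=x_4^{(0)}+p^{4n}m$ and $x_2+x_3=\sigma^{(0)}+p^{4n}m'$ live in a rectangle of dimensions about $C/p^{4n}\times \sqrt D/p^{4n}$, that is, $D^{\frac12+\eta}/A\times D^{\eta}$. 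The variable $x_2$ is coupled through $u$. Eliminating $x_2$ via the quadratic relation turns, for each fixed $x_1$, the condition \eqref{eq:degree4 equation} together with the integrality of $x_0$ into the statement that a plane curve of degree at most $4$ in $(m,m')$ (after clearing $x_0$, a curve in three variables projected to two) has integral points in a non-square rectangle. We then apply the determinant method variant of Appendix~\ref{sec:det method} for planar rectangles, which gives roughly $\ll (\text{side}_1\,\text{side}_2)^{1/d+\varepsilon}$ points on an irreducible degree $d$ curve. Summing over $x_1$ should give a saving of a power of $D$ over the trivial count, provided $A\ge c_0D^{\delta_0}$ keeps both sides comparable enough; this is where the dependence $\eta=\eta(\delta_0)$ arises.

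\textbf{Main obstacle.} We expect the hardest step to be controlling reducibility and degeneracy of these fiber curves uniformly in $x_1$, since linear or conic components would destroy the Bombieri--Pila saving. Our plan is to show that reducible fibers force $x_1$ into a set of size $D^{o(1)}$, or into the special loci $x_1\equiv\pm A$ handled as in Lemma~\ref{lem:trivial bound} via \ref{item:S3}. Components that are conics we will count directly with Lemma~\ref{lem:Pell}. Finally we balance $\eta$ against the determinant-method exponent.
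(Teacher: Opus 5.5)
Your preliminary manipulations are sound: the rewriting through $s,t,u$, the divisibility $p^{4n}\mid x_0$, the observation that the second inequality is trivial, and the Pell step for fixed $(x_1,x_2+x_3,x_4)$ off $\mathcal{S}_{\mathrm{deg}}$. The gap is at exactly the step that is supposed to produce the saving.

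\textbf{The fibers are not curves, and there are too many of them.} After fixing $x_1$, the unknowns are $m,m',x_2,x_0$, subject to the single equation \eqref{eq:degree4 equation} plus congruences. So you are counting integer points on a threefold, a double cover of $(m,m',x_2)$-space. ``Eliminating $x_2$'' does not remove a variable: the relation $x_1x_4-x_2x_3\equiv D$ holds only modulo $p^{4n}$, and writing $u=p^{4n}u'$ merely trades $x_2$ for the new variable $u'$. The integrality of $x_0$ is not a polynomial condition either. Hence there is no plane curve in $(m,m')$, and Theorem~\ref{thm:BP} has nothing to act on.

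To reach curves you must fix two further coordinates, and then the number of fibers alone defeats you. The hypothesis $A\ge c_0D^{\delta_0}$ permits $A\asymp\sqrt D$. In that range, once $x_1$ is fixed, each of $x_4$, $x_2+x_3$ and (generically) $x_2$ has only about $D^{\eta}$ admissible values. So there are $\approx AD^{2\eta}\approx D^{\frac12+2\eta}$ fibers, and even an $O(1)$ bound per curve cannot reach $D^{\frac12-\eta}$. Fixing $x_1$ alone already produces $\asymp\sqrt D>D^{\frac12-\eta}$ fibers, most of which would have to be shown empty; fiberwise upper bounds cannot do that. For the same reason, your Pell observation only gives $\#\mathcal{S}\ll D^{3/2}p^{-8n}D^{o(1)}\approx D^{\frac12+2\eta+o(1)}$.

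\textbf{The missing idea} is a long direction that mixes $x_1$ with the other coordinates. The paper proceeds as follows.
\begin{enumerate}
\item It splits $\mathcal{S}_0=\mathcal{S}\setminus\mathcal{S}_{\mathrm{deg}}$ into classes $\mathbf w$ modulo $p^{2n}$, of which there are $\ll np^{2n}$.
\item Inside a class, the quadratic congruence modulo $p^{4n}$ becomes \emph{linear} modulo $p^{2n}$. So the points lie in a lattice $\Lambda_{\mathbf w}$ of covolume $p^{6n}$ (Lemma~\ref{lem:covolume of lattice in y}).
\item A Minkowski-reduced basis for the weighted norm $\norm{\cdot}_q$ turns the region into a box $\prod_i[-B_i,B_i]$ with $\prod_iB_i\asymp D^2p^{-14n}$.
\item Theorem~\ref{thm:BP}, applied in the variables $(x_0,z_1)$ along the longest side, gives $B_1^{1/2}B_2B_3B_4\le(\prod_iB_i)^{7/8}$. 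This is a saving of $D^{\frac1{32}+\frac{7\eta}{16}}$, which beats the required $D^{4\eta}$ for $\eta\le\frac1{126}$.
\end{enumerate}

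This also needs two inputs absent from your plan:
\begin{itemize}
\item a count of the classes with $B_2\le D^{\delta}$ (Lemma~\ref{lem:smallB2});
\item the fact that for each $(\tau_3,\tau_4)$ only $O(1)$ values $\tau_2$ make $F_{\mathbf w}(\cdot,\tau_2,\tau_3,\tau_4)$ a square (Lemma~\ref{lem:squarefiniteness}, via Lemmas~\ref{eq:squarelemma} and~\ref{lem:square-part1}, using that $\mathcal{S}_{\mathrm{deg}}$ was removed).
\end{itemize}
The second item, not reducibility ``uniformly in $x_1$'', is the genuine degeneracy issue.

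Finally, the hypothesis $A\ge c_0D^{\delta_0}$ does not enter through ``comparable sides''. It is used only in the separate range $A<D^{\frac12-\delta_0'}$, which is treated by Lemma~\ref{lem:lowrangeA} via Pell in $(x_0,x_4)$ for fixed $x_1,x_2,x_3$. There the term $C/p^{4n}$ coming from $x_1=\pm A$ forces $\eta\le\delta_0/2$.
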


\begin{proof}[Proof of Theorem~\ref{thm:selfcorrelation} assuming Proposition~\ref{prop:maincountingestimate}]
By Propositions~\ref{prop:from vectors to inner products} and \ref{prop:maincountingestimate} we have for $n\in \N$ maximal with $p^{4n}\leq D^{\frac{1}{2}-\eta}$ and for any $\delta \in (0,\frac{1}{4})$
\begin{align*}
\max_{Q' \in \spn(Q)} \mathcal{X}(Q',2n)
&\ll_{\varepsilon} D^{\delta+\varepsilon} \#\mathcal{S}(n) + \frac{D^{\frac{3}{2}-\delta + \varepsilon}}{p^{8n}} + \frac{D^{7/8+\varepsilon}}{p^{5n}}  +D^{\frac{1}{2}+\varepsilon}\\
&\ll_{\varepsilon} D^{\frac{1}{2}+\delta-\eta+\varepsilon}+D^{\frac{1}{2}-\delta+2\eta+\varepsilon} +D^{\frac{1}{2}+\varepsilon}.
\end{align*}
We choose $\delta=\frac{3}{2}\eta$ so that $\max_{Q' \in \spn(Q)} \mathcal{X}(Q',2n)\ll_\epsilon D^{\frac{1+\eta}{2}+\epsilon}$. By Lemma \ref{lem:pair correlation to vectors} it follows that
\begin{align*}
\mu_\iota' \times \mu_\iota'(\{(x,y): y \in x \Bow(n)\})&\ll_\varepsilon D^{-\frac{1}{2}+\varepsilon} + D^{-1+\varepsilon} \max_{Q'\in \spn(Q)} \#\mathcal{X}(Q',2n)\\
&\ll D^{-\frac{1}{2}+\varepsilon}+D^{-\frac{1-\eta}{2}+\varepsilon}
\ll D^{-\frac{1-\eta}{2}+\varepsilon}
\end{align*}
Hence, the desired estimate follows for $\delta_1=\frac{4\eta}{1-2\eta}$.
\end{proof}

\section{Solving the counting problem}\label{sec:solve counting}

The goal of this section is to prove Proposition~\ref{prop:maincountingestimate} and hence our main theorems.
We use the notation of the previous section.
In particular, we have a Minkowski-reduced positive definite integral binary quadratic form $q(x,y) = Ax^2+Bxy+Cy^2$ of discriminant $D=AC-\tfrac{1}{4}B^2>0$ and an odd prime for which $q(x,y) \mod p$ is non-degenerate.
We fix a prime power $p^{n} \leq D^{\frac{1}{8}}$ so that $n$ close to $\frac{1}{8}\log_p(D)$ and to be determined.
The goal to estimate the size of $\mathcal{S}(n)$; we write $\mathcal{S}=\mathcal{S}(n)$ to simplify notation (keeping $n$ implicit).

\subsection{Small minimum}
First, we cover `intermediate' ranges of $\min(q)=A$, the minimum of $q$.

\begin{lemma}\label{lem:lowrangeA}
Assume $p^{4n}>2 A$.
    We have
    $$\#\mathcal{S}\ll_{\varepsilon} \frac{AD^{\frac{3}{2}+\varepsilon}}{p^{12n}}+\frac{C}{p^{4n}}.$$
\end{lemma}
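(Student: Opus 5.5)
We count $x=(x_0,\ldots,x_4)\in\mathcal{S}$ by fixing the linear data $(x_1,x_2+x_3,x_4)$, bounding the number of such triples using the linear congruences in \ref{item:S2}, and then bounding the remaining variables $x_2$ (equivalently $x_2-x_3$) and $x_0$ fibrewise. The new point compared with Lemma~\ref{lem:trivial bound} is that $p^{4n}>2A$ makes the congruences on $x_1$ nearly useless. Instead we use the degree four equation \eqref{eq:degree4 equation}, which is a Pell-type equation in the remaining free variables.

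\emph{Step 1 (the degenerate/boundary points).} Points with $x_1=\pm A$ have $x_2=x_3=\pm\frac{B}{2}$ and $x_0=0$ by \ref{item:S3}. The only freedom is $x_4$, which is determined modulo $p^{4n}$ by $Cx_1-Ax_4\equiv0$ when $p\nmid A$ (when $p\mid A$, use the other linear congruences, noting $p\nmid D$ forces $p\nmid C$ or $p\nmid B$). This gives $\ll C/p^{4n}+1$ points. The same bound covers the set in Lemma~\ref{lem:degenerate S}, which contributes $\ll\sqrt D/p^{4n}\le C/p^{4n}$. This accounts for the second term.

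\emph{Step 2 (counting triples).} Fix $x_1$ with $|x_1|<A$; there are $\ll A$ choices, and we do not attempt to save from congruences here, since $p^{4n}>2A$. Given $x_1$, the congruences in \eqref{eq:originalcongeqforx} determine $x_4$ and $x_2+x_3$ modulo $p^{4n}$, up to a bounded power of $p$ if $p\mid A$. This is the same bookkeeping as in Case~1.2 of Proposition~\ref{prop:from vectors to inner products}, where $p^\ell\Vert A$ forces $p^\ell\mid x_1$. Hence there are $\ll \max\{C/p^{4n},1\}\max\{\sqrt{D}/p^{4n},1\}\ll D^{3/2}/(A p^{8n})$ choices of $(x_2+x_3,x_4)$, using $C\asymp D/A$ and $p^{4n}<\sqrt D$. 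This yields $\ll D^{3/2}p^{-8n}$ triples in total, or $\ll A\cdot D^{3/2}/(Ap^{8n})$ before simplification. The accounting must be arranged so that the final count is $A D^{3/2}p^{-12n}$, which suggests a further saving of $A/p^{4n}$ is needed in Step 3.

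\emph{Step 3 (the fibre, main obstacle).} With $(x_1,s=x_2+x_3,x_4)$ fixed and $a:=(Cx_1-Ax_4)^2+(Bx_1-As)(Bx_4-Cs)\neq0$ (the case $a=0$ is handled in Step 1), equation \eqref{eq:degree4 equation} with $u=x_2-x_3$ reads
\begin{align*}
\disc(Q)x_0^2-\big(x_1x_4-\tfrac{s^2-u^2}{4}-D\big)^2=-a.
\end{align*}
This is a quadratic equation in $(x_0,w)$ with $w=x_1x_4-x_2x_3-D$. By Lemma~\ref{lem:Pell} (when $\disc(Q)$ is not a square) it has $\ll D^{\varepsilon}$ solutions $(x_0,w)$; when $\disc(Q)$ is a square, the equation factors and a divisor bound gives the same. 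Each $w$ determines $u^2$, hence $u$ up to sign, and so the fibre has size $\ll D^\varepsilon$. However, this bound is too strong relative to the target and may be unavailable as stated, since $w$ must also satisfy $w\equiv0 \bmod p^{4n}$. The expected issue is reconciling these: the factor $A p^{-4n}<1$ suggests the true bound combines the triples count $D^{3/2}p^{-8n}$ with the Pell fibre bound $D^\varepsilon$, together with the observation that $D^{3/2}p^{-8n}\cdot D^\varepsilon\le AD^{3/2+\varepsilon}p^{-12n}$ fails unless $A\ge p^{4n}$. Therefore I would first try to gain the factor $A/p^{4n}$ from the constraint $p^{4n}\mid w$ with $|w|\ll D$. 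The plan is to count $w\in p^{4n}\Z$ first, giving $\ll D/p^{4n}$ choices, and then recover $x_1,s,x_4$ from $w$ and the congruences. Balancing these two counting orders is the step I expect to be delicate.
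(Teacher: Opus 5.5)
Your proposal has a genuine gap, and you point to it yourself: Steps~2--3 only give $\#\mathcal{S}\ll D^{3/2+\varepsilon}p^{-8n}+C/p^{4n}$. This exceeds the target by the factor $p^{4n}/A>2$. In the range where the lemma is applied ($A$ as small as $c_0D^{\delta_0}$, $p^{4n}\approx D^{1/2-\eta}$) it yields $D^{1/2+2\eta}$ instead of the $D^{1/2-\eta}$ needed in Proposition~\ref{prop:maincountingestimate}.

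Your $D^{\varepsilon}$ fibre bound is not the problem; extra constraints such as $p^{4n}\mid w$ can only shrink a fibre. The loss is entirely in the base. You enumerate $x_4$ via $Cx_1\equiv Ax_4 \bmod p^{4n}$, paying $C/p^{4n}$ with $C\asymp D/A$ large. Meanwhile you spend the Pell-type bound on eliminating $u=x_2-x_3$, whose range is only $\sqrt D$. Your suggested repair (enumerate $w\in p^{4n}\Z$ first) is not carried out, and there is no evident cheap way to recover $(x_1,x_2+x_3,x_4)$ from $w$.

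The missing idea is to swap these roles and fix $(x_1,x_2,x_3)$ instead. Then the right-hand side of \eqref{eq:degree4 equation} is quadratic in $x_4$ with leading coefficient $x_1^2-A^2<0$ for $|x_1|<A$. So the equation in $(x_0,x_4)$ has non-degenerate (indeed definite) quadratic part, and Lemma~\ref{lem:Pell} gives $D^{o(1)}$ solutions.

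To count $(x_1,s,x_2)$ with $s=x_2+x_3$, the saving in $x_1$ that you dismissed is available after all, just not modulo $p^{4n}$.
\begin{itemize}
\item Suppose $p\nmid B$, $p^e\Vert A$ and $p^k\Vert(s\mp B)$. Then $As\equiv Bx_1\bmod p^{4n}$ forces $x_1\equiv\pm A\bmod p^{e+\min(k,4n-e)}$.
\item If this modulus exceeds $2A$ (this is exactly where $p^{4n}>2A$ enters), then $x_1=\pm A$, and \ref{item:S3} puts the point into the $C/p^{4n}$ term.
\item Otherwise $x_1$ has $\ll Ap^{-e-k}$ choices, and then $s$ has $\ll\sqrt D\,p^{-(4n-e)}$ choices.
\item Now $x_4$ is fixed modulo $p^{4n}$, but you do not enumerate it.
\item The congruence $x_2^2-sx_2+x_1x_4-D\equiv 0\bmod p^{4n}$ has discriminant $\equiv-4D\big((s/B)^2-1\big)$, of valuation $k$. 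So $x_2$ lies in $O(1)$ classes modulo $p^{4n-k}$, giving $\ll\sqrt D\,p^{-4n+k}$ choices.
\end{itemize}
The powers $p^{\pm e}$ and $p^{\pm k}$ cancel, so $\#\mathcal{S}\ll AD^{1+\varepsilon}p^{-8n}+C/p^{4n}$. This is at most the claimed bound because $p^{4n}\le\sqrt D$. If $p\mid B$, run the same argument with $p^k\Vert(x_1\mp A)$.
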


\begin{proof}
    We follow a scheme similar in parts to the proof of Lemma~\ref{lem:trivial bound}.
    Assume first that $p \nmid B$ and restrict the count to points with $p^k \Vert (x_2+x_3- B)$ (or similarly $p^k \Vert (x_2+x_3+B)$.
    Let $e< 4n$ be the integer such that $p^e\| A$. Since $p\nmid B$ and $Bx_1-A(x_2+x_3)\equiv0 \textrm{ mod }p^{4n}$, we have $p^e|x_1$ and $(p^{-e}A)(x_2+x_3)-B(p^{-e}x_1)\equiv 0 \textrm{ mod }p^{4n-e}$.
    Using $p^k \Vert (x_2+x_3- B)$ we obtain that either
    \begin{enumerate}[a)]
        \item $4n-e \geq k$ and $p^{-e}x_1\equiv p^{-e}A\mod p^k$ or
        \item $4n-e < k$ and $p^{-e}x_1\equiv p^{-e}A\mod p^{4n-e}$.
    \end{enumerate}
    
    We assume Case a) first.
    If $p^{e+k}> 2A$ we have $x_1 = A$, $x_2=x_3 = \frac{B}{2}$ (by \ref{item:S3}), and $x_4 \equiv C \mod p^{4n}$ (by \eqref{eq:originalcongeqforx}) so that $x$ is determined up to $\frac{C}{p^{4n}}$ choices.
    Otherwise, there are $\ll p^{-e-k}A$ many options for $x_1$.
    Once $x_1$ is fixed, the linear congruence equation $A(x_2+x_3)-Bx_1\equiv 0 \textrm{ mod }p^{4n}$ determines $x_2+x_3$ mod $p^{4n-e}$, hence there are $\ll p^{-(4n-e)}D^{\frac{1}{2}}$ many options for $x_2+x_3$. We also fix $x_2+x_3$.
    By \eqref{eq:originalcongeqforx}, $x_4$ is determined modulo $p^{4n}$.
    
    We claim that the quadratic congruence condition $x_1x_4-x_2x_3\equiv D \textrm{ mod }p^{4n}$, determines $x_2$ mod $p^{4n-k}$.
    Indeed, as in the argument for Lemma~\ref{lem:trivial bound} the polynomial $x_1x_4-x_2x_3\equiv D$ viewed as a polynomial in $x_1,x_2+x_3,x_4$ has discriminant 
    \begin{align*}
    (x_2+x_3)^2-4(x_1x_4-D) \equiv -4D \big(((x_2+x_3)/B)^2-1\big) \mod p^{4n}
    \end{align*}
    which is zero exactly mod $p^{k}$ proving the claim.
    Then there are $\ll p^{-4n+k}D^{\frac{1}{2}}$ many options for $x_2$ and we fix one such option so that $x_1,x_2,x_3$ are determined.
    
    For fixed $x_1,x_2,x_3$, there are $D^{o(1)}$ many $x_4$ satisfying the equation \eqref{eq:degree4 equation} unless $x_1=A,x_2=x_3=\frac{B}{2}$.
    Indeed, this follows from Lemma~\ref{lem:Pell} unless the right-hand side (RHS) of \eqref{eq:degree4 equation} is $\disc(Q)$ times the perfect square of a rational polynomial in $x_4$. 
    Observe that the coefficient of $x_4^2$ in the RHS is $x_1^2-A^2$, hence non-positive. It follows that the RHS can become $\disc(Q)$ times a perfect square only if $x_1=A$. In this case we have $x_0=0,x_2=x_3=\frac{B}{2}$, and $x_4\equiv C \textrm{ mod }p^{4n}$. We thus conclude
    \begin{align*}
        \#\mathcal{S}&\ll_{\varepsilon} (p^{-e-k}A)(p^{-4n+e}D^{\frac{1}{2}})(p^{-4n+k}D^{\frac{1}{2}})D^{\varepsilon}+\frac{C}{p^{4n}}
        \leq \frac{AD^{1+\varepsilon}}{p^{8n}}+\frac{C}{p^{4n}}.
    \end{align*}

    We now assume Case b) so that $x_1\equiv A$ mod $p^{4n}$ for any $x\in \mathcal{S}$.
    As $p^{4n} > 2A$, this implies $x_1 = A$ (as $|x_1| \leq A$) and we conclude as in the beginning of a).

    If $p \mid B$, then $p \nmid A$ (as $p\nmid D$) and we count points subject to $p^k \mid (x_1-A)$ (or $p^k \mid (x_1+A)$). The proof is very similar. With these conditions, $x_1$ is determined up to $\ll Ap^{-k}$ choices (unless $p^k > 2A$ in which case we invoke \ref{item:S3}). Thus, $x_2+x_3$ is determined up to $\ll \frac{\sqrt{D}}{p^{4n}}$ choices and, fixing one such choice, $x_4$ is determined modulo $p^{4n}$.
    Using the quadratic congruence condition, $x_2$ is determined modulo $p^{4n-k}$ and we conclude as before.
\end{proof}

Let $\delta_0':=\min\{2\delta_0,\frac{1}{126}\}$. 
We first prove Proposition~\ref{prop:maincountingestimate} for the range 
\begin{align*}
c_0D^{\delta_0}\leq A=\min(q)< D^{\frac{1}{2}-\delta_0'}.
\end{align*}

\begin{proof}[Proof of Proposition~\ref{prop:maincountingestimate} when $ A< D^{\frac{1}{2}-\delta_0'}$]
Let $\eta=\tfrac{1}{4}\delta_0'$. We choose $n$ so that $p^{4n}\leq D^{\frac{1}{2}-\eta}<p^{4n+1}$.
Note that $p^{4n}>2A$ as $p^{4n+1}>D^{\frac{1}{2}- \eta}> 2pA$. 
We thus have
$$\#\mathcal{S}\ll_{\varepsilon} \frac{AD^{\frac{3}{2}+\varepsilon}}{p^{12n}}+\frac{A^{-1}D}{p^{4n}}\ll D^{\frac{1}{2}+\varepsilon-\eta}$$
by Lemma \ref{lem:lowrangeA} and the proposition follows in this range.
\end{proof}
In the rest of this section, we assume $A\geq D^{\frac{1}{2}-\delta_0'}$.

\subsection{Bad sets}
For the case $A\geq D^{\frac{1}{2}- \delta_0'}$, we shall first exclude the points in
\begin{align*}
    \mathcal{S}_{\mathrm{deg}}=\{\mathbf{x}\in\mathcal{S}: Cx_1-Ax_4=Bx_1-A(x_2+x_3)= 0\}
\end{align*}
and then count the number of points in
$$\mathcal{S}_0:=\mathcal{S}\setminus \mathcal{S}_{\mathrm{deg}}.$$

Recall from Lemma~\ref{lem:degenerate S} that we have
\begin{align}\label{eq:degcount}
\#\mathcal{S}_{\operatorname{deg}}\ll p^{-4n}\sqrt{D}.
\end{align}

We shall also need the following elementary lemma.

\begin{lemma}\label{eq:squarelemma}
    Let 
    \begin{align*}
    Q_1&=x_1x_4-x_2x_3-D,\\
    Q_2 &= (Cx_1-Ax_4)^2+(Bx_1-A(x_2+x_3))(Bx_4-C(x_2+x_3)).
    \end{align*}
    Let $P$ be a real affine plane in $\mathbf{A}^4$.
    \begin{enumerate}
        \item Suppose for $\alpha, \beta\in\R$, $Q_1+\alpha Q_2 + \beta$ vanishes identically on $P$. Then we have $\alpha=(4D)^{-1}$ or $\beta = D$.
        \item Suppose $(Q_1^2-Q_2)|_P$ is a square of a polynomial function on $P$ with real coefficients, then either $P$ is defined by $x_2-x_3=Cx_1+Ax_4-Bx_2=0$, or $P$ is defined by $Cx_1-Ax_4=Bx_4-C(x_2+x_3)=0$.
    \end{enumerate}
\end{lemma}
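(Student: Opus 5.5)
We will exploit that both $Q_1$ and $Q_2$ are invariants of the natural group action coming from changing the basis of $\iota_1,\iota_2$ simultaneously. Concretely, write $X=\begin{pmatrix}x_1&x_2\\x_3&x_4\end{pmatrix}$ and $M=\begin{pmatrix}A&B/2\\B/2&C\end{pmatrix}$. Then $Q_1=\det X-D$. We also expect $Q_1^2-Q_2$ to equal, up to the factor $\disc(Q)$, the Gram determinant of \eqref{eq:Qx}, namely $\det\begin{pmatrix}M&X\\X^t&M\end{pmatrix}$. This identity is the one recorded just before Definition~\ref{def:set S}.

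The determinant is a polynomial whose quadratic part in $X$ is $\det X$-type terms mixed with $M$. Using the anisotropy of $q$ (positive definite, $D>0$), we will diagonalize over $\R$: replace $X$ by $M^{-1/2}XM^{-1/2}$ scaled by $\sqrt D$, so that $M$ becomes $\sqrt D\,\mathrm{Id}$. This is a real linear change of variables on $\mathbf{A}^4$, and it maps affine planes to affine planes. In the normalized variables $Y$ we get
\begin{align*}
Q_1=D(\det Y-1),\qquad Q_2=D^2\,\|Y-Y^{\mathrm{adj}}\|^2\text{-type form},
\end{align*}
where the second expression means $Q_2=\tfrac{D^2}{4}\big((y_1-y_4)^2+(y_2+y_3)^2\big)\cdot c$ for an explicit constant $c$. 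We will verify this from the given formula: $Cx_1-Ax_4$ and $Bx_1-A(x_2+x_3)$ are exactly the components of the anti-self-adjoint part relative to $M$. So $Q_2$ is $4D$ times a positive semidefinite quadratic form of rank $2$, and it vanishes exactly on the "$M$-symmetric" multiples.

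For part (1), we restrict $Q_1+\alpha Q_2+\beta$ to $P$. In $Y$-coordinates, $\det Y$ splits as $\tfrac14\big((y_1+y_4)^2-(y_2-y_3)^2\big)-\tfrac14\big((y_1-y_4)^2+(y_2+y_3)^2\big)$. The second bracket is proportional to $Q_2$ with proportionality constant $-(4D)^{-1}$, after fixing normalizations. So $Q_1+\alpha Q_2$ equals $D\cdot\tfrac14\big(u^2-v^2\big)+(\alpha-(4D)^{-1})Q_2-D+\beta$, where $u=y_1+y_4$ and $v=y_2-y_3$. If $\alpha\neq(4D)^{-1}$, the quadratic form $\tfrac{D}{4}(u^2-v^2)+(\alpha-\frac1{4D})Q_2$ is nondegenerate of rank $4$. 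A nondegenerate quadratic form vanishing identically on an affine plane forces the plane's direction space $V$ to be totally isotropic. Then the linear term, computed at a base point $p_0$, must vanish on $V$, so $p_0$ can be moved into $V^\perp=V$, i.e. we may take $p_0=0$. The constant then gives $\beta-D=0$. This is the routine case analysis.

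For part (2), $Q_1^2-Q_2$ restricted to $P$ is a quartic. The hard step is classifying planes on which this quartic is a perfect square. Write $Q_1^2-Q_2=(Q_1-\sqrt{Q_2})(Q_1+\sqrt{Q_2})$ formally. If the restriction equals $R^2$, then $Q_2|_P=Q_1|_P^2-R^2=(Q_1-R)(Q_1+R)$. Since $Q_2|_P$ has degree $\le2$ and is positive semidefinite, comparing degrees shows that either $R=\pm Q_1+\text{const}$ (so $Q_2|_P$ is linear and nonnegative, hence constant), or $Q_1|_P$ and $R$ have cancelling top parts. We will treat the subcases $\deg Q_1|_P\le1$ and $\deg Q_1|_P=2$. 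In the first, $\det Y$ restricted to $P$ has vanishing quadratic part, and combined with the factorization this forces $Q_2|_P$ to be a square of a linear form times something. Then part (1) together with positivity pins $P$ down to either the symmetric locus $\{y_2=y_3,\ \text{trace-type condition}\}$ or the locus $\{Q_2\text{'s two linear forms}=0\}$. Translating back gives the two listed planes. The main obstacle I expect is this bookkeeping: showing that no other plane works. I would try first to reduce by the stabilizer symmetry of the hypothesis, namely the group $\SO(M)\times\SO(M)$ acting by $X\mapsto g X h$ with $\det g=\det h=1$, which preserves both $Q_1$ and $Q_2$. This should normalize $P$ to a few standard forms before checking each.
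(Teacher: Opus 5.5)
Your part (1) is correct and is essentially the paper's argument: for $\alpha\neq(4D)^{-1}$ the form $x_1x_4-x_2x_3+\alpha Q_2$ is nondegenerate, the direction plane $V$ of $P$ is totally isotropic, a base point lies in $V^\perp=V$, and evaluating there gives $\beta=D$. Part (2), however, is not proved, and your sketch goes wrong at the steps that carry its content.

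First, your normalized decomposition has a sign error. With $Y=M^{-1/2}XM^{-1/2}$ one has $Q_1=D(\det Y-1)$, $Q_2=D^2\big((y_1-y_4)^2+(y_2+y_3)^2\big)$, and
\[
\det Y=\tfrac14\big((y_1+y_4)^2+(y_2-y_3)^2\big)-\tfrac14\big((y_1-y_4)^2+(y_2+y_3)^2\big),
\]
as it must be, since $\det$ on $2\times2$ matrices has signature $(2,2)$. Equivalently,
\[
Q_1+(4D)^{-1}Q_2+D=(4D)^{-1}\big(Cx_1+Ax_4-\tfrac12B(x_2+x_3)\big)^2+\tfrac14(x_2-x_3)^2,
\]
which is the identity the paper uses. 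This plus sign is exactly the positivity that cuts out the plane $x_2-x_3=Cx_1+Ax_4-Bx_2=0$. With your $u^2-v^2$ you could only conclude $u=\pm v$ on $P$.

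Second, your case analysis of $Q_2|_P=(Q_1|_P-R)(Q_1|_P+R)$ misidentifies the cases. If one factor is a constant $c$, then $Q_2|_P=c(2Q_1|_P-c)$. This is in general quadratic, not ``linear and hence constant'', and it is the main case.
\begin{itemize}
\item For $c=0$: $Q_2$ is positive semidefinite with zero set the plane $Cx_1-Ax_4=Bx_4-C(x_2+x_3)=0$, which gives the second alternative.
\item For $c\neq0$: one has $Q_1-(2c)^{-1}Q_2-\tfrac c2\equiv0$ on $P$. Part (1) with $\alpha=-(2c)^{-1}$, $\beta=-\tfrac c2$ forces $c=-2D$ in either alternative. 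So $Q_1+(4D)^{-1}Q_2+D\equiv0$ on $P$, and the identity above gives the first plane.
\end{itemize}

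The remaining case is that both factors are nonconstant, hence affine-linear. You observe that $\det$ then has no quadratic part on $P$, but you never derive a contradiction, and none follows from positivity of $Q_2$ alone. In this case the direction plane $P'$ is totally isotropic for $x_1x_4-x_2x_3$. Meanwhile $Q_2|_{P'}$ is a product of two nonzero linear forms, so it is not positive definite.

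The paper closes this case as follows. Degeneracy of $Q_2|_{P'}$ places $P'$ inside a hyperplane $a(Cx_1-Ax_4)+b(Bx_4-C(x_2+x_3))=0$. On that hyperplane the determinant is a nondegenerate ternary form, which admits no isotropic plane.

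In your (corrected) coordinates there is a shorter route. Isotropy of $P'$ gives $(y_1-y_4)^2+(y_2+y_3)^2=(y_1+y_4)^2+(y_2-y_3)^2$ on $P'$. Hence any common zero in $P'$ of $y_1-y_4$ and $y_2+y_3$ is $0$, so $Q_2|_{P'}$ is positive definite, a contradiction.

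With these points the proof closes directly, and the $\SO(M)\times\SO(M)$ normalization you propose is unnecessary. As written, though, the proposal leaves the classification of planes, which is the whole content of part (2), undone.
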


\begin{proof}
    (1) Write coordinates on the plane as $v_1s_1+v_2s_2+u$ in variables $s_1,s_2$.
    Let $Q'=Q_1'+\alpha Q_2$, a quadratic form, where $Q_1'=Q_1+D=x_1x_4-x_2x_3$.
    If $Q_1+\alpha Q_2 + \beta$ vanishes identically on $P$ or equivalently
    $Q'(v_1s_1+v_2s_2+u)=-\beta+D$ for all $s_1,s_2$, then --- by reading off the coefficients of the polynomial equation in $s_1,s_2$ -- we have $Q'(v_1)=Q'(v_2) =0$ and $\langle v_1,v_2\rangle_{Q'} = \langle v_1,u\rangle_{Q'} = \langle v_2,u\rangle_{Q'} = 0$.
    In particular, the plane spanned by $v_1,v_2$ is a totally isotropic plane with respect to $Q'$ and $u$ is orthogonal to that plane.
    Then either $u$ is contained in the plane or $Q'$ is a degenerate quadratic form i.e. of discriminant zero.
    In the former case, we may assume $u=0$ by changing the coordinates and so $\beta = D$ follows from $s_1=s_2=0$.
    In the latter case, the discriminant $\frac{16D^2\alpha^2-8D\alpha+1}{16}$ equals $0$, and thus $\alpha=(4D)^{-1}$.
    
    (2) Write $(Q_1^2-Q_2)|_P=Q_3^2$ for some $Q_3\in\R[P]$. We have $Q_2|_P=(Q_1|_P+Q_3)(Q_1|_P-Q_3)$. 
    
    We claim that either $Q_1|_P+Q_3$ or $Q_1|_P-Q_3$ is constant. Indeed, suppose both are nonconstant, then both are affine linear. Then $Q_2|_P$ decomposes as a product of two affine linear forms, and $Q_1|_P$ is affine linear. Let $P'$ be the plane through the origin which is parallel to $P$.
    Then, by homogenizing the affine coordinates of $P'$ and specializing to the boundary of $P'$, $Q_2|_{P'}$ is a product of two linear forms, and $P'$ is totally isotropic for $Q_1'$. Notice that the restriction of $Q_2$ to any plane is either positive definite or degenerate. Hence $Q_2|_{P'}$ is degenerate, which implies that $Cx_1-Ax_4$ and $Bx_4-C(x_2+x_3)$ are linearly dependent on $P'$. In other words, there exists $(a,b)\neq (0,0)$ such that $P'$ is contained in the hyperplane $H_{a,b}$ defined by $a(Cx_1-Ax_4)+b(Bx_4-C(x_2+x_3))=0$. However, one can check the restriction of $Q_1'$ to the hyperplane $H_{a,b}$ is always a non-degenerate tenary form. Therefore, $P'$ is not totally isotropic for $Q_1'$, a contradiction.
    
    Now, without loss of generality we may assume that $Q_1|_P-Q_3$ equals a constant $c$, and thus $Q_2|_P=c(2Q_1|_P-c)$. If $c=0$, then $Q_2=0$, which implies that $Cx_1-Ax_4=Bx_4-C(x_2+x_3)=0$. Now we suppose that $c\neq0$. Hence $Q_1|_P=\frac{1}{2}(c^{-1}Q_2|_P+c)$. By (1) we have $c=-2D$, and thus $Q_1|_P=-(4D)^{-1}Q_2|_P-D$. But 
    \begin{align*}
    Q_1+(4D)^{-1}Q_2=(4D)^{-1}(Cx_1+Ax_4-\tfrac{1}{2}B(x_2+x_3))^2+\tfrac{(x_2-x_3)^2-4D}{4}
    \end{align*}
    where we note that $D>0$.
    Hence $x_2-x_3=Cx_1+Ax_4-Bx_2=0$.
\end{proof}

\begin{lemma}\label{lem:square-part1}
Let $F$ be a complex polynomial of the form
\begin{align*}
F(x) = (ax^2+bx+c)^2 - dx^2-ex-f.
\end{align*}
If $F$ is a square in $\C[x]$, the coefficients satisfy
\begin{align}\label{eq:square-rel1}
ae = bd,\quad 4a^2 f = (4ac-d)d
\end{align}
or
\begin{align}\label{eq:square-rel2}
a=0,\quad (4bc-e)e = 4(b^2f + c^2d -df). 
\end{align}
\end{lemma}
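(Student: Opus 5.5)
Write $P(x)=ax^2+bx+c$ and $L(x)=dx^2+ex+f$, so that $F=P^2-L$. The plan is to split on whether $a=0$. If $a\neq 0$, a square root of $F$ must be close to $P$. If $a=0$, $F$ is at most quadratic and we use its discriminant.

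\emph{Case $a\neq 0$.} Then $F$ has degree four with leading coefficient $a^2$. If $F=G^2$ with $G\in\C[x]$, then $G$ is quadratic, and after replacing $G$ by $-G$ we may take $G=ax^2+b'x+c'$. This gives
\begin{align*}
L=P^2-G^2=(P-G)(P+G).
\end{align*}
Here $P-G=(b-b')x+(c-c')$ is affine linear and $P+G=2ax^2+(b+b')x+(c+c')$. Since $L$ has degree at most two, either $P-G$ is a nonzero constant or $P-G=0$.

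If $P=G$, then $L=0$. So $d=e=f=0$, and both relations in \eqref{eq:square-rel1} hold trivially.

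If $P-G=\kappa$ is a nonzero constant, then $b'=b$ and $c'=c-\kappa$. Hence
\begin{align*}
L=\kappa\big(2ax^2+2bx+2c-\kappa\big).
\end{align*}
Comparing coefficients gives $d=2a\kappa$, $e=2b\kappa$ and $f=\kappa(2c-\kappa)$. Then $ae=2ab\kappa=bd$. Substituting $\kappa=d/(2a)$ into the expression for $f$ gives $4a^2f=2d\cdot 2ac-d^2=(4ac-d)d$. This is exactly \eqref{eq:square-rel1}.

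\emph{Case $a=0$.} Now
\begin{align*}
F=(b^2-d)x^2+(2bc-e)x+(c^2-f).
\end{align*}
A polynomial of degree at most two is a square in $\C[x]$ precisely when its discriminant vanishes. This covers the degenerate situations too: if the degree is one, the discriminant $(2bc-e)^2\neq 0$, and $F$ is indeed not a square; if the degree is at most zero, the discriminant is $0$, and every constant is a square over $\C$.

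The condition is therefore $(2bc-e)^2=4(b^2-d)(c^2-f)$. Expanding, the $4b^2c^2$ terms cancel and we obtain
\begin{align*}
-4bce+e^2=-4b^2f-4c^2d+4df,
\end{align*}
which is equivalent to $(4bc-e)e=4(b^2f+c^2d-df)$. This is \eqref{eq:square-rel2}.

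No step is a real obstacle. The only points needing care are the sign normalization of $G$ and the degenerate low-degree subcases when $a=0$, both handled above.
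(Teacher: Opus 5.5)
Your proof is correct and is essentially the paper's argument: both split on whether $a=0$, identify the square root as $\pm(ax^2+bx+\gamma)$ for some constant $\gamma$ (you via the factorization $L=(P-G)(P+G)$ and a degree count, the paper by comparing the $x^4$ and $x^3$ coefficients) and then eliminate the constant term, while for $a=0$ both reduce to the vanishing discriminant of $(b^2-d)x^2+(2bc-e)x+(c^2-f)$. A small bonus of your explicit parametrization $d=2a\kappa$, $e=2b\kappa$, $f=\kappa(2c-\kappa)$ is that it also shows the relations \eqref{eq:square-rel1} are sufficient when $a\neq 0$: they force $F=(P-\kappa)^2$ with $\kappa=d/(2a)$.
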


\begin{proof}
Write $F(x) = (\alpha x^2 + \beta x + \gamma)^2$ and compare coefficients to obtain
\begin{align*}
&a^2 =  \alpha^2,\ ab =  \alpha \beta,\ b^2+2ac-d=(\beta^2 +2\alpha\gamma),\\
&2bc-e=2 \beta \gamma,\ c^2 - f =  \gamma^2.
\end{align*}

Suppose first $a \neq 0$, in which case $\alpha \neq 0$. Then $ab =  \alpha \beta$ implies $b^2 =  \beta^2$ and so $2ac -d = 2 \alpha \gamma$. Taking the square of the latter equation yields
\begin{align*}
(2ac-d)^2 = 4 a^2 \gamma^2 = 4a^2(c^2-f)
\end{align*}
and hence the second claim equation by canceling the $4a^2c^2$ term on both sides.
Notice also that $\frac{\beta}{\alpha} = \frac{\alpha\beta}{\alpha^2} = \frac{b}{a}$. Thus $2bc- e =  2 \alpha \gamma \frac{\beta}{\alpha} = (2ac-d) \frac{b}{a}$ which proves the first claimed equation.
The case $a=\alpha= 0$ is treated similarly.
\end{proof}

\subsection{Congruence equations}
Let $\mathcal{W}\subset\{0,\ldots,p^{2n}-1\}^4$ denote the reduction of the set $\mathcal{S}_0$ mod $p^{2n}$, forgetting the $x_0$-coordinate.
Then any $(x_1,x_2,x_3,x_4)\in \mathcal{W}$ satisfies
\begin{align*}
x_1x_4-x_2x_3 &\equiv D \mod p^{2n},\\
Cx_1-Ax_4 &\equiv 0 \mod p^{2n},\\
C(x_2+x_3)-Bx_4 &\equiv 0 \mod p^{2n},\\
A(x_2+x_3)-Bx_1 &\equiv 0 \mod p^{2n}.
\end{align*}
The latter three linear equations imply that $(x_1,x_2+x_3,x_4)\equiv (A,B,C)t$ mod $p^{2n}$ for some $t\in\mathbb{Z}/p^{2n}\mathbb{Z}$. Inserting this into the first equation, we have
$$(x_2-Bt)^2-D(t^2-1)\equiv 0 \;\mod p^{2n}.$$
For each $t$ with $p^e\| t\pm 1$, $x_2$ is determined mod $p^{2n-e}$ by the first equation, hence the total count is $\sum_{e=0}^{2n}p^e(p^{2n-e})\ll np^{2n}$.

We split the points in $\mathcal{S}$ into their congruence classes mod $p^{2n}$. For $\mathbf{w}\in \mathcal{W}$ set
    \begin{align*}
    \mathcal{S}(\mathbf{w}):=\{x\in \mathcal{S}_0: (x_1,x_2,x_3,x_4)\equiv \mathbf{w}\; \mod p^{2n}\}.
    \end{align*}
For each congruence class $\mathbf{w}\in\mathcal{W}$, we may choose a representative from $\mathcal{S}(\mathbf{w})$ (otherwise, $\mathcal{S}(\mathbf{w})=\emptyset$ and we discard $\mathbf{w}$) and denote by $\mathbf{w}$ again by abuse of notation, so that $\mathbf{w}=(w_1,w_2,w_3,w_4)$ indeed satisfies \eqref{eq:originalcongeqforx}, i.e,
\begin{equation}\label{eq:congeqforw}
    \begin{aligned}
w_1w_4-w_2w_3 &\equiv D \mod p^{4n},\\
Cw_1-Aw_4 &\equiv 0 \mod p^{4n},\\
C(w_2+w_3)-Bw_4 &\equiv 0 \mod p^{4n},\\
A(w_2+w_3)-Bw_1 &\equiv 0 \mod p^{4n}.
\end{aligned}
\end{equation}

We take the following substitutions:
\begin{equation}\label{eq:xysubstitution}
    x_i=p^{2n}y_i+w_i
\end{equation}
for $1\leq i\leq 4$, where 
\begin{align}\label{eq:boundonyi's}
|y_1|\ll p^{-2n}A,\ |y_2|,|y_3|\ll p^{-2n}\sqrt{D},\ |y_4|\ll p^{-2n}C.
\end{align}
The systems of congruence equations \eqref{eq:originalcongeqforx} and \eqref{eq:congeqforw} imply the following system of congruence equations in the new variables:
\begin{equation}\label{eq:congeqfory}
    \begin{aligned}
w_4y_1+w_1y_4-w_2y_3-w_3y_2 &\equiv 0 \mod p^{2n},\\
Cy_1-Ay_4 &\equiv 0 \mod p^{2n},\\
C(y_2+y_3)-By_4 &\equiv 0 \mod p^{2n},\\
A(y_2+y_3)-By_1 &\equiv 0 \mod p^{2n}.
\end{aligned}
\end{equation}
We have thus `linearized' the quadratic equation in \eqref{eq:originalcongeqforx}.

For each $\mathbf{w}$ let $\Lambda_{\mathbf{w}}$ denote the sublattice of $\mathbb{Z}^4$ consisting of $(y_1,y_2,y_3,y_4)\in\mathbb{Z}^4$ satisfying \eqref{eq:congeqfory}. 

\begin{lemma}\label{lem:covolume of lattice in y}
    For $\mathbf{w}\in \mathcal{W}$ we have $\operatorname{covol}(\Lambda_{\mathbf{w}})= p^{6n}$ with respect to the Euclidean norm.
\end{lemma}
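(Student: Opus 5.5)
The lattice $\Lambda_{\mathbf{w}}$ is the kernel of the reduction map
\[
\phi:\Z^4\to(\Z/p^{2n}\Z)^4
\]
given by the four linear forms in \eqref{eq:congeqfory}. Since it contains $p^{2n}\Z^4$, its covolume equals the index $[\Z^4:\Lambda_{\mathbf{w}}]=\#\phi(\Z^4)$. So it suffices to show that the image of $\phi$ has exactly $p^{6n}$ elements, i.e.\ that the solution set of \eqref{eq:congeqfory} in $(\Z/p^{2n}\Z)^4$ has $p^{2n}$ elements. We work over the local ring $R=\Z/p^{2n}\Z$, where $p$ is odd and $p\nmid D$.

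First we handle the three linear conditions involving $(y_1,y_2+y_3,y_4)$. As in the discussion preceding \eqref{eq:congeqforw}, these say that $(y_1,y_2+y_3,y_4)$ is proportional to $(A,B,C)$. Concretely, since $p\nmid D=AC-\tfrac14B^2$, one of $A,B,C$ is a $p$-adic unit. If $p\nmid A$, the second and fourth equations give $y_4\equiv A^{-1}Cy_1$ and $y_2+y_3\equiv A^{-1}By_1$, and the third then holds automatically because $C(A^{-1}By_1)-B(A^{-1}Cy_1)=0$. The cases $p\nmid C$ and $p\mid A,C$, $p\nmid B$ are symmetric. In every case the solutions of these three equations in $R^4$ are parametrized by $(s,y_2)\in R^2$, with
\[
(y_1,\,y_2+y_3,\,y_4)=s\,(A,B,C)\cdot u,
\]
where $u$ is a fixed unit (equal to $1$ after rescaling). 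This parametrization is bijective, so there are exactly $p^{4n}$ such tuples, forming a free $R$-module of rank $2$.

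The remaining step, which is the crux, is to show that the first equation is a surjective linear functional on this rank-two free module. Then its kernel has exactly $p^{4n}/p^{2n}=p^{2n}$ elements, and the index of $\Lambda_{\mathbf{w}}$ is $p^{8n}/p^{2n}=p^{6n}$.

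Using the congruences \eqref{eq:congeqforw}, we write $\mathbf{w}\equiv(At,w_2,Bt-w_2,Ct)$ for some $t$. Substituting $y_3=sB-y_2$ and $(y_1,y_4)=s(A,C)$, the functional becomes
\[
\ell(s,y_2)=2ACts-w_2sB-(Bt-w_2)y_2-(w_2-Bt)sB
\]
up to rearrangement. The coefficient of $y_2$ is $2w_2-Bt$, and the coefficient of $s$ is a similar expression. Suppose both coefficients are divisible by $p$. Then $w_2\equiv Bt/2$, and the coefficient of $s$ reduces to $2(AC-\tfrac14B^2)t=2Dt$, which forces $p\mid t$. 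In that case the quadratic congruence $w_1w_4-w_2w_3\equiv D$ reduces to $Dt^2-(w_2-\tfrac B2t)^2\equiv D$. Its left side is $\equiv 0 \bmod p$, whereas $D\not\equiv0 \bmod p$, a contradiction. Hence some coefficient of $\ell$ is a unit, so $\ell$ is surjective onto $R$, which yields the covolume $p^{6n}$.

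The expected obstacle is the bookkeeping of the coefficient of $s$ so that it genuinely reduces to a multiple of $D$. I would verify this by direct expansion, using $x_1x_4-x_2x_3\equiv D$ in the form $(w_2-\tfrac B2t)^2\equiv D(t^2-1)$.
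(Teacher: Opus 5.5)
Your argument is correct and is essentially the paper's proof. Your $y_2$-coefficient $2w_2-Bt$ is exactly $w_2-w_3$, and the paper's case split on whether $p\mid(w_2-w_3)$, together with its mod-$p$ check that $-w_4A-w_1C+w_2B\equiv(\tfrac12B^2-2AC)s$ is a unit, is the same as your ``not both coefficients vanish mod $p$'' argument; your surjectivity-of-a-linear-functional count simply replaces the paper's explicit count of $t$ and $y_2$.

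There are two slips, neither of which matters. The expansion should read $\ell(s,y_2)=(2ACt-Bw_2)s+(2w_2-Bt)y_2$; your displayed formula has a spurious term, but the coefficients you then use are the right ones. The quadratic congruence should be $Dt^2+(w_2-\tfrac B2 t)^2\equiv D$ (you have a minus sign), and it gives the same contradiction mod $p$.
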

\begin{proof}
    Since $\Lambda_{\mathbf{w}}$ contains $p^{2n}\mathbb{Z}^4$, it suffices to count the number of solutions to \eqref{eq:congeqfory} mod $p^{2n}$. The latter three conditions in \eqref{eq:congeqfory} are equivalent to $(y_1,y_2+y_3,y_4) \equiv (A,B,C)t \mod p^{2n}$ for some $t$, hence there are $p^{2n}$ many $(y_1,y_2+y_3,y_4)$ mod $p^{2n}$. 
    We may write the first equation of \eqref{eq:congeqfory} as follows:
    $$(w_2-w_3)y_2 \equiv -w_4y_1-w_1y_4+w_2(y_2+y_3) \mod p^{2n}.$$

    We claim that there are $p^{2n}$ many solutions to \eqref{eq:congeqfory}, i.e. $[\Lambda_{\mathbf{w}}:p^{2n}\mathbb{Z}^4]= p^{2n}$. 
    Let $0 \leq \nu'\leq 2n$ be maximal such that $p^{\nu'}\mid (w_2-w_3)$.

    If $\nu'=0$, then $y_2$ is determined mod $p^{2n}$ for each $(y_1,y_2+y_3,y_4)$. Thus, there are $p^{2n}$ many solutions to \eqref{eq:congeqfory}, i.e. $[\Lambda_{\mathbf{w}}:p^{2n}\mathbb{Z}^4]= p^{2n}$.

    Suppose $\nu'>0$. 
    Write $(y_1,y_2+y_3,y_4) \equiv (A,B,C)t \mod p^{2n}$ for some $t$ using the latter three conditions in \eqref{eq:congeqfory}. For the first congruence condition in \eqref{eq:congeqfory} to have a solution we require
\begin{align*}
(-w_4 A -w_1 C + w_2 B)t \equiv 0 \mod p^{\nu'}
\end{align*}
Also, there is $s$ static with
$(A,B,C)s \equiv (w_1,w_2+w_3,w_4) \equiv (w_1,2w_2,w_4) \mod p$ by \eqref{eq:congeqforw} and using $w_2 \equiv w_3 \mod p$.  
Inserting this, $-w_4 A -w_1 C + w_2 B \equiv (-2AC + \frac{1}{2}B^2)s \mod p$. 
If $p\mid s$ then $p \mid w_1,w_4,w_2+w_3$ and as $p\mid w_2- w_3$ we have $p\mid w$, which is impossible by the quadratic congruence condition in \eqref{eq:congeqforw}.
This shows $p \nmid s$, hence $p^{\nu'}\mid t$. Thus, $t$ has $p^{2n-\nu'}$ choices and $y_2$ is determined exactly mod $p^{2n-\nu'}$ and so has exactly $p^{\nu'}$ choices. It follows that there are $p^{2n}$ solutions mod $p^{2n}$. Hence, we have $[\Lambda_{\mathbf{w}}:p^{2n}\mathbb{Z}^4]= p^{2n}$ in this case as well. The claim is proved.
    
    We conclude from the claim that 
    $$\operatorname{covol}(\Lambda_{\mathbf{w}})=[\mathbb{Z}^4:p^{2n}\mathbb{Z}^4][\Lambda_{\mathbf{w}}:p^{2n}\mathbb{Z}^4]^{-1}=  [\mathbb{Z}^4:p^{2n}\mathbb{Z}^4](p^{2n})^{-1}= p^{6n}$$
    as desired.
\end{proof}

We now apply Minkowski's second theorem and we do so with respect to a `distorted' norm to account for the possibility that $A$ could be slightly smaller than $C$.
Consider the norm $\norm{\cdot}_q$ on $\R^4$ given by
\begin{align*}
\norm{t}_q^2 = \big(\tfrac{|t_1|}{p^{-2n}A}\big)^2 + \big(\tfrac{|t_2|}{p^{-2n}D^{1/2}}\big)^2 +\big(\tfrac{|t_3|}{p^{-2n}D^{1/2}}\big)^2 + \big(\tfrac{|t_4|}{p^{-2n}C}\big)^2.
\end{align*}
The covolume of $\Lambda_{\mathbf{w}}$ with respect to this norm is $\gg p^{14n}D^{-2}$ by Lemma~\ref{lem:covolume of lattice in y}.
By Minkowski's second theorem, there exists a reduced $\mathbb{Z}$-basis $\{\mathbf{v}_1,\mathbf{v}_2,\mathbf{v}_3,\mathbf{v}_4\}\subset\mathbb{Z}^4$ of $\Lambda_{\mathbf{w}}$ such that $\|\mathbf{v}_1\|_q\|\mathbf{v}_2\|_q\|\mathbf{v}_3\|_q\|\mathbf{v}_4\|_q\gg p^{14n}D^{-2}$, $\|\mathbf{v}_1\|_q\leq \|\mathbf{v}_2\|_q\leq \|\mathbf{v}_3\|_q\leq \|\mathbf{v}_4\|_q$ and, moreover,
\begin{align}\label{eq:almost orth Minkowski}
\norm{z_1\mathbf{v}_1+z_2\mathbf{v}_2+z_3\mathbf{v}_3+z_4\mathbf{v}_4}_q^2 \asymp \norm{\mathbf{v}_1}_q^2 z_1^2 +\norm{\mathbf{v}_2}_q^2 z_2^2+\norm{\mathbf{v}_3}_q^2 z_3^2+\norm{\mathbf{v}_4}_q^2 z_4^2.
\end{align}

Given any integral point $x \in \mathcal{S}(\mathbf{w})$, the associated point $(y_1,y_2,y_3,y_4)$ (via the substitution \eqref{eq:xysubstitution}) is an element of $\Lambda_{\mathbf{w}}$ by \eqref{eq:congeqfory}.
In particular, we may write
\begin{align}\label{eq:yi coordinates}
(y_1,y_2,y_3,y_4) = z_1 \mathbf{v}_1+z_2\mathbf{v}_2+z_3\mathbf{v}_3+z_4\mathbf{v}_4
\end{align}
for some (unique) $z_1,z_2,z_3,z_4 \in \Z$. 
The bounds \eqref{eq:boundonyi's} imply $\norm{(y_1,y_2,y_3,y_4)}_q\ll 1$.
Combining this with the almost orthogonality property of the vectors $\mathbf{v}_1,\mathbf{v}_2,\mathbf{v}_3,\mathbf{v}_4$ in \eqref{eq:almost orth Minkowski} we obtain
\begin{align*}
|z_i|^2 \norm{\mathbf{v}_i}_q^2 \ll \norm{z_1 \mathbf{v}_1+z_2\mathbf{v}_2+z_3\mathbf{v}_3+z_4\mathbf{v}_4}_q^2 \ll 1
\end{align*}
and hence 
\begin{align}
|z_i| \leq B_i
\end{align}
for some $B_i=B_i(\mathbf{w})>0$ with $B_i \asymp \frac{1}{\norm{\mathbf{v}_i}_q}$.
Notice that $B_1B_2B_3B_4 \asymp D^{2}p^{-14n}$ by construction.

\begin{lemma}\label{lem:Bsizes}
For any $\mathbf{w}\in \mathcal{W}$, we have
\begin{align*}
B_1 \gg \tfrac{C}{p^{4n}},\ B_2,B_3 \gg \tfrac{\sqrt{D}}{p^{4n}},\ B_4 \gg \tfrac{A}{p^{4n}}.
\end{align*}
\end{lemma}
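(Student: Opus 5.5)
The plan is to read the claimed lower bounds on the $B_i$ as upper bounds on the successive minima of $\Lambda_{\mathbf{w}}$ with respect to $\norm{\cdot}_q$, and to produce those upper bounds from the trivial sublattice $p^{2n}\Z^4\subset \Lambda_{\mathbf{w}}$. Since $B_i \asymp \norm{\mathbf{v}_i}_q^{-1}$, it suffices to show
\begin{align*}
\norm{\mathbf{v}_1}_q \ll \tfrac{p^{4n}}{C},\quad \norm{\mathbf{v}_2}_q,\norm{\mathbf{v}_3}_q \ll \tfrac{p^{4n}}{\sqrt{D}},\quad \norm{\mathbf{v}_4}_q \ll \tfrac{p^{4n}}{A}.
\end{align*}
For a reduced basis as in \eqref{eq:almost orth Minkowski}, $\norm{\mathbf{v}_i}_q$ is comparable (with absolute constants, as we are in dimension four) to the $i$-th successive minimum $\lambda_i$ of $\Lambda_{\mathbf{w}}$ for $\norm{\cdot}_q$. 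Hence it is enough to exhibit, for each $i$, $i$ linearly independent lattice vectors of norm at most the claimed bound.

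First, I compute the norms of the vectors $p^{2n}e_j\in p^{2n}\Z^4\subset\Lambda_{\mathbf{w}}$ directly from the definition of $\norm{\cdot}_q$:
\begin{align*}
\norm{p^{2n}e_1}_q = \tfrac{p^{4n}}{A},\quad \norm{p^{2n}e_2}_q=\norm{p^{2n}e_3}_q = \tfrac{p^{4n}}{\sqrt{D}},\quad \norm{p^{2n}e_4}_q = \tfrac{p^{4n}}{C}.
\end{align*}
Second, I order these four values using that $q$ is Minkowski reduced, so $|B|\le A\le C$. Then $D = AC-\tfrac14B^2 \ge \tfrac34 AC$, and also $D\le AC$. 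This gives $A^2\le AC\le \tfrac43 D$ and $C^2\ge AC\ge D$. Consequently
\begin{align*}
\tfrac{p^{4n}}{C}\ll \tfrac{p^{4n}}{\sqrt{D}} \ll \tfrac{p^{4n}}{A}.
\end{align*}

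Third, I take $i$ vectors from the list $p^{2n}e_4,\,p^{2n}e_2,\,p^{2n}e_3,\,p^{2n}e_1$, in that order. They are linearly independent, so
\begin{align*}
\lambda_1\le \tfrac{p^{4n}}{C},\quad \lambda_2,\lambda_3 \ll \tfrac{p^{4n}}{\sqrt D},\quad \lambda_4\ll \tfrac{p^{4n}}{A}.
\end{align*}
Transferring these bounds to the $\mathbf{v}_i$ and inverting yields the lemma. No step here seems to be a genuine obstacle. The only point requiring care is the comparison $\norm{\mathbf{v}_i}_q\asymp\lambda_i$, which is the standard property of Minkowski-reduced bases already invoked in \eqref{eq:almost orth Minkowski}.
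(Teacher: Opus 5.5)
Your proposal is correct and is essentially the paper's argument: bound the successive minima of $\Lambda_{\mathbf{w}}$ for $\norm{\cdot}_q$ using the four linearly independent vectors $p^{2n}\mathbf{e}_j\in p^{2n}\Z^4\subset\Lambda_{\mathbf{w}}$, then invert via $B_i\asymp\norm{\mathbf{v}_i}_q^{-1}$. Your explicit computation $\norm{p^{2n}\mathbf{e}_1}_q=p^{4n}/A$ and $\norm{p^{2n}\mathbf{e}_4}_q=p^{4n}/C$, together with the ordering derived from $|B|\le A\le C$, is the correct reading of the paper's proof; the paper's displayed norms for $\mathbf{e}_1$ and $\mathbf{e}_4$ are interchanged, and it matches these vectors to the $\mathbf{v}_i$ only implicitly by size.
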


We caution the reader that the above lower bound on $B_4$ might be strictly less than $1$.
However, we will choose $p^{4n}\asymp D^{\frac{1}{2}-\eta}$ later so that $\delta_0'\leq \eta\leq \frac{1}{126}$. Thus, in the rest of the section we may assume $p^{4n}\ll D^{\frac{1}{2}-\delta_0'} \leq A$, hence $B_4\gg 1$.

\begin{proof}
Notice that the lattice $\Lambda_\mathbf{w}$ contains the vectors $p^{2n}\mathbf{e}_i$ for $1\leq i\leq 4$
which satisfy
\begin{align*}
\norm{p^{2n}\mathbf{e}_1}_q = \tfrac{p^{4n}}{C},\
\norm{p^{2n}\mathbf{e}_2}_q = \norm{p^{2n}\mathbf{e}_3}_q = \tfrac{p^{4n}}{\sqrt{D}},\
\norm{p^{2n}\mathbf{e}_4}_q \asymp \tfrac{p^{4n}}{A}.
\end{align*}
and are linearly independent.
In particular, $\norm{p^{2n}\mathbf{e}_i}_q \geq \norm{\mathbf{v}_i}_q$ for $1\leq i\leq 4$. This proves the lemma.
\end{proof}

\begin{lemma}\label{lem:smallB2}
    For any $\delta>0$ and $n\in\mathbb{N}$ with $D^{3\delta+\delta_0'}<p^{2n}$, we have
    $$\#\big\{\mathbf{w}\in \mathcal{W}: B_2(\mathbf{w})\leq D^{\delta}\big\} 
    \ll \left(p^{12n}D^{-\frac{3}{2}+3\delta}\right)^3.$$
\end{lemma}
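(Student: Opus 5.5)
The plan is to turn the smallness of $B_2(\mathbf{w})$ into the existence of an unusually short vector in $\Lambda_{\mathbf{w}}$, and then to count the pairs $(\mathbf{w},\mathbf{v})$ consisting of a class $\mathbf{w}\in\mathcal{W}$ and a short vector $\mathbf{v}\in\Lambda_{\mathbf{w}}$.

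\emph{Step 1: a very short first basis vector.} Since $\norm{\mathbf{v}_1}_q\leq\dots\leq\norm{\mathbf{v}_4}_q$, we have $B_1\gg B_2\gg B_3\gg B_4$. If $B_2\leq D^{\delta}$ then $B_2,B_3,B_4\leq D^\delta$, and $B_1B_2B_3B_4\asymp D^2p^{-14n}$ gives
\begin{align*}
\norm{\mathbf{v}_1}_q\asymp B_1^{-1}\ll \varepsilon_0:=p^{14n}D^{-2+3\delta}.
\end{align*}
Unwinding the definition of $\norm{\cdot}_q$, the vector $\mathbf{v}=\mathbf{v}_1=(y_1,y_2,y_3,y_4)$ satisfies
\begin{align*}
|y_1|\ll \varepsilon_0p^{-2n}A,\qquad |y_2|,|y_3|\ll\varepsilon_0p^{-2n}\sqrt D,\qquad |y_4|\ll\varepsilon_0p^{-2n}C.
\end{align*}
Here the hypothesis $D^{3\delta+\delta_0'}<p^{2n}$, together with $A\geq D^{\frac12-\delta_0'}$ and $p^{4n}\leq\sqrt D$, should be exactly what is needed to get $\norm{\mathbf{v}_1}_q<\min_i\norm{p^{2n}\mathbf{e}_i}_q$. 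Then $\mathbf{v}$ is not a multiple of any $p^{2n}\mathbf{e}_i$, and its entries are small compared to $p^{2n}$ in the sense needed below. I would first check this inequality carefully, since it fixes where the hypotheses enter.

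\emph{Step 2: structure of $\mathbf{v}$.} By the last three congruences of \eqref{eq:congeqfory}, which do not depend on $\mathbf{w}$, we have $(y_1,y_2+y_3,y_4)\equiv(A,B,C)t\mod p^{2n}$. Because $|y_1|$ and $|y_4|$ are small, $Cy_1-Ay_4$ and $A(y_2+y_3)-By_1$ should be forced to vanish as integers, or at least to be highly divisible. This makes $\mathbf{v}$ essentially a short vector of a fixed three-dimensional configuration, parametrised by $(y_1,y_2+y_3,y_4)$ up to scaling, together with $y_2$. I expect this three-parameter structure to be the source of the cube in the bound. I would then count the admissible $\mathbf{v}$ by volume: the number of lattice points of the relevant sublattice in the box above.

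\emph{Step 3: recovering $\mathbf{w}$ from $\mathbf{v}$.} The first congruence of \eqref{eq:congeqfory},
\begin{align*}
w_4y_1+w_1y_4-w_2y_3-w_3y_2\equiv0\mod p^{2n},
\end{align*}
is one linear condition on $\mathbf{w}$. Recall that $\mathbf{w}\in\mathcal{W}$ is parametrised by $t\mod p^{2n}$ via $(w_1,w_2+w_3,w_4)\equiv(A,B,C)t$, together with $w_2$ satisfying $(w_2-Bt)^2\equiv D(t^2-1)$. Substituting, the congruence becomes a relation between $t$ and $w_2$. Combined with the quadratic relation, this should leave $\ll p^{O(\nu)}$ choices of $\mathbf{w}$ per $\mathbf{v}$, where $p^\nu$ is the exact $p$-power dividing the coefficients $(y_1,y_2,y_3,y_4)\bmod p^{2n}$. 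To handle this, I would split the count according to $\nu$ and use the factor $p^{-4\nu}$ saved in the count of $\mathbf{v}$ to absorb the loss.

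\emph{Step 4: bookkeeping.} Multiplying the number of admissible $\mathbf{v}$ by the fibre bound and summing over $\nu$ should give $\ll(p^{12n}D^{-\frac32+3\delta})^3$.

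The main obstacle will be Step 3 when $\mathbf{v}$ is highly $p$-divisible, or when the reduced linear form degenerates against the quadratic relation. This happens when $w_2-w_3$ shares powers of $p$, as in the case analysis of Lemma~\ref{lem:covolume of lattice in y}. There I would reuse that argument, namely that $p\nmid s$ for the static multiple $s$, to show that the degeneracy costs at most a factor compensated by the corresponding divisibility constraint on $\mathbf{v}$.
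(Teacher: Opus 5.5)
Your Step 1 is the same as the paper's, including where the hypothesis $D^{3\delta+\delta_0'}<p^{2n}$ enters: it makes every coordinate of $\mathbf{v}_1$ smaller than $p^{2n}$ in absolute value. The gap is in Steps 2--4, which carry all the content. Write $X=p^{12n}D^{-\frac32+3\delta}$, so the target is $X^3$ and $|y_1|,|y_2|,|y_3|\ll X$.

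The premise of Step 2 is false. One has $|Cy_1-Ay_4|\ll ACX/\sqrt D\asymp\sqrt D\,X$, and in the range where the lemma is used ($p^{4n}\asymp D^{\frac12-\eta}$, $\delta=4\eta$) this exceeds $p^{2n}$ by a power of $D$. So nothing beyond the built-in divisibility by $p^{2n}$ is forced, and there is no special low-dimensional configuration.

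Counting ``by volume'' also fails. The box has all sides $<p^{2n}$, and the $\mathbf{w}$-independent lattice cut out by the last three congruences of \eqref{eq:congeqfory} contains $(0,1,-1,0)$. The box therefore contains $\gg X$ lattice points on that line alone, whereas volume over covolume is $\asymp X^4p^{-4n}$, which is $<X$ in that range. What works, and what the paper does, is to fix $(y_2,y_3)$, or $(y_1,y_2)$ when $p\mid B$. That gives $\ll X^2$ choices, and the remaining two coordinates are then determined modulo $p^{2n}$, hence take $O(1)$ values by Step 1. So the cube is $X^2$ (vectors) times $X$ (fiber), not a three-parameter family.

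In Step 3 you assert a fiber bound $p^{O(\nu)}$ without an argument, and the planned compensation is miscalibrated. Only two coordinates are free, so imposing $p^\nu\mid\mathbf{v}$ saves $p^{-2\nu}$, not $p^{-4\nu}$; with an unspecified exponent the sum over $\nu$ need not close.

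The missing idea is the explicit form of the first congruence. Writing $(w_1,w_2+w_3,w_4)\equiv(A,B,C)s$, it becomes $(t_3-t_2)w_2\equiv(At_4+Ct_1-Bt_2)s \mod p^{2n}$. Together with $w_2^2-Bsw_2+ACs^2\equiv D$, the paper deduces that the fiber has size $\ll p^r$, where $p^r\Vert(t_3-t_2)$: it multiplies the quadratic relation by $(t_3-t_2)^2$ to pin down $s$, then solves for $w_2$ modulo $p^{2n-r}$. The archimedean bound finishes directly, since $p^r\leq|t_3-t_2|\ll X$, with no splitting in $\nu$. The degeneracy that matters is $v_p(t_3-t_2)$, coming from $\mathbf{v}$, not a power of $p$ in $w_2-w_3$.

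Your conjectured fiber bound does hold, in the sharp form $\leq 2p^\nu$, but you have to prove it. Put $u=2w_2-Bs$ and $(t_1,t_2+t_3,t_4)\equiv(A,B,C)\tau$. The constraints become $u^2+4Ds^2\equiv 4D$ and $(t_3-t_2)u\equiv 4D\tau s$: a line through the origin meeting a smooth conic. Modulo $p^{2n-\nu}$ they meet in at most two points, and each has $p^\nu$ lifts. Since $p^\nu$ divides a nonzero coordinate of size $\ll X$, this recovers the paper's bound, and combined with the correct $p^{-2\nu}$ saving it would even give $\ll X^2$.
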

\begin{proof}
    Recall that $\mathbf{v}_1=\mathbf{v}_1(\mathbf{w})\in\Lambda_{\mathbf{w}}$ denotes a shortest vector in $\Lambda_{\mathbf{w}}$. 
    If $B_2\leq D^{\delta}$, then $B_1\asymp (B_2B_3B_4)^{-1}D^2p^{-14n}\gg D^{2-3\delta}p^{-14n}$, hence
    $$\|\mathbf{v}_1\|_q\asymp  B_1^{-1} \ll p^{14n}D^{-2+3\delta}.$$
    Write $\mathbf{v}_1=(t_1,t_2,t_3,t_4)\in\Lambda_{\mathbf{w}}$. Then we have
    \begin{align*}
|t_1|,|t_2|,|t_3|&\leq \big(p^{-2n}D^{\frac{1}{2}}\big)p^{14n}D^{-2+3\delta}=p^{12n}D^{-\frac{3}{2}+3\delta},\\
|t_4|&\leq \big(p^{-2n}C\big)p^{14n}D^{-2+3\delta}\ll p^{12n}D^{-\frac{3}{2}+3\delta+\delta_0'}.
    \end{align*}
    In particular, we have $|t_i|< p^{2n}$ for all $1\leq i\leq 4$. 
    Note that once $t_2,t_3$ are fixed, the congruence equations in \eqref{eq:congeqfory} determines $t_1,t_4$ modulo $p^{2n}$, provided that $p \nmid B$.
    By the above bounds, this implies that $t_1,t_4$ take $O(1)$ many values.
    When $p\mid B$, we instead fix either $t_1,t_2$ or $t_1,t_3$, in which case the remaining coordinates are again determined up to $O(1)$ many options.
    Thus, the number of integer vectors $(t_1,t_2,t_3,t_4)$ satisfying these conditions is $\ll (p^{12n}D^{-\frac{3}{2}+3\delta})^2$.

We now claim that for each $(t_1,t_2,t_3,t_4)$ there are $\ll p^{12n}D^{-\frac{3}{2}+3\delta}$ many possible $\mathbf{w}\in\mathcal{W}$ with $\mathbf{v}_1(\mathbf{w})=(t_1,t_2,t_3,t_4)$. 
The lemma immediately follows from the claim. 
By \eqref{eq:congeqforw}, there exists $s \in \{1,\ldots,p^{2n}\}$ such that $(w_1,w_2+w_3,w_4) \equiv (A,B,C)s \mod p^{2n}$. 
By \eqref{eq:congeqfory}, we have
\begin{equation}\label{eq:congeqq1}
    \begin{aligned}
    (t_3-t_2)w_2&\equiv t_4w_1+t_1w_4-t_2(w_2+w_3) \equiv (At_4+Ct_1-Bt_2)s \mod p^{2n}
\end{aligned}
\end{equation}
and by the quadratic congruence condition in \eqref{eq:congeqforw}
\begin{equation}\label{congeqq2}
w_2^2 -Bw_2s+ACs^2 \equiv D \mod p^{2n}
\end{equation}
Let $r\geq 0$ be such that $p^r \Vert (t_3-t_2)$. Since $|t_2-t_3|\ll p^{12n}D^{-\frac{3}{2}+3\delta}$, we have $p^r\ll p^{12n}D^{-\frac{3}{2}+3\delta}$ and in particular $r < 2n$.
For simplicity, set $\alpha = t_3-t_2$ and $\beta = (At_4+Ct_1-Bt_2)$.
Multiplying \eqref{congeqq2} by $\alpha^2$ we have
\begin{align*}
(\beta^2 -B\alpha\beta + AC \alpha^2) s^2 \equiv D \alpha^2 \mod p^{2n+2r}
\end{align*}
which has either no solutions (if the coefficient of $s^2$ has order bigger than $2r$, we conclude in this case) or determines $s$ uniquely (modulo $p^{2n}$).
In particular, $(w_1,w_2+w_3,w_4)$ is uniquely determined modulo $p^{2n}$.
By \eqref{eq:congeqq1}, $w_2$ is uniquely determined modulo $p^{2n-r}$ and we conclude using $p^r\ll p^{12n}D^{-\frac{3}{2}+3\delta}$.
\end{proof}

\subsection{Proof of Linnik basic lemma}
In this subsection we prove Proposition~\ref{prop:maincountingestimate} when $ A\geq D^{\frac{1}{2}-\delta_0'}$. Let us fix $0<\delta\leq \frac{1}{13}$, to be determined later.
Let $\delta_0'\leq \eta\leq \frac{1}{126}$ and choose the scale $n$ maximal with $p^{4n}\leq D^{\frac{1}{2}-\eta}$. 
With this choice, we have $p^{4n}\leq D^{\frac{1}{2}-\delta_0'} \leq A$, hence $B_4(\mathbf{w})\gg 1$ for any $\mathbf{w}\in \mathcal{W}$. 
Moreover, this choice satisfies the assumption of Lemma~\ref{lem:smallB2}.

We split $\mathcal{W}$ into
\begin{align*}
\mathcal{W}_{\operatorname{bad}}:=\{\mathbf{w}\in\mathcal{W}: B_2(\mathbf{w})\leq D^{\delta}\} \quad \text{and}\quad \mathcal{W}_{\operatorname{good}}:=\mathcal{W}\setminus\mathcal{W}_{\operatorname{bad}}.
\end{align*}
We shall estimate
$$\#\mathcal{S}_0=\sum_{\mathbf{w}\in\mathcal{W}}\#\mathcal{S}(\mathbf{w})=\sum_{\mathbf{w}\in\mathcal{W}_{\operatorname{good}}}\#\mathcal{S}(\mathbf{w})+\sum_{\mathcal{W}_{\operatorname{bad}}}\#\mathcal{S}(\mathbf{w}).$$

Recall that we have $$\#\mathcal{W}_{\operatorname{bad}}\ll \left(p^{12n}D^{-\frac{3}{2}+3\delta}\right)^3$$by Lemma \ref{lem:smallB2}. Using the trivial bound $$\mathcal{S}(\mathbf{w})\ll B_1B_2B_3B_4\asymp D^2p^{-14n},$$ we get
\begin{equation}\label{eq:W2contribution}
\begin{split}
\sum_{\mathbf{w}\in\mathcal{W}_{\operatorname{bad}}}\#\mathcal{S}(\mathbf{w})
&\ll (\#\mathcal{W}_{\operatorname{bad}})D^{2}p^{-14n}\\
&\ll \left(p^{12n}D^{-\frac{3}{2}+3\delta}\right)^3D^2p^{-14n}.
\end{split}
\end{equation}

We now estimate the contribution from $\mathcal{W}_{\operatorname{good}}$. 
Following \eqref{eq:degree4 equation} set
\begin{align*}
f(x_1,x_2,x_3,x_4) &= (x_1x_4-x_2x_3 -D)^2 - (Cx_1-Ax_4)^2 \\
&\qquad- (Bx_1-A(x_2+x_3))(Bx_4-C(x_2+x_3)).
\end{align*}
For $\mathbf{w}\in \mathcal{W}$, with the substitutions \eqref{eq:xysubstitution} and \eqref{eq:yi coordinates}, let $F_\mathbf{w}$ be the polynomial such that $F_{\mathbf{w}}(z_1,z_2,z_3,z_4)=f(x_1,x_2,x_3,x_4)$. In particular, we have
$$F_{\mathbf{w}}(z_1+t,z_2,z_3,z_4)=f\big((x_1,x_2,x_3,x_4)+p^{2n}t\mathbf{v}_1\big),$$
where $\mathbf{v}_1=\mathbf{v}_1(w)$ is the shortest vector in $\Lambda_{\mathbf{w}}$. Let us write $\mathbf{x}=(x_0,x_1,x_2,x_3,x_4)$, $\mathbf{x}'=(x_1,x_2,x_3,x_4)$, and $\mathbf{z}=(z_1,z_2,z_3,z_4)$ for simplicity.

Let $\mathbf{w}\in\mathcal{W}_{\operatorname{good}}$ and set $\mathsf{B}(\mathbf{w})=[-B_1,B_1]\times [-B_2,B_2]\times [-B_3,B_3]\times [-B_4,B_4]$ (a box in the $z$-coordinates). 
We consider
$$\mathcal{Z}(\mathbf{w}):=\big\{\mathbf{z}\in\mathsf{B}(\mathbf{w}):\operatorname{disc}(Q)x_0^2=F_{\mathbf{w}}(\mathbf{z}) \textrm{ for some }x_0\in\mathbb{Z}\big\} $$
and observe that $\#\mathcal{S}(\mathbf{w})\leq 2\#\mathcal{Z}(\mathbf{w})$.

We fix $(\tau_3,\tau_4)\in[-B_3,B_3]\times [-B_4,B_4]$ and set
\begin{align*}
\mathcal{T}^{\square}(\mathbf{w},\tau_3,\tau_4)
&=\big\{\tau_2\in[-B_2,B_2]: F_{\mathbf{w}}(z_1,\tau_2,\tau_3,\tau_4) \textrm{ is a square in }\mathbb{R}[z_1]\big\},\\
\mathcal{T}^{\boxtimes}(\mathbf{w},\tau_3,\tau_4)&=\big\{\tau_2\in[-B_2,B_2]: F_{\mathbf{w}}(z_1,\tau_2,\tau_3,\tau_4) \textrm{ is not a square in }\mathbb{R}[z_1]\big\}.
\end{align*}

\begin{lemma}\label{lem:squarefiniteness}
    For any $\mathbf{w}\in\mathcal{W}_{\operatorname{good}}$, $(\tau_3,\tau_4)\in [-B_3,B_3]\times[-B_4,B_4]$, we have $$\#\mathcal{T}^{\square}(\mathbf{w},\tau_3,\tau_4)\ll 1.$$
\end{lemma}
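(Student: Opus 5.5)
The plan is to combine the structural description of square restrictions from Lemma~\ref{eq:squarelemma}(2) with Lemma~\ref{lem:square-part1}. Fix $\mathbf{w}$, $\tau_3$, $\tau_4$. As $\tau_2$ varies, the set $\{\mathbf{z}=(z_1,\tau_2,\tau_3,\tau_4)\}$ traces, in the $x$-coordinates, the affine plane
\[
P=\mathbf{w}+p^{2n}\big(\R\mathbf{v}_1+\tau_3\mathbf{v}_3+\tau_4\mathbf{v}_4\big)+p^{2n}\R\mathbf{v}_2 .
\]
We have $f=Q_1^2-Q_2$ in the notation of Lemma~\ref{eq:squarelemma}. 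The restriction of $f$ to each line $z_2=\tau_2$ in $P$ is a quartic polynomial in $z_1$ of the form $(az_1^2+bz_1+c)^2-dz_1^2-ez_1-f_0$, with $a,\dots,f_0$ depending polynomially on $\tau_2$. Here $az_1^2+bz_1+c$ is $Q_1$ restricted to the line, and $dz_1^2+\dots$ is $Q_2$ restricted to it. Note that $a=p^{4n}Q_1'(\mathbf{v}_1)$ and $d=p^{4n}Q_2(\mathbf{v}_1)$ do not depend on $\tau_2$. The coefficients $b,e$ are affine in $\tau_2$, while $c,f_0$ are quadratic in $\tau_2$, and $f_0$ has degree at most $4$ after expansion.

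By Lemma~\ref{lem:square-part1}, every $\tau_2\in\mathcal{T}^{\square}$ satisfies one of two polynomial systems: \eqref{eq:square-rel1}, namely $ae=bd$ and $4a^2f_0=(4ac-d)d$, or \eqref{eq:square-rel2}, namely $a=0$ and $(4bc-e)e=4(b^2f_0+c^2d-df_0)$. Each of these is a polynomial equation in $\tau_2$ of degree at most $4$, so it has at most $4$ roots unless it vanishes identically in $\tau_2$. It therefore suffices to rule out identical vanishing. Suppose, say, that both equations in \eqref{eq:square-rel1} hold for all real $\tau_2$. Then $F_{\mathbf{w}}$ restricted to every line of the pencil is a square. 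I would upgrade this to the statement that $f|_P$ itself is a square in $\R[P]$: the square roots $\alpha z_1^2+\beta z_1+\gamma$ are produced algebraically from the coefficients (in the proof of Lemma~\ref{lem:square-part1}, $\alpha=\pm a$, $\beta=\pm b$, $\gamma=\pm(2ac-d)/(2a)$), so up to a sign choice they are polynomial in $\tau_2$ and glue to a polynomial on $P$. The case $a=0$ is handled similarly.

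Lemma~\ref{eq:squarelemma}(2) then forces $P$ to be one of the two special planes. In the second case, $Cx_1-Ax_4=Bx_4-C(x_2+x_3)=0$ on $P$, so every point of $P$ lies in $\mathcal{S}_{\mathrm{deg}}$ (using Lemma~\ref{lem:degenerate S} and the fact that $q$ is anisotropic). But $\mathbf{w}$ is represented by a point of $\mathcal{S}_0$, and I expect the contradiction to come from the fact that $P$ contains the direction $\mathbf{v}_2$ with $\|\mathbf{v}_2\|_q$ small, since $B_2>D^\delta$ for good $\mathbf{w}$. In the first case, $x_2=x_3$ and $Cx_1+Ax_4=Bx_2$ on $P$. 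Both direction vectors of $P$, in particular $\mathbf{v}_1$ and $\mathbf{v}_2$, must then satisfy two linear relations, and combined with the congruences \eqref{eq:congeqfory} this places them in a sublattice of rank at most $2$ cut out by these relations. I would then argue as in Lemma~\ref{lem:smallB2} that such short vectors cannot exist when $B_2>D^{\delta}$ and $p^{2n}>D^{3\delta+\delta_0'}$: the coordinates are forced into a congruence system whose solutions are too long.

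The main obstacle is this last step: showing that $\mathbf{v}_1$ and $\mathbf{v}_2$ cannot both lie in the special planes for good $\mathbf{w}$, and ensuring that identical vanishing of the relations in Lemma~\ref{lem:square-part1} really yields squareness on all of $P$. The fallback is to allow a bounded number of exceptional $\tau_2$ and to absorb special planes that survive into the degenerate count. With degree at most $4$ in $\tau_2$, this gives $\#\mathcal{T}^{\square}\le 8\ll1$.
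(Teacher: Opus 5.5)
Your reduction is the same as the paper's. Lemma~\ref{lem:square-part1} turns squareness of $F_{\mathbf{w}}(\cdot,\tau_2,\tau_3,\tau_4)$ into polynomial conditions of bounded degree in $\tau_2$. If these vanished identically, $f|_P$ would be a square, and Lemma~\ref{eq:squarelemma}(2) confines that to two special planes; your reconstruction of the square root for $a\neq 0$ is correct. The gap is the step you flag yourself: excluding those two planes, which is where the lemma has its content. Your suggestions do not close it:
\begin{itemize}
\item $\mathbf{w}$, i.e.\ the point $\mathbf{z}=0$, lies on $P$ only when $(\tau_3,\tau_4)=(0,0)$. So $\mathbf{w}\in\mathcal{S}_0$ says nothing about the other planes.
\item Points of $P$ are not points of $\mathcal{S}$, so ``every point of $P$ lies in $\mathcal{S}_{\mathrm{deg}}$'' is not meaningful.
\item The fallback fails: if $P$ were special, the conditions would vanish identically and \emph{every} $\tau_2$ would be exceptional.
\end{itemize}
The missing idea is the paper's choice of base point. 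Take $\mathbf{x}_0\in\mathcal{S}(\mathbf{w})$ whose last two $z$-coordinates are $(\tau_3,\tau_4)$, so that $\mathbf{x}_0\in P\cap\mathcal{S}_0$. Since $\mathcal{S}_0\cap\mathcal{S}_{\mathrm{deg}}=\emptyset$, $P$ is not contained in $\{Cx_1-Ax_4=Bx_4-C(x_2+x_3)=0\}$. For the other plane, \eqref{eq:originalcongeqforx} shows that $\mathcal{S}$ misses $\{x_2=x_3,\ Cx_1+Ax_4=Bx_2\}$. On that set $(x_1,2x_2,x_4)\equiv t(A,B,C)$ forces $2Dt\equiv 0$, hence $t\equiv 0$, and then $x_1x_4-x_2x_3\equiv 0\not\equiv D$ modulo $p^{4n}$.

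Your lattice-size idea can in fact be completed, and it then covers every $(\tau_3,\tau_4)$ without needing a point of $\mathcal{S}(\mathbf{w})$ on $P$. The input is Lemma~\ref{lem:Bsizes}, not Lemma~\ref{lem:smallB2}: it gives $\|\mathbf{v}_1\|_q\le\|\mathbf{v}_2\|_q\ll p^{4n}/\sqrt{D}$ for every $\mathbf{w}$. Since both special planes are linear, $P$ lying in one of them forces $\mathbf{v}_1,\mathbf{v}_2$ into it.
\begin{itemize}
\item Degenerate plane: integer points satisfy $(y_1,y_2+y_3,y_4)=m(A,B,C)$ with $m\in\Z$, because $\gcd(A,B,C)=1$. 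If $m\neq 0$ then $\|y\|_q\ge p^{2n}$, so both $\mathbf{v}_i$ would be multiples of $(0,1,-1,0)$, which is absurd.
\item Other plane: \eqref{eq:congeqfory} forces $\mathbf{v}_1,\mathbf{v}_2\in p^{2n}\Z^4$. The rank-two lattice $p^{2n}\Z^4\cap\{y_2=y_3,\ Cy_1+Ay_4=By_2\}$ has $\|\cdot\|_q$-covolume $\asymp p^{8n}D^{-1/2}$. This exceeds $\|\mathbf{v}_1\|_q\|\mathbf{v}_2\|_q\ll p^{8n}D^{-1}$, a contradiction.
\end{itemize}
Separately, ``the case $a=0$ is handled similarly'' is not automatic. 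There, identical vanishing of \eqref{eq:square-rel2} only says that $F_{\mathbf{w}}$ is $\kappa(\tau_2)=b^2-d$ times a square over $\R(\tau_2)$. Such a $\kappa$ need not be a square (compare $(1+\tau_2^2)z_1^2$), so Lemma~\ref{eq:squarelemma}(2) does not apply directly. For this particular $f$ a direct computation shows the exceptional set stays finite. One convenient way is to normalize $Q_1,Q_2$ to multiples of $\det Y-1$ and $(y_{11}-y_{22})^2+(y_{12}+y_{21})^2$, which forces $\kappa$ to be constant. This is an extra argument, not a repeat of the $a\neq 0$ case, and the paper is equally brief on this point.
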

\begin{proof}
   Let $\mathbf{x}_0\in\mathcal{S}(\mathbf{w})$ be a vector whose last two $z$-coordinates are $(\tau_3,\tau_4)$. 
   Observe that $\tau_2\in \mathcal{T}^{\square}(\mathbf{w},\tau_3,\tau_4)$ if and only if $$F_{\mathbf{w}}(z_1,\tau_2,\tau_3,\tau_4)=f(\mathbf{x}_0'+p^{2n}(z_1-c_1)\mathbf{v}_1+p^{2n}(\tau_2-c_2)\mathbf{v}_2)$$ is square over $\mathbb{R}[z_1]$.
    Here, $c_1,c_2$ are determined by $\mathbf{x}_0'$ and satisfy $|c_1| \leq B_1$ as well as $|c_2| \leq B_2$ (in fact, $c_1$ is the $z_1$-coordinate of $\mathbf{x}_0$ and similarly for $c_2$).
   
    Let $P_{\mathbf{w}}\subset \mathbf{A}^4$ be the affine plane $\{\mathbf{x}_0'+z_1\mathbf{v}_1+z_2\mathbf{v}_2:z_1,z_2\in\mathbb{R}\}$. We shall apply Lemma \ref{eq:squarelemma}(2) for this plane $P_{\mathbf{w}}$. 
    Note that $P_{\mathbf{w}}$ is not contained in the linear subspace given by $x_2-x_3=Cx_1+Ax_4-Bx_2=0$.
    In fact, one may verify that $\mathcal{S}$ intersects that subspace trivially merely using the congruence conditions \eqref{eq:originalcongeqforx}.
    Recall that $\mathcal{W}$ is defined a reduction of $\mathcal{S}_0$, and $\mathcal{S}_0$ is disjoint from the bad set $\mathcal{S}_{\operatorname{deg}}$. It follows that $P_{\mathbf{w}}$ is not contained in the linear subspace given by $Cx_1-Ax_4=Bx_4-C(x_2+x_3)=0$. Thus Lemma \ref{eq:squarelemma}(2) implies that $f|_{P_{\mathbf{w}}}$ is not a square of a real polynomial of $z_1,z_2$. 
    
    We claim that there is a polynomial $p\in \mathbb{R}[z_2]$ of absolutely bounded degree such that $F_{\mathbf{w}}(z_1,\tau_2,\tau_3,\tau_4)$ is square over $\mathbb{R}[z_1]$ only if $p(\tau_2)=0$. This claim is an immediate consequence of Lemma \ref{lem:square-part1}. Indeed, to see the claim we view $F_{\mathbf{w}}(z_1,\tau_2,\tau_3,\tau_4)$ as a univariate polynomial of $z_1$ and apply Lemma \ref{lem:square-part1}. Then $F_{\mathbf{w}}(z_1,\tau_2,\tau_3,\tau_4)$ is square over $\mathbb{C}[z_1]$ if and only if the coefficients satisfy at least one of the polynomial relations \eqref{eq:square-rel1} or \eqref{eq:square-rel2}, where the coefficients are also polynomial in $\tau_2$. Thus we may express the relations \eqref{eq:square-rel1} and \eqref{eq:square-rel2} by some polynomials in $\tau_2$, hence we can find a polynomial $p\in \mathbb{Z}[z_2]$ so that $F_{\mathbf{w}}(z_1,\tau_2,\tau_3,\tau_4)$ is square over $\mathbb{C}[z_1]$ if and only $p(\tau_2)=0$. Moreover, it is straightforward to check that the degree of $p$ is bounded above by an absolute constant. This completes the proof of the claim.
    
   Since $f|_{P_{\mathbf{w}}}$ is not a square of a polynomial of $z_1,z_2$, the polynomial $p$ is not identically zero. Hence, $\#\mathcal{T}^{\square}(\mathbf{w},\tau_3,\tau_4)\leq \deg p=O(1)$. This completes the proof.
\end{proof}

For $(\tau_2,\tau_3,\tau_4)\in [-B_2,B_2]\times [-B_3,B_3]\times [-B_4,B_4]$ we define
$$\mathcal{Z}(\mathbf{w},\tau_2,\tau_3,\tau_4):=\{\mathbf{z}\in\mathsf{B}(\mathbf{w}):(z_2,z_3,z_4)=(\tau_2,\tau_3,\tau_4)\}.$$

For $\mathbf{w}\in \mathcal{W}_{\operatorname{good}}$ we estimate $\mathcal{S}(\mathbf{w})$ as follows:
$$ \tfrac{1}{2}\#\mathcal{S}(\mathbf{w})
\leq \#\mathcal{Z}(\mathbf{w})
=\sum_{|\tau_4|\leq B_4}\sum_{|\tau_3|\leq B_3}\sum_{|\tau_2|\leq B_2}\#\mathcal{Z}(\mathbf{w},\tau_2,\tau_3,\tau_4)=\mathcal{I}^{\square}+\mathcal{I}^{\boxtimes},$$
where
$$ \mathcal{I}^{\square}:=\sum_{|\tau_4|\leq B_4}\sum_{|\tau_3|\leq B_3}\sum_{\tau_2\in\mathcal{T}^{\square}(\mathbf{w},\tau_3,\tau_4)}\#\mathcal{Z}(\mathbf{w},\tau_2,\tau_3,\tau_4),$$
$$\mathcal{I}^{\boxtimes}:= \sum_{|\tau_4|\leq B_4}\sum_{|\tau_3|\leq B_3}\sum_{\tau_2\in\mathcal{T}^{\boxtimes}(\mathbf{w},\tau_3,\tau_4)}\#\mathcal{Z}(\mathbf{w},\tau_2,\tau_3,\tau_4).$$

By Lemma \ref{lem:squarefiniteness}, we have using $\mathbf{w}\in \mathcal{W}_{\operatorname{good}}$
\begin{equation}
    \mathcal{I}^{\square}\ll  B_1B_3B_4\ll D^{2-\delta}p^{-14n}.
\end{equation}

For $\mathcal{I}^{\boxtimes}$ we apply a version of a theorem of Bombieri and Pila \cite{BombieriPila} (proven in Theorem~\ref{thm:BP} below) to the equation $\operatorname{disc}(Q)x_0^2=F_{\mathbf{w}}(z_1,\tau_2,\tau_3,\tau_4)$ in the variables $x_0,z_1$. 
If $\tau_2\in \mathcal{T}^{\boxtimes}(\mathbf{w},\tau_3,\tau_4)$ then $F_{\mathbf{w}}(z_1,\tau_2,\tau_3,\tau_4)$ is not square over $\mathbb{R}[z_1]$. We first consider the case that $F_{\mathbf{w}}(z_1,\tau_2,\tau_3,\tau_4)$ is square over $\mathbb{C}[z_1]$. This means that $F_{\mathbf{w}}(z_1,\tau_2,\tau_3,\tau_4)=-g(z_1)^2$ for some real polynomial $g\in\mathbb{R}[z_1]$, hence the equation $\operatorname{disc}(Q)x_0^2=F_{\mathbf{w}}(\mathbf{z})=-g(z_1)^2$ indeed implies $x_0=F_{\mathbf{w}}(z_1,\tau_2,\tau_3,\tau_4)=0$. It follows that there are at most $\deg_{z_1}F_{\mathbf{w}}=O(1)$ many solutions $(x_0,z_1)$. We thus have $\#\mathcal{Z}(\mathbf{w},\tau_2,\tau_3,\tau_4)\ll 1$ in this case.

We may now assume that $F_{\mathbf{w}}(z_1,\tau_2,\tau_3,\tau_4)$ is not square over $\mathbb{C}[z_1]$. Then $F(x_0,z_1)= \operatorname{disc}(Q)x_0^2-F_{\mathbf{w}}(z_1,\tau_2,\tau_3,\tau_4)$ is an absolutely irreducible polynomial of $x_0$-degree $2$.
By Theorem~\ref{thm:BP}, for any $\varepsilon>0$ we have
$$\#\mathcal{Z}(\mathbf{w},\tau_2,\tau_3,\tau_4)\ll_{\varepsilon} B_1^{\frac{1}{2}+\varepsilon}.$$
It follows that
\begin{equation}
        \mathcal{I}^{\boxtimes}\ll_\varepsilon B_1^{\frac{1}{2}}B_2B_3B_4 D^\varepsilon
        \ll (B_1B_2B_3B_4)^{\frac{7}{8}}D^\varepsilon
        \ll \big(D^{2}p^{-14n}\big)^{\frac{7}{8}} D^\varepsilon.
\end{equation}

Combining these estimates, we get
$$\#\mathcal{S}(\mathbf{w})\leq \mathcal{I}^{\square}+\mathcal{I}^{\boxtimes}
\ll_\varepsilon \{D^{2-\delta}p^{-14n}+(D^{2}p^{-14n})^{\frac{7}{8}}\}D^\varepsilon$$
for any $\mathbf{w}\in\mathcal{W}_{\operatorname{good}}$, hence
\begin{equation}\label{eq:W1contribution}
\begin{aligned}
    \sum_{\mathbf{w}\in\mathcal{W}_{\operatorname{good}}}\#\mathcal{S}(\mathbf{w})
    &\ll_\varepsilon \sum_{\mathbf{w}\in\mathcal{W}_{\operatorname{good}}}\{D^{2-\delta}p^{-14n}+(D^{2}p^{-14n})^{\frac{7}{8}}\}D^{\varepsilon}\\
    &\ll D^2p^{-12n}\big(D^{-\delta}+(D^{2}p^{-14n})^{-\frac{1}{8}}\big)D^{\varepsilon}.
\end{aligned}
\end{equation}

Recall $p^{4n}\asymp D^{\frac{1}{2}-\eta}$. Combining \eqref{eq:W2contribution} and \eqref{eq:W1contribution}, we get
\begin{align*}
    \#\mathcal{S}_0&\ll \sum_{\mathbf{w}\in\mathcal{W}_{\operatorname{good}}}\#\mathcal{S}(\mathbf{w})+\sum_{\mathbf{w}\in\mathcal{W}_{\operatorname{bad}}}\#\mathcal{S}(\mathbf{w})\\
    &\ll_{\varepsilon} D^2p^{-12n}\left\{D^{-\delta}+(D^{2}p^{-14n})^{-\frac{1}{8}}+p^{-2n}\left(p^{12n}D^{-\frac{3}{2}+3\delta}\right)^3\right\}D^\varepsilon\\ 
    &\asymp D^{\frac{1}{2}+3\eta}\left\{D^{-\delta}+D^{-\frac{1}{8}\big(\frac{1}{4}+\frac{7}{2}\eta\big)}+D^{-\big(\frac{1}{4}+\frac{17}{2}\eta-9\delta\big)}\right\}D^\varepsilon.
\end{align*}

We may choose $\delta=4\eta$ with $\delta_0'\leq\eta\leq\frac{1}{126}$ so that $\#\mathcal{S}_0\ll D^{\frac{1}{2}-\eta}$.
In combination with \eqref{eq:degcount} we conclude that
$$\#\mathcal{S}\leq \#\mathcal{S}_0+\#\mathcal{S}_{\operatorname{deg}}\ll_{\varepsilon} D^{\frac{1}{2}-\eta+\varepsilon}+p^{-4n}D^{\frac{1}{2}}
\asymp D^{\frac{1}{2}-\eta+\varepsilon}+D^{\eta}\ll_{\varepsilon} D^{\frac{1}{2}-\eta+\varepsilon}. $$
This concludes the proof of Proposition~\ref{prop:maincountingestimate} and, therefore, of our main theorems.
\newpage

\begin{appendix}

\section{The determinant method}\label{sec:det method}

Variants of the determinant method have been developed by Bombieri-Pila \cite{BombieriPila}, Heath-Brown \cite{HeathBrown-annals}, and Salberger \cite{Salberger-PLMS}, among others.
Here, we establish a home-made variant of the result of Bombieri and Pila, which is surely known to experts, though does not (to authors' knowledge) appear in the literature yet.

Given a polynomial $F \in \Z[x_1,\ldots,x_r]$ and $\mathrm{B}=(B_1,\ldots,B_r) \in (1,\infty)^r$ we set
\begin{align*}
X_{F}(\mathrm{B})
= \big\{ x \in \Z^r: |x_i| \leq B_i \text{ for all }i \text{ and } F(x) =0\big\}.
\end{align*}
The determinant method is one of the tools at hand providing upper bounds on the cardinality of $X_{F}(\mathrm{B})$.
We note that some of the literature concerning the determinant method considers cubes instead of general (axis-aligned) rectangular boxes; this would be insufficient for our purposes.

    \begin{theorem}[Bombieri-Pila]\label{thm:BP}
    Let $F(x_1,x_2)\in\mathbb{Z}[x_1,x_2]$ be a polynomial irreducible over $\overline{\Q}$. Suppose that $d=\deg_{x_2}(F)\geq 1$. 
    Let $\mathrm{B}=(B_1,B_2)$ and suppose that $B_2\leq B_1^{E}$ and $\deg F \leq E$ for some $E> 1$.
    Then for any $\varepsilon>0$ we have
    $$\#\big(X_{F}(\mathrm{B})\big)\ll_{E,d,\varepsilon} B_1^{\frac{1}{d}+\varepsilon}.$$
    \end{theorem}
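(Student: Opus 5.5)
We follow the Bombieri--Pila determinant method, adapted to the rectangle $[-B_1,B_1]\times[-B_2,B_2]$ in which only the first coordinate's range enters the final count. The idea is to cover $X_F(\mathrm{B})$ by few algebraic curves of bounded degree not containing the curve $F=0$, and then use B\'ezout. First we discard the degenerate situations. Since $F$ is absolutely irreducible with $d=\deg_{x_2}F\ge 1$, for each fixed $x_1$ there are at most $d$ solutions $x_2$, unless $F$ is a multiple of $x_1-c$. In that case $F$ is linear in $x_1$ and independent of $x_2$, which contradicts $d\ge1$. Hence trivially $\#X_F(\mathrm{B})\le d(2B_1+1)$. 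We therefore only need to save the factor $B_1^{1-1/d}$, and we may assume $B_1$ is large.

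\textbf{Step 1: Localization.} Partition $[-B_1,B_1]$ into intervals $I$ of length $L=B_1^{1/d}\cdot B_1^{-\varepsilon'}$ for a suitable small parameter. Since $B_2\le B_1^{E}$, the real curve $F=0$ over $x_1\in I$ splits into at most $d$ branches $x_2=\phi_j(x_1)$, after removing the $O_E(1)$ points where $\partial F/\partial x_2=0$ and further cutting $I$ at the $O_E(1)$ points where derivatives of $\phi_j$ change sign or monotonicity. On each such arc, the points of $X_F(\mathrm{B})$ lie on the graph of a smooth function $\phi$ over an interval of length $\le L$.

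\textbf{Step 2: The determinant.} Fix a large auxiliary degree $k$ and consider the monomials $x_1^ax_2^b$ with $b<d$ and $a+b\le k$. Using $F$ to reduce, any polynomial in $x_2$ can be taken of degree $<d$ modulo $F$, so these monomials suffice. Let $s$ be their number, so $s\approx dk$. Take $s$ integer points $P_1,\dots,P_s$ on one arc and form the $s\times s$ determinant $\Delta=\det(P_i^{(a,b)})$. It is an integer, so either $\Delta=0$ or $|\Delta|\ge1$. On the other hand, substitute $x_2=\phi(x_1)$, Taylor-expand each column around a base point, and perform row operations in the Bombieri--Pila manner (via divided differences or the mean-value form of the Vandermonde/Wronskian). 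This bounds $|\Delta|$ by $\prod(\text{monomial sizes})\cdot L^{s(s-1)/2}\cdot\prod(\text{derivative bounds})$.

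The derivative bounds for an algebraic branch of bounded degree follow from the standard estimate: after the affine rescaling $x_2\mapsto x_2/B_2$, $x_1\mapsto x_1/B_1$, the $j$-th derivatives are $\ll_{E,j}$ controlled on all but $O_E(1)$ subarcs. This is where the non-square box is handled. Tracking exponents, the size bound is
\[
B_1^{\sum a}B_2^{\sum b}\,(L/B_1)^{s(s-1)/2}\,B_1^{O(s)}\text{-type factors},
\]
which is $<1$ as soon as $L\le B_1^{1/d-\varepsilon}$ for $k\gg_\varepsilon 1$. The reason is that $\sum b\approx s d/2\cdot$const, and the $B_2$-contribution is compensated by the $B_2$ scaling of the derivatives in $x_2$. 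Then $\Delta=0$, so all integer points on the arc lie on one curve $G=0$ with $G\ne0$ of degree $\le k$, and $G$ is not divisible by $F$ since $\deg_{x_2}G<d$.

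\textbf{Step 3: Counting.} By B\'ezout, each arc contributes at most $k\deg F\ll_{E,\varepsilon}1$ points. The number of arcs is $\ll_{E,d} B_1/L\ll B_1^{1-1/d+\varepsilon}$. This reversed direction looks wrong, so we reconsider: the exponent must come out as $B_1^{1/d+\varepsilon}$. We therefore redo the bookkeeping with intervals of length $L=B_1^{1-1/d}$, which gives $B_1/L=B_1^{1/d}$ arcs.

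The main obstacle is exactly this step: getting the determinant size estimate sharp enough that intervals of length $B_1^{1-1/d-\varepsilon}$ force $\Delta=0$ uniformly in $B_2\le B_1^E$. We would first try Bombieri--Pila's original lemma. In that lemma one parametrizes by the variable whose derivatives are small, rescales the box to the unit square, and treats the anisotropic scaling through the exponents $b<d$ in $x_2$. We would then check that the resulting exponent depends only on $B_1$ and $d$, with $E$ entering only the implied constants through the bounded number of subarcs and the degree bounds.
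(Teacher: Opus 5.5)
You leave the archimedean determinant estimate unproved, flagging it yourself as the ``main obstacle''. As you set it up, it is false, so this is a genuine gap.

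\textbf{Where the argument fails.} Steps 2--3 need \emph{every} smooth arc over an $x_1$-window of length $L$ to carry $O(1)$ points. In the real method this requires bounds of the shape $|\phi^{(j)}|\ll_j B_2B_1^{-j}$ on the whole window. Such bounds fail near branch points and vertical tangents, and the subarcs where they fail cannot be discarded.

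Concretely, take $F=x_2^2-x_1$, so $d=2$. The branch $x_2=\sqrt{x_1}$ over $x_1\in(0,L]$ is a single arc in your Step 1, since all its derivatives are monotone of constant sign. It contains the $\asymp L^{1/2}$ points $(t^2,t)$. On it your monomials $x_1^ax_2^b$ with $b\le 1$, $a+b\le k$ become $1,x_2,\dots,x_2^{2k}$. Hence $\Delta$ is a nonzero Vandermonde determinant for any $s$ of these points, no choice of $k$ forces $\Delta=0$, and ``each arc contributes at most $k\deg F$ points'' is false.

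Repairing this in the real setting needs a separate treatment near such points, for example a dyadic or Puiseux-type decomposition. That treatment must be uniform in the coefficients of $F$, since the constant may depend only on $E,d,\varepsilon$, and $F$ varies in the application. This is exactly the hard content of Bombieri--Pila's theorem for algebraic curves. Moreover, with the asymmetric monomial set $b<d$ in a non-square box, you cannot simply swap the roles of $x_1,x_2$ on steep parts.

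\textbf{Bookkeeping.} Step 2 claims vanishing for $L\le B_1^{1/d-\varepsilon}$, which only yields $B_1^{1-1/d+\varepsilon}$ arcs. Step 3 then switches to $L=B_1^{1-1/d}$ without re-verifying the determinant bound. With ideal derivative bounds the natural threshold is indeed $L\le B_1^{1-1/d-\varepsilon}$, so your final choice is right in principle, but nothing in the proposal establishes it.

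\textbf{How the paper avoids derivatives.} It uses Heath-Brown's $p$-adic determinant method, in which residue classes modulo a prime $p\in[2B_1^{1/d+\varepsilon},4B_1^{1/d+\varepsilon})$ play the role of your short arcs.
\begin{enumerate}
\item After reducing to $\height(F)\ll(B_1B_2)^{E^3}$, one takes $\asymp E^5$ such primes. A point with $p_i\mid\partial_2F$ for all $i$ must have $\partial_2F=0$, because $|\partial_2F(x)|\le B_1^{10E^4}$ is smaller than the product of the primes. By B\'ezout there are at most $E^2$ such points.
\item Fix one $p$ and a class $\mathrm{w}$ mod $p$ with $\partial_2F(\mathrm{w})\not\equiv0$; there are $\ll p$ such classes. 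The $p$-adic implicit function theorem gives $x_2\equiv f_N(x_1)\bmod p^N$ on the class.
\item Expanding around one point of the class shows $p^{dR(dR-1)/2}\mid\Delta$ for the $dR$ monomials $x_1^{m_1}x_2^{m_2}$ with $m_1<R$, $m_2<d$.
\item The trivial bound is $|\Delta|\le(dR)!\,B_1^{dR(R-1)/2}B_2^{d(d-1)R/2}$. Here $B_2\le B_1^E$ enters only linearly in $R$, which is why the non-square box costs nothing. Comparing the two bounds forces $\Delta=0$ once $\frac{R+dE}{dR-1}<\frac1d+\varepsilon$.
\item B\'ezout against an auxiliary curve $\widetilde F_{\mathrm{w}}$ of $x_2$-degree $<d$ gives $O(1)$ points per class, hence $\ll p\ll B_1^{1/d+\varepsilon}$ points in total.
\end{enumerate}
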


    As stated, the above theorem deviates from e.g.~\cite[Thm.~5]{BombieriPila} as we do not consider points in a square (i.e.~$E$ could be bigger than $1$). 
    For completeness, we reproduce a proof using the $p$-adic determinant method of Heath-Brown \cite{HeathBrown-annals}.

    The following variant of a non-archimedean implicit function theorem was proven e.g.~in \cite[Lemma 5]{HeathBrown-annals}:
    
    \begin{lemma}\label{lem:IFT}
    Let $p>2$ be a prime.
    Let $F \in \Z_p[x_1,\ldots,x_r]$ and $u \in \Z_p^r$ such that $F(u) = 0$ and $p \nmid\partial_{r}F(u)$.
    Then for any $N \geq 1$ there exists a polynomial $f_N \in \Z_p[x_1,\ldots,x_{r-1}]$ so that for any $v\in \Z_p^r$ with $F(v)=0$ and $v \equiv u \mod p$ we have
    \begin{align*}
    v_r \equiv f_N(v_1,\ldots,v_{r-1}) \mod p^N.
    \end{align*}
    \end{lemma}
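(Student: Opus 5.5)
The plan is to build $f_N$ by Hensel/Newton iteration in the last variable, carried out with polynomial coefficients in the first $r-1$ variables. Write $x=(x',x_r)$ with $x'=(x_1,\ldots,x_{r-1})$, and write $u=(u',u_r)$. The first step is a change of variables $x'=u'+p\,s$, $x_r=u_r+p\,t$. Any $v\equiv u \bmod p$ has this form for some $s\in\Z_p^{r-1}$, $t\in\Z_p$. Set
\begin{align*}
G(s,t)=F(u'+ps,u_r+pt)\in\Z_p[s,t].
\end{align*}
Then $G(0,0)=F(u)=0$. By Taylor expansion,
\begin{align*}
G(s,t)=p\,\partial_rF(u)\,t + p\,\big(\text{linear in } s\big) + p^2(\cdots).
\end{align*}
So $H=p^{-1}G\in\Z_p[s,t]$, and $H(s,t)\equiv c\,t+\ell(s) \pmod p$ with $c=\partial_rF(u)\in\Z_p^\times$ and $\ell$ linear.

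Next, construct polynomials $g_k\in\Z_p[s]$ such that every zero $(s,t)$ of $H$ in $\Z_p^r$ satisfies $t\equiv g_k(s)\pmod{p^k}$. I would argue by induction on $k$, starting from $g_1=-c^{-1}\ell(s)$. For the inductive step, put
\begin{align*}
g_{k+1}(s)=g_k(s)-c^{-1}H(s,g_k(s)).
\end{align*}
To see that this works, take a zero $(s,t)$ and write $t=g_k(s)+p^k e$. Expanding,
\begin{align*}
0=H(s,t)=H(s,g_k(s))+p^k e\,\partial_tH(s,g_k(s))+O(p^{2k}).
\end{align*}
Since $\partial_tH\equiv c\pmod p$, this gives $H(s,g_k(s))\equiv -p^k e\,c\pmod{p^{k+1}}$. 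Hence $g_{k+1}(s)\equiv g_k(s)+p^k e= t\pmod{p^{k+1}}$. Here I need $\partial_t H(s,g_k(s))\equiv c \pmod p$ for all $s\in\Z_p^{r-1}$, which holds because $\partial_tH\equiv c$ identically mod $p$ from the Taylor expansion above. If I only wanted $p^{k+1}$ I would replace $c$ by $c$ alone, so no inverse of a varying quantity is ever needed, and every $g_k$ remains a polynomial with $\Z_p$ coefficients.

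Finally, convert back. For $v$ with $F(v)=0$ and $v\equiv u\pmod p$, take $s=(v'-u')/p$ and $t=(v_r-u_r)/p$. Then $t\equiv g_{N}(s)\pmod{p^{N}}$, so $v_r\equiv u_r+p\,g_N((v'-u')/p)\pmod{p^{N+1}}$. The right-hand side involves division by $p$ inside a polynomial, so it is not yet visibly in $\Z_p[x']$. The remedy is to reduce $g_N$ modulo $p^N$ and rewrite it in the binomial-type basis. Any monomial $s^\alpha=p^{-|\alpha|}(v'-u')^\alpha$ only needs to be known modulo $p^{N}$ after multiplication by $p$. Terms of total degree $>N$ in $g_N$ can be dropped, and each remaining term multiplied by $p$ has denominator at most $p^{N}$, which is a problem. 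I expect this bookkeeping to be the main obstacle. My first attempt would be to avoid the rescaling altogether: run the same Newton iteration directly on $F(x',x_r)$ near $u$, defining $f_{k+1}=f_k-c^{-1}F(x',f_k(x'))$ with $f_1=u_r$. The valuation estimates then use $x'\equiv u'$ and $f_k(x')\equiv u_r \pmod p$ for the relevant points, which gives $\partial_rF(x',f_k(x'))\equiv c\pmod p$ and an error gaining one power of $p$ per step. This needs no division by $p$, at the cost of $N$ iterations instead of $\log N$, which is harmless.
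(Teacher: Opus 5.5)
Your final argument, the direct iteration $f_1=u_r$, $f_{k+1}=f_k-c^{-1}F(x',f_k(x'))$ with $c=\partial_rF(u)$, is correct. The induction closes because $(v',f_k(v'))\equiv u \bmod p$ gives $c^{-1}\partial_rF(v',f_k(v'))\equiv 1 \bmod p$, and the Taylor remainder is $O(p^{2k})$ since the divided derivatives $\tfrac{1}{j!}\partial_r^jF$ have $\Z_p$-coefficients. This is the standard Hensel-type argument behind Heath-Brown's Lemma~5, which the paper cites rather than proves.

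You can drop the rescaled detour, and its obstacle was in fact illusory: $\tilde f_k(x'):=u_r+p\,g_k((x'-u')/p)$ satisfies the same recursion $\tilde f_{k+1}=\tilde f_k-c^{-1}F(x',\tilde f_k)$, starting from the integral polynomial $\tilde f_1=u_r-c^{-1}\sum_{i<r}\partial_iF(u)(x_i-u_i)$, so every $\tilde f_k$ lies in $\Z_p[x']$. Also, with $c$ frozen neither version converges quadratically, so the ``$N$ versus $\log N$'' comparison is not accurate.
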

    
    \begin{proof}[Proof of Theorem~\ref{thm:BP}]
    We give a proof using the $p$-adic determinant method of Heath-Brown \cite{HeathBrown-annals}.
    Correspondingly, we will be brief in some parts and refer to \cite{HeathBrown-annals} for details.
    
    The argument for \cite[Thm.~4]{HeathBrown-annals} based on Siegel's lemma and Bezout's theorem implies that we may assume $\height(F) \ll (B_1B_2)^{E^3}$.
    Throughout the argument we may also assume that $B_1$ is sufficiently large in terms of $d,E$; else, the theorem is trivial.

        For a prime $p$, we let
        $$X_F^{p-\text{ns}}(\mathrm{B}):=\big\{(x_1,x_2)\in\mathbb{Z}^2: |x_i|\leq B_i,\; F(x_1,x_2)=0,\;p\nmid\partial_{2}F(x_1,x_2)\big\}.$$

        \begin{claim*}
         Given $\varepsilon>0$, there are distinct primes $2B_1^{\frac{1}{d}+\varepsilon}\leq p_1,\ldots,p_r < 4B_1^{\frac{1}{d}+\varepsilon}$ with $r\ll E^5$ satisfying
        $$\#\left(X_{F}(\mathrm{B})\setminus \bigcup_{i=1}^{r}X_F^{p_i-\text{ns}}(\mathrm{B})\right)\leq E^2.$$
        \end{claim*}

        To prove the claim, suppose a point $(x_1,x_2)\in X_{F}(\mathrm{B})\setminus \bigcup_{i=1}^{r}X_F^{p_i-\text{ns}}(\mathrm{B})$ exists where the primes $p_1,\ldots,p_r$ will be chosen. 
        Then we have
        $$ p_1\cdots p_r\ll |\partial_{2}F(x_1,x_2)|\ll \operatorname{ht}(F)B_1^{E}B_2^{d}\leq B_1^{10E^4},$$
        unless $\partial_{2}F(x_1,x_2)=0$. Thus, if we choose $p_1,\ldots,p_r$ the first $\asymp E^5$ primes bigger than $2B_1^{\frac{1}{d}+\varepsilon}$ (they are smaller than $4B_1^{\frac{1}{d}+\varepsilon}$, provided $B_1\gg E^6$), then $(x_1,x_2)$ is on the intersection of $F=0$ and $\partial_{2}F=0$. As $F$ is irreducible over $\overline{\mathbb{Q}}$, by Bezout's theorem there are at most $E^2$ points on the intersection.
        This proves the claim.
        
        We now estimate $\#X_F^{p_i-\text{ns}}(\mathrm{B})$ for $1\leq i\leq r$. Let us fix one of these primes $p=p_i$ and write
        \begin{align*}
        \overline{X}_{F\mod p}=\big\{\mathrm{w}\in \mathbb{F}_p^2:F(\mathrm{w})=0,\;\partial_{2}F(\mathrm{w})\neq 0\big\}
        \end{align*}
        for the solutions mod $p$.
        We split the points in $X_F^{p-\text{ns}}(\mathrm{B})$ into their congruence classes mod $p$ and, thus, set for $\mathrm{w} \in \overline{X}_{F\mod p}$
        $$\mathcal{F}_{p}(\mathrm{w}):=\{(v_1,v_2)\in X_F^{p-\text{ns}}(\mathrm{B}): (v_1,v_2)\equiv\mathrm{w} \mod p\}.$$
        
        We fix $\mathrm{w} \in \overline{X}_{F\mod p}$ and estimate the size of $\mathcal{F}_p(\mathrm{w})$ (that is, the size of the fiber above $\mathrm{w}$ under the reduction map $X_F(\mathrm{B}) \to \overline{X}_{F\mod p}$).
        Fix $(u_1,u_2) \in \mathcal{F}_p(\mathrm{w})$ (if $\mathcal{F}_p(\mathrm{w})$ is empty, we conclude) and a sufficiently large $N\geq 1$ to be determined later. By the non-archimedean implicit function theorem in Lemma \ref{lem:IFT}, there exists a polynomial $f_{\mathrm{w}}(x_1)=f_{\mathrm{w},N}(x_1)\in\mathbb{Z}_p[x_1]$ such that $$v_2\equiv f_{\mathrm{w}}(v_1) \mod p^N$$ for any $\mathrm{v}=(v_1,v_2)\in \mathcal{F}_p(\mathrm{w})$.

    For $R\geq 1$ let $\mathcal{Q}(R)$ be a family of monomials
    $$\mathcal{Q}(R):=\{x_1^{m_1}x_2^{m_2}: 0\leq m_1\leq R-1,\;0\leq m_2\leq d-1\}$$
    and $s:=|\mathcal{Q}(R)|=dR$. 
    We pick an enumeration $F_1,\ldots,F_s$ of the monomials in $\mathcal{Q}(R)$.
    Let $\mathbf{\xi}_j=(x_{j,1},x_{j,2})$ for $j=1,\ldots,s$ be integral points on $\mathcal{F}_p(\mathrm{w})$ and set 
    \begin{align*}
    \Delta=\det(F_i(\xi_j))_{1\leq i,j\leq s}.
    \end{align*}
    
    A trivial upper bound on $\Delta$ is given by
    \begin{equation}\label{eq:determinantupperboundBP}
    \begin{aligned}
        |\Delta|&\leq (dR)! \prod_{x_1^{m_1}x_2^{m_2}\in \mathcal{Q}(R)}B_1^{m_1}B_2^{m_2}\\&= (dR)!B_1^{\frac{dR(R-1)}{2}}B_2^{\frac{d(d-1)R}{2}}\leq (dR)!B_1^{\frac{d}{2}R^2+\frac{1}{2}d^2E R}.
    \end{aligned}
    \end{equation}

    We choose $R,N\geq 1$ so that $\frac{R+dE}{dR-1}<\frac{1}{d}+\varepsilon$ and $N\geq (dR)^2$. Let $m_i \in \Z_{\geq 0}^2$ with $F_i(x_1,x_2)=x_1^{m_{i,1}}x_2^{m_{i,2}}$ for $1\leq i\leq s$ and write
    \begin{equation}
        \begin{aligned}
            \Delta&\equiv \det\big(x_{j,1}^{m_{i,1}}x_{j,2}^{m_{i,2}}\big)_{1\leq i,j\leq s}\,\mod p^N\\
            &\equiv\det\big(x_{j,1}^{m_{i,1}}f_{\mathrm{w}}(x_{j,1})^{m_{i,2}}\big)_{1\leq i,j\leq s}\,\mod p^N.
        \end{aligned}
    \end{equation} Let $y_{j,1}=x_{j,1}-u_1$ for $1\leq j\leq s$. Then we have
    $$\Delta\equiv\det\big((y_{j,1}+u_1)^{m_{i,1}}f_{\mathrm{w}}(y_{j,1}+u_1)^{m_{i,2}}\big)_{1\leq i,j\leq s}\; (\textrm{mod }p^N).$$
    By expanding $(y_{j,1}+u_1)^{m_{i,1}}f_{\mathrm{w}}(y_{j,1}+u_1)^{m_{i,2}}$'s and eliminating low-order in $y_{j,1}$ by column operations, we may arrange the columns so that $y_{j,1}^{i-1}$ divides the $i$-th column of the matrix in the determinant for $1\leq i\leq s$.
    Note that $p|y_{j,1}$ since $(x_{j,1},x_{j,2})\equiv (u_1,u_2) \; (\textrm{mod }p)$. Using this, we obtain $p^M|\Delta$ where
    $$M=0+1+\cdots+(s-1)=\tfrac{1}{2}dR(dR-1).$$
    We remark that the above divisibility relation has been verified at various points in the literature (and in fact in much more complicated settings).
    See for example \cite[Lemma 6]{HeathBrown-annals}, \cite[Proof of Lemma 3]{Broberg}, or \cite[Lemma 2.4]{Salberger-crelle}.
    
    It follows that
    \begin{equation}\label{eq:determinantlowerboundBP}
        \log|\Delta|\geq M\log p=(\log p)\left(\frac{d^2}{2}R^2-\frac{d}{2}R\right),
    \end{equation}
    unless $\Delta=0$. Combining \eqref{eq:determinantupperboundBP} and \eqref{eq:determinantlowerboundBP}, we have $\Delta=0$ if $$p>((dR)!)^{\frac{2}{dR(dR-1)}}B_1^{\frac{R+dE}{dR-1}}.$$
    Recall that the prime $p\geq 2B_1^{\frac{1}{d}+\varepsilon}$ satisfies this inequality as we chose $R$ sufficiently large so that $\frac{R+dE}{dR-1}<\frac{1}{d}+\varepsilon$.

    To summarize, we have shown that the rank of the matrix $(F_i(v))_{i \leq s, v \in \mathcal{F}_p(\mathsf{w})}$ is at most $s-1$ for $s$ as above.
    
    It follows that for each $\mathrm{w}\in \overline{X}_{F\mod p}$ there exists a polynomial $\widetilde{F}_{\mathrm{w}}(x_1,x_2)\in \mathbb{Z}[x_1,x_2]$ such that 
    \begin{align*}
    \deg (\widetilde{F}_{\mathrm{w}})=O_{d,E,\varepsilon}(1),\
    \deg_{x_2} (\widetilde{F}_{\mathrm{w}})< d,\ \text{and }
    \mathcal{F}_p(\mathrm{w})\subset X_{\widetilde{F}_{\mathrm{w}}}
    \end{align*}
    where $X_{\widetilde{F}_{\mathrm{w}}}$ is the zero locus of $\widetilde{F}_{\mathrm{w}}$.
    Since $\deg_{x_2} (\widetilde{F}_{\mathrm{w}})< d$, we have $\widetilde{F}_{\mathrm{w}}\nmid F$. Therefore, $X_{F}\cap X_{\widetilde{F}_{\mathrm{w}}}$ is a finite set of points and moreover 
    $$\#\mathcal{F}_p(\mathrm{w})\leq\#\big(X_{F}\cap X_{\widetilde{F}_{\mathrm{w}}}\big)\leq \deg(F)\deg (\widetilde{F}_{\mathrm{w}})=O_{d,E,\varepsilon}(1)$$ by B\'{e}zout's theorem. Hence,
    \begin{align*}
    \#\big(X_F^{p-\text{ns}}(\mathrm{B})\big)
    \leq \sum_{\mathrm{w}\in \overline{X}_{F\mod p}}\#\mathcal{F}_p(\mathrm{w})
    \ll_{d,E,\varepsilon}\#\overline{X}_{F\mod p}\ll p\ll B_1^{\frac{1}{d}+\varepsilon}
    \end{align*}
    which proves the theorem.
    \end{proof}

\section{An application of the Siegel mass formula}\label{sec:Siegel mass}

In this appendix we will prove upper bounds on the number of representations of a quadratic form by another when the codimension is either $0$ or $1$.
Generally, one expect representations in such small codimension to be very scarce.

Assume that $q,Q$ are two integral quadratic forms.
We denote by $n$ the number of variables of $q$ and by $M_q$ resp.~$M_Q$ a matrix representation of $q$ resp.~$Q$.
We suppose that both $q$ and $Q$ are non-degenerate i.e.~for each $v \in \Z^n$ there exists $w \in \Z^n$ with $\langle v,w\rangle_q \neq 0$ and similarly for $Q$ (equivalently, $\det(M_q) \neq 0 \neq \det(M_Q)$).

\begin{definition}\label{def:elementary divisors}
Given a prime $p$, the \emph{elementary divisors} of $q$ at $p$ are the unique integers $k_1(p,q),\ldots,k_n(p,q)$ such that for all $j \leq n$
\begin{align*}
k_1(p,q)+\ldots+k_j(p,q) = \min_{\Lambda_j} \ord_p(2^{j}\disc_q(\Lambda_j))
\end{align*}
where $\Lambda_j$ runs over all primitive sublattices of $\Z_p^n$ (of non-zero discriminant) of rank $j$.
\end{definition}

Recall \cite[Ch.~8]{cassels} that for any odd prime $p$ the form $q$ (or $Q$) is equivalent to a diagonal form
\begin{equation}\label{eq:diagonalize q}
a_1 p^{k_1} x_1^2 + \ldots + a_n p^{k_n}x_n^2
\end{equation}
where $a_i=a_i(p) \in \Z_p^\times$ and $0 \leq k_1 \leq \ldots \leq k_n$. 
It is easy to see that $k_j=k_j(p,q)$ in this case.
This alternative viewpoint is not applicable for $p=2$ where we caution readers that the sequences $k_1(2,q),\ldots,k_n(2,q)$ may not be increasing (though the defect is is very controlled, e.g.~$k_1(2,q) \leq k_2(2,q)+1$)

Define now for any $1\leq j \leq n$
\begin{align}\label{eq:m_j}
\mathsf{m}_j(q) = \prod_{p \text{ prime}}p^{k_j(p,q)}.
\end{align}
In particular, $\mathsf{m}_1(q) = \gcd(q)$.
All of the above definitions apply analogously to $Q$.

In this appendix, we establish the following.

\begin{theorem}[Equal rank $4$]\label{thm:app-equalrank}
Let $q,Q$ be two positive definite integral quadratic forms over $\Z$ of equal rank $4$.
Then the number of representations of $q$ by $Q$ is 
\begin{align*}
\ll_{Q,\varepsilon} \mathsf{m}_1(q) \mathsf{m}_2'(q) |\disc(q)|^\varepsilon
\end{align*}
where $\mathsf{m}_2'(q)$ is the largest integer with $\mathsf{m}_2'(q)^2 \mid \mathsf{m}_2(q)$.
\end{theorem}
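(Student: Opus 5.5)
We plan to bound $r(q,Q)$ via the Siegel mass formula applied to the genus of $Q$. Since every summand is nonnegative,
\begin{align*}
r(q,Q)\,|\mathrm{O}_Q(\Z)|^{-1}\leq \sum_{Q'\in\gen(Q)} r(q,Q')\,|\mathrm{O}_{Q'}(\Z)|^{-1} = \omega_{\gen(Q)}\,\prod_{\ell\le\infty}\alpha_\ell(q,Q),
\end{align*}
up to the standard normalizing constant (a factor $2$ in equal rank). Consequently $r(q,Q)\ll_Q \prod_\ell \alpha_\ell(q,Q)$.

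Because $q$ and $Q$ both have rank $4$, the archimedean density should be $\asymp_Q \disc(q)^{(m-n-1)/2}=\disc(q)^{-1/2}$ (up to $\disc(Q)$-powers). Any representation is an injective $\Z$-linear map $\Z^4\to\Z^4$ of determinant $\pm(\disc(q)/\disc(Q))^{1/2}$, and the local density at $\ell$ counts solutions of $X^tM_QX\equiv M_q$ modulo $\ell^k$, normalized by $\ell^{-k(mn-m(m+1)/2)}=\ell^{-6k}$.

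The core step is a uniform upper bound for $\alpha_\ell(q,Q)$ at primes $\ell\nmid 2\disc(Q)$. For such $\ell$, $Q$ is unimodular over $\Z_\ell$, and we diagonalize $q$ as in \eqref{eq:diagonalize q} with exponents $k_1\le\dots\le k_4$. A representation $X\in \Mat_4(\Z_\ell)$ of $q$ by a unimodular form satisfies $\ord_\ell\det X=\tfrac12\sum k_j$, so we count such $X$ via Smith normal form: $X=U\diag(\ell^{a_1},\dots,\ell^{a_4})V$ with $U$ an isometry-compatible unit. Counting cosets $\mathrm{O}_Q(\Z_\ell)X$ should give
\begin{align*}
\alpha_\ell(q,Q)\ll (1+O(\ell^{-1}))\,\ell^{\frac12\sum_j k_j}\cdot \ell^{-\frac12(k_3+k_4)}\ell^{k_1}\ell^{\lfloor k_2/2\rfloor},
\end{align*}
that is, the factor $\ell^{\frac12\ord_\ell\disc(q)}$ cancels the archimedean $\disc(q)^{-1/2}$. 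What remains is governed by the number of sublattices of index $\ell^{\frac12\sum k_j}$ in $\Z_\ell^4$ that carry $q$. Heuristically, such a sublattice is $\ell^{k_1}$ times a lattice with first elementary divisor $0$, and the freedom comes from the second elementary divisor, contributing $\ell^{\lfloor k_2/2\rfloor}$ (hence $\mathsf m_2'$) and $\ell^{k_1}$ (hence $\mathsf m_1$), up to a divisor-type factor $(k+1)^{O(1)}$. Taking the product over $\ell$, the divisor factors give $\disc(q)^\varepsilon$ and the Euler factors $\prod(1+O(\ell^{-1}))$ over $\ell\mid\disc(q)$ are $\ll\disc(q)^\varepsilon$. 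For $\ell\mid 2\disc(Q)$ we apply the same argument, losing only a constant depending on $Q$.

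We expect the main obstacle to be this local computation, namely proving the sharp exponent $k_1+\lfloor k_2/2\rfloor$. We would first reduce by scaling to $k_1=0$, since $q=\ell^{k_1}q_0$ implies that representations of $q$ are $\ell^{k_1/2}$-type (when $k_1$ is even) or else impossible, which gives a bounded multiplicative contribution. Next, if $k_2=0$ we split off the unimodular binary part, using Witt's theorem over $\Z_\ell$ as in Lemma~\ref{lem:orbits over Zp}, which reduces to binary-in-binary representations whose density is $O(1)$. In general we would count vectors $v_1,v_2$ representing the unimodular part, which is essentially unique up to $\mathrm{O}_Q(\Z_\ell)$, and then representations of the remaining binary form $\ell^{k_3}a_3x^2+\ell^{k_4}a_4y^2$ by the unimodular binary complement. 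Counting solutions of $x^2-\beta y^2\equiv \ell^{k}c\pmod{\ell^{N}}$ gives the $\ell^{\lfloor k_2/2\rfloor}$-type loss when $k_2\ge1$ forces the complement's vectors to lie in $\ell^{\lfloor k_2/2\rfloor}$ times the lattice. Carrying out this bookkeeping carefully, including the case $\ell=2$ with bounded defect, is where the work lies.
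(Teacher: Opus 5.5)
Your global reduction coincides with the paper's. The Siegel mass formula gives $r(q,Q)\ll_Q \disc(q)^{-1/2}\prod_\ell\alpha_\ell(q,Q)$, the primes $\ell\nmid 2\disc(q)\disc(Q)$ contribute $O(1)$, and everything reduces to the local bound $\alpha_\ell(q,Q)\ll_Q\prod_{i<4}(k_i+1)\,\ell^{\frac12\sum_i k_i+k_1+\lfloor k_2/2\rfloor}$. That local bound is the whole content of the theorem, and you only assert it; the route you sketch for it does not work as written.

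Your displayed inequality carries a spurious factor $\ell^{-\frac12(k_3+k_4)}$. Taken literally it would give $r(q,Q)\ll\prod_\ell\ell^{k_1+\lfloor k_2/2\rfloor-\frac12(k_3+k_4)+\varepsilon}$, which is false for $q=x_1^2+x_2^2+\ell^{2k}(x_3^2+x_4^2)$, a form represented by the sum of four squares.

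More seriously, the reduction to $k_1=0$ is false. A representation of $\ell^{k_1}q_0$ need not be $\ell^{k_1/2}$ times a representation of $q_0$, and odd $k_1$ is no obstruction. Over $\Z_\ell$ with $Q\simeq x_1x_2+x_3x_4$, the matrix $\diag(\ell^{k_1},1,\ell^{k_1},1)$ represents $\ell^{k_1}Q$ and is not divisible by $\ell$. Such representations are exactly what produces the factor $\mathsf m_1(q)$, and that factor is not bounded. For $Q=x_1^2+\dots+x_4^2$, the maps $v\mapsto\alpha v\bar\beta$ with Lipschitz quaternions $\alpha,\beta$ of norm $\ell$ give $\gg\ell^2$ representations of $\ell^2Q$, whereas $\ell\,\mathrm O_4(\Z)$ gives only $384$.

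Splitting off a unimodular binary via Witt's theorem only covers $k_1=k_2=0$. When $k_2\ge1$, only one vector $v_1$ splits off, and you must bound representations of the ternary form $\langle\ell^{k_2}a_2,\ell^{k_3}a_3,\ell^{k_4}a_4\rangle$ by the unimodular ternary lattice $v_1^\perp$. This is exactly where $\ell^{\lfloor k_2/2\rfloor}$ arises. For example, take $q=\langle a_1\rangle\perp\ell^{2j}\langle a_2,a_3,a_4\rangle$ with $\langle a_1,\dots,a_4\rangle\simeq Q$ over $\Z_\ell$. Its normalized density counts vertices in a ball of radius $j$ in the Bruhat--Tits tree of $\PGL_2(\Q_\ell)$, so it is $\asymp\ell^{j}$. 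Your binary congruence $x^2-\beta y^2\equiv\ell^kc$ does not see this.

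Note also that $\ell^{-\frac12\sum k_i}\alpha_\ell(q,Q)$ is governed by the number of left cosets $\mathrm O_Q(\Z_\ell)X$, i.e.\ overlattices of $(\Z_\ell^4,q)$ isometric to $(\Z_\ell^4,Q)$. It is not governed by the number of sublattices of $(\Z_\ell^4,Q)$ carrying $q$. The latter is already $\asymp\ell^3$ for $q=\langle1,1,1,\ell^2\rangle$ and $Q=\langle1,1,1,1\rangle$, where $k_1=k_2=0$.

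The paper supplies the missing local argument by a uniform induction on the number of variables:
\begin{itemize}
\item it sorts the first column by its valuation vector $\underline\nu$ and counts such vectors modulo $p^r$;
\item it computes the discriminant of their orthogonal complement and passes to a problem in one fewer variable (Proposition~\ref{prop:app-inductiveclaim}), running $n=1,\dots,4$ to obtain Corollary~\ref{cor:app-estimatelocdensity};
\item it tracks the elementary divisors $l_i$ of $Q$, so primes dividing $\disc(Q)$ are covered;
\item it treats $p=2$ by passing to a diagonalizable sublattice of bounded index.
\end{itemize}
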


In the codimension $1$ case we have the following.

\begin{cor}[Rank $3$ resp.~$4$]\label{cor:app-codim1}
Let $q,Q$ be two positive definite integral quadratic forms over $\Z$ of rank $3$ resp.~$4$.
Then the number of representations of $q$ by $Q$ is
\begin{align*}
\ll_{Q,\varepsilon} \mathsf{m}_1(q) \mathsf{m}_2'(q) |\disc(q)|^\varepsilon
\end{align*}
where $\mathsf{m}_2'(q)$ is the largest integer with $\mathsf{m}_2'(q)^2 \mid \mathsf{m}_2(q)$.
\end{cor}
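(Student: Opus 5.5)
The plan is to deduce the corollary from Theorem~\ref{thm:app-equalrank} by completing a representation of the ternary form $q$ to a representation of a quaternary form. Let $\iota:\Z^3\to\Z^4$ be a representation of $q$ by $Q$, and set $L=\iota(\Z^3)$, a rank three sublattice. Its orthogonal complement $L^\perp\cap\Z^4$ is spanned by a primitive vector $v$. The lattice $L\oplus\Z v$ has finite index in $\Z^4$, so the quadratic form on it is a positive definite quaternary form $q\oplus Q(v)y^2$. By Lemma~\ref{lem:orthocomplement disc} (applied to the rank one subspace spanned by $v$), we have $Q(v)\asymp_Q\disc(Q|_{\overline{L}})$, where $\overline{L}=(L\otimes\Q)\cap\Z^4$ is the saturation of $L$. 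Moreover $\disc(q)=[\overline L:L]^2\disc(Q|_{\overline L})$, so $Q(v)\ll_Q\disc(q)$.

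The strategy is then to count pairs consisting of a value $a=Q(v)$ and a representation of $q_a:=q\oplus a y^2$ by $Q$. Each $\iota$ determines $v$ up to sign, hence injects into representations of $q_a$ with $a=Q(v)$. The difficulty is that $a$ ranges over many values. However, $a$ is strongly constrained: $\disc(Q)\,[\Z^4:L\oplus\Z v]^2=\disc(q)\,a$, and $a$ is comparable to $\disc(q)/[\overline L:L]^2$. In the main case the index $[\overline L:L]$ divides a bounded multiple of $\disc(q)$, and the equation $\disc(Q)k^2=a\disc(q)$ together with $a\asymp\disc(q)/m^2$ for a divisor $m$ pins down $a$ up to $|\disc(q)|^{\varepsilon}$ choices. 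Concretely, one fixes $m=[\overline L:L]$, which has $\disc(q)^{o(1)}$ choices since $m^2\mid\disc(q)$. Then $a\,\disc(q)\disc(Q)$ must be a square and $a\asymp_Q\disc(q)/m^2$. Writing $\disc(q)\disc(Q)=s t^2$ with $s$ squarefree forces $a=s u^2$, and the archimedean constraint leaves only $O_Q(1)$ values of $u$, because the multiplicative slack in $\asymp_Q$ is bounded.

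For each admissible $a$, I will apply Theorem~\ref{thm:app-equalrank} to $q_a$, which gives a bound $\ll \mathsf m_1(q_a)\mathsf m_2'(q_a)|\disc(q_a)|^\varepsilon$. It remains to compare these invariants with those of $q$. Since $q_a$ is an orthogonal sum, its local Jordan decomposition is that of $q$ plus $a p^{\ord_p a}$. Hence $\mathsf m_1(q_a)\le\mathsf m_1(q)$ and $k_2(p,q_a)\le k_2(p,q)$ at odd $p$, because adding a summand can only lower the sorted exponents. At $p=2$ the same holds up to a bounded shift, with the caveat about non-monotonicity noted after Definition~\ref{def:elementary divisors}. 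This gives $\mathsf m_1(q_a)\mathsf m_2'(q_a)\ll\mathsf m_1(q)\mathsf m_2'(q)$, and $|\disc(q_a)|^\varepsilon\ll_Q|\disc(q)|^{2\varepsilon}$.

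The step I expect to be the main obstacle is controlling the number of $a$, and in particular justifying that $m=[\overline L:L]$ satisfies $m^2\mid\disc(q)$ so that only $\disc(q)^{o(1)}$ values arise. This follows from $\disc(q)=m^2\disc(Q|_{\overline L})$ with $\disc(Q|_{\overline{L}})$ integral up to a power of $2$, which I would handle with care, for instance by working with $2Q$ throughout. The $2$-adic bookkeeping of elementary divisors is the secondary obstacle.
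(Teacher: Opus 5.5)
Your strategy is the paper's: complete $\iota$ by the primitive vector $v$ spanning $\iota(\Q^3)^\perp\cap\Z^4$, apply Theorem~\ref{thm:app-equalrank} to $q\oplus Q(v)y^2$, and compare $\mathsf m_1,\mathsf m_2$. Your comparison of elementary divisors is fine, and so is the count of $m=[\overline L:L]$ via $m^2\mid 4\disc(q)$, so the obstacle you flagged is not the real one.

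\textbf{The gap} is the step you treat as routine: bounding the number of admissible $a=Q(v)$ once $m$ is fixed. The square-class condition gives $a=su^2$, with $s$ the squarefree part of $\disc(q)\disc(Q)$. Lemma~\ref{lem:orthocomplement disc} only gives $c_QX^2\le u^2\le C_QX^2$, where $X^2=\disc(q)/(m^2s)$ and $c_Q<C_Q$. Bounded multiplicative slack does not leave $O_Q(1)$ values of $u$: the interval $[\sqrt{c_Q}X,\sqrt{C_Q}X]$ contains $\asymp_Q X$ integers. Moreover $X$ is unbounded in general. For example, when $\disc(q)\disc(Q)$ is a rational square, $s=1$ and $X^2=\disc(Q|_{\overline L})$, which is of size $\disc(q)$ when $m=1$. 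In such cases your argument loses a factor of order $\disc(q)^{1/2}$, which destroys the claimed bound.

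\textbf{What is missing} is an exact relation between $Q(v)$ and $\disc(Q|_{\overline L})$, not merely a two-sided inequality. The paper imports it from \cite[Prop.~5.1]{AMW-higherdim}: $Q(v)$ is the relevant discriminant times a ratio of two divisors of $16\disc(Q)$. You can also extract it from the proof of Lemma~\ref{lem:orthocomplement disc}, taking $M=\Z v$ and $f(w)=2\langle v,w\rangle_Q$:
\begin{itemize}
\item The proof gives $\Z^4/(\overline L\oplus\Z v)\cong f(\Z^4)/f(\Z v)$.
\item Here $f(\Z v)=2Q(v)\Z$, and $f(\Z^4)=g\Z$ with $g$ the gcd of the entries of $2M_Qv$. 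Since $v$ is primitive, the adjugate identity shows $g\mid\det(2M_Q)$.
\item Hence $[\Z^4:\overline L\oplus\Z v]=2Q(v)/g$.
\item Comparing Gram determinants, $\disc(Q|_{\overline L})\,Q(v)=(2Q(v)/g)^2\disc(Q)$, so $Q(v)=g^2\disc(q)/(4m^2\disc(Q))$.
\end{itemize}
Thus for each $m$ there are at most as many values of $a$ as divisors of $\det(2M_Q)$, i.e.\ $O_Q(1)$. With this replacement for the square-class argument, the rest of your proof goes through.
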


The above results are by no means novel.
For instance, a different version of the above estimate under slightly stronger assumptions appears in work of Bourgain and Demeter~\cite{BourgainDemeter-Fourierrestrictionsphere,BourgainDemeter-Siegelmass}.
As in their work, we establish the above as a consequence of Siegel's mass formula \cite{Siegel-I} (see also e.g.~\cite[Ch.~6.8]{Kitaoka-book}).

In the context of proving Linnik-type bounds on self-correlations of toral measures, the analogue of Corollary~\ref{cor:app-codim1} for representations of binary by ternary forms was instrumental in \cite{ELMV-Ens,Linnikthmexpander,W-Linnik}.
Note that \cite[App.~A]{ELMV-Ens} contains a proof not reliant on the Siegel mass formula and following instead a combinatorial argument on the Bruhat-Tits tree of $\PGL_2$.

\subsection{Local estimates at $p\neq 2$}
As before, we write $n$ for the number of variables of $q$ and $Q$.
In the setting of Theorem~\ref{thm:app-equalrank} we have $n=4$, though the following discussion will also be required for $n<4$.

The Siegel mass formula expresses the average of representation numbers over the genus of $Q$ as a product of the local densities
\begin{align}\label{def:app-localdensities}
\alpha_p(q,Q) 
= \lim_{r\to\infty} (p^{-r})^{\frac{n(n-1)}{2}} 
\#\{X \in \Mat_n(\Z/p^r\Z): X^t M_Q X = M_q \mod p^r\}
\end{align}
where $p$ varies over all the primes and where the above sequence in $r$ is eventually constant.
For the current subsection, we fix the prime $p \neq 2$ and think of $q$ and $Q$ as quadratic forms over $\Z_p$.

The goal of the following will be to prove an inductive upper bound on the densities $\alpha_p(q,Q)$.
We begin with the following elementary counting estimate for points on level sets of $Q$ modulo a prime power $p^r$.

\begin{lemma}\label{lem:app-counting vectors}
Denote by $(l_1,\ldots,l_n)$ the elementary divisors of $Q$.
Let $k>0$ and $a\in \Z_p^\times$.
Fix a tuple $\underline{\nu}=(\nu_1,\ldots,\nu_n)\in (\Z_{\geq 0} \cup \{\infty\})^n$ and define for $r>0$
\begin{align*}
\mathcal{A}_r(a,k;\underline{\nu}) = \big\{x \in (\Z/p^r\Z)^n: Q(x) \equiv ap^{k} \mod p^r \text{ and } \ord_p(x_i) = \nu_i \text{ for all }i\big\}.
\end{align*}
For all $r$ large enough we have $\mathcal{A}_r(a,k;\underline{\nu}) = \emptyset$ if $k < \min\{l_i+2\nu_i\}$ and otherwise
\begin{align*}
\#\mathcal{A}_r(a,k;\underline{\nu}) \leq 2p^{r(n-1)} p^{\nu_{\mathrm{tot}}}
\end{align*}
where $\nu_{\mathrm{tot}} = \min_{i}\{l_i+2\nu_i\} - \sum_{i:\nu_i < \infty} \nu_i$.
\end{lemma}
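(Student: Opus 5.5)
We diagonalize $Q$ over $\Z_p$ as in \eqref{eq:diagonalize q}, writing $Q(x)=\sum_i b_i p^{l_i}x_i^2$ with $b_i\in\Z_p^\times$ and $l_1\le\dots\le l_n$; elementary divisors are unchanged by $\Z_p$-equivalence, and the change of coordinates is a $\GL_n(\Z_p)$-map. It preserves the count, but it does not literally preserve the valuations $\ord_p(x_i)$. So we first reduce to the case where $M_Q$ is already diagonal. The statement only involves $\ord_p$ of coordinates in whatever basis we are given, and the rest of the paper only applies the lemma with $Q$ in the diagonalized shape. I would therefore phrase the argument for diagonal $Q$ and note this reduction explicitly.

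Next we write each coordinate with $\nu_i<\infty$ as $x_i=p^{\nu_i}u_i$, where $u_i$ is a unit modulo $p^{r-\nu_i}$. For $\nu_i=\infty$ (meaning $x_i\equiv 0 \bmod p^r$) the coordinate is fixed. Each term $b_ip^{l_i+2\nu_i}u_i^2$ then has valuation exactly $e_i:=l_i+2\nu_i$. Set $e=\min_i e_i$. If $k<e$, every term is divisible by $p^{e}$, which is incompatible with $Q(x)\equiv ap^k$ once $r>k$; hence $\mathcal{A}_r=\emptyset$.

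Now assume $k\ge e$ and pick an index $j$ with $e_j=e$. Fix all $u_i$ with $i\neq j$. The number of such choices is at most $\prod_{i\ne j,\nu_i<\infty}p^{r-\nu_i}$. The remaining condition reads
\begin{align*}
b_jp^{e}u_j^2\equiv c \mod p^r
\end{align*}
for a determined $c$. Dividing by $p^{e}$ (possible only if $p^e\mid c$, otherwise there are no solutions), this becomes $u_j^2\equiv c' \bmod p^{r-e}$ with $c'$ necessarily a unit. Since $p$ is odd, Hensel's lemma gives at most $2$ square roots modulo $p^{r-e}$. Each root lifts to at most $p^{e}$ classes of $u_j$ modulo $p^{r-\nu_j}$. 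But the $u_j$ of interest live modulo $p^{r-\nu_j}$, so the number of lifts is $p^{(r-\nu_j)-(r-e)}=p^{e-\nu_j}$, valid when $r$ is large so that $r-e\ge 1$. Hence the count is at most
\begin{align*}
2p^{e-\nu_j}\prod_{i\neq j,\ \nu_i<\infty}p^{r-\nu_i}\le 2p^{r(n-1)}p^{e-\sum_{\nu_i<\infty}\nu_i},
\end{align*}
using that there are at most $n-1$ indices $i\neq j$. This is exactly the claimed bound with $\nu_{\mathrm{tot}}$.

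The only delicate point is bookkeeping: showing that $u_j$ ranges modulo $p^{r-\nu_j}$ and correctly counting lifts of the Hensel roots. There is also the question of whether $\nu_j=\infty$ could be the minimizer. It cannot, since then $e_j=\infty$, which is not minimal unless all $\nu_i=\infty$; in that case $Q(x)\equiv0\not\equiv ap^k$ for large $r$, so the set is empty.
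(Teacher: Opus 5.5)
Your argument is correct and is essentially the paper's proof: write $x_i=p^{\nu_i}u_i$, discard the coordinates with $\nu_i=\infty$, fix all coordinates except one index $j$ attaining $\min_i\{l_i+2\nu_i\}$, and observe that $u_j$ is then determined modulo $p^{r-e}$ up to sign, which gives $2p^{e-\nu_j}$ lifts modulo $p^{r-\nu_j}$. Your remark on diagonalization is well taken, and the paper likewise tacitly assumes $Q=\sum_i b_ip^{l_i}x_i^2$ in the given coordinates. Strictly, however, this is a hypothesis rather than a reduction: for $Q=x_1x_2$, $\underline{\nu}=(0,2)$, $k=2$, $a=1$ one gets $\#\mathcal{A}_r=(p-1)p^{r-1}$, which exceeds the claimed bound $2p^{r-2}$.
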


\begin{proof}
If $\nu_i =\infty$, for some $i$ we may discard that coordinate and decrease the dimension by $1$. So assume $\nu_i < \infty$ for all $i$.

Estimating $\#\mathcal{A}_r(a,k;\underline{\nu})$ is equivalent to estimating the cardinality of the set of tuples $(x_1,\ldots,x_n)$ where $x_i \in \Z/p^{r-\nu_i}\Z$ with $\sum_{i}b_i p^{l_i+2\nu_i}x_i^2 \equiv a p^{k} \mod p^{r}$ and with $p \nmid x_i$ for all $i$.
By reordering the terms we may assume $l_1+2\nu_1 = \min_i\{l_i+2\nu_i\}=:l$.
If $k<l$, the conclusion is clear, so assume otherwise.

Fix the last $n-1$ coordinates arbitrarily; that amounts to $p^{r(n-1)-\sum_{i>1}\nu_i}$ choices. 
The first coordinate $x_1$ therefore satisfies an equation of the form $x_1^2 = c \mod p^{r-l}$ for some $c \in \Z_p$. 
This determines $x_1$ modulo $p^{r-l}$ up to a sign and there are $p^{\ell-\nu_1}$ choices for $x_1$.
Our total count is therefore at most $2p^{r(n-1)-\sum_{i}\nu_i+l}$ as claimed.
\end{proof}

\begin{lemma}\label{lem:app-orthcomplement}
Denote by $(l_1,\ldots,l_n)$ the elementary divisors of $Q$.
Let $k>0$ and $a\in \Z_p^\times$. Suppose that $x\in \Z_p^n$ satisfies $Q(x) = ap^k$ and $\ord_p(x_i) = \nu_i$ for all $i$.
Then
\begin{align*}
\ord_p(\disc_Q(\{y \in \Z_p^n: y \perp x\}))
= (k - 2\min_i\{l_i+\nu_i\}) + \sum_i l_i.
\end{align*}
\end{lemma}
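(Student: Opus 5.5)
We work in the diagonal basis $Q = \sum_i b_i p^{l_i} x_i^2$ with $b_i \in \Z_p^\times$; this is available since $p \neq 2$ (cf.~\eqref{eq:diagonalize q}). In this basis the elementary divisors of $Q$ are exactly $l_1\le\dots\le l_n$. We fix $x$ with $\ord_p(x_i)=\nu_i$ and $Q(x)=ap^k$. The orthogonal complement $x^\perp$ is the kernel of the linear functional
\begin{align*}
\phi_x:\ \Z_p^n\to\Z_p,\qquad \phi_x(y)=\langle x,y\rangle_Q=\sum_i b_i p^{l_i}x_i y_i .
\end{align*}
The plan is to compute $\disc_Q(x^\perp)$ through the standard relation between the discriminant of a primitive sublattice, the discriminant of its orthogonal complement, and the index of their direct sum. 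This is essentially the computation already made at the start of \S\ref{sec:subpolynomial} for a primitive vector $v$, now carried out at a single prime $p$ with exact valuations.

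\textbf{Step 1: reduce to a primitive vector.} Write $x = p^{m}x'$ with $x'$ primitive, where $m=\min_i\nu_i$ (finite $\nu_i$ only). Then $x^\perp = x'^\perp$ and $Q(x') = ap^{k-2m}$. Set $\nu_i' = \nu_i - m$ and $k' = k-2m$. The claimed right-hand side satisfies
\begin{align*}
k - 2\min_i\{l_i+\nu_i\} = k' - 2\min_i\{l_i+\nu_i'\},
\end{align*}
so the formula is invariant under this rescaling. Hence we may assume $x$ is primitive.

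\textbf{Step 2: the orthogonal splitting.} The image of $\phi_x$ is $p^{s}\Z_p$ with $s=\min_i\{l_i+\nu_i\}$, because the valuations of the coefficients $b_i p^{l_i}x_i$ are precisely $l_i+\nu_i$. Choose $w_0\in\Z_p^n$ with $\langle x,w_0\rangle_Q = p^s u$ for some unit $u$. Since $\phi_x$ is surjective onto $p^s\Z_p$ with kernel $x^\perp$, we get $\Z_p^n = x^\perp \oplus \Z_p w_0$. Computing the Gram determinant in a basis adapted to this splitting and projecting $w_0$ orthogonally onto the line $\Q_p x$ gives, exactly as in the derivation of \eqref{eq:complement of vector},
\begin{align*}
\disc(Q) = \disc_Q(x^\perp)\cdot \frac{\langle x,w_0\rangle_Q^2}{Q(x)}.
\end{align*}
Equivalently, one can use the index $[\Z_p^n : x^\perp\oplus\Z_p x]=p^{\,k-s}$ together with $\disc(x^\perp)\,Q(x) = [\,\cdot\,]^2\disc(Q)$.

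\textbf{Step 3: take valuations.} This yields
\begin{align*}
\ord_p\disc_Q(x^\perp) = \sum_i l_i + k - 2s,
\end{align*}
which is the claim. The point needing care is the normalisation of $\disc$. Definition~\ref{def:elementary divisors} carries factors $2^j$ and the Gram matrix uses half-integral entries, but since $p$ is odd these are units and do not affect valuations. One should also check that $k\ge s$ automatically holds, so that the formula is consistent with Lemma~\ref{lem:app-counting vectors}: indeed $p^s$ divides $\langle x,x\rangle_Q = Q(x)$.

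The only real subtlety I expect is in Step 2. The claim that $\Z_p^n$ is exactly $x^\perp\oplus\Z_p w_0$ (rather than containing it with finite index) needs the observation that $x^\perp$ is the kernel of a surjection onto $p^s\Z_p\cong\Z_p$, together with the fact that $w_0$ maps to a generator of that image. Once this is in place, everything else is determinant bookkeeping.
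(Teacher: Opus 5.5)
Your proof is correct, and it takes a different route from the paper.

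The paper works entirely in coordinates. It reorders them so that $l_1+\nu_1\le l_2+\nu_2\le\cdots$ and splits off the coordinates with $\nu_i=\infty$. It then uses the flag $x^\perp\cap\langle e_1,\ldots,e_{i+1}\rangle$ to write down an explicit basis of $x^\perp$ whose $i$-th vector is supported on $e_i,e_{i+1}$. Finally it evaluates the Gram determinant of that basis by a direct expansion, in which the factor $\sum_i b_ip^{l_i}x_i^2=Q(x)=ap^k$ eventually appears.

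You use coordinates only to identify $\phi_x(\Z_p^n)=p^s\Z_p$. From there you take the splitting $\Z_p^n=x^\perp\oplus\Z_p w_0$ and the projection identity $\disc(Q)=\disc_Q(x^\perp)\,\langle x,w_0\rangle_Q^2/Q(x)$. This is the same argument the paper itself uses for \eqref{eq:complement of vector}. Your version is shorter and needs no reordering or separate treatment of $\nu_i=\infty$. Your Step~1 is unnecessary, since nothing in Step~2 uses primitivity of $x$.

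What the paper's explicit basis buys is reuse. The proof of Proposition~\ref{prop:app-inductiveclaim} needs two facts: that $w_1,\ldots,w_{n-1},e_{i_0}$ is a basis of $\Z_p^n$, and that $w_2,\ldots,w_{n-1}\perp e_{i_0}$. Your argument gives the first directly if you choose $w_0=e_{i_0}$ for an index attaining $\min_i\{l_i+\nu_i\}$, because $\langle x,e_{i_0}\rangle_Q=b_{i_0}p^{l_{i_0}}x_{i_0}$ has valuation exactly $s$. The second can also be arranged: $x^\perp\cap e_{i_0}^\perp$ is the kernel of a linear functional on $x^\perp$, hence a direct summand, so you can take $w_2,\ldots,w_{n-1}$ inside it and complete to a basis of $x^\perp$.
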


\begin{proof}
We reorder the coordinates so that $l_1+\nu_1 \leq l_2+\nu_2 \leq \ldots$.
In particular, if $\nu_i = \infty$ then $\infty = \nu_{i+1}=\nu_{i+2}=\ldots$ and
\begin{align*}
\{y \in \Z_p^n: y \perp x\} \simeq  \{y' \in \Z_p^{i-1}: y' \perp x\} \oplus \Z_p^{n-i+1}.
\end{align*}
We may thus assume $\nu_i <\infty$ for every $i$.
Equivalently, the orthogonal complement of $x$ does not contain any of the coordinate axes.
We also write $\tilde{x}_i = p^{-\nu_i}x_i$.

We first construct a suitable basis of the orthogonal complement (strongly relying on the fact that the construction is a local one).
Let $\Lambda=\{y \in \Z_p^n: y \perp x\}$.
As $\Lambda$ has co-rank $1$, any $m$-dimensional primitive lattice not contained in $\Lambda$ intersects $\Lambda$ in an $(m-1)$-dimensional primitive lattice.
In view of our assumptions we thus have a complete flag of primitive sublattices of $\Lambda$
\begin{align*}
\{0\} < \Lambda \cap \langle e_1,e_2\rangle < \Lambda \cap \langle e_1,e_2,e_3\rangle < \ldots < \Lambda \cap \langle e_1,\ldots,e_{n-1}\rangle < \Lambda.
\end{align*}
We may therefore pick a $\Z_p$-basis $w_1,\ldots, w_{n-1}$ of $\Lambda$ with $w_i \in \Lambda \cap \langle e_1,\ldots,e_{i+1}\rangle$ for every $i$.

We now manipulate this basis further and do so in two steps.
By assumption we have $w_1 \perp x$ or equivalently
\begin{align*}
a_1 p^{l_1}x_1 w_{11}+ a_2 p^{l_2}x_2 w_{12} =0
\end{align*}
where $a_1,a_2,\ldots$ are units as in \eqref{eq:diagonalize q} for $Q$.
Thus, $w_1$ must be a multiple of $(-a_2 p^{l_2+\nu_2-l_1-\nu_1}\tilde{x}_2,a_1\tilde{x}_1,0,\ldots,0)$.
Since $w_1$ is primitive, we may assume $w_1 = (-a_2 p^{l_2+\nu_2-l_1-\nu_1}\tilde{x}_2,a_1\tilde{x}_1,0, \ldots)$ and, in particular, $p \nmid w_{12}$.
By replacing $w_2$ with an appropriate translation by $w_1$ we may thus assume $w_{22}=0$. By the same argument as for $w_1$ we may suppose that $w_2 = -a_3 p^{l_3+\nu_3-l_1-\nu_1}\tilde{x}_3 e_1 + a_1\tilde{x}_1 e_3$.
Iterating this procedure we have shown that
\begin{align*}
w_i = -a_{i+1}p^{l_{i+1}+\nu_{i+1}-l_1-\nu_1}\tilde{x}_{i+1} e_1 + a_1\tilde{x}_1 e_{i+1}
\end{align*}
is a basis of $\Lambda$.
Note that $w_{11}\mid w_{21}\mid \cdots$ in view of our ordering of the coordinates.
Thus, replacing $w_2$ by a suitable translation with $w_1$ and so forth we may in fact assume that
\begin{align*}
w_i = -a_{i+1}p^{l_{i+1}+\nu_{i+1}-l_i-\nu_i}\tilde{x}_{i+1} e_i + a_i\tilde{x}_i e_{i+1}
\end{align*}
where we used the same argument as before. We remark that while it is clear that these $w_i$'s belong to $\Lambda$, it is only now understood that they form a basis of $\Lambda$.

We turn to computing the discriminant of $\Lambda$ using the above basis. Let $\Lambda'$ be the sublattice of $\Lambda$ spanned by $-a_{i+1}p^{l_{i+1}}x_{i+1}e_i + a_ip^{l_i}x_i e_{i+1}$. Then we have
\begin{align}\label{eq:orthcompl-sublattice}
\disc(\Lambda)
= p^{-2 \sum_{i < n}(l_i+\nu_i)} \disc(\Lambda').
\end{align}
By orthogonality of $e_1,\ldots,e_n$
\begin{align*}
\disc(\Lambda')
&= \prod_{i< n} \big(a_{i+1}p^{l_{i+1}} (a_ip^{l_i}x_i)^2\big)\\
&\quad +\sum_{1 \leq j < n} \big(\prod_{1 \leq i \leq j} a_ip^{l_i}(a_{i+1}p^{l_{i+1}}x_{i+1})^2\big)
\big(\prod_{j < i < n}a_{i+1}p^{l_{i+1}} (a_ip^{l_i}x_i)^2\big).
\end{align*}
Note that in each of the above summands is divisible by $\prod_{i<n}(a_{i}a_{i+1}p^{l_i+l_{i+1}}) =:A$ and
\begin{align*}
\disc(\Lambda') = A \Big(\prod_{i< n} \big( a_ip^{l_i}x_i^2\big)
 +\sum_{1 \leq j < n} \big(\prod_{1 \leq i \leq j} a_{i+1}p^{l_{i+1}}x_{i+1}^2\big)
\big(\prod_{j < i < n} a_ip^{l_i}x_i^2\big)\Big).
\end{align*}
Furthermore, each term is divisible by $\prod_{1<i< n} a_ip^{l_i}x_i^2$ and therefore
\begin{align*}
\disc(\Lambda') &= A \big(\prod_{1<i< n}  a_ip^{l_i}x_i^2\big)
\big(a_1p^{l_1}x_1^2 + \ldots + a_n p^{l_n}x_n^2\big)\\
&= \big(\prod_{i<n}a_{i}a_{i+1}p^{l_i+l_{i+1}}\big) \big(\prod_{1<i< n}  a_ip^{l_i}x_i^2\big) ap^k.
\end{align*}
Applying \eqref{eq:orthcompl-sublattice} we obtain
\begin{align*}
\disc(\Lambda) &= \big(\prod_{i<n}a_{i}a_{i+1}p^{l_i+l_{i+1}}\big) \big(\prod_{1<i< n}  a_i\tilde{x}_i^2\big) ap^{k-2(l_1+\nu_1)-\sum_{1<i<n}l_i}\\
&\in  \disc(Q) p^{k-2(l_1+\nu_1)}\Z_p^\times
\end{align*}
which proves the lemma.
\end{proof}

With these preparations at hand, we turn to stating the claimed inductive upper bound on local densities.

\begin{proposition}\label{prop:app-inductiveclaim}
Let $(l_1,\ldots,l_n)$ resp.~$(k_1,\ldots,k_n)$ be the elementary divisors of $Q$ resp.~$q$ at $p$ and write $q$ as in \eqref{eq:diagonalize q}.
Then
\begin{align*}
\alpha_p(q,Q)
\ll \sum_{\underline{\nu}} p^{\min_{i}\{l_i+2\nu_i\} - \sum_{i} \nu_i + (n-1)\min_{i}\{l_i+\nu_i\}} 
\max_{Q_{\underline{\nu}}} \alpha_p\big(\sum_{i\geq 2}a_ip^{k_i}x_i^2,Q_{\underline{\nu}}\big)
\end{align*}
where $\underline{\nu}$ runs over all $\underline{\nu} \in (\Z_{\geq 0} \cup \{\infty\})^n$ with $\min_i\{l_i+2\nu_i\} \leq k_1$ and
$Q_{\underline{\nu}}$ runs over all forms over $\Z_p$ in $n-1$ variables so that $(k_1 - 2\min_i\{l_i+\nu_i\}) + \sum_i l_i$ is the valuation of the discriminant.
Here, the implicit constant is independent of $p$.
\end{proposition}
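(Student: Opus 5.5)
We count representations column by column: first choose the image of the first basis vector, then reduce to representing the remaining part of $q$ inside its orthogonal complement. Since $p\neq 2$, we may diagonalize $q$ as in \eqref{eq:diagonalize q}, so that $M_q=\diag(a_1p^{k_1},\ldots,a_np^{k_n})$, and likewise $M_Q$ with units $b_i$ and exponents $l_i$.

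A solution $X$ mod $p^r$ of $X^tM_QX\equiv M_q$ is determined by its first column $x$ together with the remaining columns. The first column must satisfy $Q(x)\equiv a_1p^{k_1}\bmod p^r$. The remaining columns $y_2,\ldots,y_n$ must be orthogonal to $x$ modulo $p^r$ and must represent $\sum_{i\ge2}a_ip^{k_i}x_i^2$. We partition the admissible $x$ according to their valuation vector $\underline{\nu}$. By Lemma~\ref{lem:app-counting vectors}, the set of such $x$ is empty unless $\min_i\{l_i+2\nu_i\}\le k_1$, and otherwise it has at most $2p^{r(n-1)}p^{\min_i\{l_i+2\nu_i\}-\sum_i\nu_i}$ elements. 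The factor $p^{r(n-1)}$ is absorbed into the normalization $p^{-rn(n-1)/2}$, and the remaining power is the first part of the claimed exponent. We also use that $\alpha_p$ is a limit which is eventually constant in $r$, so we may lift $x$ to an exact solution $Q(x)=a_1p^{k_1}$ in $\Z_p^n$. This lets us apply Lemma~\ref{lem:app-orthcomplement}: the lattice $\Lambda_x=\{y\in\Z_p^n:y\perp x\}$ carries a form $Q_{\underline{\nu}}$ in $n-1$ variables whose discriminant has valuation $(k_1-2\min_i\{l_i+\nu_i\})+\sum_i l_i$, exactly as prescribed.

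Next we count the remaining columns given $x$. Each $y_j$ lies in $\{y\bmod p^r: \langle y,x\rangle\equiv 0\bmod p^r\}$, and this set is strictly larger than the reduction of $\Lambda_x$. Write $m=\min_i\{l_i+\nu_i\}$ for the largest power of $p$ dividing the linear form $y\mapsto\langle y,x\rangle$, which is $(b_ip^{l_i}x_i)_i$. Then $\langle y,x\rangle\equiv 0\bmod p^r$ is equivalent to $y\in\Lambda_x+p^{r-m}\Z_p^n$, and this set is a union of $p^m$ cosets of $\Lambda_x \bmod p^r$. Hence each of the $n-1$ columns gains a factor of at most $p^m$, for a total of $p^{(n-1)m}$. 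This is the second part of the exponent, $(n-1)\min_i\{l_i+\nu_i\}$. Within each coset we must count representations of $\sum_{i\ge2}a_ip^{k_i}x_i^2$ by $Q_{\underline{\nu}}$ modulo $p^{r'}$, where $r'$ is essentially $r$ up to a bounded shift. After the normalization $p^{-r(n-1)(n-2)/2}$, this count is $\alpha_p(\sum_{i\ge2}a_ip^{k_i}x_i^2,Q_{\underline{\nu}})$ up to a bounded constant. We bound it by the maximum over all admissible $Q_{\underline{\nu}}$, since $\Lambda_x$ depends on $x$ only through its isometry class, and we retain only the discriminant valuation. Summing over $\underline{\nu}$ then gives the proposition with an absolute implied constant; the only constants that enter are the factor $2$ from the square root in Lemma~\ref{lem:app-counting vectors} and the coset counting.

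The main obstacle is the bookkeeping in the last step. First, the non-exact orthogonality modulo $p^r$ must be handled so that the coset factor is exactly $p^{(n-1)m}$ and not larger. Second, translating $y_j$ by the $p^{r-m}\Z_p^n$ part perturbs the Gram matrix, and we must check this does not spoil the congruence to the required precision; it changes inner products only by multiples of $p^{r-m}$, which are negligible once $r$ is large. I would handle both by choosing $r\gg k_n+\sum_i l_i+\max_i\nu_i$ and working with the explicit basis of $\Lambda_x$ constructed in the proof of Lemma~\ref{lem:app-orthcomplement}. In that basis the residual coordinate along a complement of $\Lambda_x$ is explicit, and its contribution to the Gram matrix can be absorbed by the eventual constancy of the local densities.
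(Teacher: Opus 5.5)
Your overall architecture matches the paper's proof. You split by the valuation vector $\underline{\nu}$ of the first column, bound that count by Lemma~\ref{lem:app-counting vectors}, lift to an exact solution, and identify $\{y:\langle y,x\rangle\equiv 0 \bmod p^r\}=\Lambda_x\oplus p^{r-m}\Z_p e$ for a complement $e$ of $\Lambda_x$ (the paper uses $e=e_{i_0}$). You then pay $p^m$ per remaining column for the $e$-coordinate. The gap is in the step you yourself flag as the main obstacle, and the justification you give there is false. Write $y_j=u_j+t_je$ with $u_j\in\Lambda_x$ and $t_j\in p^{r-m}\Z_p$. Then
\[
\langle y_i,y_j\rangle=\langle u_i,u_j\rangle+t_j\langle u_i,e\rangle+t_i\langle u_j,e\rangle+t_it_jQ(e).
\]
The cross terms $t_j\langle u_i,e\rangle$ are linear in the unknowns and divisible only by $p^{r-m}$. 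Relative to the modulus $p^r$ they are equally large for every $r$, so taking $r$ large does nothing to them. Eventual constancy of local densities concerns a fixed homogeneous problem and cannot absorb a perturbation that depends on the unknowns $u_i$. Suppose you instead weaken the congruences to modulus $p^{r-m}$ (your ``bounded shift'' $r'=r-m$). Then you count $(u_j)\in(\Lambda_x/p^r\Lambda_x)^{n-1}$ satisfying the Gram conditions mod $p^{r-m}$, which overcounts by a factor $p^{m(n-1)n/2}$. Since $m$ can be as large as roughly $k_1$, this is not an absolute constant. It would destroy the exponent in the proposition, and with it Corollary~\ref{cor:app-estimatelocdensity}.

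The missing idea is to complete the square by translating inside $\Lambda_x$; this is the Claim in the paper's proof. Take an orthogonal basis $w_1',\ldots,w_{n-1}'$ of $\Lambda_x$, let $\sigma=\max_i \ord_p\langle w_i',w_i'\rangle$ (independent of $r$), and for $t\in p^{r-m}\Z_p$ set
\[
c(t)=-t\sum_i \frac{\langle e,w_i'\rangle}{\langle w_i',w_i'\rangle}\,w_i' \in p^{r-m-\sigma}\Lambda_x .
\]
Then $c(t)+te$ is exactly orthogonal to $\Lambda_x$. Its pairing with $t'e$ for $t'\in p^{r-m}\Z_p$ lies in $p^{2r-2m-\sigma}\Z_p\subseteq p^r\Z_p$ once $r\geq 2m+\sigma$. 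Hence $\langle y_i,y_j\rangle\equiv\langle u_i-c(t_i),\,u_j-c(t_j)\rangle \bmod p^r$. For fixed $(t_j)$, the map $u_j\mapsto u_j-c(t_j)$ is a bijection of $\Lambda_x/p^r\Lambda_x$. So the inner count is exactly the number of representations modulo $p^r$, with no shift, of $\sum_{i\geq 2}a_ip^{k_i}x_i^2$ by $Q|_{\Lambda_x}$. This yields $\alpha_p(\cdot,Q_{\underline{\nu}})$ with no loss. Largeness of $r$ is needed only to kill the genuinely quadratic error terms, which lie in $p^{2r-O(1)}\Z_p$, not the linear cross terms.
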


\begin{proof}
We may split the count involved in computing the density $\alpha_p(q,Q)$ (cf.~ \eqref{def:app-localdensities}) according to the set of valuations we see in the `first' vector.
Thus, let $\underline{\nu}=(\nu_1,\ldots,\nu_n) \in (\Z_{\geq 0} \cup \{\infty\})^n$ with $\min_i\{l_i+2\nu_i\} \leq k_1$. Let $i_0$ be the least integer with $l_{i_0}+\nu_{i_0}=\min_i\{l_i+\nu_i\}$.

We write the coefficients of $q$ as in \eqref{eq:diagonalize q} and write $Q= \sum_i b_ip^{l_i}x_i^2$.
By Lemma \ref{lem:app-counting vectors} we have 
\begin{align*}
\#\mathcal{A}_r(a_1,k_1;\underline{\nu}) \leq 2p^{r(n-1)} p^{\nu_{\mathrm{tot}}}.
\end{align*}
Now fix a vector $v_1 \in \mathcal{A}_r(a_1,k_1;\underline{\nu})$. 
When $r>k_1$ may lift $v_1$ to a vector in $\Z_p^n$ with $Q(v_1) = a_1 p^{k_1}$ and $\ord_p(v_{1i}) = \nu_i$ for all $i$ where we wrote again $v_1$ for the lift by abuse of notation.
Lemma~\ref{lem:app-orthcomplement} yields all the information we need on the orthogonal complement $\Lambda = \{w\in \Z_p^n: w \perp v_1\}$. We fix a basis $w_1,\ldots,w_{n-1}$ of it.
Note that $w_1,\ldots,w_{n-1},e_{i_0}$ is a basis of $\Z_p^n$ by the proof of Lemma~\ref{lem:app-orthcomplement}.
Any vector can thus be expressed in the coordinates of this basis; to distinguish them from the standard coordinates we write $x = \hat{x}_1 w_1 + \ldots + \hat{x}_{n-1}w_{n-1} + \hat{x}_n e_{i_0}$ for $x \in \Z_p^n$.
As in the proof of Lemma~\ref{lem:app-orthcomplement}, we ensure that $w_2,\ldots,w_{n-1}\perp e_{i_0}$.

We need to estimate the number of $(v_2,\ldots,v_n)$ in $(\Z/p^r\Z)^n$ with
\begin{align}\label{eq:app-afterfixingv1}
\langle v_i,v_j\rangle \equiv \delta_{ij}a_jp^{k_j} \mod p^{r} \quad \text{ for all }1 \leq i \leq j\leq n,\, (i,j) \neq (1,1).
\end{align}
Expanding this condition for $i=1$ and using the above new coordinates we obtain
$b_{i_0}p^{l_{i_0}} \hat{v}_{jn} v_{i_0}
\equiv 0 \mod p^r$ for any $j>1$ or equivalently
\begin{align*}
\hat{v}_{jn} 
\equiv 0 \mod p^{r-l_{i_0}-\nu_{i_0}}.
\end{align*}
Therefore, $\hat{v}_{jn}$ can take at most $p^{l_{i_0}+\nu_{i_0}}$ many values for each $j$. We fix one such value for every $j$ at the cost of a factor of $p^{(n-1)\min_i\{l_i+\nu_i\}}$ in the final count.
With this choice, \eqref{eq:app-afterfixingv1} for all $i,j\geq 2$ is now an inhomogeneous equation in the remaining coordinates $\hat{v}_{j1},\ldots,\hat{v}_{j(n-1)}$.
We perform an affine coordinate change to homogenize the resulting equation and use the following claim to that end.

\begin{claim*}
For any $\alpha \in p^{r-l_{i_0}-\nu_{i_0}}\Z_p$ there exists $w \in \Lambda$ so that $(w+\alpha e_{i_0},v) \equiv 0 \mod p^r$ for all $v \in \Z_p^n$ with $\hat{v}_n \in p^{r-l_{i_0}-\nu_{i_0}}\Z_p$.
\end{claim*}

To prove the claim, we may replace, for convenience, our basis of $\Lambda$ with an orthogonal basis $w_1',\ldots,w_{n-1}'$ (cf.~\eqref{eq:diagonalize q}).
The vector $w = \sum_i \beta_i w_i'$ we wish to find needs to satisfy
\begin{align*}
0 \equiv (w+\alpha e_{i_0},w_i') \equiv \beta_i (w_i',w_i') + \alpha (e_{i_0},w_i') \mod p^r
\end{align*}
or equivalently $\beta_i \equiv -\frac{\alpha(e_{i_0},w_i')}{(w_i',w_i')} \mod p^{r-\star}$. 
Here, recall that $r$ is assumed sufficiently large so that the quotient is well-defined.
We fix such choices $\beta_{1},\ldots, \beta_{n-1}$ so $w$ is determined with $w \equiv 0 \mod p^{r-\star}$.
Therefore, for any $v\in \Z_p^n$ with $\hat{v}_n\in p^{r-l_{i_0}-\nu_{i_0}}\Z_p$
\begin{align*}
(w+\alpha e_{i_0},v) \equiv \hat{v}_n (w+\alpha e_{i_0},e_{i_0}) \equiv 0 \mod p^r
\end{align*}
using $w \equiv 0 \mod p^{r-\star}$ and $\alpha\equiv 0 \mod p^{r-\star}$.
This proves the claim.

We apply the above claim for $\alpha = \hat{v}_{jn}$ and find $c_j \in \Lambda$ with the given property. Set $u_{j-1} = \hat{v}_{j1}w_1+\ldots+\hat{v}_{j(n-1)}w_{n-1}-c_j$.
With this, note that
\begin{align*}
(v_i,v_j) 
&\equiv (\hat{v}_{i1}w_1+\ldots+\hat{v}_{i(n-1)}w_{n-1},v_j) + (\hat{v}_{in}e_{i_0},v_j) \\
&\equiv (\hat{v}_{i1}w_1+\ldots+\hat{v}_{i(n-1)}w_{n-1},v_j) - (c_i,v_j)\\
&\equiv (u_{i-1},v_j) \equiv (u_{i-1},u_{j-1}) \mod p^r.
\end{align*}
Thus, after the coordinate change by translation with the $c_i$'s we are left with counting the number of solutions $(u_1,\ldots,u_{n-1}) \in \Lambda/p^r\Lambda$ with $(u_i,u_j) \equiv \delta_{ij}a_jp^{k_{j+1}} \mod p^r$.
This constitutes the claimed reduction by Lemma~\ref{lem:app-orthcomplement}.
We have accumulated a factor $p^{\min_i\{l_i+2\nu_i\} -\sum_i \nu_i +(n-1) \min_{i}\{l_i+\nu_i\}}$; the proposition follows.
\end{proof}

\begin{remark}\label{rem:app-twoindices}
The proofs of Proposition~\ref{prop:app-inductiveclaim} and Lemma~\ref{lem:app-counting vectors} show that the maximum in Proposition~\ref{prop:app-inductiveclaim} can in fact be taken over $\underline{\nu}$ so that the minimum of $l_i+2\nu_i$ is attained at least at two indices $i$.
\end{remark}

We apply Proposition~\ref{prop:app-inductiveclaim} in low dimensional cases to obtain the desired estimate for local densities.

\begin{cor}[Estimates for local densities]\label{cor:app-estimatelocdensity}
Write $(k_1,\ldots)$ and $(l_1,\ldots)$  respectively for the elementary divisors of $q$ and $Q$ at $p$. 
Then
\begin{align*}
\alpha_p(q,Q)
\ll \prod_{i < n}(k_i+1)
\begin{cases}
p^{\lfloor\frac{k_1+l_1}{2}\rfloor} &\text{if } n=1,\\
 p^{\lfloor\frac{k_1+k_2}{2}\rfloor + l_1 +l_2} &\text{if } n=2,\\
p^{\lfloor \frac{1}{2}k_1\rfloor + \frac{1}{2}(k_1+k_2+k_3) + \frac{3}{2}(l_1 +l_2+l_3)}&\text{if } n=3,\\
p^{k_1+\lfloor \frac{1}{2}k_2\rfloor + \frac{1}{2}(k_1+k_2+k_3+k_4) + 2(l_1 +l_2+l_3+l_4)}&\text{if } n=4.
\end{cases}
\end{align*}
\end{cor}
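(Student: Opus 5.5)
We prove Corollary~\ref{cor:app-estimatelocdensity} by induction on $n$, applying Proposition~\ref{prop:app-inductiveclaim} repeatedly. Each application strips off the first diagonal coefficient $a_1p^{k_1}$ of $q$ and replaces $Q$ by an $(n-1)$-ary form $Q_{\underline{\nu}}$. We then only need two pieces of information: the exponent picked up at each step, and the elementary divisors of $Q_{\underline{\nu}}$. By Lemma~\ref{lem:app-orthcomplement}, the discriminant valuation of $Q_{\underline{\nu}}$ is $k_1-2\min_i\{l_i+\nu_i\}+\sum_i l_i$. For the maximum over $Q_{\underline{\nu}}$ we use only this discriminant valuation, bounding the elementary divisors of $Q_{\underline{\nu}}$ crudely through it; this explains why the stated bounds carry the factors $\tfrac{3}{2}(l_1+l_2+l_3)$ and $2\sum l_i$ rather than anything sharp.

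The base case $n=1$ is direct. We have $\alpha_p(a_1p^{k_1}x^2, b p^{l_1}x^2)$, and the solutions $x$ mod $p^r$ of $bp^{l_1}x^2\equiv a_1p^{k_1}$ require $k_1-l_1$ even. Writing $x=p^{(k_1-l_1)/2}u$, the unit $u$ is determined up to sign modulo $p^{r-k_1}$. This gives a count $\ll p^{(k_1+l_1)/2}$, in agreement with Lemma~\ref{lem:app-counting vectors}, and the floor accounts for parity.

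For $n=2,3,4$ we plug into Proposition~\ref{prop:app-inductiveclaim}. The sum over $\underline{\nu}$ with $\min_i\{l_i+2\nu_i\}\le k_1$ has $\ll \prod (k_1+1)$ effective terms up to permutation. By Remark~\ref{rem:app-twoindices}, the minimum of $l_i+2\nu_i$ is attained at two indices, so $\nu_{\mathrm{tot}}$ is controlled. We then bound the exponent $\min_i\{l_i+2\nu_i\}-\sum\nu_i+(n-1)\min_i\{l_i+\nu_i\}$ by $k_1$ plus $O(\sum l_i)$, using $2\min\{l_i+\nu_i\}\le \min\{l_i+2\nu_i\}+\max l_i \le k_1+\sum l_i$. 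We then apply the induction hypothesis to $\sum_{i\ge2}a_ip^{k_i}x_i^2$ and $Q_{\underline{\nu}}$, whose elementary divisors $l'_j$ satisfy $\sum_j l'_j = k_1-2\min\{l_i+\nu_i\}+\sum l_i$. The negative term $-2\min\{l_i+\nu_i\}$ is what cancels the $(n-1)\min\{l_i+\nu_i\}$ gain. Carrying out this bookkeeping for $n=2$ produces $\lfloor (k_1+k_2)/2\rfloor + l_1+l_2$, and similarly for $n=3,4$ we obtain the extra $\lfloor k_1/2\rfloor$, $k_1$ and the half-sum of the $k_i$.

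The main obstacle is the exponent bookkeeping: the induction hypothesis must be used in a form depending only on $\sum l'_j$, and the contributions of $\min\{l_i+\nu_i\}$ must cancel exactly. I would first verify the $n=2$ case by hand, splitting according to whether the minimum is attained at $\nu_1$ or $\nu_2$. That case fixes the shape of an inductive statement of the form "exponent $\le$ linear form in $k$ plus $c_n\sum l$". The cases $n=3,4$ would then follow by the same inequalities, while tracking the factor $(k_i+1)$ arising from the count of admissible $\underline{\nu}$.
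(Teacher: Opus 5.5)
Your route is the paper's: induction on $n$ through Proposition~\ref{prop:app-inductiveclaim}. You use the inductive bound only through $\ord_p\disc(Q_{\underline{\nu}})=k_1-2\min_i\{l_i+\nu_i\}+\sum_i l_i$; its coefficient $\tfrac{n-1}{2}$ on $\sum_j l_j'$ cancels $p^{(n-1)\min_i\{l_i+\nu_i\}}$ exactly. You then use Remark~\ref{rem:app-twoindices} to get $\min_i\{l_i+2\nu_i\}-\nu_{i_1}-\nu_{i_2}=\lfloor\tfrac12(l_{i_1}+l_{i_2})\rfloor$.

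Drop, however, the step where you bound $\min_i\{l_i+2\nu_i\}-\sum_i\nu_i+(n-1)\min_i\{l_i+\nu_i\}$ by $k_1+O(\sum_i l_i)$. You cannot both estimate the $(n-1)\min$ term and cancel it. Estimating it via $2\min_i\{l_i+\nu_i\}\le k_1+\sum_i l_i$, while feeding $\sum_j l_j'\le k_1+\sum_i l_i$ into the induction, loses a positive power of $p^{k_1}$ already for $n=2$.

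After the cancellation, what must be summed is $p^{\min_i\{l_i+2\nu_i\}-\sum_i\nu_i}$. For $n\ge 3$ this is an infinite sum, not one with $\ll k_1+1$ terms. The factor $k_1+1$ comes from $\nu_{i_1}\le k_1/2$, which then fixes $\nu_{i_2}$. The remaining indices contribute a geometric series $\sum_{\nu_i}p^{-\nu_i}\ll 1$.

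The substantive gap is the claim that $n=3,4$ follow ``by the same inequalities''. For $n=3$, inserting the $n=2$ bound gives the exponent $k_1+\lfloor\tfrac{k_2+k_3}{2}\rfloor+\sum_i l_i+\lfloor\tfrac12(l_{i_1}+l_{i_2})\rfloor$. This exceeds the claimed $\lfloor\tfrac{k_1}{2}\rfloor+\tfrac12(k_1+k_2+k_3)+\tfrac32\sum_i l_i$ by $\tfrac12$ when $k_1$ is odd, $k_2+k_3$ is even, and there is no slack on the $l$-side (e.g.\ all $l_i=0$ and $k=(1,1,1)$, where you get $2$ versus $\tfrac32$). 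So the inequalities alone only give $\lceil k_1/2\rceil$ in place of $\lfloor k_1/2\rfloor$.

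The missing input, which the paper invokes, is parity. In equal rank, $\alpha_p(q,Q)=0$ unless $\sum_i k_i\equiv\sum_i l_i\pmod 2$; take determinants in $X^tM_QX\equiv M_q \bmod p^r$ for large $r$. In the bad case $\sum_i k_i$ is odd, so this forces $\sum_i l_i$ odd, and that supplies the missing $\tfrac12$.

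This floor is not cosmetic. It becomes the $\lfloor k_2/2\rfloor$ in the case $n=4$, which then follows from the $n=3$ bound with no further parity argument. It is also what yields $\mathsf{m}_2'(q)$ in Theorem~\ref{thm:app-equalrank}.
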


\begin{proof}
The corollary is proven by `induction' on $n$.
Throughout, we have $q = \sum_i a_i p^{k_i}x_i^2$ and $Q = \sum_i b_i p^{l_i}x_i^2$.

\textsc{The case $n=1$:}
Write $q(x) = a_1p^{k_1}x^2$ and $Q(x) = b_1 p^{l_1}x^2$.
If $l_1>k_1$ or $k_1-l_1$ is odd, we have $\alpha_p(q,Q) =0$ and we conclude. So assume otherwise and set $k= \frac{1}{2}(k_1-l_1)$.
For $r$ large enough, compute
\begin{align*}
\alpha_p(q,Q) &=\#\{x \in \Z/p^r\Z: b_1 p^{l_1} x^2 = a_1p^{k_1} \mod p^r\}\\
&=p^{l_1}\#\{x \in \Z/p^{r-l_1}\Z: b_1 x^2 = a_1p^{k_1-l_1} \mod p^{r-l_1}\}\\
&= p^{l_1}\#\{x \in \Z/p^{r-l_1-k}\Z: b_1 x^2 = a_1 \mod p^{r-l_1-2k}\}\\
&= p^{l_1+k}\#\{x \in \Z/p^{r-l_1-2k}\Z: b_1 x^2 = a_1 \mod p^{r-l_1-2k}\} = 2p^{\lfloor\frac{1}{2}(l_1+k_1)\rfloor}.
\end{align*}

\textsc{The case $n=2$:}
We apply Proposition~\ref{prop:app-inductiveclaim}.
By the previous case $n=1$,
\begin{align*}
\max_{Q_{\underline{\nu}}} \alpha_p\big(a_2p^{k_2}x^2,Q_{\underline{\nu}}\big) \leq 2 p^{\lfloor \frac{k_1+k_2+l_1+l_2}{2}\rfloor - \min\{l_i+\nu_i\}}
\end{align*}
and so
\begin{align*}
\alpha_p(q,Q)
\ll \sum_{\underline{\nu}} p^{\min_{i}\{l_i+2\nu_i\} - \nu_1 - \nu_2+\lfloor \frac{k_1+k_2+l_1+l_2}{2}\rfloor}.
\end{align*}
Now notice also that in view of Remark~\ref{rem:app-twoindices}
\begin{align*}
\min_{i}\{l_i+2\nu_i\} - \nu_1 - \nu_2
&= \lfloor\tfrac{1}{2}(l_1+2\nu_1+l_2+2\nu_2)\rfloor - \nu_1-\nu_2= \lfloor \tfrac{1}{2}(l_1+l_2)\rfloor
\end{align*}
and the above summation over $\underline{\nu}$ is restricted to $l_1+2\nu_1,l_2+2\nu_2 \leq k_1$. In particular, $\nu_1,\nu_2 \leq \tfrac{1}{2}k_1$ and so as claimed
\begin{align*}
\alpha_p(q,Q) 
\ll (k_1+1) p^{\lfloor \frac{1}{2}(l_1+l_2)\rfloor+\lfloor \frac{k_1+k_2+l_1+l_2}{2}\rfloor}
\leq (k_1+1) p^{\lfloor \frac{k_1+k_2}{2}\rfloor+l_1+l_2}.
\end{align*}

\textsc{The case $n=3$:} 
By the case $n=2$
\begin{align*}
\max_{Q_{\underline{\nu}}} \alpha_p\big(a_2p^{k_2}x_1^2+a_3p^{k_3}x_2^2,Q_{\underline{\nu}}\big)
\ll (k_2+1) p^{\lfloor \frac{k_2+k_3}{2}\rfloor + l_1+l_2+l_3 + k_1-2\min_i\{l_i+\nu_i\}}
\end{align*}
and so by Proposition~\ref{prop:app-inductiveclaim}
\begin{align*}
\alpha_p(q,Q)\ll
(k_2+1) p^{\lfloor \frac{k_2+k_3}{2}\rfloor + l_1+l_2+l_3 + k_1}\sum_{\underline{\nu}} p^{\min_i\{l_i+2\nu_i\}-\nu_1-\nu_2-\nu_3}.
\end{align*}
In view of Remark~\ref{rem:app-twoindices} denote by $i_1,i_2 \in \{1,2,3\}$ two distinct indices with $l_{i_1}+2\nu_{i_1}=l_{i_2}+2\nu_{i_2}=\min_i\{l_{i}+2\nu_{i}\}\leq k_1$.
Then $\min_i\{l_i+2\nu_i\} -\nu_{i_1}-\nu_{i_2} = \lfloor \frac{1}{2}(l_{i_1}+l_{i_2})\rfloor$
and
\begin{align*}
\sum_{\underline{\nu}} p^{\min_i\{l_i+2\nu_i\}-\nu_1-\nu_2-\nu_3}
&\ll (k_1+1) p^{\lfloor \frac{1}{2}(l_{2}+l_{3})\rfloor} \sum_{\nu_i, i\neq i_1,i_2} p^{-\nu_i}\\
&\ll (k_1+1) p^{\lfloor \frac{1}{2}(l_{2}+l_{3})\rfloor}.
\end{align*}
To summarize,
\begin{align*}
\alpha_p(q,Q)
&\ll (k_1+1)(k_2+1) p^{k_1+\lfloor \frac{k_2+k_3}{2}\rfloor + l_1+l_2+l_3 + \lfloor \frac{1}{2}(l_{2}+l_{3})\rfloor}\\
&\ll (k_1+1)(k_2+1) p^{k_1+\lfloor \frac{k_2+k_3}{2}\rfloor + \lfloor \frac{3}{2}(l_1+l_2+l_3)\rfloor}.
\end{align*}
It follows from parity considerations and $k_1+k_2+k_3-l_1-l_2-l_3$ even that
\begin{align*}
\alpha_p(q,Q)\ll (k_1+1)(k_2+1) p^{\lfloor \frac{1}{2}k_1 \rfloor +\frac{1}{2}(k_1+k_2+k_3) + \frac{3}{2}( l_1+l_2+l_3)}.
\end{align*}

\textsc{The case $n=4$:}
By the case $n=3$
\begin{align*}
\max_{Q_{\underline{\nu}}} \alpha_p\big(&\sum_{i\geq 2} a_ip^{k_i}x_i^2,Q_{\underline{\nu}}\big)\\
&\ll (k_2+1)(k_3+1) p^{k_2+\lfloor \frac{k_3+k_4}{2}\rfloor + \lfloor \frac{3}{2}(l_1+l_2+l_3 + l_4+k_1)\rfloor -3\min_i\{l_i+\nu_i\}}
\end{align*}
and so by Proposition~\ref{prop:app-inductiveclaim}
\begin{align*}
\alpha_p(q,Q)\ll
(k_2+1)(k_3+1) p^{k_2+\lfloor \frac{k_3+k_4}{2}\rfloor + \lfloor \frac{3}{2}\sum_il_i\rfloor}\sum_{\underline{\nu}} p^{\min_i\{l_i+2\nu_i\}-\sum_i\nu_i}.
\end{align*}
In view of Remark~\ref{rem:app-twoindices} denote by $i_1,i_2 \in \{1,2,3,4\}$ two distinct indices with $l_{i_1}+2\nu_{i_1}=l_{i_2}+2\nu_{i_2}=\min_i\{l_{i}+2\nu_{i}\}\leq k_1$. 
Then $\min_i\{l_i+2\nu_i\} -\nu_{i_1}-\nu_{i_2} = \lfloor \frac{1}{2}(l_{i_1}+l_{i_2})\rfloor$ and
\begin{align*}
\sum_{\underline{\nu}} p^{\min_i\{l_i+2\nu_i\}-\nu_1-\nu_2-\nu_3-\nu_4}
\ll (k_1+1) p^{\lfloor \frac{1}{2}(l_{3}+l_{4})\rfloor}.
\end{align*}
Thus,
\begin{align*}
\alpha_p(q,Q)&\ll
\Big(\prod_{i=1}^3(k_i+1)\Big) p^{k_1+k_2+\lfloor \frac{k_3+k_4}{2}\rfloor + \lfloor \frac{3}{2}(l_1+l_2+l_3 + l_4)+\frac{1}{2}k_1\rfloor+\lfloor \frac{1}{2}(l_{3}+l_{4})\rfloor}\\
&\ll \Big(\prod_{i=1}^3(k_i+1)\Big) p^{k_1+ \lfloor \frac{1}{2}k_2\rfloor+\frac{1}{2}(k_1+k_2+k_3+k_4)+2(l_1+l_2+l_3 + l_4)}
\end{align*}
as claimed.
\end{proof}

\subsection{Local estimates at $p=2$}

We require a bound similar to Corollary~\ref{cor:app-estimatelocdensity} for $p=2$.

\begin{lemma}[Estimate for local densities at $p=2$]\label{lem:app-localestimateat2}
Assume that $q,Q$ are non-degenerate quadratic forms over $\Z_2$ in $4$ variables.
Write $(k_1,\ldots)$ and $(l_1,\ldots)$ respectively for the elementary divisors of $q$ and $Q$ at $2$. 
Then
\begin{align*}
\alpha_p(q,Q)
\ll \Big(\prod_{i < n}(k_i+1)\Big) 2^{\frac{3}{2}k_1 + k_2 + \frac{1}{2}(k_3+k_4) + 2 \sum_i l_i}.
\end{align*}
\end{lemma}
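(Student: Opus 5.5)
The plan is to mimic the odd-prime argument (Proposition~\ref{prop:app-inductiveclaim} and Corollary~\ref{cor:app-estimatelocdensity}), accepting losses of a bounded power of $2$ at every step. Over $\Z_2$ we cannot diagonalize, but by \cite[Ch.~8]{cassels} both $q$ and $Q$ decompose orthogonally into Jordan blocks. These blocks are of rank one, $2^{k}u x^2$, or of rank two, $2^{k}(x^2+xy+y^2)$ or $2^k xy$. We fix such decompositions. The elementary divisors $k_i,l_i$ of Definition~\ref{def:elementary divisors} agree with the Jordan exponents up to an additive constant (this is the controlled defect remarked there), so it suffices to prove the bound with the Jordan exponents, with the constant absorbed into the implied constant.

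\textbf{Step 1: the analogue of Lemma~\ref{lem:app-counting vectors}.} Count $x\in(\Z/2^r\Z)^4$ with $Q(x)\equiv a2^{k}\mod 2^r$ and with prescribed valuations of the coordinates. For a rank-two block we use the valuation of the block vector. Fixing all coordinates except the one in the block of minimal $l_i+2\nu_i$ leaves either a quadratic congruence in one variable (at most $O(1)\cdot 2^{O(1)}$ solutions modulo $2^{r-l}$, since unit squares mod $2^m$ have $4$ roots) or a binary unimodular-type congruence. The latter is even easier: one variable is linear. This yields the same bound $\ll 2^{r(n-1)}2^{\nu_{\mathrm{tot}}}$ up to an absolute factor.

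\textbf{Step 2: the analogue of Lemma~\ref{lem:app-orthcomplement}.} Compute the discriminant of the orthogonal complement of the first vector. Rather than redo the explicit basis computation, use the sublattice argument from the proof of Lemma~\ref{lem:orthocomplement disc}. Since $\Z_2x\oplus x^\perp$ has index dividing $2^{O(1)}\cdot 2^{\min(l_i+\nu_i)}$ in $\Z_2^4$, we get $\ord_2\disc(x^\perp)=k-2\min_i\{l_i+\nu_i\}+\sum l_i+O(1)$. With these two ingredients the reduction in Proposition~\ref{prop:app-inductiveclaim} goes through verbatim. The only change is in the Claim used for homogenization: dividing by $(w_i',w_i')$ now costs a bounded extra power of $2$ when the complement's Jordan blocks are non-diagonal. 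This again gives the inductive inequality with an $O(1)$ multiplicative loss and $n$ decreasing by one.

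\textbf{Step 3: run the induction for $n=1,2,3,4$ as in Corollary~\ref{cor:app-estimatelocdensity}.} Because of the $2^{O(1)}$ losses, we no longer track floors and parity. We simply use the cruder exponents $\tfrac12(k_1+l_1)$, $\tfrac12(k_1+k_2)+l_1+l_2$, and so on. Summing over $\underline\nu$ gives the factors $(k_i+1)$. The exponent at $n=4$ then becomes $k_1+\tfrac12k_2+\tfrac12\sum k_i+2\sum l_i+O(1)$, which equals the claimed $\tfrac32k_1+k_2+\tfrac12(k_3+k_4)+2\sum l_i$.

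\textbf{Main obstacle.} I expect the hardest part to be Remark~\ref{rem:app-twoindices}, that is, the fact that the minimum of $l_i+2\nu_i$ is attained twice. At odd $p$ this used that $a x^2\equiv c$ forces a match of valuations. At $2$, a single rank-two block or the $8$-adic unit-square condition can break this. I would first try to show that it holds up to an additive $O(1)$, namely that the second-smallest value of $l_i+2\nu_i$ exceeds the minimum by at most $3$ unless there are no solutions, by reducing $Q(x)=a2^k$ modulo $2^{\min+3}$. If that fails, I would absorb the discrepancy using the extra $\tfrac12k_1$ in the claimed exponent compared with the odd case, since that slack is precisely what a single unmatched index would cost.
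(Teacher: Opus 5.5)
\textbf{Main gap.} The problem is your claim that after Steps~1--2 ``the reduction in Proposition~\ref{prop:app-inductiveclaim} goes through verbatim.''

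That reduction peels off one column $v_1$ with $Q(v_1)=a_1p^{k_1}\neq 0$. It then uses $\langle v_1,v_j\rangle\equiv 0$ for $j\ge 2$ to move $v_2,\dots,v_n$ into $v_1^\perp$. So it needs the \emph{represented} form $q$, and each of its truncations, to split off a rank-one orthogonal summand with nonzero value.

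Your Jordan-block modifications only treat the representing form $Q$ and its complements. Over $\Z_2$, $q$ itself may have only binary Jordan blocks:
\begin{itemize}
\item For $q=2^k(x_1x_2+x_3x_4)$ one has $q(e_1)=0$, so Steps~1--2 do not apply at all.
\item For $q=2^k(x_1^2+x_1x_2+x_2^2)\perp\cdots$ one has $\langle v_1,v_2\rangle=2^{k-1}$, and $v_2-\frac12 v_1$ is in general not integral. The remaining columns then do not become a representation problem on the lattice $v_1^\perp$.
\end{itemize}

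The missing idea is exactly the observation in the paper's proof. There is a sublattice $\Lambda\subset\Z_2^4$ of index dividing $2^2$ on which $q$ is diagonal, and $\alpha_2(q,Q)\ll\alpha_2(q|_\Lambda,Q)$ by \cite[Thm.~5.6.4]{Kitaoka-book}. The elementary divisors of $q|_\Lambda$ are within $O(1)$ of those of $q$. After this, the odd-prime induction runs with diagonal $q$. The non-diagonal $Q$ side can be handled by your block-wise Steps~1--2 or by a bounded-index change of lattice. Without this idea you would need a rank-two version of Steps~1--2 that fixes a pair $(v_1,v_2)$ realizing a binary block of $q$; your proposal contains neither.

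\textbf{The flagged obstacle.} The point you single out as the main obstacle is not one, and both proposed remedies are flawed.
\begin{itemize}
\item Remark~\ref{rem:app-twoindices} is only a convenience. Suppose $m=\min_i\{l_i+2\nu_i\}$ is attained at $i_1$. Then $m\le l_j+2\nu_j$ gives $m-\nu_{i_1}-\nu_j\le\frac12(l_{i_1}+l_j)$ for every $j\neq i_1$, and the remaining $\nu_i$ sum geometrically. Hence the $\underline\nu$-sum is $\ll(k_1+1)2^{\frac12(l_{n-1}+l_n)}$ for $l_1\le\dots\le l_n$, with no matching of minima needed.
\item Your claim that the second-smallest value exceeds the minimum by at most $3$ is false. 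Take $Q=x_1^2+x_2^2+x_3^2+x_4^2$ (diagonal exponents $l_i=0$) and $x=(1,2^5,2^5,2^5)$. Then $Q(x)$ is odd, while the values $l_i+2\nu_i$ are $0,10,10,10$.
\item Your fallback relies on slack that does not exist. As your own Step~3 shows, $\frac32k_1+k_2+\frac12(k_3+k_4)$ is exactly the odd-prime exponent with the floors removed.
\item In Step~2, the index of $\Z_2x\oplus x^\perp$ in $\Z_2^4$ is $2^{k-\min_i\{l_i+\nu_i\}}$ up to $2^{O(1)}$, not $2^{\min_i\{l_i+\nu_i\}}$. Your formula for $\ord_2\disc(x^\perp)$ is correct, but it follows from the former index, not the one you wrote.
\end{itemize}
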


\begin{proof}[Sketch of proof]
The proof follows the same strategy as the proof of Corollary~\ref{cor:app-estimatelocdensity} as well as the following simple observation:

Whenever $q'$ is a quadratic form over $\Z_2$ in $m$ variables, then there exists a sublattice $\Lambda$ of $\Z_2^m$ of index dividing $2^{\lfloor \frac{m}{2}\rfloor}$ such that $q'|_\Lambda$ is `diagonalizable'.
That is, there exists a basis of $\Lambda$ on which $q'$ is diagonal.
In particular, this implies for any form $Q'$ over $\Z_2$ in $\geq m$ variables
\begin{align*}
\alpha_2(q',Q') \ll_m \alpha_2(q'|_\Lambda,Q');
\end{align*}
see for example \cite[Thm.~5.6.4]{Kitaoka-book}.

The above observation reduces our arguments to diagonal forms. Thus, the previous strategy of proof for $p \neq 2$ is applicable and the lemma follows.
\end{proof}

\subsection{Proof of the global estimates}
We turn to proving the main results of this appendix.

\begin{proof}[Proof of Theorem~\ref{thm:app-equalrank}]

Let $r(q,Q)$ be the number of representations of $q$ by $Q$.
By the Siegel mass formula we have
\begin{align*}
\frac{1}{\omega} \sum_i \frac{r(q,Q_i)}{|\mathrm{O}_{Q_i}(\Z)|}
= C \disc(Q)^{-2} \disc(q)^{-\frac{1}{2}} \prod_p \alpha_p(q,Q)
\end{align*}
where $\{Q_i\}$ runs over a set of representatives of the genus of $Q$, $\omega = \sum_i \frac{1}{|\mathrm{O}_{Q_i}(\Z)|}$, and the constant $C$ is absolute (it depends on the dimension, which is $4$ here).
As $\omega$ depends only on $Q$, this shows in particular
\begin{align*}
r(q,Q) \ll \disc(q)^{-\frac{1}{2}} \prod_p \alpha_p(q,Q).
\end{align*}
By \cite[Prop.~5.6.2]{Kitaoka-book}, we have 
\begin{align*}
\prod_{2 \neq p\, \nmid\, \disc(q)\disc(Q)} \alpha_p(q,Q)\ll 1
\end{align*}
where the implicit constant is absolute.
For all other primes, we apply the estimates on the local densities given in Corollary~\ref{cor:app-estimatelocdensity} and Lemma~\ref{lem:app-localestimateat2}. Thus,
\begin{align*}
r(q,Q) \ll_\varepsilon \disc(q)^\varepsilon \prod_{p}p^{k_1(p,q)} \prod_p p^{\lfloor \frac{1}{2}k_2(p,q)\rfloor}
\ll \disc(q)^\varepsilon \mathsf{m}_1(q) \mathsf{m}_2'(q).
\end{align*}
Here, to absorb rounding errors we used that whenever $\sum_{i} k_i(p,q)$ is odd for some odd prime $p$, then $p \mid \disc(Q)$ as $\sum_{i} (k_i(p,q)-k_i(p,Q))$ is even (or no representation exists and we conclude).
This finishes the proof.
\end{proof}

\begin{proof}[Proof of Corollary~\ref{cor:app-codim1}]
Let $\iota$ be any representation of $q$ by $Q$ and let $V = \iota(\Q^{3})$. 
The primitive lattice $V \cap \Z^4$ contains $\iota(\Z^{3})$ and the square of the index $[(V \cap \Z^4):\iota(\Z^{3})]$ is a divisor of $8\disc(q)$.
In particular, the index can take $\ll_\varepsilon |\disc(q)|^\varepsilon$ many values; fix one such value $d$.
Let $v \in V^\perp \cap \Z^4$ be a primitive vector (one of two).
For instance by~\cite[Prop.~5.1]{AMW-higherdim}, its quadratic value $Q(v)$ satisfies $Q(v) = \tfrac{m_1}{m_2}d$ for two divisors $m_1,m_2$ of $16\disc(Q)$.
Given that we fixed $d$, $Q(v)$ can thus take $\ll_\varepsilon |\disc(Q)|^\varepsilon$ many values; we fix one such value $d_0$. 
We have a representation
\begin{align*}
\iota': \Z^{4} \to \Z^4,\ x' = (x,x_0) \mapsto \iota(x) + x_0 v
\end{align*}
of the quadratic form $q'(x,x_0) = q(x) + d_0 x_0^2$ by $Q$. 
Note also that 
\begin{align*}
&|\disc(q')| = |\disc(q)|d_0 \leq |\disc(Q)| |\disc(q)|^2,\\
&\mathsf{m}_1(q')\mid 16\disc(Q)\mathsf{m}_1(q),\quad \mathsf{m}_2(q') \mid 16\disc(Q)\mathsf{m}_2(q).
\end{align*}
Overall, the above argument shows that the number of representations of $q$ by $Q$ is bounded by a sum of the number of representations of certain $4$-variable forms $q'$ by $Q$ where $q'$ ranges over $\ll_{Q,\varepsilon} |\disc(q)|^\varepsilon$ many forms of discriminant at most $|\disc(Q)| |\disc(q)|^2$ and with $\mathsf{m}_1(q')\mid 16\disc(Q)\mathsf{m}_1(q)$ as well as $\mathsf{m}_2(q') \mid 16\disc(Q)\mathsf{m}_2(q)$.
Thus, the corollary indeed follows from Theorem~\ref{thm:app-equalrank}.
\end{proof}

\section{A uniform version of Duke's theorem}\label{sec:Duke}
Let $\Hbf=\mathbf{PB}^\times$ where $B$ is a definite quaternion algebra over $\Q$.
Let $\widetilde{\Hbf}=\Bbf^1$ be the simply connected cover of $\Hbf$ and let $\rho:\widetilde{\Hbf}\to \Hbf$ be the natural isogeny. In this subsection, we choose and fix a maximal order $R$ in $B$. For each prime $p$ and natural number $m$, let $K_p^H[m]$ be the compact open subgroup of $\Hbf(\Q_p)$ defined by $K_p^H[m]=(1+p^mR_p)/(1+p^m\Z_p)$ if $m\geq 1$, and $K_p^H[0]=R_p^\times/\Z_p^\times$. Let $\disc(B)$ be the product of ramified primes in $B$.

Let $N$ be a positive integer and write $N=\prod_p p^{m_p}$. 
Let $\Av_p[m_p]$ be the averaging projection on the $K^H_p[m_p]$-invariant vectors, and put $\pr_p[0]=\Av_p[0]$, $\pr_p[m_p]=\Av_p[m_p]-\Av[m_p-1]$ for $m_p\geq1$. Let $\pr[N]=\prod_p\pr_p[m_p]$.
For $d\geq0$, we define a Sobolev norm on $C_c^\infty([\Hbf(\bA)])$ as follows:
\begin{equation}\label{eq:sob H}
    \Scal_d^H(f)^2 := \sum_{N} \left( N^d\sum_{\Dcal}\norm{\pr[N]\Dcal f(x)}_2^2 \right)
\end{equation}
where the inner sum is over degree at most $d$ monomials with respect to a fixed basis of the Lie algebra of $\Hbf(\R)$. Note that \cite{MV10} uses an alternative way to define the Sobolev norm, and it is comparable to the Sobolev norm defined above; see e.g. \cite[Thm 2.29]{Wu14}.

Let $L^2_{00}([\Hbf(\bA)])$ denotes the closed subspace of $L^2([\Hbf(\bA)])$ orthogonal to $\rho(\widetilde{\Hbf}(\bA))$-invariant functions. Let $h\in\Hbf(\bA)$. Let $E$ be a imaginary quadratic field admitting an embedding $\iota:E\hookrightarrow B$, and let $\Tbf\simeq \Res_{E/\Q}(\Gm)/\Gm$ be the associated torus in $\Hbf$. 
Let $\Ocal$ be the quadratic order in $E$ uniquely determined by the condition that $\Ocal_p := \Ocal\otimes_\Z\Z_p = \iota^{-1}(h_pR_ph_p^{-1})$.

\begin{theorem} \label{thm:uniform Duke}
    There exists $\delta,\theta>0$ such that for every $f\in C_c^\infty([\H(\bA)])\cap L^2_{00}([\Hbf(\bA)])$ and $0<\varepsilon<\frac{1}{4}$,
    \begin{equation} \label{eq:uniform Duke}
        \abs{\int_{[\Tbf(\bA) h]}f} \ll_{\varepsilon} (\disc B)^{\frac{3}{4}+\varepsilon} \left( \frac{\disc\Ocal}{D_E} \right)^{-(\frac{1}{4}-\frac{\theta}{2})+\varepsilon}D_E^{-\delta+\varepsilon}  \mathcal{S}_{10}^H(f).
    \end{equation}
\end{theorem}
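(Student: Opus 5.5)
We prove Theorem~\ref{thm:uniform Duke} by spectrally expanding $f$ and bounding each toric period with Waldspurger's formula combined with a subconvexity bound, keeping track of the dependence on $\disc B$ and on the conductor $\disc\Ocal/D_E$.

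\emph{Step 1: spectral expansion.} Since $f\in L^2_{00}([\Hbf(\bA)])$, the one-dimensional (character) spectrum is orthogonal to $f$. Hence $f$ expands over cuspidal automorphic representations $\pi$ of $\Hbf(\bA)$, which are Jacquet--Langlands transfers of cusp forms on $\PGL_2$ of level dividing $\disc B\cdot N$. We decompose $f=\sum_N \pr[N] f$ and, inside each level, along an orthonormal basis $\varphi$ of $K$-finite vectors in each $\pi$. Then
\begin{align*}
\int_{[\Tbf(\bA)h]}f=\sum_{\pi}\sum_{\varphi}\langle f,\varphi\rangle\int_{[\Tbf(\bA)h]}\varphi .
\end{align*}

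\emph{Step 2: local and global bounds for the periods.} For each $\varphi$ we apply Waldspurger's formula in the Ichino form:
\begin{align*}
\Big|\int_{[\Tbf(\bA)h]}\varphi\Big|^2 \asymp \frac{L(\tfrac12,\pi_E)}{D_E^{1/2}L(1,\mathrm{Ad}\,\pi)}\prod_{v} \alpha_v(\varphi,h_v).
\end{align*}
At the archimedean place and at primes where everything is unramified, the local factors $\alpha_v$ are trivial. The local integrals that need real work are at primes $p$ dividing $\disc\Ocal/D_E$, where the torus sits inside the order $h_pR_ph_p^{-1}$ with conductor $p^{c_p}$. Using matrix coefficient decay for $\pi_p$, with the Kim--Sarnak exponent $\theta=7/64$, the local integral over $\Tbf(\Q_p)$ of a vector of level $p^{m}$ is bounded by $\ll p^{-c_p(1-2\theta)/2+O(m)}$. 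This yields the factor $(\disc\Ocal/D_E)^{-(\frac14-\frac\theta2)}$ after taking square roots. At primes dividing $\disc B$, a trivial bound together with the volume normalisation of $[\Tbf(\bA)h]$ costs at most a power of $\disc B$.

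\emph{Step 3: subconvexity and summation.} Next we insert a uniform subconvex bound $L(\tfrac12,\pi_E)\ll (D_E\,C(\pi))^{1/2-2\delta+\varepsilon}$ in the $D_E$-aspect, from Duke/Iwaniec or Michel--Venkatesh; this is polynomial in the conductor $C(\pi)\ll \disc B\cdot N^2\cdot(\text{eigenvalue})$. We also use $L(1,\mathrm{Ad}\,\pi)\gg C(\pi)^{-\varepsilon}$ by Hoffstein--Lockhart. Combining, each $\varphi$ contributes at most $(\disc B\,N\lambda_\pi)^{O(1)}(\disc\Ocal/D_E)^{-(\frac14-\frac\theta2)+\varepsilon}D_E^{-\delta+\varepsilon}$. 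We then sum with Cauchy--Schwarz against $\sum N^{d}\lambda^{d}|\langle f,\varphi\rangle|^2\le \Scal^H_{10}(f)^2$. Weyl's law bounds the number of forms of level $N$ and eigenvalue $\le\lambda$ by $\ll \disc B\,N^{3}\lambda$, so taking $d=10$ makes the sum converge. The resulting power of $\disc B$ is $\frac34$: we get $\frac12$ from the count of forms after square-rooting, and $\frac14$ from the conductor's contribution to the convexity-type numerator.

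The main obstacle is Step 2 at the primes where the torus is non-maximally embedded, i.e.\ $p\mid\disc\Ocal/D_E$, when the test vector also has depth $m_p$. There we need a uniform local period estimate that saves $p^{-c_p(1/2-\theta)}$ with only a polynomial loss in $p^{m_p}$. I would first try to reduce this to bounds for matrix coefficients of $\pi_p$ integrated over the compact quotient $\Tbf(\Q_p)/\Tbf(\Z_p)$. Here $\Tbf(\Q_p)$ acts via its intersection with the congruence subgroup $K^H_p[m]$ conjugated by $h_p$. Tracking the powers of $\disc B$ uniformly is the secondary difficulty.
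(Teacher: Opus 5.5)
Your proposal is correct and follows essentially the same route as the paper. The paper first proves a fixed-level bound for pure-tensor $K^H[N]$-fixed vectors via Waldspurger's formula. It uses local matrix-coefficient bounds (Cowling--Haagerup--Howe with the Kim--Sarnak $\theta$, losing $[K_p:K_p[m_p]]\ll p^{3m_p}$), integrated over the Cartan shells $\iota_r^{-1}(K_{p,\max}a^kK_{p,\max})$ of the torus at $p\mid\disc\Ocal/D_E$, and trivial bounds at $p\mid\disc B$ and at $\infty$. It then inserts Michel--Venkatesh subconvexity, Hoffstein--Lockhart, and Siegel's bound for $L(1,\eta_E)$, which your Ichino-form display silently drops but which costs only $D_E^{\varepsilon}$. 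Finally it applies Cauchy--Schwarz over Casimir eigenspaces of dimension $\ll N^3\disc(B)(1+4\lambda)$ and over $\pr[N]$ against $\Scal^H_{10}$, giving $(\disc B)^{1/4}\cdot(\disc B)^{1/2}$ exactly as you describe.

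One bookkeeping point: your local bound $p^{-c_p(1-2\theta)/2}$ gives the stated exponent $-(\frac14-\frac\theta2)$ only if $c_p=\ord_p(\disc\Ocal/D_E)$, that is, twice the conductor exponent $r$ of $\Ocal_p$. If $c_p$ is the conductor exponent itself, you need the sharper bound. The shell computation does give $\alpha_p\ll p^{3m_p}(r+1)^2p^{-r(1-2\theta)}$, with the shells $k\approx 2r$ dominating.
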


The theorem can be viewed as a uniform version of Duke's theorem \cite{duke88}, in particular treating all aspects in which the discriminant can vary. We refer the readers to Michel's survey \cite{Mic21} for a beautiful introduction to Duke's theorem and recent developments surrounding it.
See also the discussions in \cite[\S4]{ELMVAnn} for references.
Theorem~\ref{thm:uniform Duke} and its proof are relatively standard knowledge in the community; we include a proof here merely for completeness.
We combine Waldspurger's formula \cite{Waldspurger} with subconvex estimates \cite{MV10} and estimates for local torus integrals to deduce Theorem~\ref{thm:uniform Duke}.
The estimates on local torus integrals proven below are contained in much greater generality in the literature, see for instance \cite[\S5]{CU} or \cite{wu2018}.
We are very thankful to Philippe Michel for so generously sharing his ideas and insights into the proof of the above variant of Duke's theorem.

Let $\delta>0$ be an absolute constant as in \cite[Thm.1.2]{MV10} (for subconvexity), see also \eqref{eq:subconvex bound} below.

\begin{theorem}\label{thm:uniform Duke fixed level}
    Let $\pi'=\otimes_v\pi'_v$ be an infinite dimensional cuspidal automorphic representation of $\Hbf(\bA)$ and let $\phi=\otimes_v\phi_v\in\pi'$ be a $K^H[N]$-fixed vector such that $(\phi,\phi)=1$. Then for every $\varepsilon>0$,
    \begin{equation*}
        \abs{\int_{[\Tbf(\bA) h]}\phi}^2 \ll_\varepsilon (\dim\pi'_\infty)^{1+\varepsilon}(\disc B)^{\frac{1}{2}+\varepsilon} N^{5+\varepsilon}\left( \frac{\disc\Ocal}{D_E} \right)^{-(\frac{1}{2}-\theta)+\varepsilon}D_E^{-2\delta+\varepsilon},
    \end{equation*}
    where
    $\theta=\frac{7}{64}$ is a constant toward the Ramanujan-Petersson conjecture \cite{Kim03}, $\dim\pi'_\infty$ is the dimension of the finite dimensional representation $\pi'_\infty$, and $D_E$ is the discriminant of $E$.
\end{theorem}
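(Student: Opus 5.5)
The plan is to combine Waldspurger's formula with a subconvexity bound and estimates for local toric integrals. Since $\pi'$ is an infinite-dimensional cuspidal representation of $\Hbf(\bA)=\mathbf{PB}^\times(\bA)$, we first pass to its Jacquet--Langlands transfer $\pi$ on $\PGL_2$. Waldspurger's formula \cite{Waldspurger} (in the explicit Ichino form) gives, for $\phi=\otimes_v\phi_v$ with $(\phi,\phi)=1$,
\begin{align*}
\abs{\int_{[\Tbf(\bA)h]}\phi}^2 = \frac{C}{\sqrt{D_E}\,\cdots}\,\frac{L(\tfrac12,\pi_E)}{L(1,\pi,\mathrm{Ad})}\prod_v \alpha_v(h_v\phi_v),
\end{align*}
where $L(s,\pi_E)=L(s,\pi)L(s,\pi\otimes\chi_E)$ and the $\alpha_v$ are normalized local toric integrals. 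After normalization, $\alpha_v=1$ at all places where everything is unramified. The torus volume normalization contributes the factor $D_E^{-1/2}$ together with the conductor of the order $\Ocal$. To control the adjoint $L$-value we use $L(1,\pi,\mathrm{Ad})\gg_\varepsilon (\dim\pi'_\infty\, N\disc B)^{-\varepsilon}$ (Hoffstein--Lockhart).

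For the central value, the conductor of $\pi_E$ is $\asymp (\dim\pi'_\infty)^{O(1)}(N\disc B)^{O(1)}D_E^2$. The subconvex bound of \cite[Thm.~1.2]{MV10} gives
\begin{align*}
L(\tfrac12,\pi)L(\tfrac12,\pi\otimes\chi_E)\ll (\dim\pi'_\infty\, N\disc B)^{A}D_E^{\frac12-2\delta+\varepsilon}.
\end{align*}
Combined with the $D_E^{-1/2}$ from the formula, this produces $D_E^{-2\delta+\varepsilon}$. We then have to check that the polynomial dependence on level and weight is as stated. We intend to use the hybrid conductor dependence in \cite{MV10} to make this exponent fit inside $(\dim\pi'_\infty)^{1+\varepsilon}(\disc B)^{1/2+\varepsilon}N^{\star}$; otherwise we absorb the extra powers using convexity in those aspects.

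The main obstacle is the local analysis at primes $p$ dividing $\disc\Ocal/D_E$, or $N$, or $\disc B$. Here we must bound $\alpha_p(h_p\phi_p)$, where $\phi_p$ is $K^H_p[m_p]$-fixed and $h_pR_ph_p^{-1}\cap E_p=\Ocal_p$ with $\Ocal_p$ of conductor $p^{c}$. We would decompose $\phi_p$ in an orthonormal basis of $K^H_p[m_p]$-fixed vectors, of which there are $\ll p^{O(m_p)}$. For each basis vector, the toric integral $\int_{E_p^\times/\Q_p^\times}\langle\pi(t)h\phi,h\phi\rangle\,dt$ is estimated using the matrix coefficient bound $\ll \Xi(g)^{1-2\theta}$, where $\Xi$ is the Harish-Chandra function and $\theta=7/64$ comes from \cite{Kim03}. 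Since $hKh^{-1}$ meets $E_p^\times$ in $\Ocal_p^\times$, the integral localizes and yields roughly $p^{-c(1/2-\theta)}$ times volume factors $p^{O(m_p)}$. This gives $(\disc\Ocal/D_E)^{-(1/2-\theta)+\varepsilon}N^{5+\varepsilon}$, with the $N$ exponent obtained by careful bookkeeping of the number of fixed vectors and the volume of $K^H_p[m_p]$, as in \cite[\S5]{CU}, \cite{wu2018}. At ramified primes of $B$, the group $\Hbf(\Q_p)$ is compact, so $\alpha_p\ll 1$ up to the normalizing factors $p^{1/2}$. These factors are what contribute $(\disc B)^{1/2}$.

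Multiplying the local bounds, the subconvex bound and the lower bound for $L(1,\pi,\mathrm{Ad})$ yields the stated estimate.
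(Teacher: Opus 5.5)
Your architecture is the same as the paper's: Waldspurger's formula, the subconvex bound of \cite{MV10}, Hoffstein--Lockhart for $L(1,\pi,\Ad)$, and local toric integrals controlled by $\Xi_p^{1-2\theta}$. However, the quantitative bookkeeping does not close, and that bookkeeping is the whole content of the explicit exponents.

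\textbf{The level and discriminant exponents.} You assign $(\disc B)^{1/2}$ and all of $N^{5}$ to the local factors. You then let the central value contribute a further unspecified $(\dim\pi'_\infty\,N\disc B)^{A}$. This gives $(\dim\pi'_\infty)^{A}(\disc B)^{\frac12+A}N^{5+A}$, and ``absorbing the extra powers by convexity'' is not a step you can take after the fact.

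\begin{itemize}
\item The local factors at $p\mid\disc B$ are $O(1)$, with no $p^{1/2}$. Here $\Tbf(\Q_p)$ is compact of Tate volume at most $2$, the normalizing ratio $L(1,\eta_p)L(1,\pi_p,\Ad)/\zeta_p(2)L(\tfrac12,\pi_{E,p})$ is $\ll 1$, and $(h_p\phi_p,\phi_p)\le(\phi_p,\phi_p)$. So these primes cost only $(\disc B)^{\varepsilon}$.
\item The level enters locally only through the bound $(h_p\phi_p,\phi_p)\le[K_p:K_p[m_p]]\,(\phi_p,\phi_p)\,\Xi_p(h_p)^{1-2\theta}$ of \cite{CHH88} and \cite[Lemma~9.1]{Venkatesh-sparse}, with $[K_p:K_p[m_p]]\ll p^{3m_p}$. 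Expanding $\phi_p$ in a basis of fixed vectors and bounding only diagonal coefficients would not handle the cross terms. One gets $\alpha(\phi,h)\ll(\disc B)^{\varepsilon}N^{3}(\disc\Ocal/D_E)^{-(\frac12-\theta)+\varepsilon}D_E^{-1/2}$.
\item The remaining $(\dim\pi'_\infty)(\disc B)^{1/2}N^{2}$ comes entirely from the central value. What is needed is a hybrid bound $L(\tfrac12,\pi)L(\tfrac12,\pi\otimes\eta)\ll C(\pi)^{1/2}D_E^{\frac12-2\delta+\varepsilon}$: convexity strength in $C(\pi)$, subconvex in $D_E$.
\item This is combined with $C(\pi)\ll\disc(B)N^{4}(\dim\pi'_\infty)^{2}$, from \cite[Prop.~3.3]{wu2018} and $C(\pi_\infty)\asymp(\dim\pi'_\infty)^2$. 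The same bound turns $L(1,\pi,\Ad)^{-1}\ll C(\pi)^{\varepsilon}$ into $\varepsilon$-losses.
\item You also need Siegel's bound $L(1,\eta_E)\gg_\varepsilon D_E^{-\varepsilon}$ for the $L(1,\eta)^{2}$ hidden in your ``$\cdots$''.
\end{itemize}

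\textbf{The local exponent at primes dividing the conductor of $\Ocal$.} Your exponent there is off by a factor of two. Take $\Ocal_p=\Z_p+p^{r}\Ocal_{E_p}$ with $p$ split or inert. The torus spreads over double cosets $Ka^kK$ with $k$ up to $2r$ and beyond. One has $\vol(\iota_r^{-1}(Ka^kK))\ll p^{-(r-k/2)}$ for $k<2r$ and $\ll 1$ for $k\ge 2r$. Hence
\[
\sum_k(k+1)\,p^{-(\frac12-\theta)k}\,\vol\big(\iota_r^{-1}(Ka^kK)\big)\ll(r+1)^{2}\,p^{-r(1-2\theta)},
\]
with the sum dominated by $k\approx 2r$. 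The $p$-part of $\disc\Ocal/D_E$ is $p^{2r}$, so this is exactly what yields $(\disc\Ocal/D_E)^{-(\frac12-\theta)}$. Your $p^{-c(\frac12-\theta)}$, with $p^{c}$ the conductor, would only give $(\disc\Ocal/D_E)^{-(\frac14-\frac{\theta}{2})}$. At primes ramified in $E$, the Tate measure supplies an extra $p^{-1/2}$. These produce the $D_E^{-1/2}$ that cancels the $D_E^{1/2}$ in the subconvex bound.
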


\begin{proof}[Proof of Theorem~\ref{thm:uniform Duke} assuming Theorem~\ref{thm:uniform Duke fixed level}] 

    Consider the real manifold $Y_N = \H(\Q)\backslash \H(\A) / K[N]$ on which $\H(\R)\simeq \SO(3)$ acts.
The number of orbits is $\ll [K[0]:K[N]] \ll N^3$ (cf.~\cite[Lemma 3.5]{Nori}) times the number of orbits on $Y_0$.
Moreover, the number of orbits on $Y_0$ is $\ll \disc(B)$ by \cite[Thm.~25.1.1]{voight} and so $Y_N$ is a union of $\ll N^3 \disc(B)$ many $\H(\R)$-orbits.
For $N$ sufficiently large, the stabilizer of each $\H(\R)$ is trivial and so, in particular, $L^2(Y_N)$ is a direct sum of $\ll N^3 \disc(B)$ many copies of $L^2(\H(\R))$ by Eichler mass formula.
We have
\begin{align*}
L^2(Y_N) = \bigoplus_\lambda \mathcal{H}_\lambda
\end{align*}
where $\lambda$ runs over all Casimir eigenvalues, $\mathcal{H}_\lambda$ is the eigenspace for Casimir eigenvalue $\lambda$, and $\dim(\mathcal{H}_\lambda) \ll N^3 \disc(B) (1+4\lambda)$ by Peter-Weyl theorem.

Write $I(\lambda)=\dim(\mathcal{H}_\lambda)$. For each $\lambda$, let $\{f_{\lambda,i}\}_{i \leq I(\lambda)}$ be an orthonormal basis of $\mathcal{H}_\lambda$ consisting of cusp forms coming from cuspidal automorphic representations of $\H(\A)$ and by Theorem~\ref{thm:uniform Duke fixed level},
\begin{align*}
\abs{\int_{[\Tbf(\bA) h]}f_{\lambda,i}}^2 
&\ll_\varepsilon \sqrt{1+4\lambda}^{1+\varepsilon}(\disc B)^{\frac{1}{2}+\varepsilon} N^{5+\varepsilon}\left( \frac{\disc\Ocal}{D_E} \right)^{-(\frac{1}{2}-\theta)+\varepsilon}D_E^{-2\delta+\varepsilon}.
\end{align*}
Let $f \in L^2(Y_N)\cong L^2(\H(\Q)\backslash\H(\bA))^{K[N]}$ and write $f = \sum_{\lambda}f_\lambda$ where $ \mathcal{H}_\lambda \ni f_{\lambda} = \sum_{i \leq I(\lambda)} a_{\lambda,i}f_{\lambda,i}$ for some $a_{\lambda,i}\in \C$.
Then for each $\lambda$
\begin{align*}
\abs{\int_{[\Tbf(\bA) h]}f_\lambda}
\ll_\varepsilon \sum_{i} |a_{\lambda,i}| \lambda^{\frac{1}{4}+\varepsilon}(\disc B)^{\frac{1}{4}+\varepsilon} N^{\frac{5}{2}+\varepsilon}\left( \frac{\disc\Ocal}{D_E} \right)^{-(\frac{1}{4}-\frac{\theta}{2})+\varepsilon}D_E^{-\delta+\varepsilon}
\end{align*}
and summing over $\lambda$
\begin{align*}
\abs{\int_{[\Tbf(\bA) h]}f}
\ll_\varepsilon C(f,\varepsilon) (\disc B)^{\frac{1}{4}+\varepsilon} N^{\frac{5}{2}+\varepsilon}\left( \frac{\disc\Ocal}{D_E} \right)^{-(\frac{1}{4}-\frac{\theta}{2})+\varepsilon}D_E^{-\delta+\varepsilon}
\end{align*}
where $C(f,\varepsilon) = \sum_{\lambda}\sum_{i\leq I(\lambda)} |a_{\lambda,i}| \lambda^{\frac{1}{4}+\varepsilon}$ for $\varepsilon\leq \frac{1}{4}$.
Observe that
\begin{align*}
C(f,\varepsilon) 
&\ll \sum_{\lambda}\Big(\sum_{i\leq I(\lambda)} |a_{\lambda,i}|^2 \Big)^{\frac{1}{2}} \lambda^{\frac{1}{4}+\varepsilon} \sqrt{I(\lambda)}\\
&\ll \disc(B)^{\frac{1}{2}} N^{\frac{3}{2}} \sum_{\lambda}\Big(\sum_{i\leq I(\lambda)} |a_{\lambda,i}|^2 \Big)^{\frac{1}{2}} \lambda\\
&= \disc(B)^{\frac{1}{2}} N^{\frac{3}{2}} \sum_{\lambda} \norm{\Delta^2 f_{\lambda}} \lambda^{-1}\\
&\ll \disc(B)^{\frac{1}{2}} N^{\frac{3}{2}} \Big(\sum_{\lambda} \norm{\Delta^2 f_{\lambda}}^2\Big)^{\frac{1}{2}} 
= \disc(B)^{\frac{1}{2}} N^{\frac{3}{2}} \norm{\Delta^2 f}.
\end{align*}
Therefore,
\begin{align*}
\abs{\int_{[\Tbf(\bA) h]}f}
\ll_{\varepsilon} (\disc B)^{\frac{3}{4}+\varepsilon} N^{4+\varepsilon}\left( \frac{\disc\Ocal}{D_E} \right)^{-(\frac{1}{4}-\frac{\theta}{2})+\varepsilon}D_E^{-\delta+\varepsilon}\norm{\Delta^2 f}.
\end{align*}

Lastly, for general $f$ apply the above to $\mathrm{pr}[N]f$.
Apply Cauchy-Schwarz again
\begin{align*}
\abs{\int_{[\Tbf(\bA) h]}f} \ll_{\varepsilon} (\disc B)^{\frac{3}{4}+\varepsilon} \left( \frac{\disc\Ocal}{D_E} \right)^{-(\frac{1}{4}-\frac{\theta}{2})+\varepsilon}D_E^{-\delta+\varepsilon}  \sum_N N^{4+\varepsilon}\norm{\pr[N]\Delta^2f}
\end{align*}
where
\begin{align*}
\sum_N N^{4+\varepsilon}\norm{\pr[N]\Delta^2f}
\ll \big(\sum_N N^{10+\varepsilon}\norm{\pr[N] \Delta^2f}^2\Big)^{\frac{1}{2}}
\ll \mathcal{S}_{10}^H(f)
\end{align*}
using that $\Delta$ is a differential operator of degree two.
Hence
\begin{equation*}
    \abs{\int_{[\Tbf(\bA) h]}f} \ll_{\varepsilon} (\disc B)^{\frac{3}{4}+\varepsilon} \left( \frac{\disc\Ocal}{D_E} \right)^{-(\frac{1}{4}-\frac{\theta}{2})+\varepsilon}D_E^{-\delta+\varepsilon}  \mathcal{S}_{10}^H(f)
\end{equation*}
as claimed.
\end{proof}

The rest of the subsection is devoted to proving Theorem~\ref{thm:uniform Duke fixed level}.
\subsubsection{Waldspurger formula}
Let $\pi$ be the Jacquet-Langlands transfer of $\pi'$. For $\phi=\otimes_v \phi_v$ in $\pi'=\otimes_v\pi'_v$ we define
    \[
    \alpha(\phi, h) = \prod_v \alpha_v(\phi_v, h_v),
    \]
where
\[
\alpha_v(\phi_v, h_v)=\frac{L(1,\eta_v)L(1,\pi_v,\Ad)}{\zeta_v(2)L(\frac{1}{2},\pi_{E,v})}\int_{\Tbf(\Q_v)}\frac{(\pi'_v(h_v^{-1}th_v)\phi_v,\phi_v)}{(\phi_v,\phi_v)}dt.
\]
Here $dt$ denotes the quotient Haar measure on $\Tbf(\Q_v)\cong \Q_v^\times\backslash E_v^\times$ under Tate's measure normalization, see \cite[Sect.1.6]{YZZ13}.
Note that $\alpha_v(\phi_v, h_v)=1$ for all but finitely many $v$'s.

Recall the Waldspurger formula from \cite[Prop.~7]{Waldspurger} or \cite[Thm.~1.4.2]{YZZ13}
\begin{equation} \label{eq:Waldspurger formula}
    \frac{\lvert \int_{[\Tbf(\bA) h]}\phi \rvert^2}{(\phi,\phi)} = \frac{\zeta(2)L(\frac{1}{2},\pi)L(\frac{1}{2},\pi\otimes\eta)}{8L(1,\eta)^2L(1,\pi,\Ad)}\alpha(\phi,h).
\end{equation}

\subsubsection{Local estimates}
Let $\phi=\otimes_v\phi_v$ be a $K^H[N]$-fixed vector in $\pi$.
By \cite{CHH88} and \cite[Lemma~9.1]{Venkatesh-sparse} we have
\begin{equation}\label{eq:first bound of matrix coefficients}
    ( h_p\phi_p,\phi_p ) \leq [K_{p} \colon K_p[m_p]](\phi_p,\phi_p)\Xi_p(h_p)^{1-2\theta}.
\end{equation}
where $h\in\Hbf(\Q_p)$, $\Xi_p$ is the Harish-Chandra spherical function \cite[p.~505]{bump}, and we can take $\theta=\frac{7}{64}$ by the work of Kim and Sarnak \cite[App.~2]{Kim03}.
The index $[K_{p} \colon K_p[m_p]]$ has the following bound:
\begin{equation}\label{eq:index upper bound}
    [K_{p} \colon K_p[m_p]] \ll p^{3m_p},
\end{equation}
see e.g.~\cite[(A.8)]{EMMV}.

Combining \eqref{eq:first bound of matrix coefficients} and \eqref{eq:index upper bound} we get
\begin{equation}\label{eq:matrix coefficients bound}
    ( h_p\phi_p,\phi_p ) \ll p^{3m_p}(\phi_p,\phi_p)\Xi_p(h_p)^{1-2\theta}.
\end{equation}

We also recall the following consequence of Macdonald's formula (see e.g. \cite[Thm.~4.6.6]{bump}).
Let $a=\diag(p,1)$, we have
\begin{equation}\label{eq:Harish-Chandra spherical function}
    \Xi_p(a^k) = \frac{(1-p^{-1})k+1+p^{-1}}{1+p^{-1}}p^{-\frac{k}{2}}.
\end{equation}

Following from the definition of local $L$-functions (see e.g. \cite{MW09}), we have
\begin{equation}\label{eq:local L-factor bound}
    \frac{L(1,\eta_v)L(1,\pi_v,\Ad)}{\zeta_v(2)L(\frac{1}{2},\pi_{E,v})}\ll 1,
\end{equation}
where the implied constant is absolute.

In the following subsections, we give local estimates for $\alpha_p(\phi_p,h_p)$ in different cases.

\subsubsection{$p\nmid\disc(\Ocal)\disc(B)N$}
In this spherical case, we have $\alpha(\phi_p,h_p)=1$; see \cite[Sect.~1.4.1]{YZZ13}.

\subsubsection{$p\nmid\disc(\Ocal)$, $p\mid\disc(B)N$} 
    In this case
    \begin{equation} \label{eq:p divides disc(R) but not disc(O)}
        \alpha_p(\phi_p,h_p) \ll 1,
    \end{equation}
where the implied constant is absolute.
This follows from the trivial bound $(h_p\phi_p,\phi_p)\leq(\phi_p,\phi_p)$ and \eqref{eq:local L-factor bound}.

\subsubsection{$p \nmid \disc(B)$, $p\mid\disc(\Ocal)$, $p$ split in $E$}
We recall the following integral version of the Skolem-Noether theorem \cite[p.~44, Thm.~3.2]{Vigneras} or \cite[Prop.~3.7]{Linnikthmexpander}, which will be useful in this subsection and the subsequent two subsections.

\begin{lemma}\label{lem:integral Skolem-Noether}
    Let $\Ocal_p'$ be an order in $E_p$. The group $\GL_2(\Z_p)$ acts transitively by conjugation on the set of optimal embeddings $\iota:\Ocal_p'\hookrightarrow M_2(\Z_p)$.
    Here, $\iota$ is optimal if $\iota(E_p) \cap M_2(\Z_p) = \iota(\Ocal_p')$.
\end{lemma}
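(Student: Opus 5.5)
The plan is to turn an optimal embedding into a module structure on $\Z_p^2$ and use that proper ideals of a local quadratic order are principal. Two optimal embeddings will then both be conjugate to one fixed \emph{companion matrix} embedding, and the conjugating matrices lie in $\GL_2(\Z_p)$.

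First, I would note that every order in the quadratic étale $\Q_p$-algebra $E_p$ has the form $\Ocal_p' = \Z_p + p^k\Ocal_{E_p}$. Such an order is monogenic, $\Ocal_p' = \Z_p[\omega]$, where $\omega$ has minimal polynomial $X^2 - tX + n \in \Z_p[X]$; one can take $\omega = p^k\omega_0$ with $\Ocal_{E_p} = \Z_p[\omega_0]$. Given an embedding $\iota:\Ocal_p'\hookrightarrow M_2(\Z_p)$, the lattice $L=\Z_p^2$ becomes an $\Ocal_p'$-module and $\Q_p^2$ becomes an $E_p$-module. I would choose $v\in \Q_p^2$ so that the map $E_p \to \Q_p^2$, $x\mapsto \iota(x)v$, is an isomorphism. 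Such $v$ exists because $\Q_p^2$ is free of rank one over $E_p$. In the split case $E_p\simeq \Q_p\times\Q_p$, one takes $v$ with nonzero components in both eigenlines of $\iota(\omega)$. Transporting $L$ along this isomorphism gives a $\Z_p$-lattice $I\subset E_p$ with $\Ocal_p' I\subset I$. Optimality, $\iota(E_p)\cap M_2(\Z_p)=\iota(\Ocal_p')$, says exactly that the multiplier ring $\{x\in E_p : xI\subset I\}$ equals $\Ocal_p'$. In other words, $I$ is a proper fractional $\Ocal_p'$-ideal.

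The key step, and the one I expect to need the most care, is that $I$ is principal, $I=\lambda\Ocal_p'$ with $\lambda\in E_p^\times$. I would first try to cite the standard fact (e.g.\ \cite[Prop.~2.1]{ELMV-Ens} or Voight) that proper ideals of quadratic orders are invertible, hence locally principal. For a self-contained argument, I would scale $I$ so that it is primitive and contains $1$. Then $I=\Z_p + \Z_p\beta$ with $\beta = a + b\omega_0$, and one may take $a=0$. The multiplier ring of $\Z_p+\Z_p p^j\omega_0$ is $\Z_p+p^j\Ocal_{E_p}$, so properness forces $j=k$, which gives $I=\Ocal_p'$. This argument works uniformly, including for $p=2$ and the split case, because it only uses the description $\Ocal_p'=\Z_p+p^k\Ocal_{E_p}$.

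Replacing $v$ by $\iota(\lambda)v$, the vectors $v$ and $\iota(\omega)v$ form a $\Z_p$-basis of $\Z_p^2$. In this basis, $\iota(\omega)$ is the companion matrix $\begin{pmatrix}0&-n\\1&t\end{pmatrix}$. So if $g\in\GL_2(\Z_p)$ has columns $v$ and $\iota(\omega)v$, then $g^{-1}\iota g$ is the fixed standard embedding $\iota_{\mathrm{st}}:\omega\mapsto\begin{pmatrix}0&-n\\1&t\end{pmatrix}$. Given two optimal embeddings $\iota_1,\iota_2$, this yields $g_1,g_2\in\GL_2(\Z_p)$ with $g_i^{-1}\iota_i g_i=\iota_{\mathrm{st}}$. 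Hence $\iota_2 = (g_2g_1^{-1})\,\iota_1\,(g_2g_1^{-1})^{-1}$, which proves transitivity.
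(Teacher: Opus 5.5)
Your overall route is correct, and it gives more than the paper does: the paper has no argument for this lemma and simply cites Vign\'eras and Ellenberg--Michel--Venkatesh. Your argument is the standard one behind those references. An optimal embedding turns $\Z_p^2$ into a lattice $I\subset E_p$ whose multiplier ring is exactly $\Ocal_p'$. So $\GL_2(\Z_p)$-conjugacy classes of optimal embeddings correspond to proper $\Ocal_p'$-lattices up to scaling, and transitivity amounts to this local class set being trivial. The individual steps are all fine: the existence of $v$ in the split case, the translation of optimality into $\iota^{-1}(M_2(\Z_p))=\Ocal_p'$, the companion-matrix computation in the basis $v,\iota(\omega)v$, and the final conjugation by $g_2g_1^{-1}$. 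With the cited fact that proper ideals of quadratic orders are invertible (hence principal over the semilocal ring $\Ocal_p'$), the proof is complete, uniformly in $p$ and in the splitting type of $E_p$.

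The self-contained justification of principality that you offer as an alternative does not work. Normalizing $I$ so that $1\in I$ is primitive does not force $I=\Ocal_p'$. Take $E_p=\Q_p\times\Q_p$, $k=0$, $\omega_0=(1,0)$ and $I=\Z_p\times p^{-1}\Z_p$. Then $1=(1,1)$ is primitive in $I$ and the multiplier ring is $\Z_p\times\Z_p=\Ocal_p'$, yet $I\neq\Ocal_p'$.
\begin{itemize}
\item Here $I=\Z_p+\Z_p\beta$ with $\beta=(0,-p^{-1})=-p^{-1}+p^{-1}\omega_0$, so $a\notin\Z_p$. Hence ``one may take $a=0$'' is illegitimate: you may only subtract $\Z_p$-multiples of $1$, and dropping $a$ gives the different lattice $p^{-1}\Z_p\times\Z_p$.
\item That second lattice equals $\Z_p+\Z_p\,p^{-1}\omega_0$, so it has $j=-1$ and is still proper for $\Ocal_p'=\Ocal_{E_p}$. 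Thus $j<0$ is not excluded, and properness does not force $j=k$.
\end{itemize}

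A correct elementary substitute is as follows. Write $I=\Z_p+\Z_p\tau$ and let $A\tau^2+B\tau+C=0$ with $A,B,C\in\Z_p$ coprime. A direct check shows the multiplier ring of $I$ is $\Z_p+\Z_pA\tau$, which therefore equals $\Ocal_p'$.
\begin{itemize}
\item If $A\in\Z_p^\times$, then $I=\Z_p+\Z_pA\tau=\Ocal_p'$.
\item If $C\in\Z_p^\times$, replace $I$ by $\tau^{-1}I=\Z_p+\Z_p\tau^{-1}$, whose generator satisfies $CX^2+BX+A=0$. Now apply the previous case.
\item If $p\mid A$ and $p\mid C$, then $B$ is a unit. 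Replace $I$ by $(\tau+1)^{-1}I=\Z_p+\Z_p(\tau+1)^{-1}$, whose generator satisfies an equation with leading coefficient $A-B+C\in\Z_p^\times$.
\end{itemize}
In the last two cases the inverted element is a unit of $E_p$, since $C\neq 0$ resp.\ $A-B+C\neq 0$. Either way $I$ is an $E_p^\times$-multiple of $\Ocal_p'$, for every $p$ including $p=2$.
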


Let $\alpha=\frac{D_E+\sqrt{D_E}}{2}$ so that the ring of integers in $E$ is $\Ocal_E=\Z[\alpha]$. 
Any order in the quadratic \'etale algebra $E_p=E\otimes \Q_p\simeq \Q_p \times \Q_p$ is of the form $\Ocal_{E_p,r}=\Z_p+p^r\Ocal_{E_p}=\Z_p+p^r\Z_p\alpha$ for some nonnegative integer $r$ where $\Ocal_{E_p}$ is the maximal order.
Let $\iota_r: E_p\to M_2(\Q_p)$ be the embedding
\begin{equation*}
    a+b\alpha \mapsto aI_2+b\begin{pmatrix}
        0 & p^{-2r}\Nr\alpha \\
        -p^{2r} & \Tr\alpha
    \end{pmatrix}.
\end{equation*} 
As $p \nmid D_E$, it follows that $\iota_r^{-1}(M_2(\Z_p)) = \Ocal_{E_p,r}$ so that $\iota_r$ is an opimal embedding for $\Ocal_{E_p,r}$.
By abuse of notation we will also let $\iota_r$ denote the induced embedding $E_p^\times\to\GL_2(\Q_p)$.

Let
\begin{equation*}
    \Ocal_{E_p,r}^{(k)}:=\{ t\in\Ocal_{E_p,r}\setminus p\Ocal_{E_p,r}\mid \Nr_{E_p/\Q_p}(t)\in p^k\Z_p^\times \}.
\end{equation*}
Let $K_{p,\max} = \GL_2(\Z_p)$. Then for every $r\geq0$,
\begin{equation*}
    \iota_r^{-1}(K_{p,\max}a^kK_{p,\max}) = \Z_p^{\times}\backslash\Ocal_{E_p,r}^{(k)}.
\end{equation*}
Let $\vol$ denote the Haar measure on $\Tbf(\Q_p)\cong \Q_p^\times\backslash E_p^{\times}$ with $\vol(\Z_p^\times\backslash\Ocal_{E_p}^\times)=1$. 
An explicit calculation shows
\begin{equation*}
    \vol(\Z_p^{\times}\backslash\Ocal_{E_p,r}^{(k)}) = \begin{cases}
        1 & k=r=0,\\
        (p-1)^{-1}p^{-(r-1)} & k=0, r\geq1,\\
        p^{-(r-\frac{k}{2})} & 0<k<2r, 2\mid k,\\
        \frac{p}{p-1} & k=2r>0,\\
        2 & k>2r, \\
        0 & \text{otherwise.}
    \end{cases}
\end{equation*}
Using additionally \eqref{eq:matrix coefficients bound}, \eqref{eq:Harish-Chandra spherical function}, and \eqref{eq:local L-factor bound}, we have
\begin{equation} \label{eq:E_p split}
    \begin{split}
        \alpha_p(\phi_p,h_p) & \ll \int_{\Tbf(\Q_p)}\frac{(\pi_p(h_p^{-1}th_p)\phi_p,\phi_p)}{(\phi_p,\phi_p)} dt \\
        & = \sum_{k=0}^{\infty}\int_{\iota_r^{-1}(K_{p,\max}a^kK_{p,\max})}\frac{(\pi_p(h_p^{-1}th_p)\phi_p,\phi_p)}{(\phi_p,\phi_p)} dt \\
        & \ll p^{3m_p}\sum_{k=0}^{\infty}(k+1)p^{-(\frac{1}{2}-\theta)k}\vol(\Z_p^{\times}\backslash\Ocal_{E_p,r}^{(k)}) \\
        & \ll p^{3m_p}(r+1)^2p^{-r(1-2\theta)},
    \end{split}
\end{equation}
where the implied constant is absolute.

\subsubsection{$p \nmid \disc(B)$, $p\mid\disc(\Ocal)$, $p$ inert in $E$}
If $p$ is inert in $E$, then $E_p/\Q_p$ is an unramified quadratic field extension. 
Let $\iota_r: E_p\to M_2(\Q_p)$ be as in the previous section.
In this case we have
\begin{equation*}
    \vol(\Z_p^\times\backslash\Ocal_{E_p,r}^{(k)}) = 
    \begin{cases}
        1 & k=r=0,\\
        \frac{1}{p^{r-1}(p+1)} & k=0, r>0,\\
        \frac{p-1}{p+1}p^{-(r-\frac{k}{2})} & 0< k< 2r, 2\mid k,\\
        \frac{p}{p+1} & k=2r>0,\\
        0 & \text{otherwise}.
    \end{cases}
\end{equation*}

It follows that
\begin{equation} \label{eq:E_p unramified}
    \begin{split}
        \alpha_p(\phi_p, h_p) & \ll \int_{\Tbf(\Q_p)}\frac{(\pi_p(h_p^{-1}th_p)\phi_p,\phi_p)}{(\phi_p,\phi_p)} dt \\
        & = \sum_{k=0}^{\infty}\int_{\iota_r^{-1}(K_{p,\max}a^kK_{p,\max})}\frac{(\pi_p(h_p^{-1}th_p)\phi_p,\phi_p)}{(\phi_p,\phi_p)} dt \\
        & \ll p^{3m_p}\sum_{k=0}^{\infty}(k+1)p^{-(\frac{1}{2}-\theta)k}\vol(\Z_p^{\times}\backslash\Ocal_{E_p,r}^{(k)}) \\
        &\ll p^{3m_p}(r+1)^2p^{-r(1-2\theta)},
    \end{split}
\end{equation}
where the implied constant is absolute.

\subsubsection{$p \nmid \disc(B)$, $p\mid\disc(\Ocal)$, $p$ ramified in $E$}
In this case $E_p$ is a quadratic ramified extension of $\Q_p$. 
Taking Tate's normalization of the Haar measure on $\Tbf(\Q_p)$, we have $\vol(\Tbf(\Q_p))= 2\vol(\Z_p^\times\backslash \Ocal_{E_p}^\times) = 2p^{-\frac{1}{2}}$.
Let $\iota_r$ be as in the previous subsection. Then the optimal embedding of $\Ocal_p$ into $R_p$ is integrally conjugated to a unique $\iota_r$ for some $r\geq1$. In this case we have
\begin{equation*}
    \iota_r^{-1}(K_{p,\max}a^kK_{p,\max}) = \Z_p^{\times}\backslash\Ocal_{E_p,r-1}^{(k)}.
\end{equation*}
We calculate
\begin{equation*}
    \vol(\Z_p^\times\backslash\Ocal_{E_p,r}^{(k)}) = 
    \begin{cases}
        p^{-(r+\frac{1}{2})} & k=0\\
        p^{-(r-\frac{k}{2}+\frac{1}{2})}-p^{-(r-\frac{k}{2}+\frac{3}{2})} & 0< k\leq 2r, 2\mid k,\\
        p^{-\frac{1}{2}} & k=2r+1,\\
        0 & \text{otherwise}.
    \end{cases}
\end{equation*}
It follows that
\begin{equation} \label{eq:E_p ramified}
    \begin{split}
        \alpha_p(\phi_p,h_p) & \ll \int_{\Tbf(\Q_p)}\frac{(\pi_p(h_p^{-1}th_p)\phi_v,\phi_v)}{(\phi_v,\phi_v)} dt \\
        & = \sum_{k=0}^{\infty}\int_{\iota_r^{-1}(K_{p,\max}a^kK_{p,\max})}\frac{(\pi_p(h_p^{-1}th_p)\phi_v,\phi_v)}{(\phi_v,\phi_v)} dt \\
        & \ll p^{3m_p}\sum_{k=0}^{\infty}(k+1)p^{-(\frac{1}{2}-\theta)k}\vol(\Z_p^{\times}\backslash\Ocal_{E_p,r-1}^{(k)}) \\
        & \ll p^{3m_p}r^2p^{-(r-1)(1-2\theta)-\frac{1}{2}} ,
    \end{split}
\end{equation}
where the implied constant is absolute.

\subsubsection{$p \mid \disc(B)$ ramified, $p\mid\disc(\Ocal)$}
In this case, $B_p = B \otimes \Q_p$ is the unique quaternion divison algebra over $\Q_p$. Since $E_p$ is embedded into $B_p$, $E_p$ has to be a field extension of $\Q_p$. There is a unique maximal order $R_{p,\max}$ in $B_p$, and thus $\Ocal_{E_p}$ is the only order that can be optimally embedded into $R_{p,\max}$.
In this case we have the trivial bound (using $(h_p\phi_p,\phi_p)\leq(\phi_p,\phi_p)$)
\begin{equation} \label{eq:B_p ramified}
    \alpha_p(\phi_p,h_p) \ll 1,
\end{equation}
where the implied constant is absolute.

\subsubsection{$v=\infty$}
As for the ramified primes, we have the trivial bound for the local factor
\begin{equation} \label{eq:archimedean}
    \alpha_\infty(\phi_\infty,h_\infty) \ll 1,
\end{equation}
where the implied constant is absolute.

\subsubsection{Combining the local estimates}
\begin{lemma}\label{lem:combine local estimates}
    
    \begin{equation}
        \alpha(\phi,h) \ll_\varepsilon (\disc B)^\varepsilon N^3\left( \frac{\disc\Ocal}{D_E} \right)^{-(\frac{1}{2}-\theta)+\varepsilon}D_E^{-\frac{1}{2}}.
    \end{equation}
\end{lemma}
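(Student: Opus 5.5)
The bound is obtained prime by prime from $\alpha(\phi,h)=\prod_v\alpha_v(\phi_v,h_v)$, using the local estimates just established. The aim is to show that the product of the local bounds is at most the three global factors: $N^3$ from the level, $(\disc\Ocal/D_E)^{-(\frac12-\theta)+\varepsilon}$ from the conductor of the order, and $D_E^{-1/2}$ from the ramification of $E$. Write $\disc\Ocal=f^2D_E$, so that $\disc\Ocal/D_E=f^2$ with $f=\prod_p p^{r_p}$. The place $v=\infty$ contributes $O(1)$ by \eqref{eq:archimedean}. Every prime $p\nmid \disc(\Ocal)\disc(B)N$ contributes exactly $1$.

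\emph{Primes dividing $N$ but not $\disc\Ocal\cdot\disc B$.} By \eqref{eq:p divides disc(R) but not disc(O)} such a prime contributes at most an absolute constant $C$. Since $m_p\ge1$, we have $C\le p^{3m_p}$ as soon as $p^3\ge C$. The finitely many remaining small primes give a bounded total, so the product over these primes is $\ll \prod_p p^{3m_p}$, i.e.\ it fits inside $N^3$ with no $\varepsilon$-loss.

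\emph{Primes dividing $\disc B$.} Here \eqref{eq:p divides disc(R) but not disc(O)} and \eqref{eq:B_p ramified} give $\ll C$ per prime, and $C^{\omega(\disc B)}\ll_\varepsilon(\disc B)^\varepsilon$. Since $\Ocal_p$ must be maximal in this case, these primes do not divide $f$. If $p\mid D_E$ as well, then $E_p$ is a ramified field and $\Tbf(\Q_p)$ is compact of Tate volume $2p^{-1/2}$. Combining this with the trivial bound on matrix coefficients yields $\ll p^{-1/2}$, which supplies the $p$-part of $D_E^{-1/2}$.

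\emph{Primes $p\nmid\disc B$ with $p\mid\disc\Ocal$.}
\begin{itemize}
\item If $E_p$ is split or inert, \eqref{eq:E_p split} and \eqref{eq:E_p unramified} give $p^{3m_p}(r_p+1)^2p^{-r_p(1-2\theta)}=p^{3m_p}(r_p+1)^2(p^{2r_p})^{-(\frac12-\theta)}$ with $r_p\ge1$. The factors $C(r_p+1)^2$ multiply to $\ll_\varepsilon f^{\varepsilon}$, because every such prime divides $f$.
\item If $E_p$ is ramified, \eqref{eq:E_p ramified} gives $p^{3m_p}r^2p^{-(r-1)(1-2\theta)}p^{-1/2}$, where the local conductor exponent is $r_p=r-1$. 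This produces $(p^{2r_p})^{-(\frac12-\theta)}$, the $p$-part of $D_E^{-1/2}$ (for $p=2$ only up to a bounded factor), and a factor $Cr^2$.
\end{itemize}
When $r_p\ge1$ the factor $Cr^2$ is absorbed into $f^\varepsilon$.

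\emph{Main obstacle: ramified primes with $r_p=0$.} These are the primes $p\mid D_E$ with $p\nmid fN\disc B$, and there may be $\asymp\log D_E/\log\log D_E$ of them. A constant $C>1$ per prime would cost $D_E^{o(1)}$, which the statement does not allow, so at these primes I need the local factor to be $\le p^{-1/2}$ exactly, or $p^{-1/2}(1+O(p^{-1}))$ with a convergent product.

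In this case $\phi_p$ is spherical and $\Ocal_p=\Ocal_{E_p}$ is optimally embedded, so I would compute $\alpha_p$ exactly rather than bound it. The quotient $\Tbf(\Q_p)/(\Z_p^\times\backslash\Ocal_{E_p}^\times)$ has two cosets: the trivial one and the class of a uniformizer, which lies in $K a K$. On the trivial coset the matrix coefficient equals $1$. On the uniformizer coset it equals the spherical function $\Xi_{\pi_p}(a)$, given by Macdonald's formula in terms of the Satake parameters. The $L$-factor ratio in the definition of $\alpha_p$ is $L(1,\pi_p,\Ad)/(\zeta_p(2)L(\tfrac12,\pi_{E,p}))$, since $L(1,\eta_p)=1$ for ramified $\eta_p$.

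I expect this ratio to cancel against $1+\Xi_{\pi_p}(a)$. This is the standard unramified computation underlying Waldspurger's formula, cf.\ \cite[\S1.4]{YZZ13}, and should give $\alpha_p=\vol(\Z_p^\times\backslash\Ocal_{E_p}^\times)=p^{-1/2}$ exactly with Tate's normalization. If exact cancellation fails, I would fall back on showing the local factor is $p^{-1/2}(1+O(p^{-1}))$, whose product over $p$ converges.

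Multiplying all the local contributions then gives
\[
\alpha(\phi,h)\ll_\varepsilon(\disc B)^\varepsilon N^3 f^{-2(\frac12-\theta)+\varepsilon}D_E^{-1/2},
\]
which is the claimed bound since $f^2=\disc\Ocal/D_E$.
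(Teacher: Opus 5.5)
Your approach is the paper's: multiply the local estimates over all places. You carry it out more carefully than the paper's one-line proof, and both refinements you add are needed for the lemma exactly as stated.

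First, at primes $p\mid\gcd(D_E,\disc B)$, the bound \eqref{eq:B_p ramified} only gives $\alpha_p\ll 1$. That misses the $p$-part of $D_E^{-1/2}$. Your use of $\vol(\Tbf(\Q_p))=2p^{-1/2}$ together with the trivial bound on matrix coefficients fixes this.

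Second, at ramified primes with $r_p=0$, an absolute constant per prime only yields $D_E^{-1/2+o(1)}$. This would be harmless in Theorem~\ref{thm:uniform Duke fixed level}, which already loses $D_E^{\varepsilon}$, but it is not what the lemma claims. The exact cancellation you anticipate does hold. Take Satake parameters $\{\alpha,\alpha^{-1}\}$, and note $L(1,\eta_p)=L(\tfrac12,\pi_p\otimes\eta_p)=1$. Macdonald's formula gives $\omega_{\pi_p}(a)=\frac{p^{-1/2}(\alpha+\alpha^{-1})}{1+p^{-1}}$, and then
\begin{align*}
\frac{L(1,\pi_p,\Ad)}{\zeta_p(2)L(\tfrac12,\pi_{E,p})}=\frac{1+p^{-1}}{(1+\alpha p^{-1/2})(1+\alpha^{-1}p^{-1/2})},\qquad 1+\omega_{\pi_p}(a)=\frac{(1+\alpha p^{-1/2})(1+\alpha^{-1}p^{-1/2})}{1+p^{-1}}.
\end{align*}
Hence $\alpha_p=\vol(\Z_p^\times\backslash\Ocal_{E_p}^\times)=p^{-1/2}$ exactly, and $|D_E|_2^{1/2}$ at $p=2$. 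You can drop the hedge and the fallback.

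One subcase is not covered by your write-up: $p\mid D_E$, $r_p=0$, $p\nmid\disc B$, but $p\mid N$. Your ramified bullet absorbs the constant $Cr^2$ into $f^\varepsilon$ only when $r_p\ge 1$. Your exact computation needs $\phi_p$ spherical, i.e.\ $p\nmid N$. So here the ramified-case bound leaves a constant $C$ next to $p^{3m_p}p^{-1/2}$, and $C^{\omega(N)}$ is not $O(1)$.

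Handle it as you did the other primes dividing $N$. The torus has only the two cosets of $\Z_p^\times\backslash\Ocal_{E_p}^\times$, each of volume $p^{-1/2}$. The trivial bound $|(\pi_p(g)\phi_p,\phi_p)|\le(\phi_p,\phi_p)$ then gives $\alpha_p\le C'p^{-1/2}$ with $C'$ absolute. Since $C'\le p^{3m_p}$ for all but boundedly many $p$, this fits inside $N^3$.
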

\begin{proof}
    This follows from \eqref{eq:p divides disc(R) but not disc(O)}\eqref{eq:E_p split}\eqref{eq:E_p unramified}\eqref{eq:E_p ramified}\eqref{eq:B_p ramified}\eqref{eq:archimedean}.
\end{proof}

We can now finish the proof of Theorem~\ref{thm:uniform Duke fixed level}.
\begin{proof}[Proof of Theorem~\ref{thm:uniform Duke fixed level}]
    We have the subconvexity bound \cite[Thm.1.2]{MV10}
    \begin{equation}\label{eq:subconvex bound}
        L\big(\frac{1}{2},\pi_E\big)=L\big(\frac{1}{2},\pi\big)L\big(\frac{1}{2},\pi\otimes\eta_E\big)\ll C(\pi)^{\frac{1}{2}}D_E^{\frac{1}{2}-2\delta+\varepsilon},
    \end{equation}
    where $C(\pi)$ denotes the analytic conductor of $\pi$ as defined in \cite{IS00}.
    We also have the zero-free region type estimate by the appendix of \cite{HL94}:
    \begin{equation*}
        L(1,\pi,\Ad)\gg_\varepsilon C(\pi)^{-\varepsilon}.
    \end{equation*}

    Since $\zeta_E(s)=\zeta(s)L(s,\eta_E)$, we have $L(1,\eta_E)=\Res_{s=1}\zeta_E(s)$. Therefore, by Brauer-Siegel theorem, for any $\varepsilon>0$ we have
    \begin{equation*}
        D_E^{-\varepsilon}\ll_{\varepsilon} L(1,\eta_E) \ll_{\varepsilon} D_E^\varepsilon.
    \end{equation*}
    Since $\pi'$ has a nonzero $K^H[N]$-fixed vector, by \cite[footnote 19]{MV10} and \cite[Prop.~3.3]{wu2018} we have 
    \begin{equation*}
       \disc(B)N(\dim\pi'_\infty)^2 \ll C(\pi)\ll \disc(B)N^4(\dim\pi'_\infty)^2,
    \end{equation*}
    where the implied constants are absolute and we have used $C(\pi_\infty)\asymp (\dim\pi'_\infty)^2$.
    
    Now, in view of the Waldspurger formula \eqref{eq:Waldspurger formula} and Lemma~\ref{lem:combine local estimates},
    \begin{equation*}
        \begin{split}
            \abs{\int_{[\Tbf(\bA)h]}\phi}^2 &= \frac{\zeta(2)L(\frac{1}{2},\pi)L(\frac{1}{2},\pi\otimes\eta)}{8L(1,\eta)^2L(1,\pi,\Ad)}\alpha(\phi,h)\\
            &\ll_\varepsilon \frac{C(\pi)^{\frac{1}{2}}D_E^{\frac{1}{2}-2\delta+\epsilon}}{D_E^{-\varepsilon}C(\pi)^{-\varepsilon}}(\disc B)^\varepsilon N^3\left( \frac{\disc\Ocal}{D_E} \right)^{-(\frac{1}{2}-\theta)+\varepsilon}D_E^{-\frac{1}{2}} \\
            &= C(\pi)^{\frac{1}{2}+\varepsilon}(\disc B)^\varepsilon N^3\left(\frac{\disc\Ocal}{D_E} \right)^{-(\frac{1}{2}-\theta)+\varepsilon}D_E^{-2\delta+\varepsilon} \\
            &\ll (\dim\pi'_\infty)^{1+\varepsilon}(\disc B)^{\frac{1}{2}+\varepsilon} N^{5+\varepsilon}\left( \frac{\disc\Ocal}{D_E} \right)^{-(\frac{1}{2}-\theta)+\varepsilon}D_E^{-2\delta+\varepsilon}.
        \end{split}
    \end{equation*}
    The implied constant depends merely on $\varepsilon$. 
\end{proof}

\end{appendix}

\bibliographystyle{plain}
\bibliography{Bibliography}

@book {Knus,
    AUTHOR = {Knus, Max-Albert},
     TITLE = {Quadratic forms, {C}lifford algebras and spinors},
    SERIES = {Semin\'arios de Matem\'atica [Seminars in Mathematics]},
    VOLUME = {1},
 PUBLISHER = {Universidade Estadual de Campinas, Instituto de Matem\'atica,
              Estat\'istica e Ci\^encia da Computa\c c\~ao, Campinas},
      YEAR = {1988},
     PAGES = {vi+135},
   MRCLASS = {11Exx (11E88)},
  MRNUMBER = {1099376},
MRREVIEWER = {Manuel\ Ojanguren},
}

@article {Nori,
    AUTHOR = {Nori, Madhav V.},
     TITLE = {On subgroups of {${\rm GL}_n({\bf F}_p)$}},
   JOURNAL = {Invent. Math.},
  FJOURNAL = {Inventiones Mathematicae},
    VOLUME = {88},
      YEAR = {1987},
    NUMBER = {2},
     PAGES = {257--275},
      ISSN = {0020-9910,1432-1297},
   MRCLASS = {20G40 (20G30 20J06)},
  MRNUMBER = {880952},
MRREVIEWER = {James\ E.\ Humphreys},
       DOI = {10.1007/BF01388909},
       URL = {https://doi.org/10.1007/BF01388909},
}

@book {Vigneras,
    AUTHOR = {Vign\'eras, Marie-France},
     TITLE = {Arithm\'etique des alg\`ebres de quaternions},
    SERIES = {Lecture Notes in Mathematics},
    VOLUME = {800},
 PUBLISHER = {Springer, Berlin},
      YEAR = {1980},
     PAGES = {vii+169},
      ISBN = {3-540-09983-2},
   MRCLASS = {12A80 (16A18)},
  MRNUMBER = {580949},
MRREVIEWER = {J.\ H. H. Chalk},
}

@article{KWY-2in4announcement,
      title={Representations of binary forms by quaternary quadratic forms}, 
      author={Wooyeon Kim and Andreas Wieser and Pengyu Yang},
      year={2025},
    journal={arXiv preprint 2511.22877},
      eprint={2511.22877},
      archivePrefix={arXiv},
      primaryClass={math.NT},
      url={https://arxiv.org/abs/2511.22877}, 
}

@article {Salberger-crelle,
    AUTHOR = {Salberger, Per},
     TITLE = {On the density of rational and integral points on algebraic
              varieties},
   JOURNAL = {J. Reine Angew. Math.},
  FJOURNAL = {Journal f\"ur die Reine und Angewandte Mathematik. [Crelle's
              Journal]},
    VOLUME = {606},
      YEAR = {2007},
     PAGES = {123--147},
      ISSN = {0075-4102,1435-5345},
   MRCLASS = {11G35 (11G50 14G05)},
  MRNUMBER = {2337644},
MRREVIEWER = {Ulrich\ Derenthal},
       DOI = {10.1515/CRELLE.2007.037},
       URL = {https://doi.org/10.1515/CRELLE.2007.037},
}

@article {Broberg,
    AUTHOR = {Broberg, Niklas},
     TITLE = {A note on a paper by {R}. {H}eath-{B}rown: ``{T}he density of
              rational points on curves and surfaces'' [{A}nn. of {M}ath.
              (2) {\bf 155} (2002), no. 2, 553--595; MR1906595]},
   JOURNAL = {J. Reine Angew. Math.},
  FJOURNAL = {Journal f\"ur die Reine und Angewandte Mathematik. [Crelle's
              Journal]},
    VOLUME = {571},
      YEAR = {2004},
     PAGES = {159--178},
      ISSN = {0075-4102,1435-5345},
   MRCLASS = {11G35 (11G05 14G05)},
  MRNUMBER = {2070148},
MRREVIEWER = {Timothy\ D.\ Browning},
       DOI = {10.1515/crll.2004.039},
       URL = {https://doi.org/10.1515/crll.2004.039},
}

@incollection {BourgainDemeter-Siegelmass,
    AUTHOR = {Bourgain, Jean and Demeter, Ciprian},
     TITLE = {Three applications of the {S}iegel mass formula},
 BOOKTITLE = {Geometric aspects of functional analysis. {V}ol. {I}},
    SERIES = {Lecture Notes in Math.},
    VOLUME = {2256},
     PAGES = {99--111},
 PUBLISHER = {Springer, Cham},
      YEAR = {[2020] \copyright 2020},
      ISBN = {978-3-030-36020-7; 978-3-030-36019-1},
   MRCLASS = {11E45 (11L07)},
  MRNUMBER = {4175745},
MRREVIEWER = {Anton\ Deitmar},
       DOI = {10.1007/978-3-030-36020-7\_6},
       URL = {https://doi.org/10.1007/978-3-030-36020-7_6},
}

@article {BourgainDemeter-Fourierrestrictionsphere,
    AUTHOR = {Bourgain, Jean and Demeter, Ciprian},
     TITLE = {New bounds for the discrete {F}ourier restriction to the
              sphere in 4{D} and 5{D}},
   JOURNAL = {Int. Math. Res. Not. IMRN},
  FJOURNAL = {International Mathematics Research Notices. IMRN},
      YEAR = {2015},
    NUMBER = {11},
     PAGES = {3150--3184},
      ISSN = {1073-7928,1687-0247},
   MRCLASS = {42B10 (11E04)},
  MRNUMBER = {3373047},
MRREVIEWER = {Juan\ Luis\ Varona},
       DOI = {10.1093/imrn/rnu036},
       URL = {https://doi.org/10.1093/imrn/rnu036},
}

@article {Siegel-I,
    AUTHOR = {Siegel, Carl Ludwig},
     TITLE = {\"{U}ber die analytische {T}heorie der quadratischen {F}ormen},
   JOURNAL = {Ann. of Math. (2)},
  FJOURNAL = {Annals of Mathematics. Second Series},
    VOLUME = {36},
      YEAR = {1935},
    NUMBER = {3},
     PAGES = {527--606},
      ISSN = {0003-486X,1939-8980},
   MRCLASS = {99-04},
  MRNUMBER = {1503238},
       DOI = {10.2307/1968644},
       URL = {https://doi.org/10.2307/1968644},
}

@incollection {Cogdell-squares,
    AUTHOR = {Cogdell, James W.},
     TITLE = {On sums of three squares},
      NOTE = {Les XXII\`emes Journ\'ees Arithmetiques (Lille, 2001)},
   JOURNAL = {J. Th\'eor. Nombres Bordeaux},
  FJOURNAL = {Journal de Th\'eorie des Nombres de Bordeaux},
    VOLUME = {15},
      YEAR = {2003},
    NUMBER = {1},
     PAGES = {33--44},
      ISSN = {1246-7405,2118-8572},
   MRCLASS = {11F66 (11E25 11F30)},
  MRNUMBER = {2018999},
MRREVIEWER = {Emmanuel\ P.\ Royer},
       DOI = {10.5802/jtnb.385},
       URL = {https://doi.org/10.5802/jtnb.385},
}

@article {DukeSchulzePillot,
    AUTHOR = {Duke, William and Schulze-Pillot, Rainer},
     TITLE = {Representation of integers by positive ternary quadratic forms
              and equidistribution of lattice points on ellipsoids},
   JOURNAL = {Invent. Math.},
  FJOURNAL = {Inventiones Mathematicae},
    VOLUME = {99},
      YEAR = {1990},
    NUMBER = {1},
     PAGES = {49--57},
      ISSN = {0020-9910,1432-1297},
   MRCLASS = {11E36 (11D85 11E20 11H55 11P21 11P55)},
  MRNUMBER = {1029390},
MRREVIEWER = {Mark\ Sheingorn},
       DOI = {10.1007/BF01234411},
       URL = {https://doi.org/10.1007/BF01234411},
}

@article {Iwaniec-halfintegral,
    AUTHOR = {Iwaniec, Henryk},
     TITLE = {Fourier coefficients of modular forms of half-integral weight},
   JOURNAL = {Invent. Math.},
  FJOURNAL = {Inventiones Mathematicae},
    VOLUME = {87},
      YEAR = {1987},
    NUMBER = {2},
     PAGES = {385--401},
      ISSN = {0020-9910,1432-1297},
   MRCLASS = {11F37 (11F30)},
  MRNUMBER = {870736},
MRREVIEWER = {Marvin\ I.\ Knopp},
       DOI = {10.1007/BF01389423},
       URL = {https://doi.org/10.1007/BF01389423},
}

@article{effectivesemisimple,
      title={Effective equidistribution of semisimple adelic periods and representations of quadratic forms}, 
      author={Manfred Einsiedler and Elon Lindenstrauss and Amir Mohammadi and Andreas Wieser},
      year={2025},
    journal={arXiv preprint 2503.21068}
}

@article {Salberger-PLMS,
    AUTHOR = {Salberger, Per},
     TITLE = {Counting rational points on projective varieties},
   JOURNAL = {Proc. Lond. Math. Soc. (3)},
  FJOURNAL = {Proceedings of the London Mathematical Society. Third Series},
    VOLUME = {126},
      YEAR = {2023},
    NUMBER = {4},
     PAGES = {1092--1133},
      ISSN = {0024-6115,1460-244X},
   MRCLASS = {11D45 (11D72 11G35 14G05 14G40)},
  MRNUMBER = {4574827},
MRREVIEWER = {Paul\ M.\ Voutier},
       DOI = {10.1112/plms.12508},
       URL = {https://doi.org/10.1112/plms.12508},
}

@article {HeathBrown-annals,
    AUTHOR = {Heath-Brown, D. R.},
     TITLE = {The density of rational points on curves and surfaces},
   JOURNAL = {Ann. of Math. (2)},
  FJOURNAL = {Annals of Mathematics. Second Series},
    VOLUME = {155},
      YEAR = {2002},
    NUMBER = {2},
     PAGES = {553--595},
      ISSN = {0003-486X,1939-8980},
   MRCLASS = {11G35 (11G50 14G05 14G40)},
  MRNUMBER = {1906595},
MRREVIEWER = {Carlo\ Gasbarri},
       DOI = {10.2307/3062125},
       URL = {https://doi.org/10.2307/3062125},
}

@article {BombieriPila,
    AUTHOR = {Bombieri, E. and Pila, J.},
     TITLE = {The number of integral points on arcs and ovals},
   JOURNAL = {Duke Math. J.},
  FJOURNAL = {Duke Mathematical Journal},
    VOLUME = {59},
      YEAR = {1989},
    NUMBER = {2},
     PAGES = {337--357},
      ISSN = {0012-7094,1547-7398},
   MRCLASS = {11P21 (11D99)},
  MRNUMBER = {1016893},
MRREVIEWER = {Ulrich\ Rausch},
       DOI = {10.1215/S0012-7094-89-05915-2},
       URL = {https://doi.org/10.1215/S0012-7094-89-05915-2},
}

@article {Waldspurger,
    AUTHOR = {Waldspurger, J.-L.},
     TITLE = {Sur les valeurs de certaines fonctions {$L$} automorphes en
              leur centre de sym\'{e}trie},
   JOURNAL = {Compositio Math.},
  FJOURNAL = {Compositio Mathematica},
    VOLUME = {54},
      YEAR = {1985},
    NUMBER = {2},
     PAGES = {173--242},
      ISSN = {0010-437X},
   MRCLASS = {11F70 (11F67 22E55)},
  MRNUMBER = {783511},
MRREVIEWER = {Stephen Gelbart},
       URL = {http://www.numdam.org/item?id=CM_1985__54_2_173_0},
}

@incollection {Linnikthmexpander,
    AUTHOR = {J. S. Ellenberg and Ph. Michel and A. Venkatesh},
     TITLE = {Linnik's ergodic method and the distribution of integer points
              on spheres},
 BOOKTITLE = {Automorphic representations and {$L$}-functions},
    SERIES = {Tata Inst. Fundam. Res. Stud. Math.},
    VOLUME = {22},
     PAGES = {119--185},
 PUBLISHER = {Tata Inst. Fund. Res., Mumbai},
      YEAR = {2013},
   MRCLASS = {11K36 (11E12)},
  MRNUMBER = {3156852},
MRREVIEWER = {Vladimir S. Anashin},
}

@article {IlyaAnnals,
    AUTHOR = {Khayutin, Ilya},
     TITLE = {Joint equidistribution of {CM} points},
   JOURNAL = {Ann. of Math. (2)},
  FJOURNAL = {Annals of Mathematics. Second Series},
    VOLUME = {189},
      YEAR = {2019},
    NUMBER = {1},
     PAGES = {145--276},
      ISSN = {0003-486X},
   MRCLASS = {11G18 (37A17)},
  MRNUMBER = {3898709},
MRREVIEWER = {Thomas Ward},
       DOI = {10.4007/annals.2019.189.1.4},
       URL = {https://doi.org/10.4007/annals.2019.189.1.4},
}

@article {MVIHES,
    AUTHOR = {Ph. Michel and A. Venkatesh},
     TITLE = {The subconvexity problem for {${\rm GL}_2$}},
   JOURNAL = {Publ. Math. Inst. Hautes \'{E}tudes Sci.},
  FJOURNAL = {Publications Math\'{e}matiques. Institut de Hautes \'{E}tudes
              Scientifiques},
    NUMBER = {111},
      YEAR = {2010},
     PAGES = {171--271},
      ISSN = {0073-8301},
   MRCLASS = {11F67 (11F37 11F70 22E55 58J51)},
  MRNUMBER = {2653249},
MRREVIEWER = {Andre Reznikov},
       DOI = {10.1007/s10240-010-0025-8},
       URL = {https://doi.org/10.1007/s10240-010-0025-8},
}

@article {CU,
    AUTHOR = {Clozel, Laurent and Ullmo, Emmanuel},
     TITLE = {\'{E}quidistribution de mesures alg\'{e}briques},
   JOURNAL = {Compos. Math.},
  FJOURNAL = {Compositio Mathematica},
    VOLUME = {141},
      YEAR = {2005},
    NUMBER = {5},
     PAGES = {1255--1309},
      ISSN = {0010-437X},
   MRCLASS = {22E40 (11F70)},
  MRNUMBER = {2157138},
MRREVIEWER = {A. Raghuram},
       DOI = {10.1112/S0010437X0500148X},
       URL = {https://doi.org/10.1112/S0010437X0500148X},
}

@article {ELMV-Ens,
    AUTHOR = {M. Einsiedler and E. Lindenstrauss and Ph. Michel and A. Venkatesh},
     TITLE = {The distribution of closed geodesics on the modular surface,
              and {D}uke's theorem},
   JOURNAL = {Enseign. Math. (2)},
  FJOURNAL = {L'Enseignement Math\'{e}matique. Revue Internationale. 2e S\'{e}rie},
    VOLUME = {58},
      YEAR = {2012},
    NUMBER = {3-4},
     PAGES = {249--313},
      ISSN = {0013-8584},
   MRCLASS = {11F11 (11E16 11F37 14C22 37A45 53C22)},
  MRNUMBER = {3058601},
MRREVIEWER = {Thomas R. Shemanske},
       DOI = {10.4171/LEM/58-3-2},
       URL = {https://doi.org/10.4171/LEM/58-3-2},
}

@Article{AES3D,
  Title                    = {Integer points on spheres and their orthogonal lattices},
  Author                   = {M. Aka and M. Einsiedler and U. Shapira},
  Journal                  = {Invent. Math.},
  Year                     = {2016},
  Volume				   = {206},
  Number				   = {2},
  Pages					   = {379--396}
}

@article {AMW-higherdim,
    AUTHOR = {Aka, Menny and Musso, Andrea and Wieser, Andreas},
     TITLE = {Equidistribution of rational subspaces and their shapes},
   JOURNAL = {Ergodic Theory Dynam. Systems},
  FJOURNAL = {Ergodic Theory and Dynamical Systems},
    VOLUME = {44},
      YEAR = {2024},
    NUMBER = {8},
     PAGES = {2009--2062},
      ISSN = {0143-3857,1469-4417},
   MRCLASS = {37A17 (11H06 11H55)},
  MRNUMBER = {4798789},
MRREVIEWER = {Shucheng\ Yu},
       DOI = {10.1017/etds.2023.107},
       URL = {https://doi.org/10.1017/etds.2023.107},
}

@Article{AEW-2in4,
  Title                    = {Planes in four space and four associated {CM} points},
  Author                   = {M. Aka and M. Einsiedler and A. Wieser},
  Journal                  = {Duke Math. J.},
  pages = {1469--1529},
  volume = {171},
  number = {7},
  Year			 = {2020}
}

@book {bump,
    AUTHOR = {Bump, Daniel},
     TITLE = {Automorphic forms and representations},
    SERIES = {Cambridge Studies in Advanced Mathematics},
    VOLUME = {55},
 PUBLISHER = {Cambridge University Press, Cambridge},
      YEAR = {1997},
     PAGES = {xiv+574},
      ISBN = {0-521-55098-X},
   MRCLASS = {11F70 (11F41 11R39 22E50 22E55)},
  MRNUMBER = {1431508},
MRREVIEWER = {Solomon\ Friedberg},
       DOI = {10.1017/CBO9780511609572},
       URL = {https://doi.org/10.1017/CBO9780511609572},
}

@article{BlomerBrumleyKhayutin,
      title={The mixing conjecture under {GRH}}, 
      author={Valentin Blomer and Farrell Brumley and Ilya Khayutin},
    journal={arXiv preprint 2212.06280},
      year={2022},
      eprint={2212.06280},
      archivePrefix={arXiv},
      primaryClass={math.NT},
      url={https://arxiv.org/abs/2212.06280}, 
}

@article {BlomerBrumley-mixing,
    AUTHOR = {Blomer, Valentin and Brumley, Farrell},
     TITLE = {Simultaneous equidistribution of toric periods and fractional
              moments of {$L$}-functions},
   JOURNAL = {J. Eur. Math. Soc. (JEMS)},
  FJOURNAL = {Journal of the European Mathematical Society (JEMS)},
    VOLUME = {26},
      YEAR = {2024},
    NUMBER = {8},
     PAGES = {2745--2796},
      ISSN = {1435-9855,1435-9863},
   MRCLASS = {11F67 (11E45 11R52)},
  MRNUMBER = {4756945},
MRREVIEWER = {Andre\ Reznikov},
       DOI = {10.4171/jems/1324},
       URL = {https://doi.org/10.4171/jems/1324},
}

@article{BlomerBrumleyRadziwill,
      title={{Joint Linnik problems}}, 
      author={Valentin Blomer and Farrell Brumley and Maksym Radiwiłł},
    journal={arXiv preprint 2603.05609},
      year={2026},
      eprint={2603.05609},
      archivePrefix={arXiv},
      primaryClass={math.NT},
      url={https://arxiv.org/abs/2603.05609}, 
}

@Book{cassels,
  Title                    = {Rational quadratic forms},
  Author                   = {J.W.S. Cassels},
  Publisher                = {Academic Press Inc.},
  Year                     = {1978},
  Series                   = {London Mathematical Society Monographs},
  Volume                   = {13}
}

@Article{duke88,
  Title                    = {Hyperbolic distribution problems and half-integral weight \uppercase{M}aass forms},
  Author                   = {W. Duke},
  Journal                  = {Inventiones mathematicae},
  Year                     = {1988},
  Number                   = {1},
  Pages                    = {73-90},
  Volume                   = {92}
}

@article {EL-joiningsPIHES,
    AUTHOR = {Einsiedler, Manfred and Lindenstrauss, Elon},
     TITLE = {Joinings of higher rank torus actions on homogeneous spaces},
   JOURNAL = {Publ. Math. Inst. Hautes \'Etudes Sci.},
  FJOURNAL = {Publications Math\'ematiques. Institut de Hautes \'Etudes
              Scientifiques},
    VOLUME = {129},
      YEAR = {2019},
     PAGES = {83--127},
      ISSN = {0073-8301,1618-1913},
   MRCLASS = {22E40 (22D40 37A05)},
  MRNUMBER = {3949028},
MRREVIEWER = {Thomas\ Ward},
       DOI = {10.1007/s10240-019-00103-y},
       URL = {https://doi.org/10.1007/s10240-019-00103-y},
}

@Article{EL-nonmaximal,
  Title                    = {RIGIDITY OF NON-MAXIMAL TORUS ACTIONS, UNIPOTENT QUANTITATIVE RECURRENCE, AND DIOPHANTINE APPROXIMATIONS},
  Author                   = {M. Einsiedler and E. Lindenstrauss},
  Journal                  = {arXiv preprint, arXiv:2307.04163},
  Year					   = {2023},
  url					   = {https://arxiv.org/pdf/2307.04163}
}

@Article{ELMVAnn,
  Title                    = {Distribution of periodic torus orbits and \uppercase{D}uke's theorem for cubic fields},
  Author                   = {M. Einsiedler and E. Lindenstrauss and Ph. Michel and A. Venkatesh},
  Journal                  = {Annals of Mathematics},
  Year                     = {2011},
  Pages                    = {815-885},
  Volume                   = {173}
}

@Article{ELMV-DukeJ,
  Title                    = {Distribution of periodic torus orbits on homogeneous spaces},
  Author                   = {M. Einsiedler and E. Lindenstrauss and P. Michel and A. Venkatesh},
  Journal                  = {Duke Math. J.},
  Year                     = {2009},
  Pages                    = {119-174},
  Volume                   = {148},
  Number		   = {1}
}

@article {EMMV,
    AUTHOR = {Einsiedler, M. and Margulis, G. and Mohammadi, A. and
              Venkatesh, A.},
     TITLE = {Effective equidistribution and property {$(\tau)$}},
   JOURNAL = {J. Amer. Math. Soc.},
  FJOURNAL = {Journal of the American Mathematical Society},
    VOLUME = {33},
      YEAR = {2020},
    NUMBER = {1},
     PAGES = {223--289},
      ISSN = {0894-0347},
   MRCLASS = {11K36 (11E20 11E57 22E55 37A17 37A46)},
  MRNUMBER = {4066475},
MRREVIEWER = {Herbert Abels},
       DOI = {10.1090/jams/930},
       URL = {https://doi.org/10.1090/jams/930},
}

@misc{wu2018,
      title={Subconvex bounds for compact toric integrals}, 
      author={Han Wu},
      year={2018},
      eprint={1604.01902},
      archivePrefix={arXiv},
      primaryClass={math.NT},
   note={        \url{https://arxiv.org/abs/1604.01902}}, 
}

@book {Kitaoka-book,
    AUTHOR = {Kitaoka, Yoshiyuki},
     TITLE = {Arithmetic of quadratic forms},
    SERIES = {Cambridge Tracts in Mathematics},
    VOLUME = {106},
 PUBLISHER = {Cambridge University Press, Cambridge},
      YEAR = {1993},
     PAGES = {x+268},
      ISBN = {0-521-40475-4},
   MRCLASS = {11E12 (11E08 11H55)},
  MRNUMBER = {1245266},
MRREVIEWER = {Rainer\ Schulze-Pillot},
       DOI = {10.1017/CBO9780511666155},
       URL = {https://doi.org/10.1017/CBO9780511666155},
}

@Book{linnik,
  Title                    = {Ergodic properties of algebraic fields},
  Author                   = {Yu. V. Linnik},
  Publisher                = {Springer-Verlag},
  Year                     = {1968},
  Address                  = {New York},
  Note                     = {Translated from the Russian by M.S. Keane},
  Series                   = {Ergebnisse der Mathematik und ihrer Grenzgebiete},
  Volume                   = {45}
}

@Book{platonov,
  Title                    = {Algebraic groups and number theory},
  Author                   = {V. Platonov and A. Rapinchuk},
  Publisher                = {Academic Press, Inc.},
  Year                     = {1994},
  Note                     = {Translated from the 1991 Russian original by R. Rowen},
  Series                   = {Pure and Applied Mathematics},
  Volume                   = {139}
}

@incollection {Rapinchuk-survey,
    AUTHOR = {Rapinchuk, Andrei S.},
     TITLE = {Strong approximation for algebraic groups},
 BOOKTITLE = {Thin groups and superstrong approximation},
    SERIES = {Math. Sci. Res. Inst. Publ.},
    VOLUME = {61},
     PAGES = {269--298},
 PUBLISHER = {Cambridge Univ. Press, Cambridge},
      YEAR = {2014},
      ISBN = {978-1-107-03685-7},
   MRCLASS = {20G30 (11F06 20G25 20G35)},
  MRNUMBER = {3220894},
MRREVIEWER = {Herbert\ Abels},
}

@incollection {SchulzePillot-survey04,
    AUTHOR = {Schulze-Pillot, Rainer},
     TITLE = {Representation by integral quadratic forms---a survey},
 BOOKTITLE = {Algebraic and arithmetic theory of quadratic forms},
    SERIES = {Contemp. Math.},
    VOLUME = {344},
     PAGES = {303--321},
 PUBLISHER = {Amer. Math. Soc., Providence, RI},
      YEAR = {2004},
      ISBN = {0-8218-3441-X},
   MRCLASS = {11E12 (11E25 11E45)},
  MRNUMBER = {2060206},
MRREVIEWER = {Hidenori\ Katsurada},
       DOI = {10.1090/conm/344/06226},
       URL = {https://doi.org/10.1090/conm/344/06226},
}

@article {SchulzePillot-2in4,
    AUTHOR = {Schulze-Pillot, Rainer},
     TITLE = {Averages of {F}ourier coefficients of {S}iegel modular forms
              and representation of binary quadratic forms by quadratic
              forms in four variables},
   JOURNAL = {Math. Ann.},
  FJOURNAL = {Mathematische Annalen},
    VOLUME = {368},
      YEAR = {2017},
    NUMBER = {3-4},
     PAGES = {923--943},
      ISSN = {0025-5831,1432-1807},
   MRCLASS = {11E12 (11E45 11F27 11F30 11F46)},
  MRNUMBER = {3673640},
MRREVIEWER = {Dongxi\ Ye},
       DOI = {10.1007/s00208-016-1448-4},
       URL = {https://doi.org/10.1007/s00208-016-1448-4},
}

@Article{Venkatesh-sparse,
author = {A.~Venkatesh},
title = {Sparse equidistribution problems, period bounds and subconvexity},
journal = {Ann.~of Math.},
volume = {172},
year = {2010}, 
number = {2}, 
pages = {989--1094}
}

@book {voight,
    AUTHOR = {Voight, John},
     TITLE = {Quaternion algebras},
    SERIES = {Graduate Texts in Mathematics},
    VOLUME = {288},
 PUBLISHER = {Springer, Cham},
      YEAR = {[2021] \copyright 2021},
     PAGES = {xxiii+885},
      ISBN = {978-3-030-56692-0; 978-3-030-56694-4},
   MRCLASS = {11R52 (11-02 11E12 11F06 16H05 16U60 20H10)},
  MRNUMBER = {4279905},
MRREVIEWER = {Juliusz\ Brzezi\'nski},
       DOI = {10.1007/978-3-030-56694-4},
       URL = {https://doi.org/10.1007/978-3-030-56694-4},
}

@article {W-Linnik,
    AUTHOR = {Wieser, Andreas},
     TITLE = {Linnik's problems and maximal entropy methods},
   JOURNAL = {Monatsh. Math.},
  FJOURNAL = {Monatshefte f\"ur Mathematik},
    VOLUME = {190},
      YEAR = {2019},
    NUMBER = {1},
     PAGES = {153--208},
      ISSN = {0026-9255,1436-5081},
   MRCLASS = {37A17 (11E12 11K06)},
  MRNUMBER = {3998337},
MRREVIEWER = {Thomas\ Ward},
       DOI = {10.1007/s00605-019-01320-7},
       URL = {https://doi.org/10.1007/s00605-019-01320-7},
}

@article {Wu14,
    AUTHOR = {Wu, Han},
     TITLE = {Burgess-like subconvex bounds for
              {$\text{GL}_2\times\text{GL}_1$}},
   JOURNAL = {Geom. Funct. Anal.},
  FJOURNAL = {Geometric and Functional Analysis},
    VOLUME = {24},
      YEAR = {2014},
    NUMBER = {3},
     PAGES = {968--1036},
      ISSN = {1016-443X,1420-8970},
   MRCLASS = {11F67 (11F70 22E55)},
  MRNUMBER = {3213837},
MRREVIEWER = {Jannis\ A.\ Antoniadis},
       DOI = {10.1007/s00039-014-0277-4},
       URL = {https://doi.org/10.1007/s00039-014-0277-4},
}

@article {MV10,
    AUTHOR = {Michel, Philippe and Venkatesh, Akshay},
     TITLE = {The subconvexity problem for {${\rm GL}_2$}},
   JOURNAL = {Publ. Math. Inst. Hautes \'Etudes Sci.},
  FJOURNAL = {Publications Math\'ematiques. Institut de Hautes \'Etudes
              Scientifiques},
    NUMBER = {111},
      YEAR = {2010},
     PAGES = {171--271},
      ISSN = {0073-8301,1618-1913},
   MRCLASS = {11F67 (11F37 11F70 22E55 58J51)},
  MRNUMBER = {2653249},
MRREVIEWER = {Andre\ Reznikov},
       DOI = {10.1007/s10240-010-0025-8},
       URL = {https://doi.org/10.1007/s10240-010-0025-8},
}

@article {CHH88,
    AUTHOR = {Cowling, M. and Haagerup, U. and Howe, R.},
     TITLE = {Almost {$L^2$} matrix coefficients},
   JOURNAL = {J. Reine Angew. Math.},
  FJOURNAL = {Journal f\"ur die Reine und Angewandte Mathematik. [Crelle's
              Journal]},
    VOLUME = {387},
      YEAR = {1988},
     PAGES = {97--110},
      ISSN = {0075-4102,1435-5345},
   MRCLASS = {22D10 (22E46)},
  MRNUMBER = {946351},
MRREVIEWER = {Rebecca\ Herb},
       DOI = {10.1515/crll.1988.387.97},
       URL = {https://doi.org/10.1515/crll.1988.387.97},
}

@book {YZZ13,
    AUTHOR = {Yuan, Xinyi and Zhang, Shou-Wu and Zhang, Wei},
     TITLE = {The {G}ross-{Z}agier formula on {S}himura curves},
    SERIES = {Annals of Mathematics Studies},
    VOLUME = {184},
 PUBLISHER = {Princeton University Press, Princeton, NJ},
      YEAR = {2013},
     PAGES = {x+256},
      ISBN = {978-0-691-15592-0},
   MRCLASS = {11G18 (11F70 14G35)},
  MRNUMBER = {3237437},
MRREVIEWER = {Ernest\ Hunter\ Brooks},
}

@article {Kim03,
    AUTHOR = {Kim, Henry H.},
     TITLE = {Functoriality for the exterior square of {${\rm GL}_4$} and
              the symmetric fourth of {${\rm GL}_2$}},
      NOTE = {With appendix 1 by Dinakar Ramakrishnan and appendix 2 by Kim
              and Peter Sarnak},
   JOURNAL = {J. Amer. Math. Soc.},
  FJOURNAL = {Journal of the American Mathematical Society},
    VOLUME = {16},
      YEAR = {2003},
    NUMBER = {1},
     PAGES = {139--183},
      ISSN = {0894-0347,1088-6834},
   MRCLASS = {11F70 (11R39 22E46)},
  MRNUMBER = {1937203},
MRREVIEWER = {Mahdi\ Asgari},
       DOI = {10.1090/S0894-0347-02-00410-1},
       URL = {https://doi.org/10.1090/S0894-0347-02-00410-1},
}

@article {HL94,
    AUTHOR = {Hoffstein, Jeffrey and Lockhart, Paul},
     TITLE = {Coefficients of {M}aass forms and the {S}iegel zero},
      NOTE = {With an appendix by Dorian Goldfeld, Hoffstein and Daniel
              Lieman},
   JOURNAL = {Ann. of Math. (2)},
  FJOURNAL = {Annals of Mathematics. Second Series},
    VOLUME = {140},
      YEAR = {1994},
    NUMBER = {1},
     PAGES = {161--181},
      ISSN = {0003-486X,1939-8980},
   MRCLASS = {11F37 (11F12 11F30 11F66)},
  MRNUMBER = {1289494},
MRREVIEWER = {Lynne\ H.\ Walling},
       DOI = {10.2307/2118543},
       URL = {https://doi.org/10.2307/2118543},
}

@article {MW09,
    AUTHOR = {Martin, Kimball and Whitehouse, David},
     TITLE = {Central {$L$}-values and toric periods for {${\rm GL}(2)$}},
   JOURNAL = {Int. Math. Res. Not. IMRN},
  FJOURNAL = {International Mathematics Research Notices. IMRN},
      YEAR = {2009},
    NUMBER = {1},
     PAGES = {Art. ID rnn127, 141--191},
      ISSN = {1073-7928,1687-0247},
   MRCLASS = {11F67 (11F41 22E55)},
  MRNUMBER = {2471298},
MRREVIEWER = {Alexandru\ A.\ Popa},
       DOI = {10.1093/imrn/rnn127},
       URL = {https://doi.org/10.1093/imrn/rnn127},
}

@incollection {IS00,
    AUTHOR = {Iwaniec, H. and Sarnak, P.},
     TITLE = {Perspectives on the analytic theory of {$L$}-functions},
      NOTE = {GAFA 2000 (Tel Aviv, 1999)},
   JOURNAL = {Geom. Funct. Anal.},
  FJOURNAL = {Geometric and Functional Analysis},
      YEAR = {2000},
     PAGES = {705--741},
      ISSN = {1016-443X,1420-8970},
   MRCLASS = {11M26 (11F67 11F70 11F72 11G40)},
  MRNUMBER = {1826269},
MRREVIEWER = {Henry\ H.\ Kim},
       DOI = {10.1007/978-3-0346-0425-3\_6},
       URL = {https://doi.org/10.1007/978-3-0346-0425-3_6},
}

@incollection {Mic21,
    AUTHOR = {Michel, Philippe},
     TITLE = {Some recents advances on {D}uke's equidistribution theorems},
 BOOKTITLE = {Nine mathematical challenges---an elucidation},
    SERIES = {Proc. Sympos. Pure Math.},
    VOLUME = {104},
     PAGES = {107--132},
 PUBLISHER = {Amer. Math. Soc., Providence, RI},
      YEAR = {[2021] \copyright 2021},
      ISBN = {978-1-4704-5490-6},
   MRCLASS = {11F66 (11E20 11F41 37A44)},
  MRNUMBER = {4337419},
MRREVIEWER = {Shaoyun\ Yi},
       DOI = {10.1090/pspum/104/01873},
       URL = {https://doi.org/10.1090/pspum/104/01873},
}

\end{document}